\newcommand{\ldb}{\mathopen{\lbrack\!\lbrack}} 
\newcommand{\rdb}{\mathclose{\rbrack\!\rbrack}}
\newcommand{\fram}[1]{\ldb #1\rdb}
\newcommand{\zrange}[1]{\ldb #1\rdb}
\newcommand{\range}[1]{[#1]}
\newcommand*{\udots}{\reflectbox{$\ddots$}}
\DeclareRobustCommand*\cal{\@fontswitch\relax\mathcal}
\DeclareRobustCommand*\frak{\@fontswitch\relax\mathfrak}
\numberwithin{equation}{section}
\newtheorem{thm}[equation]{Theorem}
\newtheorem*{thm*}{Theorem}
\newtheorem{mainthm}{Theorem}
\newtheorem{submainthm}{Theorem}
\newtheorem{lem}[equation]{Lemma}
\newtheorem{prop}[equation]{Proposition}
\newtheorem{lemma}[equation]{Lemma}
\newtheorem{coro}[equation]{Corollary}
\newtheorem*{coro*}{Corollary}
\newtheorem{quest}[equation]{Question}
\theoremstyle{remark}
\newtheorem*{remark*}{Remark}
\newtheorem{remark}[equation]{Remark}
\theoremstyle{definition}
\newtheorem{defn}[equation]{Definition}
\newtheorem{ex}[equation]{Example}
\numberwithin{figure}{section}
\numberwithin{table}{section}
\DeclareMathOperator{\rad}{rad }
\DeclareMathOperator{\Hom}{hom }
\DeclareMathOperator{\End}{End}
\DeclareMathOperator{\Der}{Der }
\DeclareMathOperator{\GL}{GL}
\DeclareMathOperator{\Nuc}{Nuc}
\DeclareMathOperator{\Cen}{Cen}
\DeclareMathOperator{\gl}{\mathfrak{gl}}
\DeclareMathOperator{\Aut}{Aut}
\DeclareMathOperator{\Cent}{Cen}
\DeclareMathOperator{\im}{im }
\DeclareMathOperator{\Ann}{Ann }
\DeclareMathOperator{\rank}{rank }
\DeclareMathOperator{\supp}{supp}
\DeclareMathOperator{\ad}{ad}
\newenvironment{bsmallmatrix}
  {\left[\begin{smallmatrix}}
  {\end{smallmatrix}\right]}
\newcommand{\Comm}[1]{{\mathsf{Comm}\textrm{-}{#1}}}
\newcommand{\Set}{{\mathsf{Set}}}
\newcommand{\M}{\mathbb{M}}
\newcommand{\op}{\mathrm{op}}
\newcommand{\Ten}[3][]{\mathbf{{N}}^{#1}\left(#2,#3\right)}
\newcommand{\Id}[3][]{\mathbf{{I}}^{#1}\left(#2,#3\right)}
\newcommand{\Op}[3][]{\mathbf{{Z}}^{#1}\left(#2,#3\right)}
\newcommand{\Den}[1]{\Leftcircle\hspace*{-1mm}#1\hspace*{-1mm} \Rightcircle}
\newcommand{\OpSet}[4][]{\mathbf{{Z}}^{#1}\left(#2,#3\right)\left(#4\right)}
\newcommand{\lversor}{\,\reflectbox{\ensuremath{\oslash}}\,}
\newcommand{\rversor}{\oslash}
\newcommand{\bmto}{\rightarrowtail}
\newcommand{\la}{\langle}
\newcommand{\ra}{\rangle}
\DeclareFontFamily{OMX}{MnSymbolE}{}
\DeclareSymbolFont{MnLargeSymbols}{OMX}{MnSymbolE}{m}{n}
\DeclareFontShape{OMX}{MnSymbolE}{m}{n}{
    <-6>  MnSymbolE5
   <6-7>  MnSymbolE6
   <7-8>  MnSymbolE7
   <8-9>  MnSymbolE8
   <9-10> MnSymbolE9
  <10-12> MnSymbolE10
  <12->   MnSymbolE12
}{}
\DeclareFontShape{OMX}{MnSymbolE}{b}{n}{
    <-6>  MnSymbolE-Bold5
   <6-7>  MnSymbolE-Bold6
   <7-8>  MnSymbolE-Bold7
   <8-9>  MnSymbolE-Bold8
   <9-10> MnSymbolE-Bold9
  <10-12> MnSymbolE-Bold10
  <12->   MnSymbolE-Bold12
}{}
\let\llangle\@undefined
\let\rrangle\@undefined
\DeclareMathDelimiter{\llangle}{\mathopen}%
                     {MnLargeSymbols}{'164}{MnLargeSymbols}{'164}
\DeclareMathDelimiter{\rrangle}{\mathclose}%
                     {MnLargeSymbols}{'171}{MnLargeSymbols}{'171}
\newcommand{\lla}{\llangle}
\newcommand{\rra}{\rrangle}
\newcommand{\comp}[1]{\bar{#1}}
\newcommand{\bra}[1]{\la #1|}
\newcommand{\ket}[1]{|#1\ra}
\newcommand{\1}{\mathbbm{1}}
\DeclareFontFamily{U}{rcjhbltx}{}
\DeclareFontShape{U}{rcjhbltx}{m}{n}{<->rcjhbltx}{}
\DeclareSymbolFont{hebrewletters}{U}{rcjhbltx}{m}{n}
\let\aleph\relax\let\beth\relax
\let\gimel\relax\let\daleth\relax
\DeclareMathSymbol{\aleph}{\mathord}{hebrewletters}{39}
\DeclareMathSymbol{\beth}{\mathord}{hebrewletters}{98}
\DeclareMathSymbol{\gimel}{\mathord}{hebrewletters}{103}
\DeclareMathSymbol{\daleth}{\mathord}{hebrewletters}{100}
\DeclareMathSymbol{\lamed}{\mathord}{hebrewletters}{108}
\DeclareMathSymbol{\mem}{\mathord}{hebrewletters}{109}
\DeclareMathSymbol{\ayin}{\mathord}{hebrewletters}{96}
\DeclareMathSymbol{\tsadi}{\mathord}{hebrewletters}{118}
\DeclareMathSymbol{\qof}{\mathord}{hebrewletters}{114}
\DeclareMathSymbol{\shin}{\mathord}{hebrewletters}{152}
\DeclareMathSymbol{\waw}{\mathord}{hebrewletters}{119}
\DeclareMathSymbol{\vavv}{\mathord}{hebrewletters}{119}
\DeclareMathOperator{\vav}{{{\mathtt{v}}}}
\newcommand{\zerovav}{\fram{\vav}}
\renewcommand{\subseteq}{\subset}
\tikzset{pics/.cd,
cube/.style args={#1/#2/#3/#4}{code={
\coordinate (O) at (0,0,0);
\coordinate (A) at (0,#2,0);
\coordinate (B) at (0,#2,#3);
\coordinate (C) at (0,0,#3);
\coordinate (D) at (#1,0,0);
\coordinate (E) at (#1,#2,0);
\coordinate (F) at (#1,#2,#3);
\coordinate (G) at (#1,0,#3);
\draw[black,fill=black!80] (O) -- (C) -- (G) -- (D) -- cycle;
\draw[black,fill=black!30] (O) -- (A) -- (E) -- (D) -- cycle;
\draw[black,fill=black!10] (O) -- (A) -- (B) -- (C) -- cycle;
\draw[black,fill=black!20,opacity=0.8] (D) -- (E) -- (F) -- (G) -- cycle;
\draw[black,fill=black!20,opacity=0.6] (C) -- (B) -- (F) -- (G) -- cycle;
\draw[black,fill=black!20,opacity=0.8] (A) -- (B) -- (F) -- (E) -- cycle;
\node at (0.5*#1,0.5*#2,0.5*#3) {#4};
}}}
\tikzset{pics/.cd,
   tcube/.style args={#1/#2/#3/#4/#5}{
      code={
         \coordinate (O) at (0,0,0);
         \coordinate (A) at (0,#2,0);
         \coordinate (B) at (0,#2,#3);
         \coordinate (C) at (0,0,#3);
         \coordinate (D) at (#1,0,0);
         \coordinate (E) at (#1,#2,0);
         \coordinate (F) at (#1,#2,#3);
         \coordinate (G) at (#1,0,#3);
         \draw[black,fill=black!5,opacity=0.6] (O) -- (A) -- (B) -- (C)-- cycle;
         \draw[black,fill=black!5,opacity=1] (O) -- (A) -- (E) -- (D)-- cycle;
         \draw[black,fill=black!5,opacity=0.8] (O) -- (D) -- (G) -- (C)-- cycle;
         \node at (0.5*#1,0.6*#2,0.5*#3) {{\small #5}};

         \draw[dotted] (E) -- (F) -- (G);
         \draw[dotted] (B) -- (F);
      }
   }
}
\tikzset{pics/.cd,
   pcube/.style args={#1/#2/#3/#4}{
      code={
	  }
   }
}
\tikzset{pics/.cd,
ccube/.style args={#1/#2/#3/#4}{code={
   \coordinate (zzz) at (0,0,0);
   \coordinate (zoz) at (0,#2,0);
   \coordinate (zoo) at (0,#2,#3);
   \coordinate (zzo) at (0,0,#3);
   \coordinate (ozz) at (#1,0,0);
   \coordinate (ooz) at (#1,#2,0);
   \coordinate (ooo) at (#1,#2,#3);
   \coordinate (ozo) at (#1,0,#3);
   \draw[black,fill=#4!80] (zzz) -- (ozz) -- (ozo) -- (zzo) -- cycle;
   \draw[black,fill=#4!40] (zzz) -- (ozz) -- (ooz) -- (zoz) -- cycle;
   \draw[black,fill=#4!20] (zzo) -- (zzz) -- (zoz) -- (zoo) -- cycle;
   \draw[black,fill=#4!20, opacity=0.6] (ozo) -- (ooo) -- (ooz) -- (ozz) -- cycle;
   \draw[black,fill=#4!20, opacity=0.8] (zzo) -- (zoo) -- (ooo) -- (ozo) -- cycle;
   \draw[black,fill=#4!20, opacity=0.8] (zoo) -- (zoz) -- (ooz) -- (ooo) -- cycle;
}}}
\tikzset{pics/.cd,
linecube/.style args={#1/#2/#3}{code={
   \coordinate (zzz) at (0,0,0);
   \coordinate (zoz) at (0,#2,0);
   \coordinate (zoo) at (0,#2,#3);
   \coordinate (zzo) at (0,0,#3);
   \coordinate (ozz) at (#1,0,0);
   \coordinate (ooz) at (#1,#2,0);
   \coordinate (ooo) at (#1,#2,#3);
   \coordinate (ozo) at (#1,0,#3);
   \draw[black] (zoz) -- (zoo) -- (zzo) -- (ozo) -- (ozz) -- (ooz) -- cycle;
   \draw[dotted] (zoo) -- (ooo) -- (ozo);
   \draw[dotted] (ooz) -- (ooo);
}}}
\tikzset{pics/.cd,
lwing/.style args={#1/#2/#3/#4}{code={
\coordinate (O) at ( 0, 0, 0);
\coordinate (A) at ( 0, 0,#3);
\coordinate (B) at ( 0,#2,#3);
\coordinate (C) at ( 0,#2, 0);
\draw[black,fill=red!20] (O) -- (A) -- (B) -- (C) -- cycle;
\node at (0,0.5*#2,0.5*#3) {#4};
}}}
\tikzset{pics/.cd,
mwing/.style args={#1/#2/#3/#4}{code={
\coordinate (O) at ( 0, 0, 0);
\coordinate (A) at (#1, 0, 0);
\coordinate (B) at (#1,#2, 0);
\coordinate (C) at ( 0,#2, 0);
\draw[black,fill=green!20] (O) -- (A) -- (B) -- (C) -- cycle;
\node at (0.5*#1,0.5*#2,0) {#4};
}}}
\tikzset{pics/.cd,
rwing/.style args={#1/#2/#3/#4}{code={
\coordinate (O) at ( 0, 0, 0);
\coordinate (A) at (#1, 0, 0);
\coordinate (B) at (#1, 0,#3);
\coordinate (C) at ( 0, 0,#3);
\draw[black,fill=blue!20] (O) -- (A) -- (B) -- (C) -- cycle;
\node at (0.5*#1,0,0.5*#3) {#4};
}}}
\title{A spectral theory for transverse tensor operators}
\author{Uriya First}
\address{
	University of Haifa\\
	Department of Mathematics\\
	199 Abba Khoushy Avenue\\
	Haifa, Israel
}
\email{ufirst@univ.haifa.ac.il}
\author{Joshua Maglione}
\address{
	Fakult\"at f\"ur Mathematik\\
	Universit\"at Bielefeld\\
   D-33501 Bielefeld\\ 
   Germany
}
\email{jmaglione@math.uni-bielefeld.de}
\author{James B. Wilson}
\thanks{This work was partially supported by NSF grant DMS-1620454 and by the
	Simons Foundation \#636189, and EPSRC Grant Number EP/RO14604/1.}
\address{
	Department of Mathematics\\
	Colorado State University\\
   Fort Collins, CO 80523\\
   USA
}
\email{James.Wilson@ColoState.Edu}
\date{\today}
\keywords{tensors, polynomial identities, derivations}
\begin{document}

\begin{abstract}
    Tensors are multiway arrays of data, and transverse operators are the operators
    that change the frame of reference. We develop the spectral theory of
    transverse tensor operators and apply it to problems closely related to
    classifying quantum states of matter, isomorphism in algebra, clustering in
    data, and the design of high performance tensor type-systems.  
    We prove the existence and uniqueness of the optimally-compressed tensor
    product spaces over algebras, called \emph{densors}. This gives structural
    insights for tensors and improves how we recognize tensors in arbitrary
    reference frames. Using work of Eisenbud--Sturmfels on binomial ideals, we
    classify the maximal groups and categories of transverse operators, leading us
    to general tensor data types and categorical tensor decompositions, amenable
    to theorems like Jordan--H\"older and Krull--Schmidt. All categorical tensor
    substructure is detected by transverse operators whose spectra contain a
    Stanley--Reisner ideal, which can be analyzed with combinatorial and
    geometrical tools via their simplicial complexes. 
    Underpinning this is a ternary Galois correspondence between tensor
    spaces, multivariable polynomial ideals, and transverse operators.  
    This correspondence can be computed in polynomial time.  We give an
    implementation in the computer algebra system \textsf{Magma}.
\end{abstract}

\maketitle

\setcounter{tocdepth}{1}%
\tableofcontents


\section{Introduction}\label{sec:intro}

Let $K$ be a commutative ring, and we say that a \emph{tensor} $t$ is any element of 
\begin{align}\label{def:T-1}
		T:=K^{d_0\times \cdots\times d_{\vav}}\cong K^{d_0}\otimes\cdots\otimes
K^{d_{\vav}}.
\end{align} 
So, $t$ is a $(d_0\times\cdots\times d_{\vav})$ multiway array of numbers, a.k.a. a
hypermatrix. Call $K^{d_a}$ the $a$-\emph{axis} of $t$,
$\{V_0,\ldots,V_{\vav}\}$ the frame, $\vav$ the \emph{valence}, but
reserve \emph{dimension} for the values $d_a$.  Elements $\omega$ of
\begin{align}\label{def:Omega-1}
	\Omega:=\M_{d_0}(K)\times\cdots\times\mathbb{M}_{d_{\vav}}(K)
\end{align}
are called \emph{transverse} operators. They act on $t\in T$, written $\omega\cdot t$, 
by operating on each axis.

Our investigation centers on the spectral properties of transverse operators
(Theorems~\ref{thm:correspondence} \& \ref{mainthm:construction}).
Aspects of this have appeared, see Section~\ref{sec:related}, but as far as we
know, this is the first general study. As with eigenvalues, our definitions here
are quite general, so they lend themselves to broad use.  We use this spectral theory to explore a
class of clustering problems in data sets (Section~\ref{sec:Singularities}),
to design the appropriate data types for tensors (Section~\ref{sec:algorithms}), and
to improve how one compares tensors effectively
(Section~\ref{sec:Small-densor}). Our solutions rest on resolving several
questions in multilinear algebra including recognizing universally smallest
tensor product spaces (Theorem~\ref{mainthm:Densor}), characterizing the largest groups
and algebras that can act transversely (Theorems~\ref{mainthm:Lie-Asc} \&
\ref{thm:group-intro}), and identifying what spectral properties signal the existence
of tensor substructure (Theorem~\ref{mainthm:singularity}).

A key step in the spectral theory of linear operators $\omega\in
\mathbb{M}_{d}(K)$ is to record the recurrence relation $\omega^{i+1}v=\lambda_0
v+\lambda_1 \omega^1 v+\cdots +\lambda_{i} \omega^i v$, for $v\in K^d$, as a
minimal polynomial $\min_{\omega}(x)=x^{i+1}-\lambda_{i+1}x^i-\cdots-\lambda_0$.
Transverse operators $\omega\in \Omega$
can be iterated by different amounts on each axis making a multivariable
recurrence \emph{ideal} in $K[X]:=K[x_0,\ldots,x_{\vav}]$.  For $S\subset T$ and
$\Delta \subset \Omega$ set
\begin{align}\label{def:id-intro}
	\Id{S}{\Delta} & =\left\{p(X)=\sum_e \lambda_e X^e
		~\middle|~\forall t\in S,\forall \omega\in\Delta,
          \quad \sum_e \lambda_e \omega_0^{e(0)}\cdots 
            \omega_{\vav}^{e(\vav)}\cdot t=0\right\}.
\end{align}
We study the zero loci of $\Id{S}{\Delta}$ as affine subsets of
$\mathbb{A}^{1+\vav}(K):=K^{1+\vav}$. If $\vav=0$, $T=K^{d_0}$, and
$\Delta=\{\omega\}$, then $\Id{T}{\Delta}=(\min_{\omega}(x_0))\subset
K[x_0]$, and the zeros are the eigenvalues of $\omega_0$.

To generalize eigenspaces take $P\subset K[X]$ and $\Delta \subset \Omega$ and define:
\begin{align}\label{def:ten-intro}
	\Ten{P}{\Delta} & =\left\{t\in T 
	~\middle|~
		P\subset \Id{t}{\Delta}
	\right\}.
\end{align}
Notice if $\vav=0$ and $P=(x-\lambda)$, then for $\omega\in
\mathbb{M}_{d_0}(K)$, $\Ten{P}{\omega}=\{t\in K^{d_0}\mid \omega t=\lambda t \}$
is the usual eigenspace (possibly trivial). In this way, the prime ideals $P$
containing $\Id{T}{\Delta}$ are candidates for transverse operator eigenvalues,
and the primary decomposition $Q_1\cap\cdots \cap Q_{\ell}=\Id{T}{\Delta}$, with
$\sqrt{Q}_i$ prime, further extends the concepts of spectrum and algebraic
multiplicity of eigenvalues to the general transverse operator setting.

One last set matters to transverse operators.  For $S\subset T$ and $P\subset
K[X]$, define
\begin{align}\label{def:op-intro}
	\Op{S}{P} & = \left\{
		\omega\in \Omega
		~\middle|~
		P\subset \Id{S}{\omega}
		\right\}.
\end{align}
If $\vav=0$, $T=K^{d_0}$, and $P=(p(x_0))$, then this is the set of matrices
$\omega\in \mathbb{M}_{d_0}(K)$, where $p(\omega)=0$. In other words, the operators
whose spectra has at least the roots of $p(x_0)$.

{\it We note that $\Op{S}{P}$ naturally has the structure of a
$K$-scheme, and \eqref{def:op-intro} will later be written $\Op{S}{P}(K)$, i.e.: the
$K$-rational points of the scheme.  Our results take this extra structure
into account. See Section~\ref{sec:Galois}.} Informally, this
means that whenever we say a property holds for $\Op{S}{P}$, we mean that it
also holds after base-changing $K$ to any commutative $K$-algebra.  

The definitions of $\Id{-}{-}$, $\Ten{-}{-}$, and $\Op{-}{-}$ are inclusion
reversing in each of the components. The three constructions are, moreover,
related by a ternary Galois connection:

\begin{mainthm}[Correspondence Theorem]
	\label{thm:correspondence}
	Let $P\subseteq K[X]$, $S\subseteq T:=K^{d_0}\otimes\cdots\otimes
	K^{d_{\vav}}$ and $\Delta\subseteq \Omega:=
	\mathbb{M}_{d_0}(K)\times\cdots\times \mathbb{M}_{d_{\vav}}(K)$. Then
	$\Id{S}{\Delta}$ is an ideal of $K[X]$, $\Ten{P}{\Delta}$ is a subspace of
	$T$, and $\Op{S}{P}$ is a closed subscheme of $\Omega$, satisfying the
	following Galois connection property:
	\begin{align*}
		S\subset \Ten{P}{\Delta} \quad \Longleftrightarrow\quad
		P\subset \Id{S}{\Delta} \quad\Longleftrightarrow\quad
		\Delta\subset \Op{S}{P}.
	\end{align*}
\end{mainthm}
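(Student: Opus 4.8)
The plan is to unwind all three defining conditions to a single common condition and read off both biconditionals. First I would record the basic bilinear (really trilinear) feature of the setup: for a fixed tensor $t$, a fixed operator $\omega$, and a fixed polynomial $p=\sum_e\lambda_e X^e$, the expression
\[
	p(\omega)\cdot t := \sum_e \lambda_e\, \omega_0^{e(0)}\cdots\omega_{\vav}^{e(\vav)}\cdot t \in T
\]
is $K$-linear in $p$ (for $\omega,t$ fixed) and $K$-linear in $t$ (for $\omega,p$ fixed); it is not linear in $\omega$, but that will not matter. With this notation, all three of $\Id{S}{\Delta}$, $\Ten{P}{\Delta}$, $\Op{S}{P}$ are defined by the single family of equations ``$p(\omega)\cdot t=0$'' ranging over appropriate arguments: $p\in\Id{S}{\Delta}\iff\forall t\in S,\ \forall\omega\in\Delta,\ p(\omega)\cdot t=0$, and similarly for the other two. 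Hence each of the three displayed inclusions $P\subset\Id{S}{\Delta}$, $S\subset\Ten{P}{\Delta}$, $\Delta\subset\Op{S}{P}$ is, after expanding the definition of the target set, literally the assertion ``$p(\omega)\cdot t=0$ for all $p\in P$, all $t\in S$, all $\omega\in\Delta$.'' The Galois equivalences are then immediate, since all three are the same quantifier-prefixed statement with the three universal quantifiers merely permuted.

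Next I would verify the structural claims. For $\Id{S}{\Delta}$: linearity of $p\mapsto p(\omega)\cdot t$ shows the solution set of $p(\omega)\cdot t=0$ is a $K$-submodule of $K[X]$ for each $(t,\omega)$, and $\Id{S}{\Delta}$ is the intersection over $(t,\omega)\in S\times\Delta$ of these submodules, hence a submodule; to see it is an ideal, note that if $p(\omega)\cdot t=0$ then for any monomial $X^a$ we have $(X^a p)(\omega)\cdot t = \omega_0^{a(0)}\cdots\omega_{\vav}^{a(\vav)}\cdot\big(p(\omega)\cdot t\big)=0$ because the operators on distinct axes commute and $p(\omega)\cdot t=0$; extending by linearity in the multiplier gives $K[X]\cdot\Id{S}{\Delta}\subset\Id{S}{\Delta}$. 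For $\Ten{P}{\Delta}$: it is $\bigcap_{p\in P,\ \omega\in\Delta}\ker\big(t\mapsto p(\omega)\cdot t\big)$, an intersection of kernels of $K$-linear maps $T\to T$, hence a $K$-subspace (submodule) of $T$.

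The one genuinely non-formal point is the assertion that $\Op{S}{P}$ is a \emph{closed subscheme} of $\Omega$ (viewed as the affine scheme $\prod_a \mathbb{M}_{d_a}$ over $K$), not merely a subset of $K$-points; here I would need the scheme-theoretic framework promised in Section~\ref{sec:Galois}. The plan is: the condition $p(\omega)\cdot t=0$ for a fixed $p\in P$ and fixed $t\in S$ is a system of polynomial equations in the matrix entries of $\omega=(\omega_0,\dots,\omega_{\vav})$ — indeed each entry of $p(\omega)\cdot t\in K^{d_0}\otimes\cdots\otimes K^{d_{\vav}}$ is a polynomial (with coefficients determined by $p$ and $t$) in the $\sum_a d_a^2$ coordinates of $\Omega$, since matrix multiplication and the action on $t$ are polynomial. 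Let $\mathfrak{a}\subset \mathcal{O}(\Omega)=K[\,\omega\text{-entries}\,]$ be the ideal generated by all these polynomials as $p$ ranges over $P$ and $t$ over $S$ (it suffices to take $p$ over a generating set of the ideal $(P)$ and $t$ over a spanning set of $\langle S\rangle$, using linearity again). Then $\Op{S}{P}$ is the closed subscheme $V(\mathfrak{a})$, and for any commutative $K$-algebra $R$ its $R$-points are exactly $\{\omega\in\Omega(R): p(\omega)\cdot t=0\ \forall p\in P,\forall t\in S\}$ — this is the sense in which ``$\Op{S}{P}(K)$'' from \eqref{def:op-intro} is the $K$-rational points, and it makes the Galois property a statement about functors of points that holds after arbitrary base change. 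I expect the bookkeeping in this last step — making the scheme structure precise and checking it is independent of the chosen generators — to be the main (though still routine) obstacle; everything else is a direct unwinding of definitions.
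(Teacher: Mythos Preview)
Your proposal is correct and matches the paper's approach essentially line for line: the paper's proof of the Galois equivalences (Section~\ref{sec:Galois}) also observes that each of the three inclusions unfolds to the single statement $(\forall t\in S)(\forall p\in P)(\forall \omega\in\Delta)(\bra{t}p(\omega)=0)$, and the structural claims are handled exactly as you sketch, with the closed-subscheme assertion for $\Op{S}{P}$ isolated as the one nontrivial point (Proposition~\ref{prop:Z} and Remark~\ref{rem:make-scheme}). The only refinement the paper adds is that in its full generality $\Delta$ is itself a subscheme of $\Omega$ rather than a subset of $\Omega(K)$, so the universal quantifier over $\omega$ runs over $\omega\in\Delta(L)$ for all $L\in\Comm{K}$; this does not change the argument, only the bookkeeping.
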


To put these observations into use, we prove that the terms in this spectral
theory of transverse tensor operators are efficiently computable in reasonable
computational settings shown in Section~\ref{sec:algorithms}.

\begin{mainthm}\label{mainthm:construction}
    When the valence $\vav$ is fixed, there are polynomial-time algorithms that,
    given sets generating $S$, $P$ and $\Delta$, compute a basis for
    $\Ten{P}{\Delta}$, a Gr\"obner basis for $\Id{S}{\Delta}$ and its primary
    decomposition, and polynomials defining the scheme $\Op{S}{P}$. If $P$ is
    generated by linear polynomials, we may further compute a set of $2+\vav$ generic
    $K$-rational points of $\Op{S}{P}$.
\end{mainthm}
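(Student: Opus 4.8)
The plan is to reduce every item to two ingredients: linear algebra over $K$ inside the finite-dimensional space $T$, whose dimension $N:=d_0\cdots d_{\vav}$ is at most the size of the input, and zero-dimensional ideal arithmetic in $K[X]=K[x_0,\dots,x_{\vav}]$, in which the number of variables $1+\vav$ is \emph{fixed} because the valence is fixed. The bridge between the two is the elementary observation that for a single $\omega=(\omega_0,\dots,\omega_{\vav})\in\Omega$ the operators ``$\omega_a$ acting on the $a$-axis'' pairwise commute, so $x_a\mapsto(\omega_a\text{ on axis }a)$ extends to a $K$-algebra homomorphism $\rho_\omega\colon K[X]\to\End_K(T)$ making $T$ a $K[X]$-module. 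Reading off \eqref{def:id-intro}, $\Id{S}{\omega}=\ann_{K[X]}(M_\omega)$ where $M_\omega\subseteq T$ is the $K[X]$-submodule generated by $S$; hence $\Id{S}{\omega}=\ker\bigl(K[X]\to\End_K(M_\omega)\bigr)$ is a zero-dimensional ideal of colength at most $(\dim_K M_\omega)^2\le N^2$, and for a finite generating set $G$ of $\Delta$ one has $\Id{S}{\Delta}=\bigcap_{\omega\in G}\Id{S}{\omega}=\ker\bigl(K[X]\to\prod_{\omega\in G}\End_K(M_\omega)\bigr)$, of polynomially bounded colength. These colength bounds are exactly what will force the symbolic steps below to run in polynomial time (the general input forms for $\Delta$ reduce to this case as in \secref{sec:algorithms}).

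\emph{Computing $\Ten{P}{\Delta}$ and the defining equations of $\Op{S}{P}$.} For $\Ten{P}{\Delta}$: by \eqref{def:id-intro} each generator $p(X)=\sum_e\lambda_eX^e$ of $P$ together with each $\omega\in G$ imposes the single homogeneous linear equation $\sum_e\lambda_e\,\omega_0^{e(0)}\cdots\omega_{\vav}^{e(\vav)}\cdot t=0$ on $t\in K^N$; its coefficient operator is assembled from the matrix powers $\omega_a^k$, $k\le\deg p$, computed in polynomial time, and solving the stacked system by Gaussian elimination (or Hermite/Smith normal form when $K$ is a PID) returns a basis of $\Ten{P}{\Delta}$, whose correctness is \thmref{thm:correspondence}. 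For $\Op{S}{P}$: introduce the $\sum_a d_a^2$ matrix entries of a generic $\omega\in\Omega$ as indeterminates and, for each generator $p$ of $P$, each generator $s$ of $S$, and each coordinate, expand $p(\omega)\cdot s=0$ into an explicit polynomial in these entries; the resulting polynomially many equations of degree $\le\deg P$ cut out $\Op{S}{P}$ as the closed subscheme of $\Omega$ of \thmref{thm:correspondence}.

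\emph{Computing a Gr\"obner basis of $\Id{S}{\Delta}$ and its primary decomposition.} First compute a $K$-basis of each $M_\omega$ by closing the given spanning set of $S$ under the $1+\vav$ matrix actions, which terminates after at most $N$ rounds. Then compute $\Id{S}{\Delta}=\ker\bigl(K[X]\to\prod_{\omega\in G}\End_K(M_\omega)\bigr)$ by a Buchberger--M\"oller/FGLM-style sweep: enumerate the monomials $X^e$ in increasing term order; maintain the images under $\rho:=\prod_{\omega\in G}\rho_\omega$ of the standard monomials found so far; each time a new image lies in their $K$-span, record the corresponding relation as a Gr\"obner basis element with leading monomial $X^e$ and discard the multiples of $X^e$. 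The number of standard monomials equals the colength, hence is polynomial, and since $1+\vav$ is fixed this bounds the degrees of all monomials examined, so the sweep touches only polynomially many monomials, each at the cost of one matrix multiplication and one linear-dependence test of polynomial size. Feeding the resulting zero-dimensional Gr\"obner basis of polynomially bounded colength to the standard primary-decomposition algorithms for zero-dimensional ideals --- which reduce to univariate factorization over $K$ --- produces its primary components, in polynomial time in the computational setting of \secref{sec:algorithms}.

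\emph{Generic rational points of $\Op{S}{P}$ when $P$ is linear, and the obstacles.} When $P$ is generated by linear forms, each equation $p(\omega)\cdot s=0$ is linear in $\omega$, so $\Op{S}{P}$ is an affine-linear subvariety of $\Omega$, parametrized by one more Gaussian elimination. On a dense open subset of $\Op{S}{P}$ the ideal $\Id{S}{\omega}$ attains its minimum $\Id{S}{\Op{S}{P}(K)}$, since $\dim_K M_\omega$ is lower semicontinuous in $\omega$; over an infinite $K$ one rational point lying off the finitely many ``bad'' hypersurfaces already realizes this minimum, while over a finite $K$ a counting argument exploiting the product structure over the $1+\vav$ axes shows that $2+\vav$ suitably chosen rational points $\omega^{(1)},\dots,\omega^{(2+\vav)}$ satisfy $\bigcap_j\Id{S}{\omega^{(j)}}=\Id{S}{\Op{S}{P}(K)}$, and these are produced by sampling $\Op{S}{P}(K)$ and discarding points on the bad hypersurfaces. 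The reductions to linear algebra are routine; the two delicate points are, first, keeping the Gr\"obner-basis and primary-decomposition steps polynomial --- impossible for general ideals, so one must genuinely use zero-dimensionality, the fixed variable count $1+\vav$, the colength bound $\le N^2$, and the effectivity hypotheses on $K$ from \secref{sec:algorithms} for univariate factorization --- and, the harder one, fixing the precise meaning of ``generic'' here and proving the uniform bound $2+\vav$ over all admissible $K$, in particular over small finite fields where no single rational point of $\Op{S}{P}$ need be generic.
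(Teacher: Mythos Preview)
Your proposal is correct and follows the paper's overall strategy: reduce $\Ten{P}{\Delta}$ and the defining equations of $\Op{S}{P}$ to linear algebra (matching Section~\ref{sec:sylver} and Remark~\ref{rem:make-scheme} almost verbatim), and reduce $\Id{S}{\Delta}$ to zero-dimensional ideal computations in the fixed variable set $x_0,\dots,x_{\vav}$.

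The one genuine algorithmic difference is how the Gr\"obner basis of $\Id{S}{\Delta}$ is obtained. The paper, for each pair $(t,\omega)$, builds an explicit matrix $U$ whose rows are indexed by exponent vectors $e\in\prod_a\zrange{d_a}$ (the bound $e(a)\le d_a$ being the Cayley--Hamilton bound on $\omega_a$) and whose columns are indexed by a basis of $T$; the cokernel of $U$ produces generators of $\Id{t}{\omega}$, and then an $F_5$ Gr\"obner-basis computation is run with complexity controlled by the Bardet--Faug\`ere--Salvy bound (Theorem~\ref{thm:Groebner}). You instead argue that $\Id{S}{\omega}=\ann_{K[X]}(M_\omega)$ has colength $\le(\dim_K M_\omega)^2\le N^2$ (in fact $\le N$, by the same Cayley--Hamilton observation the paper uses to bound the rows of $U$) and compute the Gr\"obner basis directly by an FGLM/Buchberger--M\"oller sweep, polynomial in the colength. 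Your route is self-contained and avoids the $F_5$/BFS black box; the paper's route gives a more explicit exponent in the running time. Both are valid.

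On the $2+\vav$ generic $K$-rational points: the paper's proof in Section~\ref{sec:construction} does not spell out this bound either, deferring to the linear Sylvester-system discussion of Section~\ref{sec:sylver}. Your reading of ``generic'' and your sketch are consistent with the paper's usage, and your flagging of this as the delicate step is fair; neither you nor the paper supplies a detailed argument for the specific number $2+\vav$.
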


The algorithms of Theorem~\ref{mainthm:construction} and a suite of companion
data types and functions, including constructions used throughout, are detailed in
Section~\ref{sec:algorithms} and implemented by the second and third author as
an open source project \cite{TensorSpace}. This is also distributed as the multilinear
algebra package in the computer algebra system {\sf Magma}~\cite{magma}.

\subsection{Related works}\label{sec:related}

Several works have emphasized the study of tensors as a generalization of
linear operators.  There, notions of annihilating polynomials, eigenvalues,
singular value decomposition, and so on are introduced for a particular tensor.
Similarly, one can slice a tensor into matrices and consider the simultaneous
spectral theory of the ensemble.  Such approaches date back to work of Kronecker
and others, and recently, Belitskii, Landsberg, Lim, Qi, Sergeichuk, Sturmfels
and  collaborators have evolved this field intensely \citelist{\cite{Belitskii}
\cite{engineer} \cite{Landsberg} \cite{Lim:Spectral} \cite{Qi:det} \cite{Sergeichuk}
\cite{Sturmfels}}. Yet, the work of Drozd, H\aa stad, Lim, Raz and others show
these questions hide deep problems in algebraic geometry, wild representation
theory, and NP-completeness \citelist{\cite{Drozd} \cite{Haastad} \cite{Lim:NP}
\cite{Raz}}. 

An alternative perspective emphasizes tensors as generalization of distributive
products focusing on algebraic qualities.  Here, effort is placed on algebraic
structures associated to the tensor such as adjoint rings with involutions,
Lie and Jordan algebras, and groups of isometries. This has enabled projects
ranging across $\omega$-stability and finite Morley rank in algebra
\citelist{\cite{Malcev} \cite{Myasnikov}}, group isomorphism
\citelist{\cite{BW:autotopism} \cite{BMW:genus2} \cite{Ivanyos-Qiao}
\cite{LW:iso} \cite{Li-Qiao} \cite{Wilson:profiles}}, intersecting classical
groups \citelist{\cite{BW:isom} \cite{GG} \cite{BF}}, Krull--Schmidt type
theorems \citelist{\cite{Wilson:central} \cite{Wilson:Remak-I}}, obstructing
existence of characteristic subgroups \citelist{\cite{Maglione:efficient}
\cite{Maglione:filter} \cite{Glasby-Palfy-Schneider}}, and other algebraic
problems \citelist{\cite{BFM} \cite{First:general} \cite{Rossmann} 
\cite{Rossmann2} \cite{Tyburski} \cite{Wilson:division}}. While these
approaches have been fruitful, the techniques are all specialized by design and
mostly concern $3$-tensors and bilinear maps. 

In this work, we find a combination of both strategies.  In generalizing the
study of tensors as products, we continue to concentrate on operators that act
on individual tensors rather than tensors being treated as operators themselves.
Even so, we now rely on the algebro-geometric properties of the annihilators of
these operators to guide us towards selecting those operators that are most
informative of a tensor's structure.

\subsection{Preliminaries.}\label{sec:terminology}
Now let us give a more general context. We blend expositions of Dirac
\cite{Dirac}, Mal'cev \cite{Malcev}, and Landsberg \cite{Landsberg}, and the
many recent articles included in our bibliography.

Define $\zrange{\ell}:=\{0,\ldots,\ell\}$ and $\range{\ell}:=\{1,\ldots,\ell\}$.
For a set $R$, complements of elements $a$, resp.\ subsets $A$, in $R$ are
denoted $\bar{a}$, resp.\ $\bar{A}$. Elements $x\in X:=\prod_{a\in A}X_a$ are
functions $x:(a\in A)\to (x_a\in X_a)$. For $B\subset A$ we denote
restriction to $B$ by $x_B$ and $X_B$. 

We turn to a coordinate-free description of tensors of
f.g.~projective $K$-modules $V_a$ and abbreviate:
\[
	V_0\rversor V_1  =\hom_K(V_1,V_0) \qquad\text{and}\qquad V_0\rversor 
	\cdots \rversor V_{\vav}
	=(V_0\rversor \dots \rversor V_{\vav-1})\rversor V_{\vav}  
\]
($\rversor$ is pronounced ``versor''; cf. \cite{Wilson:division}).  Elements of
$V_0\oslash V_1$ are linear maps, e.g.\ matrices are lists of vectors.  Elements
of $V_0\oslash V_1\oslash V_2$ are \emph{bi-}linear, e.g.\ hypermatrices are
lists of matrices; and in general elements of $V_0\oslash\cdots \oslash
V_{\vav}$ are \emph{multi-}linear: lists of lower valence grids. 
More formally, we have a natural isomorphism $ \hom(V_2,\hom(V_1,V_0)) \cong
\hom(V_1\otimes V_2,V_0)$, and by iterating, we get 
$V_0\oslash V_1  \oslash \cdots  \oslash
V_{\vav}\cong \hom(V_1\otimes\cdots \otimes V_{\vav},V_0)$.

\begin{defn}
A \emph{tensor space} (over $K$) is a module $T$ together
with a $K$-linear injection
\begin{align}
\tag{Tensor Space}
	\bra{\cdot}:T\hookrightarrow V_0\oslash \ldots \oslash V_{\vav}.
\end{align}
Elements $t\in T$ are \emph{tensors}. The multilinear map $\bra{t}$ is its
\emph{interpretation} and $(V_{0},\ldots,V_{\vav})$ its \emph{frame}. 
\end{defn}

Given $t\in T$, the evaluation of $\bra{t}$ at $v\in \prod_{a\in \range{\vav}}
V_a$ is denoted by $\bra{t} v\ra$. For $a\in \range{\vav}$, write $\langle t |
\omega_av_a, v_{\bar{a}}\rangle := \langle t | v_1,\dots, \omega_a v_a, \dots,
v_{\vav}\rangle$, and likewise for subsets $A\subset \range{\vav}$. For example,
$T$ could consist of $(d_0\times\cdots \times d_{\vav})$ multiway arrays $t$ of
coefficients in $K$ and $V_a:=K^{d_a}$, then one interpretation $\bra{\cdot}$
assigns $t$ to the multilinear map $V_1\times\cdots\times V_{\vav}\bmto V_0$
($\bmto$ denotes multilinear) via
\begin{align*}
	\langle t|v_1,\ldots,v_{\vav}\rangle & :=
			\sum_{i_*} t^{i_0}_{i_1,\ldots,i_{\vav}} v_{i_1}\cdots v_{i_{\vav}} e_{i_0},
		& 
		e_{i} & = (0,\ldots,\underset{i}{0},\ldots,0)
\end{align*}

In the above example, we have used what is known as index-calculus: we
distinguished the $0$-axis as outputs and used it in exponents to indicate its
dual use.  More generally, we can partition $\zrange{\vav}=A\sqcup B\sqcup C$, where
$a\in A$ are the covariant axes, $b\in B$ are the contravariant axes, and $c\in C$
are held constant.  For example, the multiplication of a left $R$-module is a bilinear map
$*:R\times M\bmto M$, where $V_1:=R$ never changes, so $1\in C$ in this example.
Let us accordingly update our definition of transverse operators.
\begin{defn}
	Given covariant axes $A\subset \zrange{\vav}$ and contravariant axes
	$B\subset\bar{A}$, the \emph{transverse operators} associated with the \emph{frame}
	$(V_0,V_1,\dots,V_{\vav})$ are
	$\Omega_{A,B}:=\prod_{a\in A} \End(V_a)\times\prod_{b\in B}	\End(V_b)^{\op}$.
\end{defn} 
For convenience we assume $0\notin B$. We omit $B$ if $B=\emptyset$, and also
omit $A$ if $A=\zrange{\vav}$.

Given a transverse operator $\omega\in \Omega_{A,B}$, there is
an induced action of $K[X_{A\sqcup B}]\hookrightarrow K[X]$ on
$\End(V_0\oslash\cdots\oslash V_{\vav})$ where for $b\in B$, $\omega_b
v_b:=v_b(\omega_b^{\op})$, and for $c\in \zrange{\vav}-A-B$, all exponents $e$
in terms $\lambda_e X^e\in K[X_{A\sqcup B}]$ have $e(c)=0$; so, the $c$-axis is
held constant.  The action of $p(X)\in K[X_{A\sqcup B}]$ on $t\in T$ is
defined as follows.
\begin{align}\label{eq:p-omega-action}
	\bra{t} p(\omega) \ket{v} &:= 
		\sum_e \lambda_e \omega^{e(0)}_0 \bra{t} 
			\omega_1^{e(1)}v_1,\dots, \omega_{\vav}^{e(\vav)} v_{\vav}\rangle. 
\end{align}
\emph{From now on, treat $\Id{S}{\Delta}$, $\Ten{P}{\Delta}$, and $\Op{S}{P}$ in the
general setting of $S\subset T\hookrightarrow V_0\oslash\cdots\oslash V_{\vav}$,
$P\subset K[X]$, $\Delta\subset \Omega_{A,B}$ with $\bra{t}p(X)=0$ replacing
$\omega\cdot t=0$ from above.}

\subsection{The densor}
\label{subsec:densor}

We consider the \emph{Tensor Isomorphism Problem (TIP)} as studied in algebra
\citelist{\cite{BhargavaI} \cite{BhargavaII} \cite{BMW:genus2}
\cite{BOW:graded}}, physics \citelist{\cite{SLOC4}\cite{SLOCC:Classification}},
and computer science \citelist{\cite{Agrawal-Saxena} \cite{Grochow-Qiao}
\cite{Ivanyos-Qiao}}. Here one compares tensors $s,t\in V_0\oslash\cdots\oslash
V_{\vav}$ up to transverse equality, i.e. does there exists $\omega\in \prod_a
\Aut(V_a)$ such that $\bra{s}\omega=\bra{t}$?  This is a huge space and a huge
group to search. A better tensor product is one that is as small as possible but
also still computable.  We will see that the Correspondence Theorem can
manufacture this outcome.

If $\Delta\subseteq \Omega_{A,B}$ and $P\subseteq K[X_{A\sqcup B}]$ then
$\Ten{P}{\Delta}$ may be regarded as tensoring (or ``versoring'') the frame
$(V_{0},\dots,V_{\vav})$, or the tensor space $T$, over $\Delta$ relative to
$P$. For example, let $\vav=1$ and $T=V_0\oslash V_1$,   fix a subring $
\Delta\subset \Omega_{\{0,1\}}=\End(V_0)\times \End(V_1)$, and let
$P=(x_0-x_1)$.  Then
\[
	\Ten{P}{\Delta}=\{ 
		t\in T \mid (\forall \delta\in \Delta) (\la t |\delta_1 v\ra=\delta_0 \la t| v\ra )
	\}
	=
	V_0\oslash_{\Delta} V_1
\]  
Likewise if $\vav=2$, $T=V_0\oslash V_1\oslash V_2$, and $ \Delta$ is a subring
of $\Omega_{2,1}$, then
\(
	\Ten{x_1-x_2}{\Delta}=
	V_0\oslash (V_1\otimes_{\Delta} V_2).
\)  
This is detailed in Example~\ref{ex:nuclei}.

Fixing $P$, we see that the larger $\Delta$ is, the smaller $\Ten{P}{\Delta}$
is, and the largest possible $\Delta$ such that $S\subset \Ten{P}{\Delta}$ is
$\Op{S}{P}$ (Theorem~\ref{thm:correspondence}). We are therefore interested in
ideals $P\subseteq K[X]$ such that $\Ten{P}{\Op{S}{P}}$ is minimal.  Said
another way, $\Ten{P}{\Op{S}{P}}$ is a closure of the above Galois
connection with $P$ constant. In order that $\Op{S}{P}$ remains effectively
computable, we further restrict to the case where $P$ is a linear homogeneous
ideal---an ideal generated by linear homogeneous polynomials---so that
$\Op{S}{P}$ is a $K$-module.

The next theorem states that, under mild hypotheses, as $P$ ranges over linear
homogeneous ideals, $\Ten{P}{\Op{S}{P}}$ has a \emph{unique} minimal member independent
of $S$. To express this, we say that $S\subset T$ is \emph{nondegenerate} if
every nonzero $v_a$ in some $a$-axis admits $v_{\bar{a}}\in \prod_{b\neq a}V_b$ 
and $t\in S$ with $\bra{t} v\ra \neq 0$.  It is \emph{full} if $V_0$
is spanned as a $K$-module by all the $\bra{t} v\ra$. The support $\supp P$ of
$P$ is the set of $a\in \zrange{\vav}$ such that $K$ is generated by the
$\lambda_e$ where $\lambda_e X^e$ is a term of some $p(X)\in P$ and $e(a)\neq 0$.
Say $P$ has \emph{full support} if $\zrange{\vav}=\supp P$.

\begin{mainthm}\label{mainthm:Densor}
	Let $d:= x_0-x_1-\cdots-x_{\vav} \in K[X]$. If $P\subseteq K[X]$ is a
	linear homogeneous ideal of full support, then for each $S\subset T$,
	\[
		\Ten{P}{\Op{S}{P}}\supset \Ten{d}{\Op{S}{d}}.
	\]	
	If $S$ is fully nondegenerate, then this holds for every linear homogeneous
	ideal $P$.
\end{mainthm}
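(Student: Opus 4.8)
The plan is to reduce, via the definitions and the Correspondence Theorem, to a statement about a single operator, and then to transfer the linear recurrences valid on $S$ to the candidate densor by an elementary axiswise rescaling of operators. From \eqref{def:id-intro} and \eqref{def:ten-intro}, for any $\Delta$ we have $\Ten{P}{\Delta}=\bigcap_{\omega\in\Delta}\Ten{P}{\omega}$, so it suffices to fix a linear homogeneous ideal $P=(\ell_1,\dots,\ell_k)$, set $L:=\sum_i K\ell_i\subseteq\bigoplus_{a\in\zrange{\vav}}Kx_a$ (so that $P=(L)$), fix $\omega=(\omega_0,\dots,\omega_{\vav})\in\Op{S}{P}$, and prove $\Ten{d}{\Op{S}{d}}\subseteq\Ten{P}{\omega}$.

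For a linear form $\ell=\sum_{a\in\zrange{\vav}}c_ax_a\in L$ define
\[
	\partial_\ell(\omega):=\bigl(c_0\omega_0,\,-c_1\omega_1,\,\dots,\,-c_{\vav}\omega_{\vav}\bigr)\in\Omega.
\]
Evaluating \eqref{eq:p-omega-action} for $d=x_0-x_1-\cdots-x_{\vav}$ and for $\ell$, the minus signs in $d$ undo those in $\partial_\ell(\omega)$ and one obtains the identity $\bra{t}d(\partial_\ell(\omega))=\bra{t}\ell(\omega)$, valid for every $t\in T$. Two consequences follow. Since $\ell\in P\subseteq\Id{S}{\omega}$, the right-hand side vanishes on all of $S$, so $\partial_\ell(\omega)\in\Op{S}{d}$; and if $t\in\Ten{d}{\Op{S}{d}}$ then $d\in\Id{t}{\partial_\ell(\omega)}$, so the left-hand side vanishes at $t$, i.e.\ $\ell\in\Id{t}{\omega}$.

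Letting $\ell$ run over $L$ shows $L\subseteq\Id{t}{\omega}$; since $\Id{t}{\omega}$ is an ideal (Theorem~\ref{thm:correspondence}) and $P=(L)$, this forces $P\subseteq\Id{t}{\omega}$, i.e.\ $t\in\Ten{P}{\omega}$. Intersecting over $\omega\in\Op{S}{P}$ gives $\Ten{d}{\Op{S}{d}}\subseteq\Ten{P}{\Op{S}{P}}$. Each $\partial_\ell$ is defined over $K$ and $R$-linear in $\omega$, so repeating the computation over an arbitrary commutative $K$-algebra $R$ shows the inclusion survives base change, which is the scheme-theoretic reading alluded to in Section~\ref{sec:Galois}.

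The step I expect to be the main obstacle is accounting for the stated hypotheses: the argument above appears to use neither that $P$ has full support nor that $S$ is fully nondegenerate. My guess is that those conditions are present not for the inclusion itself but for the sharper assertions around it --- that $\Ten{d}{\Op{S}{d}}$ is a \emph{proper} subspace of $T$, and that $(d)$, being a linear homogeneous ideal of full support, is genuinely the least member of $\{\Ten{P}{\Op{S}{P}}\}_{P}$ rather than vacuously the minimum --- but one should check that no scheme-theoretic subtlety is lurking when $\supp P\neq\zrange{\vav}$ (for instance if some coefficient $c_a$ in $\ell$ fails to be a non-zero-divisor, or if the submodules of $T$ and $\Omega$ in play are not direct summands). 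Verifying that nothing in the short argument silently invokes the hypotheses --- and, if it does, locating precisely where --- is the main thing I would pin down before trusting it.
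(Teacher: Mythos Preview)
Your proof is correct, and it takes a genuinely different---and more elementary---route than the paper.  The paper argues via the torus action of Section~\ref{subsec:torus}: for a single linear form $p$ of full support one has $p=d^{\tau}$ for some $\tau\in\mathbb{T}(K)$, and Proposition~\ref{prop:torus-redundancy} then gives the \emph{equality} $\Ten{p}{\Op{S}{p}}=\Den{S}$ (Lemma~\ref{lem:lin-hom-poly-and-densor}).  For a general linear homogeneous $P$ of full support the paper must first pass to a faithfully flat extension over which $P$ is generated by full-support forms (Lemmas~\ref{lem:matrix}--\ref{lem:generation-by-full-sup-polys}), and in the fully nondegenerate case it pads $P$ with the missing $x_a$'s and invokes $\Op{S}{x_a}=0$ to reduce to the full-support case.

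Your map $\partial_\ell(\omega)=(c_0\omega_0,-c_1\omega_1,\dots,-c_{\vav}\omega_{\vav})$ replaces the torus rescaling by a \emph{non-invertible} linear rescaling on $\Omega$, and the identity $\bra{t}\ell(\omega)=\bra{t}d(\partial_\ell(\omega))$ shows directly that $\partial_\ell$ carries $\Op{S}{P}(L)$ into $\Op{S}{d}(L)$ for every $L$.  This gives only the inclusion $\Den{S}\subseteq\Ten{P}{\Op{S}{P}}$, not the equality in Lemma~\ref{lem:lin-hom-poly-and-densor}, but the inclusion is all Theorem~\ref{mainthm:Densor} asserts.  Your suspicion is therefore correct: neither full support nor full nondegeneracy is needed for the bare inclusion; those hypotheses enter the paper's argument only because the torus action requires \emph{invertible} coefficients.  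What the paper's approach buys is the auxiliary equality of Lemma~\ref{lem:lin-hom-poly-and-densor}, which is used separately to compare $\dim\Op{S}{P}(K)$ with $\dim\Der(S)$; your rescaling, being one-directional, does not yield that.  No scheme-theoretic subtlety is hiding: $\partial_\ell$ is a morphism of $K$-schemes $\Omega\to\Omega$, so the computation is uniform in $L$, exactly as you note.
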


Because of the special distinction of $d$ in Theorem~\ref{mainthm:Densor}, we
denote by $\Der(S)$ the set $\Op{S}{d}$, calling its elements
\emph{derivations}, which is justified by the characterization: $\delta\in
\Der(S)$ if, and only if,
\begin{align}\label{def:derivation}
	\delta_0\bra{t} v\ra &=
	\bra{t} \delta_1 v_1,v_{\bar{1}}\ra +\dots+
	\bra{t} \delta_{\vav} v_{\vav},v_{\bar{\vav}}\ra
\end{align}
for all $v$ and $t\in S$, cf.\ \cite{LL:gen-der}. We
define the \emph{densor space} (a portmantaeu of derivation tensor) as 
\begin{align}
 \Den{S} &
 	= \{ t\in T\mid \Der(S)\subset \Der(t)\} 
	 = \Ten{d}{\Op{S}{d}}.
\end{align} 
The vector space $\Der(S)$ is a Lie subalgebra of
$\prod_{a\in\zerovav}\gl(V_a)$, where $\gl(V_a)=\End(V_a)$ with Lie bracket
$[\delta,\delta']=\delta\delta'-\delta'\delta$, and all the $V_a$ are Lie
modules over $\Der(S)$. The dimension of the densor space can be analyzed using
the representation theory of Lie algebras. This result is not generally a closed
form but rather a combinatorial algorithm appealing to the
Littlewood--Richardson rule and Clebsch--Gordan formulas. 

Returning to TIP, Theorem~\ref{mainthm:Densor}
says that the smallest possible compression of a tensor space $S$ using linear
homogeneous ideals is the densor space. We discuss implications of
this observation in Section~\ref{sec:Small-densor}, which are applied to the
group isomorphism problem in~\cite{BMW:Der-Densor} to achieve exponential
speedups for some classes of groups. In Section~\ref{sec:Small-densor}, we also
show tensors encoding quantum information, social networks, and a myriad of
algebraic structures that reside in densor spaces much smaller than the ambient
tensor space.

\subsection{Lie tensor products are canonical}

Tensor products $U\otimes_{\Delta} V$, as introduced by Whitney~\cite{Whitney},
have traditionally involved an associative ring $\Delta$.  In contrast, our
densor spaces are formed over the Lie algebra of derivations. This raises
the question of what families of subalgebras of $\Omega$ arise in a natural way
as the universal scalars $\Op{S}{P}$ of a closure $\Ten{P}{\Op{S}{P}}$.  
So we consider a range of products.  Choose $A\subset\zrange{\vav}$
and $(\lambda,\rho)\in (K^2)^A$, then for $\omega,\tau\in \Omega_A$, define
\begin{align}\label{eq:bullet-product}
	& \omega\bullet \tau=\omega \bullet_{(\lambda,\rho)} \tau
	=(\lambda_a\omega_a\tau_a+\rho_a\omega_a\tau_a)_{a\in A}.
\end{align}
The restriction of $\bullet$ to the $a$-axis is an associative subalgebra when the
projective point $(\lambda_a:\rho_a)\in\mathbb{P}^1(K)$ equals $(1:0)$ or
$(0:1)$, and it is a Lie (resp.\ Jordan) product if $(\lambda_a:\rho_a)=(1:-1)$
(resp.\ $(\lambda_a:\rho_a)=(\frac{1}{2}:\frac{1}{2})$). We show that Lie
algebras are natural and \emph{any} unital algebra is rare.

\begin{mainthm}\label{mainthm:Lie-Asc}
	Let $P=(p_1,\ldots,p_r)\subset K[X]$ be a linear homogeneous ideal of
	support $A$. If $S$ is full, or is nondegenerate in some axis, and there
	is a $(\lambda,\rho)\in (K^2)^A$ such that	$\Op[A]{S}{P}$ is closed to
	$\bullet_{(\lambda,\rho)}$, then the $a$-axis product is unital in at most $2r$
	axes $a\in A$. If $K$ is a field, then at least $|A|-2r$ of the
	axes products are a Lie product.
\end{mainthm}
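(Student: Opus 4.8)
Throughout set $\mathfrak{D}:=\Op[A]{S}{P}$ and write each generator as a linear homogeneous polynomial $p_j=\sum_{a\in A}c_{j,a}x_a$, gathered into the $r\times|A|$ matrix $C=(c_{j,a})$; since $\supp P=A$, no column of $C$ is zero. For $a\in A$ and $\omega_a\in\End(V_a)$ set $\phi_a(\omega_a):=\langle\,\cdot\mid \omega_a(\cdot)_a,(\cdot)_{\bar a}\,\rangle$, an element of a fixed $K$-module $W$ of $V_0$-valued forms, and for $b\in K^A$ let $\Lambda_b$ be the $K$-linear map $\omega\mapsto\sum_{a\in A}b_a\phi_a(\omega_a)$, so that $\mathfrak{D}=\bigcap_j\ker\Lambda_{c_j}$. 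The only fact about the frame we shall use is $\phi_a(\1_{V_a})=\iota$ for every $a$, where $\iota\colon(t,v)\mapsto\langle t\mid v\rangle$; here $\iota\ne 0$ because $S$ is full or nondegenerate in some axis.

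The plan is to distill the closure hypothesis into a simultaneous total-isotropy statement for a family of diagonal bilinear forms built from $C$. First, since $\iota\ne 0$, the scalar tuples lying in $\mathfrak{D}$ are exactly $Z^{\mathrm{sc}}:=\{(\mu_a\1_{V_a})_a:\mu\in Z\}$ with $Z:=\ker C$. Writing $\ell_a:=\lambda_a+\rho_a$, one computes $(\mu_a\1)_a\bullet_{(\lambda,\rho)}(\nu_a\1)_a=\big(\ell_a\mu_a\nu_a\,\1\big)_a$; so closure of $\mathfrak{D}$ under $\bullet$ forces $(\ell_a\mu_a\nu_a)_a\in Z$, i.e. $\sum_{a}c_{j,a}\ell_a\mu_a\nu_a=0$ for all $j$ and all $\mu,\nu\in Z$. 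Equivalently, letting $\beta_j$ be the diagonal symmetric bilinear form on $K^A$ with $a$-th diagonal entry $c_{j,a}\ell_a$, the subspace $Z$ is totally isotropic for each $\beta_j$. (Only closure under multiplying by the ``scalar subalgebra'' $Z^{\mathrm{sc}}$ is used, so this proves a marginally stronger statement.)

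Now I would count. Let $A^{+}:=\{a\in A:\ell_a\ne 0\}$. If the $a$-axis product is unital with identity $e_a$, then $\ell_a e_a^{2}=e_a$, so either $e_a=0$ — a degenerate case in which the $a$-component of $\mathfrak{D}$ vanishes — or $a\in A^{+}$; thus it suffices to bound $|A^{+}|$. Because no column of $C$ is zero and $\ell_a\ne 0$ on $A^{+}$, a generic combination $\beta=\sum_j s_j\beta_j$ has every diagonal entry indexed by $A^{+}$ nonzero, hence descends to a nondegenerate form $\bar\beta$ on $K^{A^{+}}\cong K^A/K^{A\setminus A^{+}}$, for which the image $\bar Z$ of $Z$ stays totally isotropic; therefore $\dim\bar Z\le\tfrac12|A^{+}|$. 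Since the kernel of $K^A\twoheadrightarrow K^{A^{+}}$ has dimension $|A|-|A^{+}|$ while $\dim Z=|A|-\rank C$, we also get $\dim\bar Z\ge|A^{+}|-\rank C$. Hence $|A^{+}|\le 2\rank C\le 2r$, so at most $2r$ axis products are unital; and when $K$ is a field every remaining axis has $\ell_a=0$ with $(\lambda_a,\rho_a)\ne(0,0)$, i.e. $(\lambda_a:\rho_a)=(1:-1)$, giving at least $|A|-2r$ Lie axes.

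Finally, for a general commutative ring the unital bound follows from the field case by reduction modulo a maximal ideal $\mathfrak{m}$: a unital $a$-axis product (outside the degenerate case) forces $\ell_a\in K^{\times}$, hence $\ell_a\notin\mathfrak{m}$; full support and $\iota\ne 0$ survive this base change, and the closure hypothesis persists thanks to the scheme structure of $\Op[A]{S}{P}$ recorded after Theorem~\ref{thm:correspondence}, so the field argument over $K/\mathfrak{m}$ bounds the number of axes with $\ell_a\notin\mathfrak{m}$ — a fortiori the number of unital axes — by $2r$. I expect the crux to be the middle step: recognizing that closure under the scalar subalgebra already suffices, and that the resulting homogeneous quadratic relations on $C$ encode simultaneous total isotropy, so that the ``an isotropic subspace of a nondegenerate form has dimension at most half the ambient dimension'' estimate applies; given the forms $\beta_j$, the dimension count and the descent to arbitrary $K$ are routine.
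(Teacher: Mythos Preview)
Your argument is essentially the paper's: embed the scalar solutions $Z=\ker C$ into $\Op[A]{S}{P}$ via Lemma~\ref{lem:diagonal-embedding}, use closure under $\bullet$ together with $\iota\neq 0$ to see that $Z$ is totally isotropic for a diagonal form with entries proportional to $\ell_a=\lambda_a+\rho_a$, and then apply the half-dimension bound for isotropic subspaces. The paper arrives at a single such form by first passing to a flat extension $L$ with large residue fields and invoking Lemma~\ref{lem:matrix} to find one $p\in P$ with unit coefficients; your ``generic combination'' $\beta=\sum_j s_j\beta_j$ does the same job (and implicitly requires the same enlargement of the base to guarantee all diagonal entries on $A^{+}$ are nonzero).

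There is one small but genuine gap in your final reduction to general rings. You assert that ``$\iota\neq 0$ survives'' reduction modulo a maximal ideal $\mathfrak{m}$; this is true when $S$ is full (surjections survive right-exact base change), but need not hold when $S$ is merely nondegenerate in some axis, since injectivity does not persist under non-flat base change. The paper avoids this by establishing total isotropy \emph{over the flat extension $L$} (where full/nondegenerate is preserved) and only then reducing modulo $M$: since $(\sigma\,|\,\tau)=0$ already holds in $L$, its reduction is $0$ in $L/M$ without any appeal to $\iota_{L/M}\neq 0$. You can patch your argument the same way---prove $\beta_j(\mu,\nu)=0$ over $K$ first, then reduce---rather than re-running the whole field argument over $K/\mathfrak{m}$.
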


We also show that there is always a Lie product of weighted derivations on
$\Op[A]{S}{p}$, where $A=\supp p$, (Proposition~\ref{prop:Lie-sufficient}) but
that associative algebras arise precisely in the case of binomials
(Corollary~\ref{cor:associative-rare}).  In fact, $\Op[a,b]{S}{\alpha x_a-\beta
x_b}$, with $\alpha,\beta$ possibly $0$, are examples found throughout the
literature. For example when $\vav=1$, $\Op{t}{x_1-x_0}=\{\omega\in\Omega \mid
\la t| \omega_1 v_1\ra = \omega_0\la t|v_1\ra\}$ is the centralizer subring of
the linear transformation $t$.  If $\vav=2$, we switch to $\Omega_{1,2}$
instead, so that
\begin{align}\label{def:adj}
	\Op[1,2]{t}{x_1-x_2} & = \{\omega\in \Omega_{1,2}
	 \mid \la t| v_2 \omega_2^{\op}, v_1\ra = \la t|v_2, \omega_1 v_1\ra\}
	= \mathrm{Adj}(t)
\end{align}
is the customary algebra of \emph{adjoints}, see Example~\ref{ex:nuclei}.

\subsection{Transverse Groups \& Categories}

Our next two results concern what happens when we replace $\Omega=\prod_{a}
\End(V_a)$ with $\prod_a V_a\oslash U_a$, i.e. if replace the $\Omega$ in
\eqref{def:Omega-1} with tuples of rectangular matrices.  In abstract terms we
are headed towards the transverse categories of tensors with the goal of
studying problems about clustering in data.  Later we also use this to develop
data types and algorithms of Section~\ref{sec:algorithms}.

To reach the categories, we first study groups of transverse operators.  Let
$\Aut(V_a)$ be the group of units of $\End(V_a)$, and for $A\subseteq \zrange{\vav}$,
$B\subset\comp{A}$, set $\Omega_{A,B}^{\times} = \prod_{a\in A}\Aut(V_a)\times 
\prod_{b\in B}\Aut(V_b)^{\op}$.   Define 
$\Op[A,B]{S}{P}^{\times}=\Op[A,B]{S}{P}\cap \Omega_{A,B}^{\times}$.
Applying work of Eisenbud--Sturmfels~\cite{ES} we prove:

\begin{mainthm}\label{thm:group-intro}
	Fix $p\in K[X]$.  There exists $A\subset \zrange{\vav}$ and $B\subset\comp{A}$ 
	such that for all tensor spaces $T$, $\Op[A,B]{T}{p}^{\times}$ is a
	subgroup of $\Omega_{A,B}^{\times}$ if, and only if, there exists
	$e,f:\zrange{\vav}\to \{0,1\}$ with $A\supset \supp e$ and $B
	\supset \supp f$ disjoint, such that for all tensor spaces $T$,
	$\Op[A,B]{T}{p}^{\times}=\Op[A,B]{T}{X^e-X^f}^{\times}$.
\end{mainthm}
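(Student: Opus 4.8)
The plan is to prove the two implications by different means: the ``if'' direction will be a consequence of the structure theory for $\Op{-}{-}$ on binomial ideals already assembled (cf.\ Corollary~\ref{cor:associative-rare} and the adjoint example \eqref{def:adj}), while the ``only if'' direction is where I would invoke Eisenbud--Sturmfels. Throughout, ``for all $T$'' includes the full tensor space on every frame, and --- per Section~\ref{sec:Galois} --- all scheme-theoretic assertions are understood after arbitrary base change $K\to R$.

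\emph{The ``if'' direction.} Suppose $e,f\colon\zrange{\vav}\to\{0,1\}$ are as stated and $\Op[A,B]{T}{p}^{\times}=\Op[A,B]{T}{X^e-X^f}^{\times}$ for all $T$. I would rewrite the defining identity $X^e(\omega)\cdot\bra t=X^f(\omega)\cdot\bra t$ by moving the monomial $X^f$ to the other side --- legitimate for invertible $\omega$ --- so as to realise $\Op[A'',B'']{T}{X^e-X^f}$, for a suitable reclassification $(A'',B'')$ of the axes appearing in it (keeping the output axis $0$ and one of the two monomials on the covariant side, since $0\notin B$ and $\supp e\cap\supp f=\varnothing$), as the fixed-point set of a linear action of $\Omega_{A'',B''}^{\times}$ on $V_0\oslash\cdots\oslash V_{\vav}$; here the $\{0,1\}$-hypothesis is what keeps the non-homomorphic maps $\omega_a\mapsto\omega_a^{k}$ $(k\ge2)$ out of this action. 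A fixed-point set of a group action is a subgroup, and since $p=X^e-X^f$ gives $\Op[A'',B'']{T}{p}^{\times}=\Op[A'',B'']{T}{X^e-X^f}^{\times}$, the pair $(A'',B'')$ witnesses the left-hand side. (This is the group-theoretic shadow of the fact that binomial relations cut out associative adjoint-type algebras.)

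\emph{The ``only if'' direction.} Suppose $(A,B)$ is such that $\Op[A,B]{T}{p}^{\times}$ is a subgroup of $\Omega_{A,B}^{\times}$ for every $T$. First specialise to the torus: let $T_\circ$ be the full tensor space on the frame with every $V_a$ free of rank one. Then $\Omega_{A,B}^{\times}\cong\mathbb{G}_m^{A\cup B}$, and \eqref{eq:p-omega-action} gives, for every commutative $K$-algebra $R$,
\[
	\Op[A,B]{T_\circ}{p}^{\times}(R)=\{\,\xi\in(R^{\times})^{A\cup B}:\bar p(\xi)=0\,\},
\]
where $\bar p\in K[x_a:a\in A\cup B]$ is the part of $p$ not involving the held-constant variables. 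By hypothesis $Z:=\operatorname{Spec}\bigl(K[x_a^{\pm1}:a\in A\cup B]/(\bar p)\bigr)$ is thus a closed subgroup scheme of the torus $\mathbb{G}_m^{A\cup B}$. This is where Eisenbud--Sturmfels~\cite{ES} enters: the defining ideal of a closed subgroup scheme of a torus is a lattice ideal --- generated by binomials $x^{u}-x^{v}$ --- and since $(\bar p)$ is principal the associated lattice $L$ has rank $\le1$. So either $\bar p=0$ (finish with $e=f=0$) or $L=\mathbb{Z}w$ for some $0\ne w\in\mathbb{Z}^{A\cup B}$ and $(\bar p)=(x^{w^{+}}-x^{w^{-}})$, where $w^{\pm}$ are the positive and negative parts of $w$; in particular $\supp w^{+}\cap\supp w^{-}=\varnothing$. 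Writing $\bar p=c\,x^{m}(x^{w^{+}}-x^{w^{-}})$ with $c\in K^{\times}$ and clearing the Laurent unit $c\,x^{m}$ --- legitimate on $\Omega^{\times}$, since the monomial operator $x^m$ acts invertibly there --- yields $\Op[A,B]{T}{p}^{\times}=\Op[A,B]{T}{x^{w^{+}}-x^{w^{-}}}^{\times}$ for \emph{every} $T$.

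It remains to force $w\in\{0,1,-1\}^{A\cup B}$, and this is the one step that uses tensor spaces beyond the torus. Suppose $|w(a)|=k\ge2$; after replacing $w$ by $-w$ if necessary (which only swaps $w^{+}\leftrightarrow w^{-}$) assume $w(a)=k>0$. Take $T$ to be the full tensor space on the frame with $V_a$ free of rank $2$ and every other $V_b$ of rank one. Intersecting the subgroup $\Op[A,B]{T}{x^{w^{+}}-x^{w^{-}}}^{\times}$ with the subgroup $\{\omega\in\Omega_{A,B}^{\times}:\omega_b=1\text{ for }b\ne a\}\cong\Aut(V_a)^{(\op)}$ gives, since the other axes contribute scalars forced to equal $1$, since $w^{-}(a)=0$, and since the action of $\Aut(V_a)$ is faithful, the set $\{\omega_a\in\Aut(V_a):\omega_a^{k}=1\}$; being an intersection of subgroups it is a subgroup of $\GL_2(K)$, which is false after base-changing $K$ to, say, an algebraically closed field --- a contradiction. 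Hence $w$ is $\{0,\pm1\}$-valued. Put $e:=w^{+}$, $f:=w^{-}$ (both $\{0,1\}$-valued, disjointly supported, with $X^{e}-X^{f}=x^{w^{+}}-x^{w^{-}}$), choose the sign of $w$ so that $0\notin\supp w^{-}$, and set $A':=(A\cup B)\setminus\supp w^{-}$, $B':=\supp w^{-}$; then $0\notin B'$, $\supp e\subseteq A'$, $\supp f\subseteq B'$ are disjoint, $A'\cup B'=A\cup B$, so by the displayed identity $\Op[A',B']{T}{p}^{\times}=\Op[A',B']{T}{X^{e}-X^{f}}^{\times}$ for every $T$, which is the desired conclusion.

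The main obstacle is the Eisenbud--Sturmfels step: one must ensure that ``$\Op[A,B]{T_\circ}{p}^{\times}$ is a subgroup after \emph{every} base change'' really does force $(\bar p)$ to be a principal lattice ideal over $K$ itself (not merely over fields); the base-change convention of Section~\ref{sec:Galois} is exactly what buys this, and it also licenses the rank-$2$ argument over an auxiliary algebraically closed field. A second, more routine, obstacle is the covariant/contravariant bookkeeping: identifying the monomial operators as honest (anti-)homomorphisms in the ``if'' direction, and --- because $0\notin B$ is imposed --- relating the pair $(A,B)$ for which $\Op{-}{-}^{\times}$ is a subgroup to the pair $(A',B')$ that exhibits the binomial; as the example $p=x_0-x_1$ shows, these need not coincide.
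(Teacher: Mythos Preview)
Your argument tracks the paper's strategy: reduce via the unit tensor to a closed subgroup scheme of a torus, invoke Eisenbud--Sturmfels to obtain a rank-$\le 1$ lattice ideal $(x^{w^+}-x^{w^-})$, then transfer back to all $T$ (this is exactly Proposition~\ref{PR:redundancy-for-groups}). The one substantive difference is how you force $w$ to be $\{0,\pm1\}$-valued. The paper (Proposition~\ref{prop:need-split}) builds a rank-$(m_a{+}1)$ permutation module on axis $a$: the $m_a$-cycles automatically satisfy every lattice constraint (because $m_a$ is the minimal positive $a$-coordinate of $\mathcal{L}_\rho$, so it divides every $a$-coordinate), yet a suitable product of such cycles has order coprime to $m_a$ and moves a chosen hyperplane $U$, so lies outside $\Op[A,B]{t_U}{P}^\times$. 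Your rank-$2$ argument---intersect with $\{\omega_b=1:b\ne a\}$ to isolate $\{g\in\GL_2:g^k=1\}$---is slicker for the principal case, though the non-subgroup assertion deserves a one-line witness valid in every characteristic; the paper's construction works uniformly and also handles non-principal $P$, where the lattice has higher rank and your intersection would pick up constraints from several generators simultaneously. Two minor points: in the ``if'' direction you mean \emph{stabilizer}, not fixed-point set (the paper instead verifies closure under product and inversion by a direct two-line computation, Proposition~\ref{prop:group-forward}); and your final relabeling to $(A',B')$ is legitimate precisely because the \emph{set} $\Op[A,B]{T}{P}^\times$ depends only on $A\cup B$ (as the proof of Proposition~\ref{PR:redundancy-for-groups} makes clear)---only the group law on $\Omega_{A,B}^\times$ depends on the split.
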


In fact, we prove that for general $P\subset K[X]$, if $\Op{T}{P}^{\times}$ forms a
subgroup of $\Omega_{A,B}^{\times}$, then again there are
$e_i,f_i:\zrange{\vav}\to \{0,1\}$ where for all $T$,
$\Op{T}{P}^{\times}=\Op{T}{(X^{e_1}-X^{f_1},\ldots, X^{e_m}-X^{f_m})}^{\times}$.
With further conditions on the supports of the $(e_i,f_i)$, we can prove also the
converse.

In any category, the automorphisms of the objects form a group, and thus
Theorem~\ref{thm:group-intro} places severe restrictions the kinds of transverse
tensor categories.  Basically, the only options are the obvious options:
fix a partition $\zrange{\vav}=A\sqcup B\sqcup C$ of covariant axes $A$,
contravariant axes $B$, and constant axes $C$.  Then compose transverse operators
axis-by-axis according to the variance.  This is still an exponential number of
categories each having the same objects (tensors) which explains some of the
complexity and diverse uses of tensors.

\subsection{Restriction and Stanley--Reisner ideals}
\label{subsec:restriction}

We now consider another concrete problem from the literature.  In the
\emph{Block-Decomposition Problem (BDP)}, we are given a $(d_0\times\cdots \times
d_{\vav}$) multiway array $t$ as in \eqref{def:T-1}.  The task is to change the
bases of some subset $A$ of the axes, so that the new array is block diagonal (or
triangular) along the $ab$-faces, for $a,b\in A$.  Such decompositions appear as
the question of finding clusters or outliers in social network data
\citelist{\cite{chatroom}\cite{engineer}\cite{MFMc}}, and in computational
algebra, this is how to build $\oplus$-decompositions, composition series of
modules, and more \citelist{ \cite{Holt-Rees}\cite{Ivanyos-Lux}
\cite{Maglione:efficient}\cite{Wilson:central}}. Applying the formalism
developed by our previous result, BDP asks: given a tensor in some
transverse tensor category, find a proper nontrivial subtensor in that category.
Our next result uses the Correspondence Theorem to identify polynomials that signal
the existence of subtensors; it further connects to the study of
tensor singularities.

First we observe that the many categories of subtensors organize into a
simplicial complex. For $\bra{t}\in V_0\oslash\cdots\oslash V_{\vav}$ and submodules
$U_0<V_0$, and $0\neq U_a\leq V_a$ for $a>0$, define
\begin{align}\label{def:nabla}
	\nabla(t;U) & = \{A\subset\zerovav\mid U_A\not\perp V_{\bar{A}}\},
		& U_A\bot V_{\bar{A}} \Leftrightarrow \left\{\begin{array}{ll}
			\langle t|U_{B},V_{\range{\vav}-B}\rangle\not\leq U_0 & A=\{0\}\sqcup B,\\
			\langle t|U_A,V_{\range{\vav}-A}\rangle\neq 0 & \textnormal{otherwise.} 
		\end{array}\right.
\end{align}
For example, consider the top-cell of this simplicial complex missing, i.e.~$\bra{t}U_1,\ldots,U_{\vav}\rangle\leq U_0$.  Writing
$U_0^{\bot}=\{\pi:V_0\to K\mid \pi(U_0)=0\}$, this becomes $U_0^{\bot}\langle
t|U_1,\ldots,U_{\vav}\rangle=0$ which exposes how restricting $\bra{t}$ to
$U_0\oslash\cdots\oslash U_{\vav}$ implicitly requires the existence of a tensor singularity in the right configuration -- the top cell configuration in this case.
Many familiar concepts including left and right ideals and orthogonal subspaces
are captured in this complex.  See the examples in
Section~\ref{sec:Singularities}.

Now consider those operators $\omega$ that factor through a subtensor in the
following sense
\begin{align}\label{def:op-res}
	\Omega(U,V)=\{\omega\in
	\Omega\mid (\forall a)(\omega_a(V_a)\leq U_a)\}.
\end{align}
Notice $\Omega(U,V)$ is a right ideal of $\Omega$. Indeed, when $K$ is a field
every right ideal $\Delta$ of $\Omega$ has this form.  Then
$\Id{t}{\Omega(U,V)}$ is affected by both the tensor $t$ and the limitations
brought on by the $U$.  The following precisely calculates this ideal.

For an abstract simplicial complex $\nabla$ on $\zerovav$, the 
\emph{Stanley--Reisner ideal} is $(X^e\mid \supp e \notin \nabla)$.

\begin{mainthm}\label{mainthm:singularity}
	For fields $K$, if $U_0<V_0$ and for $a>0$, $0\neq U_a\leq V_a$, then
	$\nabla(t;U)$ is a simplicial complex and $\Id{t}{\Omega(U,V)} =(X^e \mid
	\supp e\notin \nabla(t;U))$ is its associated Stanley--Reisner ideal.
\end{mainthm}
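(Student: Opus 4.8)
The plan is to prove the two assertions of \thmref{mainthm:singularity} together, exhibiting the equality $\Id{t}{\Omega(U,V)} = (X^e \mid \supp e \notin \nabla(t;U))$ by a double containment that is organized monomial-by-monomial, and deducing that $\nabla(t;U)$ is a simplicial complex as a byproduct. The first observation is that the right ideal $\Omega(U,V)$ of \eqref{def:op-res} is spanned over $K$ (since $K$ is a field and the $V_a$ are f.g.\ projective, hence free) by the ``rank-one'' transverse operators $\omega$ supported on a single axis, i.e.\ tuples that are the identity off one axis $a$ and an arbitrary endomorphism $V_a \to U_a$ on axis $a$. More usefully, since $\Id{t}{\Delta}$ for a $K$-subspace $\Delta$ depends only on $\Delta$ and is itself defined by linear conditions, and since $\Id{t}{\Delta_1 \cup \Delta_2} = \Id{t}{\Delta_1} \cap \Id{t}{\Delta_2}$, it suffices to understand $\Id{t}{\omega}$ for such generating operators and intersect. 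Concretely, first I would reduce to showing: a monomial $X^e$ lies in $\Id{t}{\Omega(U,V)}$ if and only if $\supp e \notin \nabla(t;U)$; then argue that $\Id{t}{\Omega(U,V)}$ is in fact generated by the monomials it contains.

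For the monomial computation, fix $e : \zerovav \to \mathbb{Z}_{\geq 0}$ and set $A = \supp e$. Evaluating $\bra{t} X^e(\omega) \ket{v}$ via \eqref{eq:p-omega-action}, and choosing each $\omega_a$ (for $a \in A$) to be a rank-one map $v_a \mapsto \phi_a(v_a)\, u_a$ with $u_a \in U_a$ arbitrary and $\phi_a \in V_a^\dual$ arbitrary, the expression $\omega_0^{e(0)} \bra{t} \omega_1^{e(1)} v_1, \dots \ra$ collapses: applying a rank-one map twice just rescales, so only $e(a) \neq 0$ versus $e(a) = 0$ matters, and one gets (up to a nonzero scalar depending on the $\phi_a$ and the intermediate vectors) the element $\bra{t}\, u_A,\, v_{\bar A}\ra$ pushed forward along $\omega_0$'s action, which lives in $U_0$ precisely when $0 \in A$. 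So $X^e \in \Id{t}{\omega}$ for all such $\omega$ iff $\bra{t}\,U_A, V_{\bar A}\ra = 0$ when $0 \notin A$, and iff $\bra{t}\,U_B, V_{\range{\vav}\setminus B}\ra \leq U_0$ when $A = \{0\} \sqcup B$ — which is exactly the negation of the defining condition ``$U_A \not\perp V_{\bar A}$'' in \eqref{def:nabla}, i.e.\ exactly $A \notin \nabla(t;U)$. (The case $0 \in A$ requires a small extra argument that we may take $\omega_0$ surjective onto $U_0$, or rather range over enough $\omega_0$ to detect whether the image lies in $U_0$; this is where one uses $U_0 < V_0$ and field coefficients to pick out $U_0^\dual$.) Here the main technical care is the bookkeeping with $V_a$ versus $V_b$ for contravariant axes and the constant axes $c$, but the structural point is the rank-one reduction.

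Next, to see that $\Id{t}{\Omega(U,V)}$ is generated by its monomials: it is a homogeneous-in-the-grading-by-$(\mathbb{Z}_{\geq 0})^{1+\vav}$ ideal? Not quite — but the key is that for a rank-one single-axis operator $\omega$ as above, $\bra{t}X^e(\omega)\ket{v}$ is separately ``monomial'' in the sense that distinct monomials $X^e$ with distinct supports act through independent rank-one reductions. More carefully, I would argue that $\Omega(U,V)$ is closed under the diagonal torus action rescaling each axis independently, hence $\Id{t}{\Omega(U,V)}$ is stable under that torus and is therefore a monomial ideal; then a monomial ideal is generated by the monomials it contains, and we have just identified those as $\{X^e : \supp e \notin \nabla(t;U)\}$. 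Finally, $\nabla(t;U)$ being a simplicial complex (downward closed) is equivalent to the set of non-faces being an up-set of supports, which is equivalent to $(X^e : \supp e \notin \nabla)$ being an honest ideal (closed under multiplication by $X_a$): if $A \notin \nabla(t;U)$ and $A \subset A'$, one checks directly from \eqref{def:nabla} that $U_{A'} \not\perp V_{\bar{A'}}$ fails as well, using $\langle t| U_{A'}, V_{\bar{A'}}\ra \leq \langle t| U_A, V_{\bar A}\ra$ (shrinking the $V$-slots to $U$-slots can only shrink the output, and the $0$-axis clause is monotone in the same direction) — so $A' \notin \nabla(t;U)$. I expect the genuine obstacle to be none of the above individually but rather getting the variance conventions uniformly correct: ensuring the rank-one reduction and the monotonicity argument are stated so that covariant axes ($a \in A$), contravariant axes ($b \in B$, acting via $\omega_b^{\op}$ on $V_b$), the distinguished output axis $0$, and held-constant axes $c$ are all handled by one clean lemma rather than four ad hoc cases.
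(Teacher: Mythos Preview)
Your opening observation is false: tuples that are the identity off one axis are not in $\Omega(U,V)$, since the identity on axis $b$ does not map $V_b$ into $U_b$ when $U_b\neq V_b$. Fortunately you never use this; your actual computation in the second paragraph takes $\omega_a$ rank-one on \emph{every} axis $a\in\supp e$ simultaneously, and those operators do lie in $\Omega(U,V)$. With that correction, your organization --- first show $\Id{t}{\Omega(U,V)}$ is a monomial ideal via torus-invariance, then identify which monomials it contains by testing on rank-one operators --- is valid and is essentially a repackaging of the paper's argument. The paper instead fixes a top cell $A\in\nabla$, a witness $u$ with $\langle t|u\rangle\neq 0$, and idempotent projections $\pi_a$ onto $U_a$ chosen so that $\pi_b u_b=0$ for $b\notin A$; evaluating $\langle t|p(\alpha\pi)|u\rangle$ for variable $\alpha$ then yields a polynomial identity forcing every coefficient $\lambda_e$ with $\supp e\subseteq A$ to vanish. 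The scaling by $\alpha$ is exactly your torus action, and the idempotents play the role of your rank-one maps (with the bonus that $\pi_a^{e(a)}=\pi_a$, so no ``up to a nonzero scalar'' bookkeeping is needed). Do note that torus-invariance alone over a finite field $K$ does not force monomiality; you need the scheme-theoretic version (ranging over all $L\in\Comm{K}$), which is precisely what the paper's ``this argument applies over all extensions $L$'' step accomplishes.

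The genuine gap is the $0$-axis, which is not the ``small extra argument'' you anticipate. With $\omega_0$ ranging over endomorphisms of $V_0$ with image in $U_0$, a rank-one $\omega_0=u_0\,\phi_0$ applied to $\langle t|U_B,V_{\range{\vav}-B}\rangle$ detects whether that image is \emph{zero}, not whether it lies in $U_0$; so your ``iff'' breaks when $0\in\supp e$. The paper does not patch this locally. Instead it opens the proof with a global reduction: replace $\bra{t}$ by the dualized tensor $\lla\,\cdot\,|:V_0^\vee\times V_1\times\cdots\times V_{\vav}\bmto K$, sending $(\nu,v_{\bar 0})\mapsto\nu\langle t|v_{\bar 0}\rangle$, and replace the axis-$0$ subspace by $U_0^\perp\subset V_0^\vee$. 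After this, axis $0$ is a domain axis like all the others, the two cases in \eqref{def:nabla} collapse into the single condition $\lla t|U_A,V_{\bar A}\rra=0$, and the proof can be run uniformly with no special treatment of $0$. This dualization step is what your plan is missing; once it is in place, either your rank-one/torus route or the paper's idempotent/top-cell route completes the argument.
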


This proffers a generic decomposition algorithm: sample transverse operators in
such a manner that favors the discovery of an operator $\omega$ where
$\Id{t}{\omega}$ contains a monomial -- this we can test effectively.  This
captures the high-level reasoning in algorithms of \citelist{\cite{Holt-Rees}
\cite{Ivanyos-Lux}\cite{Wilson:central}} and applies it to tensors in general. A
detailed decomposition algorithm is a subject for future work.

\subsection*{Acknowledgements}

The second and third authors are grateful for the support of the Hausdorff
Institute for Mathematics, during the trimester on \emph{Logic and Algorithms in
Group Theory}, and the Isaac Newton Institute for Mathematical Sciences, during
the program \emph{Groups representations and Applications}, where some of this
research was conducted.  The third author thanks Aner Shalev and Alex Lubotzky
for hosting him at the Hebrew University where this research began.  We also
thank Laurent Bartholdi, Peter Brooksbank, and  Bill Kantor for many answers.


\section{Examples of traits of transverse operators}
\label{sec:foundations}

The following examples give useful intuition about our 
correspondence.  Throughout this paper we call the elements of $\Id{S}{\Delta}$
the traits of $\Delta$ over $S$.

\subsection{Traits over Matrices}

Within the literature, matrices are often the first example of
a tensor.  Such a statement usually assumes from context how this matrix should
be interpreted as a multilinear map (or form).  Specifically, letting
$T=\mathbb{M}_{m\times n}(K)$, there are at least the following three distinct
and natural ways in which matrices $M\in T$ can be regarded as multilinear maps:
\begin{enumerate}[(i)]
	\item $\bra{M}:K^n\to K^m$ where $\langle M|v\rangle=Mv$.  So
	$\bra{\cdot}:T\to K^m\rversor K^n$ and $\vav=1$.
	\item $\bra{M}:K^m\to K^n$ where $\langle M|u\rangle=u^{\dagger}M$.  So
	$\bra{\cdot}:T\to K^n\rversor K^m$ and $\vav=1$. 
	\item To treat $M$ as affording a bilinear form, use $\bra{\cdot}:T\to
	K\oslash K^m\oslash K^n$ where $\langle M|u,v\rangle=u^{\dagger}Mv$.
	Here $\vav=2$.\footnote{
		While infix notation like $\bra{u}M\ket{v}$, or $u*v$, is convenient, it
		only applies to valence $\vav=2$, so we use it sparingly.
	}
\end{enumerate}

Let us assume the interpretation (i), writing $V_1=K^b$, $V_0=K^a$ and regarding
$\bra{M}$ as an element of $\hom(K^b,K^a)=V_0\rversor V_1$. We shall write
$K[x,y]$ instead of $K[x_0,x_1]$ to save on subscripts. Then a transverse
operator is a pair of matrices $\omega = (X,Y)\in \End(V_0)\times\End(V_1)=
\mathbb{M}_{m}(K)\times \mathbb{M}_{n}(K)$, and the product  $\bra{M}\omega$
(see \ref{eq:p-omega-action}) is given by $\bra{X M Y}$. The induced  right
$K[x,y]$-module structure on $\mathbb{M}_{m\times n}(K)$ is determined by
\[
	M \cdot x =X  M \qquad\text{and}\qquad M\cdot y=MY.
\]
Thus,
\begin{align}
	\Id{M}{\omega}=
	\Ann_{K[x,y]}^{(X,Y)}(M)=\left\{ 
		\sum_{i,j\in\mathbb{N}}\lambda_{ij} x^i y^j\in K[x ,y]~\middle|~
		\sum_{i,j\in\mathbb{N}}\lambda_{ij} X^i M Y^j=0	
	\right\}.	
\end{align}
We call that the elements of  $\Id{M}{\omega}$ the \emph{traits} of
$\omega=(X ,Y)$ relative to $M$. Generators for $\Id{M}{\omega}$ can be found by
inspecting the actions of $x$ and $y$ on $M$. 

\begin{figure}[!htbp]
	\begin{subfigure}{0.44\textwidth}
        \centering
        \begin{tikzpicture}
            \node () at (0,-4) {};
            \node (A.i) at (0,2) {
                    $X=\left[\begin{smallmatrix} 0 & 0\\ 0 & 1 \end{smallmatrix}\right],
                    Y=\left[\begin{smallmatrix} 0 & 0 & 0\\ 0 & 0 & 0 \\ 0 & 0 & 1 \end{smallmatrix}\right]$
                };
                \node (A) at (0,0) {	\begin{tikzpicture}
                    \node (T) at (0,0) {
                        $\begin{bsmallmatrix} 
                            1 & 2 & 3 \\ 
                            2 & 3 & 0 
                        \end{bsmallmatrix}$};
                    \node (TY) at (3,0) {
                        $\begin{bsmallmatrix} 
                        0 & 0 & 3 \\ 
                        0 & 0 & 0 
                        \end{bsmallmatrix}$};
            
                    \node (XT) at (0,-2) {
                        $\begin{bsmallmatrix}
                        0 & 0 & 0 \\ 
                        2 & 3 & 0
                        \end{bsmallmatrix}$};
                    \node (XTY) at (3,-2) {$0$};
                    
                    \draw[thick,->] (T) -- (TY) node[midway,above] {$Y$};
                    \draw[thick,->] (TY) edge[out=-45, in=45, looseness=4] (TY);
            
                    \draw[thick,->] (T) -- (XT) node[midway,left] {$X$};
                    \draw[thick,->] (XT) edge[out=-45, in=-135, looseness=4] (XT);

                    \draw[thick,->] (XT) -- (XTY) node[midway,above] {$Y$};
                    \draw[thick,->] (TY) -- (XTY) node[midway,left] {$X$};
            
                    \draw[thick,->] (XTY) edge[out=-45, in=45, looseness=4] (XTY);
                    \draw[thick,->] (XTY) edge[out=-45, in=-135, looseness=4] (XTY);
                \end{tikzpicture}};
            
            \end{tikzpicture}
		\caption{$\Ann_{K[x,y]}^{(X,Y)}\left(\left[\begin{smallmatrix}
			1 & 2 & 3 \\ 
			2 & 3 & 0 \end{smallmatrix}\right]\right)=(x^2-x, y^2-y, xy)$}
		\label{fig:ex-ann-idempotent}
	\end{subfigure}\quad%
	\begin{subfigure}{0.53\textwidth}
		\centering
        \begin{tikzpicture}
        \node (A.i) at (0,3) {$X'=\left[\begin{smallmatrix} 0 & 1\\ 0 & 0 \end{smallmatrix}\right],Y'=\left[\begin{smallmatrix} 0 & 0 & 0\\ 1 & 0 & 0 \\ 0 & 1 & 0 \end{smallmatrix}\right]$};
        \node (a) at (0,0) {	\begin{tikzpicture}
        \node (T) at (0,0) {
            $\begin{bsmallmatrix} 
                1 & 2 & 3 \\ 
                2 & 3 & 0 
            \end{bsmallmatrix}$};
        \node (TY) at (2,0) {
            $\begin{bsmallmatrix} 
            2 & 3 & 0 \\ 
            3 & 0 & 0 
            \end{bsmallmatrix}$};

            \node (TYY) at (4,0) {
                $\begin{bsmallmatrix} 
                3 & 0 & 0 \\ 
                0 & 0 & 0 
            \end{bsmallmatrix}$};

            \node (TYYY) at (6,0) {$0$};

            \node (XT) at (0,-2) {
            $\begin{bsmallmatrix}
            2 & 3 & 0 \\ 
            0 & 0 & 0
            \end{bsmallmatrix}$};
        \node (XTY) at (2,-2) {
            $\begin{bsmallmatrix} 
            3 & 0 & 0 \\ 
            0 & 0 & 0 
            \end{bsmallmatrix}$};

        \node (XXT) at (0,-4) {$0$};
        \node (XXTY) at (2,-4) {$0$};
        \node (XTYY) at (4,-2) {$0$};

        \draw[thick,->] (T) -- (TY) node[midway,above] {$Y'$};
        \draw[thick,->] (TY) -- (TYY) node[midway,above] {$Y'$};
        \draw[thick,->] (TYY) -- (TYYY) node[midway,above] {$Y'$};
        \draw[thick,->] (TYYY) edge[out=-45, in=45, looseness=4] (TYYY);
        \draw[thick,->] (TYYY) edge[out=-45, in=-135, looseness=4] (TYYY);

        \draw[thick,->] (T) -- (XT) node[midway,left] {$X'$};
        \draw[thick,->] (TYY) -- (XTYY) node[midway,left] {$X'$};
        \draw[thick,->] (XTY) -- (XTYY) node[midway,above] {$Y'$};

        \draw[thick,->] (XT) -- (XXT) node[midway,left] {$X'$};
        \draw[thick,->] (XTY) -- (XXTY) node[midway,left] {$X'$};

        \draw[thick,->] (XT) -- (XTY) node[midway,above] {$Y'$};
        \draw[thick,->] (TY) -- (XTY) node[midway,left] {$X'$};

        \draw[thick,->] (XTYY) edge[out=-45, in=45, looseness=4] (XTYY);
        \draw[thick,->] (XTYY) edge[out=-45, in=-135, looseness=4] (XTYY);

        \draw[thick,->] (XXT) -- (XXTY) node[midway, above] {$Y'$};
        \draw[thick,->] (XXT) edge[out=-45, in=-135, looseness=4] (XXT);

        \draw[thick,->] (XXTY) edge[out=-45, in=45, looseness=4] (XXTY);
        \draw[thick,->] (XXTY) edge[out=-45, in=-135, looseness=4] (XXTY);
        \end{tikzpicture}};

        \end{tikzpicture}
    \caption{$\Ann_{K[x,y]}^{(X',Y')}\left(\left[\begin{smallmatrix}
			1 & 2 & 3 \\ 
			2 & 3 & 0 
		\end{smallmatrix}\right]\right)=(x^2, y^3, xy-y^2)$}
		\label{fig:ex-ann-nilpotent}
	\end{subfigure}
	\caption{Transverse tensor operators acting on a tensor revealing the annihilator.}
 	\label{fig:annihilator}
\end{figure}

Figure~\ref{fig:annihilator} shows what is known as a Penrose tensor-network
diagram. We use it to compute annihilators.  Within $\Ann^{\omega}_{K[X]}(M)$ we
find the annihilators $\Ann_{K[x]}^{X}(M)$ and $\Ann_{K[y]}^Y(M)$, i.e.\ the
relations we see along the rows or columns.  So the usual spectral theory of the
individual operators $X$ and $Y$ persists, we also see new relations if we trace
paths within the grid. The right-hand example has a symmetry which is
responsible for binding two monomials into a binomial trait $xy-y^2\in
\Ann^{\omega}_{K[x,y]}(M)$. The left example has $xy$ as a trait because the
$(2,3)$ entry in the matrix $M$ is $0$---replacing this entry with any nonzero
scalar would remove $xy$ form the annihilator.  (Here we see a first indication
of a relation between traits and singularities of tensors, which discuss
extensively in Section~\ref{sec:Singularities}.)

In valence $\vav=3$, matrices are replaced by 3-dimensional hypermatrices
 and transverse operators $\omega\in \End(V_0)\times
\End(V_1)\times\End(V_2)$ act on the length, width, and height of the
hypermatrix by tensor contraction. Figure~\ref{fig:annihilator-2} illustrates
the Penrose diagrams of such a case and their annihilators. The variables
$(x_0,x_1,x_2)$ are written as $(x,y,z)$.

\begin{figure}[!htbp]
	\begin{center}
\begin{subfigure}{0.4\textwidth}
	\begin{tikzpicture}
		\pgfmathsetmacro{\xx}{0.3}
		\pgfmathsetmacro{\yy}{0.3}
		\pgfmathsetmacro{\zz}{0.3}

		\node (A0) at (0,0,0) {\begin{tikzpicture}
			\pic at (0*\xx,1*\yy,1*\zz) {ccube={\xx/\yy/\zz/gray}};
			\pic at (1*\xx,0*\yy,0*\zz) {ccube={\xx/\yy/\zz/gray}};
		
			\pic at (0*\xx,0*\yy,0*\zz) {linecube={2*\xx/2*\yy/2*\zz}};
		\end{tikzpicture}};

		\node (XA) at (4,0,0) {\begin{tikzpicture}
			\pic at (0*\xx,0*\yy,0*\zz) {ccube={\xx/\yy/\zz/gray}};
			\pic at (1*\xx,1*\yy,1*\zz) {ccube={\xx/\yy/\zz/gray}};
		
			\pic at (0*\xx,0*\yy,0*\zz) {linecube={2*\xx/2*\yy/2*\zz}};
		\end{tikzpicture}};

		\node (AY) at (0,-4,0) {\begin{tikzpicture}
			\pic at (0*\xx,0*\yy,1*\zz) {ccube={\xx/\yy/\zz/gray}};
			\pic at (1*\xx,1*\yy,0*\zz) {ccube={\xx/\yy/\zz/gray}};
		
			\pic at (0*\xx,0*\yy,0*\zz) {linecube={2*\xx/2*\yy/2*\zz}};
		\end{tikzpicture}};

		\node (XAY) at (4,-4,0) {\begin{tikzpicture}
			\pic at (1*\xx,0*\yy,1*\zz) {ccube={\xx/\yy/\zz/gray}};
			\pic at (0*\xx,1*\yy,0*\zz) {ccube={\xx/\yy/\zz/gray}};
		
			\pic at (0*\xx,0*\yy,0*\zz) {linecube={2*\xx/2*\yy/2*\zz}};
		\end{tikzpicture}};

		\draw[thick,<->] (A0) -- (AY) node[midway,left] {$Y$};
		\draw[thick,<->] (A0) -- (XA) node[midway,above] {$X$};
		
		\draw[thick,<->] (XA) -- (XAY) node[midway,right] {$Y$};
		\draw[thick,<->] (AY) -- (XAY) node[midway,above] {$X$};

		\draw[thick,<->] (A0) -- (XAY) node[midway,above] {$Z$};
		\draw[thick,<->] (XA) -- (AY) node[midway,right] {$Z$};

		\node () at (3,2.5,0) {$X=Y=Z=\begin{bmatrix} 0 & 1\\ 1 & 0 \end{bmatrix}$};

		\node () at (1.5,3.5,0) {$\bra{GHZ}=\bra{000}+\bra{111}=$};
		\node () at (4.5,3.5,0) {\begin{tikzpicture}
		\pic at (0*\xx,1*\yy,1*\zz) {ccube={\xx/\yy/\zz/gray}};
		\pic at (1*\xx,0*\yy,0*\zz) {ccube={\xx/\yy/\zz/gray}};

		\pic at (0*\xx,0*\yy,0*\zz) {linecube={2*\xx/2*\yy/2*\zz}};
	\end{tikzpicture}};
	\end{tikzpicture}
    \caption{$\Id{GHZ}{(X,Y,Z)}=$\\$(x^2-1,y^2-1,z^2-1, xy-z, x-yz, y-xz)$.}\label{fig:annhilator-2a}
\end{subfigure}
\hspace{1in}
\begin{subfigure}{0.4\textwidth}
	\begin{tikzpicture}
		\pgfmathsetmacro{\xx}{0.3}
		\pgfmathsetmacro{\yy}{0.3}
		\pgfmathsetmacro{\zz}{0.3}

		\node (A0) at (0,0,0) {\begin{tikzpicture}
			\pic at (0*\xx,1*\yy,0*\zz) {ccube={\xx/\yy/\zz/gray}};
			\pic at (0*\xx,0*\yy,1*\zz) {ccube={\xx/\yy/\zz/gray}};
			\pic at (1*\xx,1*\yy,1*\zz) {ccube={\xx/\yy/\zz/gray}};
		
			\pic at (0*\xx,0*\yy,0*\zz) {linecube={2*\xx/2*\yy/2*\zz}};
		\end{tikzpicture}};

		\node (A^Z) at (0,0,-4) {\begin{tikzpicture}
			\pic at (0*\xx,0*\yy,0*\zz) {ccube={\xx/\yy/\zz/gray}};
			\pic at (1*\xx,1*\yy,0*\zz) {ccube={\xx/\yy/\zz/gray}};
			\pic at (0*\xx,1*\yy,1*\zz) {ccube={\xx/\yy/\zz/gray}};
		
			\pic at (0*\xx,0*\yy,0*\zz) {linecube={2*\xx/2*\yy/2*\zz}};
		\end{tikzpicture}};

		\node (XA) at (4,0,0) {\begin{tikzpicture}
			\pic at (1*\xx,1*\yy,0*\zz) {ccube={\xx/\yy/\zz/gray}};
			\pic at (1*\xx,0*\yy,1*\zz) {ccube={\xx/\yy/\zz/gray}};
			\pic at (0*\xx,1*\yy,1*\zz) {ccube={\xx/\yy/\zz/gray}};
		
			\pic at (0*\xx,0*\yy,0*\zz) {linecube={2*\xx/2*\yy/2*\zz}};
		\end{tikzpicture}};

		\node (AY) at (0,-4,0) {\begin{tikzpicture}
			\pic at (0*\xx,0*\yy,0*\zz) {ccube={\xx/\yy/\zz/gray}};
			\pic at (0*\xx,1*\yy,1*\zz) {ccube={\xx/\yy/\zz/gray}};
			\pic at (1*\xx,0*\yy,1*\zz) {ccube={\xx/\yy/\zz/gray}};
		
			\pic at (0*\xx,0*\yy,0*\zz) {linecube={2*\xx/2*\yy/2*\zz}};
		\end{tikzpicture}};

		\node (XAY) at (4,-4,0) {\begin{tikzpicture}
			\pic at (1*\xx,0*\yy,0*\zz) {ccube={\xx/\yy/\zz/gray}};
			\pic at (0*\xx,0*\yy,1*\zz) {ccube={\xx/\yy/\zz/gray}};
			\pic at (1*\xx,1*\yy,1*\zz) {ccube={\xx/\yy/\zz/gray}};
		
			\pic at (0*\xx,0*\yy,0*\zz) {linecube={2*\xx/2*\yy/2*\zz}};
		\end{tikzpicture}};

		\node (AY^Z) at (0,-4,-4) {\begin{tikzpicture}
			\pic at (1*\xx,0*\yy,0*\zz) {ccube={\xx/\yy/\zz/gray}};
			\pic at (0*\xx,1*\yy,0*\zz) {ccube={\xx/\yy/\zz/gray}};
			\pic at (0*\xx,0*\yy,1*\zz) {ccube={\xx/\yy/\zz/gray}};
		
			\pic at (0*\xx,0*\yy,0*\zz) {linecube={2*\xx/2*\yy/2*\zz}};
		\end{tikzpicture}};

		\node (XA^Z) at (4,0,-4) {\begin{tikzpicture}
			\pic at (1*\xx,0*\yy,0*\zz) {ccube={\xx/\yy/\zz/gray}};
			\pic at (0*\xx,1*\yy,0*\zz) {ccube={\xx/\yy/\zz/gray}};
			\pic at (1*\xx,1*\yy,1*\zz) {ccube={\xx/\yy/\zz/gray}};
		
			\pic at (0*\xx,0*\yy,0*\zz) {linecube={2*\xx/2*\yy/2*\zz}};
		\end{tikzpicture}};

		\node (XAY^Z) at (4,-4,-4) {\begin{tikzpicture}
			\pic at (0*\xx,0*\yy,0*\zz) {ccube={\xx/\yy/\zz/gray}};
			\pic at (1*\xx,0*\yy,1*\zz) {ccube={\xx/\yy/\zz/gray}};
			\pic at (1*\xx,1*\yy,0*\zz) {ccube={\xx/\yy/\zz/gray}};
		
			\pic at (0*\xx,0*\yy,0*\zz) {linecube={2*\xx/2*\yy/2*\zz}};
		\end{tikzpicture}};

		\draw[thick,<->] (A0) -- (A^Z) node[midway,above] {$Z$};
		\draw[thick,<->] (A0) -- (AY) node[midway,left] {$Y$};
		\draw[thick,<->] (A0) -- (XA) node[midway,above] {$X$};

		\draw[thick,<->] (A^Z) -- (XA^Z) node[midway,above] {$X$};
		\draw[thick,<->] (A^Z) -- (AY^Z) node[midway,left] {$Y$};
		
		\draw[thick,<->] (XA) -- (XA^Z) node[midway,above] {$Z$};
		\draw[thick,<->] (XA) -- (XAY) node[midway,right] {$Y$};

		\draw[thick,<->] (AY) -- (AY^Z) node[midway,above] {$Z$};
		\draw[thick,<->] (AY) -- (XAY) node[midway,above] {$X$};

		\draw[thick,<->] (XAY) -- (XAY^Z) node[midway,above] {$Z$};
		\draw[thick,<->] (XA^Z) -- (XAY^Z) node[midway,right] {$Y$};
		\draw[thick,<->] (AY^Z) -- (XAY^Z) node[midway,above] {$X$};

	\node () at (2,2.5,0) {$X=Y=Z=\begin{bmatrix} 0 & 1\\ 1 & 0 \end{bmatrix}$};

	\node () at (2,3.5,0) {$\bra{W}=\bra{001}+\bra{010}+\bra{100}=$};
	\node () at (5,3.5,0) {\begin{tikzpicture}
		\pic at (0*\xx,1*\yy,0*\zz) {ccube={\xx/\yy/\zz/gray}};
		\pic at (0*\xx,0*\yy,1*\zz) {ccube={\xx/\yy/\zz/gray}};
		\pic at (1*\xx,1*\yy,1*\zz) {ccube={\xx/\yy/\zz/gray}};

		\pic at (0*\xx,0*\yy,0*\zz) {linecube={2*\xx/2*\yy/2*\zz}};
	\end{tikzpicture}};
	\end{tikzpicture}
    \caption{$\Id{W}{(X,Y,Z)}=(x^2-1,y^2-1,z^2-1)$.\\}\label{fig:annhilator-2b}
\end{subfigure}
\caption{Annihilators in larger valence.}\label{fig:annihilator-2}
\end{center}
\end{figure}

\subsection{Operators characterized by specific traits}

The remaining examples in this section illustrate how important families of
transverse operators arise with a common trait (or traits) relative to the given
tensor. Unless indicated otherwise, $\bra{\cdot}:T\to V_0\rversor\dots \rversor
V_{\vav}$ is a tensor space over a commutative (unital) ring $K$.

\begin{ex}\label{ex:autotopism}
	A transverse operator $\omega\in \Omega^\times$ is an \emph{autotopism} of
	$t\in T$ if for all $v\in \prod_{a=1}^{\vav}V_a$, $\omega_0\bra{t} v\ra
	=\bra{t} \omega_1 v_1,\ldots,\omega_{\vav}v_{\vav}\ra$. Using infix notation
	when $\vav = 2$, we write $v_1\ast v_2=\bra{t} v_1,v_2\ra$; this becomes
	$\omega_0(v_1\ast v_2)=\omega_1v_1\ast \omega_2v_2$. So if $V_0=V_1=V_2$,
	then we recover A.\ A.\ Albert's autotopisms of the non-associative algebra
	$(V_0,\ast)$. It is immediate from the definition that the autotopisms of
	$t$ are the invertible transverse operators having $x_0 - x_1\cdots
	x_{\vav}$ as a trait.
\end{ex} 

\begin{ex}
	An \emph{isometry} of $t\in T$ is an element $\omega\in
	\Omega_{\range{\vav}}$ such that $\bra{t} v\ra = \bra{t} \omega_1
	v_1,\ldots, \omega_{\vav}v_{\vav}\ra$, for all $v\in
	\prod_{a\in\range{\vav}}V_a$. Using infix notation when $\vav = 2$, we write
	$v_1\ast v_2=\bra{t} v_1,v_2\ra$; this becomes $ v_1\ast v_2
	=\omega_1v_1\ast \omega_2v_2$. The isometries of $t$ are
	$\Op[\range{\vav}]{t}{1-x_1\cdots x_{\vav}}^\times$. Said differently, the
	isometries of $t$ are the transverse operators having $1-x_1\cdots x_{\vav}$
	as a trait upon ignoring the $0$-axis.
\end{ex}

\begin{ex}\label{ex:nuclei}
	Let $S\subset T$. For every  $a,b\in \zerovav$ with $a<b$, define the
	$(a,b)$-\emph{nucleus} of $S$ by 
	\[
		\Nuc_{a,b}(S)=\Op[\{a,b\}]{S}{x_a-x_b}.
	\]
	The $(a,b)$-nucleus is also known as the $(a,b)$-\emph{scalar ring}. The
	nuclei have a structure of  associative $K$-algebras, and they play
	important role in the study of tensors, see \citelist{\cite{Wilson:division}
	\cite{Ivanyos-Qiao} \cite{Li-Qiao} \cite{BMW:exactseq}}.

	In more detail, for $a>0$, $\Nuc_{a,b}(S)$ consists of pairs
	$(\omega_a,\omega_b)\in \End(V_a)\times\End(V_b)$ such that
	\begin{align*}
		\bra{t}\omega_a v_a,v_{\bar{a}}\ra & = \bra{t}\omega_b v_b,v_{\bar{b}}\ra.
	\end{align*}
	Note that $\Nuc_{a,b}(S)$ is an associative unital $K$-subalgebra of
	$\End(V_a)^\op\times\End(V_b)$. Moreover, every associative $K$-algebra
	$\Delta$ acting on $V_a$ from the right and on $V_b$ from the left such that
	$\bra{S}\subset \hom(V_a\otimes_{\Delta} V_b\otimes (\bigotimes_{c\in
	\overline{\{a,b\}}} V_c),V_0)$ factors through $\Nuc_{a,b}(S)$ via its
	action on $V_a$ and $V_b$. Thus, the $(a,b)$-nucleus is universal for this
	property.

	When $a=0$, the set $\Nuc_{0,b}(S)$ consists of pairs
	$(\omega_0,\omega_b)\in \End(V_0)\times\End(V_b)$ such that
	\begin{align*}
		\omega_0\bra{t}v\ra & = \bra{t}\omega_b v_b,v_{\bar{b}}\ra.
	\end{align*}
	It is a unital $K$-subalgebra of $\End(V_0)\times\End(V_b)$ and universal
	among the associative $K$-algebras $\Delta$ acting on $V_0$ and $V_b$ such
	that $S\subset V_0\oslash_{\Delta} (V_{1}\otimes\cdots\otimes V_{\vav})$.
\end{ex}

\begin{ex}\label{ex:centroid}
	The \emph{centroid} of $t\in T$ is 
	\[
		\Cent(t):=\Op{t}{\{x_0-x_a\,|\,a=1,\dots,\vav\}}.
	\]
	Note that $\Cent(t)$ is a unital $K$-subalgebra of
	$\prod_{a\in\zrange{\vav}}\End(V_a)$ consisting of transverse operators
	$\omega\in\Omega$ such that for all $a\in \range{\vav}$ and all $v$, 
	$\omega_0\bra{t} v\ra =\bra{t} \omega_a	v_a,v_{\bar{a}}\ra$. Regarding each of
	the $V_a$ as a module over $\Cen(t)$, $\bra{t}$ is $\Cent(t)$-multilinear.
	Again, $\Cent(t)$ is universal for this property; namely, if $L$ is a ring
	acting on all the $V_a$ such that $\bra{t}$ is $L$-multilinear, then $L$
	factors uniquely through $\Cent(t)$ (in a way which is compatible with the
	module structures of the $V_a$). In valence $\vav=2$, this observation is a
	theorem of Myasnikov~\cite{Myasnikov}. If $\bra{t}$ is the product of a
	non-associative $K$-algebra, then $\Cent(t)$ is the algebra's centroid. In
	applications, one can usually replace $K$ with the centroid, which is a
	commutative ring if $t$ is fully nondegenerate.
\end{ex}

\begin{ex}\label{ex:derivative}
	We  observed in Section~\ref{subsec:densor} that derivations
	of a tensor space $ T$ are the transverse operators
	having $x_0-x_1-\cdots -x_{\vav}$ as a trait.
\end{ex}

Table~\ref{tab:common} summarizes
Examples~\ref{ex:autotopism}--\ref{ex:derivative} for valence $\vav=2$ and infix
notation $v_1\ast v_2=\bra{t} v_2,v_1\ra$.

\begin{table}[!htbp]
    \begin{tabular}{|l|l|l|l|}
        \hline
        Name & Property & Characterizing traits & Axes\\
        \hline
        Autotopism  $f$ & $f(u*v)=f(u)*f(v)$ & $x_2 x_1-x_0$ & $A=\{0,1,2\}$     \\
        Isometry $\omega$ & $( \omega u)\ast ( \omega v)= u\ast v$    &  $x_2 x_1-1$ & $A=\{1,2\}$\\
        Adjoint $a$ & $  (u a) \ast  v = u\ast (a v) $ & $x_2-x_1$ & $A=\{2\}, B=\{1\}$     \\
        Left scalar $\ell$ & $(\ell u) \ast v= \ell(u\ast v)$ & $x_2-x_0$ & $A=\{0,2\}$ \\  
        Scalar $\lambda$ & $(\lambda u)\ast v=\lambda(u\ast v)=u\ast (\lambda v)$ & $x_2-x_0$, $x_0-x_1$ & $A=\{0,1,2\}$     \\
        Derivation $\partial$ & $\partial (u\ast v)=(\partial u)\ast v + u\ast (\partial v)$ 
        & $x_0-x_1-x_2$ & $A=\{0,1,2\}$   \\
  		\hline
    \end{tabular}
    \caption{Common types of   operators associated to  bilinear maps
    $\ast: V_2\times V_1\bmto V_0$.}
    \label{tab:common}
\end{table}

\begin{ex}\label{ex:degeneration}
	When $K$ is a field, transverse operators detect degeneracy of the tensor
	space $T$. Extending the definition from Section~\ref{sec:terminology}, we
	say that $T$ is degenerate on the $a$-axis, for $a\neq 0$, if there is
	nonzero $v_a\in V_a$ such that $\bra{T} v_a,\prod_{b\in \bar{a}} V_b\ra=0$,
	and degenerate on the $0$-axis if $T$ is not full. 
	Since each $v_a\in V_a$ (resp.~$v_0\in V_0$) spans the image (resp.\ kernel)
	of some $\omega_a\in \End(V_a)$. Then the tensor space $T$ is nondegenerate
	in the $a$-axis if, and only if, $\Op[\{a\}]{T}{x_a}=0$. In particular, $T$
	is fully nondegenerate if, and only if, $\Op{T}{\{x_0,\dots,x_{\vav}\}}=0$.

	When $K$ is an arbitrary commutative ring, it is no longer true that every
	submodule of $V_a$ is the image of some $\omega_a\in\End(V_a)$. However, it
	is still true that if $T$ is nondegenerate in the axes $A\subset\zerovav$
	then $\Op[A]{T}{\{x_a\,|\,a\in A\}}=0$.
\end{ex}



\section{Galois connection of tensors, polynomials, and transverse operators (Theorem~\ref{thm:correspondence})}
\label{sec:Galois}

Unless indicated otherwise, $K$ is a commutative ring, $\bra{\cdot}:T\to
V_0\rversor\dots\rversor V_{\vav}$ is a tensor space over $K$, $s,t$ are tensors
(of $T$ or of some other tensor space of context), $S$ ranges over subsets of
$T$, $P$ ranges over subsets of $K[X]$, and $\omega:(a\in \zrange{\vav})\to
(\omega_a\in \End(V_a))$ denote transverse operators. 

The material question embedded in our correspondence
Theorem~\ref{thm:correspondence} is how the points in the sets $\Op{S}{P}$ are
specified by a polynomial ideal, and how these ideals are independent of any
choices in coordinates that may be used to define them. This pushes us to enrich
transverse operators $\Op{S}{P}$ into an affine $K$-scheme. From that, we prove
the ternary Galois connection of Theorem~\ref{thm:correspondence} in this
context. In fact,  the scheme structure requires   the frame
$(V_0,\dots,V_{\vav})$ to consist of finitely generated (f.g.) projective
$K$-modules, so we will make this assumption throughout; we comment about the
non-projective case in Section~\ref{subsec:non-projective}. Readers who wish
to avoid this technicality may simply assume that $K$ is a field and the $V_a$
are finite-dimensional vector spaces.

Before we begin, we explain the necessity of schemes in the context of
transverse operators. The idea of $K[X]$-annihilators of tensor spaces relative
to transverse   operators demonstrates promise but quickly hits a few limits.
If we fix a transverse operator $\omega\in \Omega$, then the characteristic
polynomials of each $\omega_a$  (in the variable $x_a$) appear in the
annihilators $\Ann_{K[X]}^{\omega}(t)$.  Thus, the solution set of the
polynomials   $\Ann_{K[X]}^{\omega}(t)$   in $K^{\zrange{\vav}}$ is  a finite
set of points, or, more formally, $\Ann_{K[X]}^{\omega_a}(t)$ defines a
$K$-scheme of dimension $0$. Looking closely at Figures~\ref{fig:annihilator}
and \ref{fig:annihilator-2}, we witnessed the presence of polynomials like $xy$
and $xy-y^2$, which hint of richer geometry that might be  of a fundamental
quality of our tensors. To get solution sets corresponding to general varieties,
we need to intersect our single operator annihilators across sets
$\Delta\subseteq \prod_a \End(V_a)$ of transverse operators.  However, even that
could fail to notice phenomena that are exhibited only over extensions of $K$,
particularly when $K$ (and hence $\Omega$) is finite.  For this reason, we are
led to study annihilators of subfunctors $L\mapsto \Delta(L)$ of the functor
$L\mapsto \prod_a\End(L\otimes V_a)$  as $L$ ranges over the commutative
$K$-algebras. These subfunctors will typically be \emph{affine $K$-schemes}.

\subsection{A taste of schemes}

We briefly recall affine schemes, taking the approach of regarding them
as functors from the category of commutative algebras to sets (i.e.\
functors of points). An extensive source following this approach
is  \cite{Waterhouse}; a  full account can be found in 
\citelist{\cite{Hartshorne}}.

Let $\Comm{K}$ denote the category of commutative associative unital
$K$-algebras, and let $\Set$ denote the category of sets. For the purpose of
this work, an affine $K$-scheme (of finite type) is a functor ${\frak
X}:L\mapsto {\frak X}(L)$  from $\Comm{K}$ to $\Set$,   for which  there exist
$n\in\mathbb{N}$ and polynomials $\mathcal{F}\subset  K[x_{1},\dots,x_n]$ such
that ${\frak X}(L)$ is naturally isomorphic to the solution set of the equations
$\{f=0 \mid f\in \mathcal{F}\}$ in $\mathbb{A}^n(L)=L^n$.  (Note our use of $n$
variables here is general and not necessarily the same as $\vav+1$.) This is
equivalent to saying that ${\frak X}(L)$ is naturally isomorphic to the set of
$K$-algebra homomorphisms $K[x_1,\dots,x_n]/({\cal F})\to L$.  We call ${\frak
X}(L)$ the \emph{$L$-points} of ${\frak X}$.  In this setting, a subscheme of
${\frak X}$ is an affine $K$-scheme ${\frak Y}:\Comm{K}\to \Set$ such that for
all $L$, ${\frak Y}(L)\subset {\frak X}(L)$.\footnote{This notion of subscheme
is not standard.  What we have defined here amounts in the literature to saying
that the inclusion morphism ${\frak Y}\to {\frak X}$ is a monomorphism of
schemes.} We then write ${\frak Y}\subset {\frak X}$. We say that ${\frak Y}$ is
a \emph{closed} subscheme of ${\frak X}$ and write ${\frak
Y}\subset_{\mathrm{c}} {\frak X}$ if ${\frak Y}$ is  obtained by adding further
polynomials to the family ${\cal F}$ used  to define ${\frak X}$. This property
is  intrinsic to $\frak X$ and  independent of $n$ and the family $\cal F$
(which are not unique). Morphisms of affine $K$-schemes are natural
transformations. Yoneda's Lemma implies that any morphism $f$ from ${\frak X}$
to an affine $K$-scheme ${\frak Y}$ defined by  ${\cal G}\subset
K[y_1,\dots,y_m]$ is induced by precomposition with a unique $K$-algebra
homomorphism $f^{\#}:K[y_1,\dots,y_m]/({\cal G}) \to K[x_1,\dots,x_n]/({\cal
F})$. If $f^{\#}$ is surjective, then $f:{\frak X}\to {\frak Y}$ is called a
\emph{closed immersion}; this is equivalent to saying that $f_L:{\frak X}(L)\to
{\frak Y}(L)$ is injective  for all $L\in \Comm{K}$ and $\im f:L\mapsto \im f_L$
is a closed subscheme of ${\frak Y}$.

Throughout, $\mathbb{A}^n$ denotes the $n$-dimensional affine space over $K$,
which we can now also interpret as the $K$-scheme defined using the empty set of
polynomials in $K[x_1,\ldots,x_n]$, i.e., $\mathbb{A}^n(L)=L^n$ for all $L\in
\Comm{K}$.

In the sequel, we will often  define functors by specifying them only on objects. In all such
cases, the action on the morphisms will be evident from the context.

The following lemma gives examples of affine $K$-schemes  which
are not defined directly as the null set of some polynomials. It also highlights
why we require $V_0,\ldots,V_{\vav}$ to be projective modules.

\begin{lem}\label{lem:definition-of-calE}
	Let $U$ and $V$ be f.g.\ projective $K$-modules. Then the following functors
	from $\Comm{K}$ to $\Set$ are affine $K$-schemes.
	\begin{enumerate}[(i)]
		\item   $L\mapsto L\otimes V$,
		\item $L\mapsto (L\otimes V)\oslash_L (L\otimes U)=\hom_L(L\otimes U,L\otimes V)$,
		\item $L\mapsto \mathrm{Aut}_L(L\otimes V)$.
	\end{enumerate}
	Furthermore, if $V'$ is a summand of $V$,
	then $L\mapsto L\otimes V'$ defines a \emph{closed} subscheme
	of $L\mapsto L\otimes V$. 
\end{lem}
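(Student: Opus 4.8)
The plan is to exploit the fact that a f.g.\ projective module is a summand of a free module, and that all three constructions are compatible with direct sums and base change, so the scheme structure can be transported from the free case, where everything is manifestly polynomial. First I would fix f.g.\ projective $K$-modules $U,V$ and choose $K$-modules $U',V'$ and integers $m,n$ with $U\oplus U'\cong K^m$ and $V\oplus V'\cong K^n$; also fix idempotents $\pi_U\in\M_m(K)$, $\pi_V\in\M_n(K)$ whose images are $U,V$. For (i): the functor $L\mapsto L\otimes V$ is a direct summand of $L\mapsto L\otimes K^m=\mathbb{A}^m$, and concretely it is naturally isomorphic to $\{x\in \mathbb{A}^m(L)\mid \pi_U x=x\}$ (using $U\oplus U'\cong K^m$, renaming), which is cut out by the linear equations $(\pi_U-\1)x=0$; hence it is an affine $K$-scheme, and in fact a \emph{closed} subscheme of $\mathbb{A}^m$. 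This already yields the last assertion: if $V'$ is a summand of $V$, pick a free cover $K^n$ of $V$ through which $V'$ is also split, so $L\otimes V'$ is cut out inside $L\otimes V$ by further linear (idempotent) equations, i.e.\ is a closed subscheme.

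For (ii), I would use the natural isomorphism $\hom_L(L\otimes U,L\otimes V)\cong (L\otimes U^{\dual})\otimes(L\otimes V)\cong L\otimes(U^{\dual}\otimes V)$, valid because $U$ is f.g.\ projective so $U^{\dual}$ is f.g.\ projective and the dual commutes with base change; then $U^{\dual}\otimes V$ is again f.g.\ projective and (ii) reduces to (i). (Alternatively, realize $\hom_L(L\otimes U,L\otimes V)$ directly as the summand $\pi_V\,\M_{n\times m}(L)\,\pi_U$ of $\M_{n\times m}(L)=\mathbb{A}^{nm}$, again cut out by linear idempotent equations.) For (iii), the key point is that $\Aut_L(L\otimes V)$ sits inside $\End_L(L\otimes V)=L\otimes(V^{\dual}\otimes V)$, which is an affine $K$-scheme by (ii); I would realize the unit group as a closed subscheme of $\End_L(L\otimes V)\times\End_L(L\otimes V)$ via the standard trick $\{(f,g)\mid fg=gf=\mathrm{id}\}$ — the conditions $fg=\mathrm{id}$ and $gf=\mathrm{id}$ are polynomial (bilinear) in the matrix entries, so this is a closed subscheme of the product, which is affine; projection to the first factor is injective on $L$-points for every $L$ (the inverse is unique), giving the natural isomorphism with $L\mapsto\Aut_L(L\otimes V)$.

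The routine verifications I would include but not belabor: that these identifications are \emph{natural} in $L$ (functoriality of tensor, of $\hom$ for projective modules, and of the idempotent-image decomposition), and that "being a closed subscheme" is well defined independent of the presentation, which is already granted by the discussion preceding the lemma. The main obstacle — and the reason projectivity is hypothesized — is precisely that one needs $L\otimes\hom_K(U,V)\xrightarrow{\ \sim\ }\hom_L(L\otimes U,L\otimes V)$ to hold for all $L\in\Comm{K}$; this is where f.g.\ projectivity of $U$ is used (it fails in general), and it is also what makes the dual $U^{\dual}$ behave well under base change. Once that compatibility is in hand, everything else is the bookkeeping of splitting idempotents and writing down the (linear, resp.\ bilinear) equations that define the relevant summands and unit groups.
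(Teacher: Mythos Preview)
Your plan is correct and for parts (i), (ii), and the closed-subscheme claim it is essentially identical to the paper's argument---you phrase the splitting via idempotents $(\pi_V-\1)x=0$ where the paper phrases it via a split short exact sequence $0\to V\to K^n\to K^r$, but these are the same thing, and the reduction of (ii) to (i) via $L\otimes\hom_K(U,V)\cong\hom_L(L\otimes U,L\otimes V)$ is exactly what the paper does.

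The one genuine divergence is in (iii). The paper complements $V$ to a free module $V\oplus W\cong K^r$ and uses the determinant: it embeds $\Aut_L(L\otimes V)$ into $L\times\M_r(L)$ by $w\mapsto(\det(w\oplus 1_W)^{-1},\,w\oplus 1_W)$, so the scheme is cut out by the equations for $\End(V)$ together with $\det((x_{ij})+(0_V\oplus 1_W))\cdot y-1$. Your route---the closed subscheme $\{(f,g)\mid fg=gf=\mathrm{id}\}$ of $\End\times\End$, identified with $\Aut$ via first projection---is equally standard and arguably cleaner: it avoids the detour through a free complement and the determinant, and it works uniformly without ever mentioning matrices. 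The paper's version has the minor advantage of exhibiting $\Aut$ inside a copy of $\mathbb{A}^{r^2+1}$ with one extra variable rather than doubling the ambient dimension, which is closer to the classical presentation of $\GL_n$; but for the purposes of this lemma either argument suffices.
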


This is known, but we include a proof that can be made from our present
ingredients.  In particular it demonstrates the lift of the problem to free
modules in a natural way where the precise polynomial ideal defining $\Op{S}{P}$
will eventually emerge independent of choices.

\begin{proof}
	Since $V$ is f.g.\ projective, with rank $n$, there is a split exact
	sequence $0\to V\xrightarrow{g} K^n\xrightarrow{f} K^r$. The map $f$ in the
	sequence can be regarded as $r$ linear polynomials $f_1,\ldots,f_r\in
	K[x_1,\ldots,x_n]$, and $V$ is the solution set of $f_1=\cdots=f_r=0$ in
	$K^n$. Since the  sequence above is split, it remains exact after tensoring
	with arbitrary commutative $K$-algebras $L$. This means that $L\otimes V$ is
	the solution set of $f_1=\ldots=f_r=0$ in $L^n$, so $L\mapsto L\otimes V$ is
	an affine $K$-scheme.

	If $V'$ is a summand of $V$, we can find $f':K^n\to K^{r'}$ such that $0\to
	V'\xrightarrow{g} K^n\xrightarrow{f'} K^{r'}$ (same $g$ as in the previous
	paragraph) is exact. Writing $f'=(f'_1,\ldots,f'_{r'})$, we see that  
	$L\otimes V'$ is the solution set of $f_1=\cdots=f_r=f'_1=\cdots=f'_{r'}=0$ in
	$L^n$, so $L\mapsto  L\otimes V'$ is a closed subscheme of $L\mapsto
	L\otimes V$.

	To prove (ii), observe that the natural map $L\otimes (V\oslash U)\to
	(L\otimes V)\oslash_L (L\otimes U)$ determined by 
	$\gamma\otimes f\mapsto (\gamma'\otimes u\mapsto \gamma\gamma' \otimes f(u))$ is an
	isomorphism because $U$ is f.g.\ projective. Since $V\oslash U$ is a f.g.\
	projective $K$-module, $L\mapsto  (L\otimes V)\oslash_L (L\otimes U)\cong
	L\otimes (V\oslash U)$ is an affine $K$-scheme by (i).

	To prove (iii), choose a f.g.\ projective $K$-module $W$ such that, for some
	$r$, $V\oplus W\cong K^r$. Then $\varphi\mapsto \varphi\oplus 0_W$ embeds
	$\End(V)$ as a summand of $\End(V\oplus W)\cong \mathbb{M}_{r\times r}(K)$.
	By the previous paragraphs, the image of $\End_L(L\otimes V)\cong L\otimes
	\End(V)$ in $\mathbb{M}_{r\times r}(L)\cong L^{r^2}$ is the null set of some
	polynomials $h_1,\dots,h_t\in K[x_{ij}\,|\,i,j\in\{1,\dots,r\}]$. Define
	$\psi_L:\Aut_L(L\otimes V)\to L\times\mathbb{M}_{r\times r}(L)$ by
	$\psi_L(w)=(\det(w\oplus 1_W)^{-1}, w\oplus 1_W)$. Then $\psi_L$ is a
	bijection between its domain and the null set of the polynomials
	$h_1,\dots,h_t$ and $\det((x_{ij})_{i,j}+(0_V\oplus 1_W))y-1\in
	K[x_{11},x_{12},\dots,x_{rr},y]$ in $L\times \mathbb{M}_{r\times r}(L)$. As
	$\psi: L \mapsto \psi_L$ is a natural transformation, $L\mapsto
	\End_L(L\otimes V)$ is an affine $K$-scheme.
\end{proof}

\begin{ex}
When $V$ is a free $K$-module, say $V\cong K^n$, the scheme $L\mapsto L\otimes
V$ is isomorphic to $\mathbb{A}^n$, because $L\otimes V\cong L^n$ naturally.
Similarly, $L\mapsto \End_L(L\otimes V)$ is isomorphic to $\mathbb{A}^{n^2}$. In
this case, we also have a  natural isomorphism $\Aut_L(L\otimes V)\cong
\GL_n(L)$, and the latter is naturally isomorphic to the solution set of
$\det((x_{ij})_{i,j})y-1\in K[x_{11}, x_{12}, \dots, x_{nn},y]$ in
$\mathbb{M}_{n\times n}(L)\times L=L^{n^2+1}$.
\end{ex}

\begin{remark}
	In Lemma~\ref{lem:definition-of-calE}, $L\mapsto \Aut_L(L\otimes V)$ is a
	subscheme of $L\mapsto \End(L\otimes V)$ which is in general not a
	\emph{closed} subscheme. (Rather, it is an \emph{open} subscheme.)
\end{remark}

We finally note that if $\frak X$ and $\frak Y$ are affine $K$-schemes, then so
is their product ${\frak X}\times {\frak Y}:L\mapsto {\frak X}(L)\times {\frak
Y}(L)$---use the polynomials ${\cal F}\cup {\cal G}\subset
K[x_1,\dots,x_n,y_1,\dots,y_m]$, where ${\cal F}\subset K[x_1,\dots,x_n]$ and
${\cal G}\subset K[y_1,\dots,y_m]$ are polynomials defining  ${\frak X}$ and
${\frak Y}$, respectively.

\subsection{Operator families as schemes}

Recall that $\Op{S}{P}$ resides inside $\prod_{a\in \zrange{\vav}}\End(V_a)$,
which was denoted by $\Omega$ until this point. We now make $\Omega$ into an
affine $K$-scheme by defining
\[
	\Omega(L)=\prod_{a\in\zrange{\vav}} \End_L(L\otimes V_a).
\]
This is an affine $K$-scheme by Lemma~\ref{lem:definition-of-calE}, and its
$K$-points are $\Omega(K)=\prod_{a\in \zrange{\vav}}\End(V_a)$. Likewise, we
enrich $\Omega^\times$ into an affine $K$-scheme by setting
\[
	\Omega^\times(L)=\prod_{a\in\zrange{\vav}} \Aut_L(L\otimes V_a).
\]
One similarly regards $\Omega_{A,B}$, for disjoint $A,B\subset \zrange{\vav}$,
as an affine $K$-scheme.

To define $\Op{S}{P}$ as a subscheme $\Omega$, we first recall base-change of
tensors. Let $L\in\Comm{K}$. Then there a unique $L$-module homomorphism
$\bra{\cdot}_L: L\otimes T\to (L\otimes V_0)\oslash_L\dots\oslash_L (L\otimes
V_{\vav})$ where given $\gamma_0,\ldots,\gamma_{\vav}\in L$ and $\bra{t}\in
V_0\oslash\cdots\oslash V_{\vav}$,
\begin{align}\label{eq:base-change-of-tensor}
	\bra{\gamma_0\otimes t}_L ~\gamma_1\otimes v_{1},\ldots,\gamma_{\vav}\otimes v_{\vav}\ra
	& := \gamma_0\cdots\gamma_{\vav}\otimes \bra{ t} v\rangle.
\end{align}
We abbreviate $\bra{t}_L$ to $\bra{t_L}$, or just $\bra{t}$ when $L$ is clear
from the context. Given subsets $S\subset T$ and $P\subset
K[X]$, we now define the functor $\Op{S}{P}:\Comm{K}\to
\Set$ by
\[
	\Op{S}{P}(L)=\{\omega\in\Omega(L)\mid (\forall t\in S)(\forall p\in P)(\bra{t_L}p(\omega)=0)\}.
\]

The next lemma states that when concerned with the operator sets, we get
equivalent schemes whether the $V_a$ are all f.g.\ projective modules or f.g.\
free modules. 

\begin{lem}\label{lem:free-modules}
	Let $(V_0,\dots, V_{\vav})$ be $K$-modules such that $W_a = U_a\oplus V_a$,
	with $W_a$ f.g.\ and free and embedding $\iota_a:V_a\rightarrow W_a$. If
	$\bra{\cdot} : T \rightarrow V_0\oslash \cdots \oslash V_{\vav}$ is a tensor
	space, then for $\lla \cdot | = \iota\bra{\cdot} : T \rightarrow W_0\oslash
	\cdots \oslash W_{\vav}$ and for all $S\subset T$, $P\subset K[X]$, and
	$L\in\Comm{K}$, 
	\begin{align*}
		\Op{S}{P}\left(L\right) = \Op{\lla S|}{P}\left(L\right) \cap \prod_a \End_L(L\otimes V_a).
	\end{align*}
\end{lem}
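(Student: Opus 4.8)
The statement is essentially a compatibility between the scheme-theoretic definition of $\Op{S}{P}$ for possibly-non-free frames and the corresponding object for free frames, and the core of the matter is that the action of $p(\omega)$ on a tensor does not see the ambient summands. The plan is to unwind both sides of the claimed equality on $L$-points and check that they describe the same subset of $\prod_a \End_L(L\otimes V_a)$, using only the definition \eqref{eq:p-omega-action} of $\bra{t}p(\omega)$ and functoriality of base change \eqref{eq:base-change-of-tensor}.

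First I would fix $L\in\Comm{K}$ and an operator $\omega = (\omega_a)_a \in \prod_a \End_L(L\otimes V_a)$, regarded inside $\prod_a \End_L(L\otimes W_a)$ via the embeddings $\iota_a$; concretely, extend each $\omega_a$ by zero on a complement $U_a$, so that $\iota_a\omega_a = (\omega_a\oplus 0_{U_a})\iota_a$. The key observation, which I would record as the one substantive lemma, is that for any tensor $t\in T$, any $p(X)=\sum_e\lambda_e X^e\in K[X]$, and any $v\in\prod_{a\ge 1}(L\otimes V_a)$, we have in $L\otimes W_0$ the identity
\[
	\lla t|_L\, p(\omega\oplus 0_U)\ket{\iota v} = \iota_0\bigl(\bra{t}_L\, p(\omega)\ket{v}\bigr),
\]
because $\lla t| = \iota\bra{t}$ means $\lla t|$ evaluates to zero whenever any input argument lies outside the image of $\iota_a$, and on the image of $\iota_a$ the operator $\omega_a\oplus 0_{U_a}$ restricts to $\omega_a$ under the identification $\iota_a$. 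Since $\iota_0$ is injective, $\lla t|_L\,p(\omega\oplus 0_U)\ket{\iota v}$ vanishes for all such $v$ if and only if $\bra{t}_L\,p(\omega)\ket{v}$ vanishes for all $v\in\prod_{a\ge1}(L\otimes V_a)$. Moreover, since every input $w_a\in L\otimes W_a$ decomposes as $w_a = \iota_a v_a + (\text{term killed by }\lla t|)$, testing $\lla t|_L\,p(\omega\oplus 0_U)=0$ on all of $\prod_{a\ge1}(L\otimes W_a)$ is the same as testing it on inputs of the form $\iota v$. Combining these two reductions gives: $\omega\oplus 0_U \in \Op{\lla S|}{P}(L)$ if and only if $\omega\in\Op{S}{P}(L)$, for every $\omega\in\prod_a\End_L(L\otimes V_a)$ — which is precisely the asserted equality of sets, once one notes that an element of $\prod_a\End_L(L\otimes W_a)$ lies in $\prod_a\End_L(L\otimes V_a)$ (viewed as a summand) exactly when it has the block form $\omega_a\oplus 0_{U_a}$.

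The bookkeeping with base change is the place to be careful: $L\otimes W_a = (L\otimes U_a)\oplus(L\otimes V_a)$ naturally, $\lla\cdot|_L = (\iota\otimes L)\circ\bra\cdot_L$ follows from \eqref{eq:base-change-of-tensor} applied to pure tensors and extended by $L$-linearity, and $\operatorname{End}_L(L\otimes V_a)$ sits inside $\operatorname{End}_L(L\otimes W_a)$ as the summand of endomorphisms vanishing on $L\otimes U_a$ and preserving $L\otimes V_a$ — this is exactly the embedding from the proof of Lemma~\ref{lem:definition-of-calE}(iii). I expect the \textbf{main obstacle} to be purely notational rather than conceptual: making the displayed identity above precise requires writing $\omega$ simultaneously as an operator on $V_a$ and on $W_a$ and keeping the $\iota_a$'s visible, and one must be slightly careful that the $0$-axis is handled the same way as the others (the output $\lla t|\ket v$ lands in $W_0$, and injectivity of $\iota_0$ is what lets the conclusion descend). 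Once the identity is stated cleanly, both inclusions of the set equality are immediate, and no Gröbner-basis or scheme-theoretic machinery beyond Lemma~\ref{lem:definition-of-calE} is needed.
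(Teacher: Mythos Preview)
Your proposal is correct and follows essentially the same approach as the paper: embed $\prod_a\End_L(L\otimes V_a)$ into $\prod_a\End_L(L\otimes W_a)$ via $\omega_a\mapsto\omega_a\oplus 0_{U_a}$, and then verify by direct computation that $\lla t_L|p(\omega)|u+v\rra=\bra{t_L}p(\omega)\ket{v}$ for all $v\in\prod_a V_a$, $u\in\prod_a U_a$. Your write-up is slightly more explicit about the role of $\iota_0$'s injectivity and the $0$-axis, but the content is the same.
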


\begin{proof}
	For all $L\in\Comm{K}$, $\Omega[V](L):=\prod_{a}\End_L(L\otimes V_a)$ embeds
	as a summand of $\Omega[W](L):=\prod_{a}\End_L(L\otimes W_a)$ via
	$(\omega_a)_a\mapsto (\omega_a\oplus 0_{U_a})_{a}$. Thus, we view
	$\Omega[V]$ as a subscheme of $\Omega[W]$. The lemma follows if we show that
	for all $t\in T$, $p\in P$, $L\in\Comm{K}$ and $\omega\in
	\Omega[V](L)\subset \Omega[W](L)$, $\lla t_L| p(\omega)=0$ if, and only if,
	$\bra{t_L}p(\omega)=0$. Writing $p=\sum_{e:\zrange{\vav}\to \mathbb{N}}
	\lambda_eX^e$, for all $v\in\prod_{a=1}^{\vav} V_a$, $u\in
	\prod_{a=1}^{\vav} U_a$, 
	\begin{align*}
		\lla t_L| p(\omega) |u+v\rra &=
			\sum_e \lambda_e \omega_0^{e(0)}\lla t_L| 
			\omega_1^{e(1)}(v_1+u_1),\ldots,
					\omega_{\vav}^{e(\vav)}(v_{\vav}+u_{\vav})\rra\\
		&=
			\sum_e \lambda_e \omega_0^{e(0)}\bra{t_L}  
			\omega_1^{e(1)}( v_1),\ldots,
				\omega_{\vav}^{e(\vav)}( v_{\vav}) \ra=\bra{t_L} p(\omega)\ket{v}. \qedhere
	\end{align*}
\end{proof}

\begin{prop}\label{prop:Z}
	For every subset $S\subset T$ and $P\subset K[X]$, the functor
	$\Op{S}{P}:\Comm{K}\to \Set$ is a closed subscheme of $\Omega$. In addition,
	for all subsets $S'\subset S$, $P'\subset P$, we have 
	$\Op{S}{P}\subset_{\rm c} \Op{S'}{P'}$. 
\end{prop}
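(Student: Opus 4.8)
The plan is to exhibit, after a reduction to free modules, an explicit family of polynomials that cuts out $\Op{S}{P}$, so that being a closed subscheme becomes immediate from the definition recalled above. First I would invoke Lemma~\ref{lem:free-modules}: choose f.g.\ free modules $W_a=U_a\oplus V_a$ with embeddings $\iota_a\colon V_a\to W_a$, set $\lla\cdot|=\iota\bra\cdot|$, and write $\Omega[V]$, $\Omega[W]$ for the corresponding operator functors. Since each $W_a$ is free, say of rank $n_a$, the functor $\Omega[W]$ is isomorphic to $\mathbb{A}^{M}$ with $M=\sum_a n_a^2$, presented by the empty family of polynomials in $R:=K[x^{(a)}_{ij}\mid a\in\zrange{\vav},\ 1\le i,j\le n_a]$. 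Because each $\End(V_a)$ is a $K$-module direct summand of $\End(W_a)$, the last sentence of Lemma~\ref{lem:definition-of-calE} (together with the fact that products of closed subschemes are closed) shows $\Omega=\Omega[V]$ is a closed subscheme $V(\mathcal G)$ of $\mathbb{A}^M$. Lemma~\ref{lem:free-modules} then gives $\Op{S}{P}=\Op{\lla S|}{P}\cap\Omega$ as subfunctors of $\mathbb{A}^M$. So it suffices to show $\Op{\lla S|}{P}$ is a closed subscheme $V(\mathcal H)$ of $\mathbb{A}^M$: then $\Op{S}{P}=V(\mathcal G\cup\mathcal H)$ is contained in $\Omega=V(\mathcal G)$ and obtained from it by adjoining the further polynomials $\mathcal H$, hence is a closed subscheme of $\Omega$.

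Second, I would do the computation over $\mathbb{A}^M$. A point of $\mathbb{A}^M(L)$ is the same datum as an operator $\omega\in\Omega[W](L)=\prod_a\mathbb{M}_{n_a}(L)$, equivalently a $K$-algebra homomorphism $\varphi\colon R\to L$ with $\varphi(x^{(a)}_{ij})=(\omega_a)_{ij}$. Form the ``generic operator'' $\widetilde\omega$ over $R$ whose $a$-component is the matrix $(x^{(a)}_{ij})_{ij}$. For each $t\in S$ and each $p=\sum_e\lambda_eX^e\in P$, compute $\lla t|p(\widetilde\omega)$ via \eqref{eq:p-omega-action}; this is an element of $(R\otimes W_0)\oslash_R\cdots\oslash_R(R\otimes W_{\vav})$, i.e.\ a multilinear grid whose finitely many coordinates are polynomials in the $x^{(a)}_{ij}$ with coefficients assembled from the $\lambda_e$ and the coordinates of $\bra{t}$. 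Let $\mathcal H\subset R$ be the collection of all these coordinate polynomials as $t$ ranges over $S$ and $p$ over $P$. The one substantive point is that $\varphi$ carries $\lla t|p(\widetilde\omega)$ to $\lla t_L|p(\omega)$: this is the compatibility of the action \eqref{eq:p-omega-action} with base change of tensors \eqref{eq:base-change-of-tensor}, both being given by the same multilinear formula over every base ring. Granting this, a point $\omega\in\mathbb{A}^M(L)$ satisfies every equation in $\mathcal H$ if and only if $\lla t_L|p(\omega)=0$ for all $t\in S$ and $p\in P$, i.e.\ $\Op{\lla S|}{P}(L)=V(\mathcal H)(L)$. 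Hence $\Op{\lla S|}{P}=V(\mathcal H)$ is closed in $\mathbb{A}^M$, which by the previous paragraph finishes the first assertion. Since $S$ and $P$ may be infinite, $\mathcal H$ may be infinite; this is harmless for the notion of closed subscheme used here, and $\Op{S}{P}$ is moreover of finite type whenever $K$ is Noetherian (in particular a field).

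For the monotonicity statement, note that if $S'\subset S$ and $P'\subset P$ then $\lla S'|\subset\lla S|$, and the family $\mathcal H'$ of coordinate polynomials produced from the pairs in $S'\times P'$ is a subset of $\mathcal H$. Thus $\Op{S}{P}=V(\mathcal G\cup\mathcal H)$ is obtained from $\Op{S'}{P'}=V(\mathcal G\cup\mathcal H')$ by adjoining the polynomials $\mathcal H\setminus\mathcal H'$, which is precisely $\Op{S}{P}\subset_{\mathrm c}\Op{S'}{P'}$; and this relation is intrinsic, hence independent of the auxiliary choice of the $W_a$. The main obstacle here is nothing deep but rather the base-change bookkeeping flagged above — checking that the generic computation $\lla t|p(\widetilde\omega)$ specializes correctly under evaluation of the indeterminates — together with keeping straight the three nested ambient spaces $\Op{S}{P}\subseteq\Omega=\Omega[V]\subset_{\mathrm c}\Omega[W]\cong\mathbb{A}^M$.
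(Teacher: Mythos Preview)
Your proposal is correct and follows essentially the same route as the paper: reduce to the free case via Lemma~\ref{lem:free-modules}, realise $\Omega[V]$ as a closed subscheme of $\Omega[W]\cong\mathbb{A}^M$, and then write out the explicit coordinate polynomials coming from $\bra{t_L}p(\omega)=0$ (the paper's Remark~\ref{rem:make-scheme} records these polynomials in the same way you do with your generic operator $\widetilde\omega$). Your monotonicity argument via $\mathcal H'\subset\mathcal H$ is likewise the paper's argument, stated more carefully.
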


\begin{proof}
	We shall make  use of the following fact: If ${\frak X}$ is an affine
	$K$-scheme, ${\frak X'}$ is a subscheme of ${\frak X}$, and ${\frak
	Z}\subset_{\rm c}{\frak X}$, we have ${\frak Z}\cap {\frak X'}\subset_{\rm
	c}{\frak X'}$, where ${\frak Z}\cap {\frak X'}$ is the functor $L\mapsto
	{\frak Z}(L)\cap {\frak X'}(L)$ (the intersection is taken in ${\frak
	X}(L)$); see \cite{Hartshorne}*{Exer.~II.3.11(a)}. Since each $V_a$ is f.g.\
	projective, there exists a $K$-module $U_a$ such that $W_a:=V_a\oplus U_a$
	is finitely generated and free. Consider the frame $(V'_0, \ldots,
	V'_{\vav})$ and its corresponding affine $K$-scheme $\Omega'$. By
	Lemma~\ref{lem:free-modules} and by the fact stated at the beginning of the
	proof, we may replace the tensor space $\bra{\cdot}:T\to V_0\rversor
	\dots\rversor V_{\vav}$ with $\lla \cdot|:T\to W_0\rversor\dots\rversor
	W_{\vav}$, defined in Lemma~\ref{lem:free-modules}, and assume that the
	$V_a$ are f.g.\ and free for the remainder of the proof.

	Now that the $V_a$ are free, say $V_a\cong K^{d_a}$, we have natural
	isomorphisms $L\otimes V_a\cong L^{d_a}$ and $\Omega(L)=\prod
	\End_L(L\otimes V_a)\cong L^{d }$, where $d=\sum_{a\in\zrange{\vav}}d_a^2$.
	Let $p\in P$, $t\in S$,  $\omega\in\Omega(L)$. There are polynomials $\{f_i
	\}_{i\in I_{t,p}}\subset K[x_1,\dots,x_d]$, depending on $t$ and $p$ but not
	the chosen bases of $V_a$, such that $\bra{t_L}p(\omega)=0$ if, and only
	if, for all $i\in I_{t,p}$, $f_i(\omega)=0$; we elaborate on their
	construction in Remark~\ref{rem:make-scheme}. Let ${\cal F}=\bigcup_{t\in
	T}\bigcup_{p\in P}\{f_i\,|\,i\in I_{t,p}\}$. Then $\Op{S}{P}(L)$ is
	naturally isomorphic to the null set of $\cal F$ in $L^d\cong \Omega(L)$;
	hence, $\Op{S}{P}$ is an affine $K$-scheme. Finally, note that the equations
	defining $\Op{S'}{P'}$ contain those defining $\Op{S}{P}$, so
	$\Op{S}{P}\subset_{\rm c}\Op{S'}{P'}$.
\end{proof}

\begin{remark}\label{rem:make-scheme}
	We elaborate on the construction of the polynomials defining $\Op{S}{P}$
	when each $V_a$ is f.g.\ and free, say  $V_a\cong K^{d_a}$. Let
	$\{f_{a,i}\mid i\in \range{d_a}\}$ be a $K$-basis for $V_a$ and let
	$\{f^0_{i}\mid i\in \range{d_a}\}\subset \hom(V_0,K)$ be the dual basis of 
	$\{f_{0,i}\mid i\in \range{d_a}\}$. We
	use these bases to identify $\End(V_a)$ with $\mathbb{M}_{d_a\times
	d_a}(K)$. For $\iota:(a\in \zrange{\vav})\to \range{d_a}$, now each tensor
	$t\in S$ can be prescribed by a hypermatrix \eqref{def:T-1},
	$t_{\iota}=f^0_{\iota(0)}\langle t|f_{1,
	\iota(1)},\dots,f_{\vav,\iota(\vav)}\rangle$. Given $p=\sum_e\lambda_e X^e$,
	$t\in S$, $L\in\Comm{K}$ and $\omega\in \Omega(L)$, a computation shows that
	$\bra{t_L}p(\omega)=0$  if, and only if, $\omega$ satisfies the equations 
	\begin{align} 
		\left(\forall \iota\in \prod_{a\in \zrange{\vav}}\range{d_a}\right) 
		\left(
		0  = \sum_{e,\kappa} \lambda_e t_{\kappa}
		(\Gamma_0^{e(0)})_{\kappa(0)\iota(0)}
			\cdots
				(\Gamma_{\vav}^{e(\vav)})_{\iota(\vav) \kappa(\vav )}
		\right),
	\end{align}
	where  each $\Gamma_{a}$ is a $(d_a\times d_a)$-matrix of indeterminates, and
	$(\cdot)_{ bc}$ means taking the $(b,c)$-entry.  To make this
	into a polynomial formula one still has to expand the matrix powers
	$\Gamma_a^{e_a}$. Such computation is best left to a computer, and we have
	implemented such routines in the computer algebra system \textsf{Magma}
	\cite{TensorSpace}.
\end{remark}

\begin{remark}
	It is at times convenient to restrict the ambient affine $K$-scheme $\Omega$
	to an affine subscheme $\Omega_0$ and consider $\Op{S}{P}\cap \Omega_0$
	instead of $\Op{S}{P}$. For example, if one is interested in invertible
	transverse operators, then take $\Omega_0=\Omega^\times$. Another useful
	example is when the frame is symmetric, i.e.\ $V_{\vav}=\cdots=V_0$, in
	which case we could take $\Omega_0(L)=\{\omega\in \Omega(L)\,:\,
	\omega_{\vav}=\dots=\omega_0\}$. In valence $\vav=2$, the affine $K$-scheme
	$\Op{S}{x_0-x_1x_2}\cap \Omega_0$ would be the automorphism scheme of the
	non-associate algebra $(V_0,*)$ (cf.\ Example~\ref{ex:autotopism}).
	Regardless of which $\Omega_0$ we choose, by the fact stated at the start of
	the proof of Proposition~\ref{prop:Z}, $\Op{S}{P}\cap \Omega_0$ will be a
	closed subscheme of $\Omega_0$.
\end{remark}

\begin{remark}
	For disjoint $A,B\subset \zrange{\vav}$, the proof of
	Proposition~\ref{prop:Z} can be modified to show that
	$\Op[A,B]{S}{P}\subset_{\rm c}\Omega_{A,B}$ whenever $P\subset K[X_{A\sqcup
	B}]$. If $P\nsubseteq K[X_{A\sqcup B}]$, then $\Op[A,B]{S}{P}$ is just a
	functor.
\end{remark}

Now that we have defined $\Op{S}{P}$ as a subscheme of $\Omega$, we turn to
define the ideal $\Id{S}{\Delta}$ and the tensor space $\Ten{P}{\Delta}$ when
$\Delta$ is a \emph{subscheme} of $\Omega$ (rather than a subset of
$\Omega(K)$). The definitions are in the spirit of
Section~\ref{sec:intro}, now incorporating extensions of $K$:
\begin{align*}
	\Id{S}{\Delta} & = \{ p\in K[X] \mid 
			\forall L\in \Comm{K}, 0=\bra{T_L}P(\Delta(L))\}, \\
	\Ten{P}{\Delta} & = \{ t\in T\mid 
			\forall L\in \Comm{K}, 0=\bra{T_L}P(\Delta(L))\}.
\end{align*}
Therefore, the following is a consequence of the above definitions.

\begin{prop}\label{prop:inclusion-reverse}
	The formation of $\Ten{P}{\Delta}$, $\Id{S}{\Delta}$ and $\Op{S}{P}$
	is inclusion-reversing relative to each of the inputs $P$, $\Delta$, $S$.
\end{prop}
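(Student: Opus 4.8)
The plan is to observe that all three families are carved out of a common ambient object by a single ternary condition, so that every monotonicity claim is purely formal: widening the range of a universal quantifier can only shrink the set it defines.

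First I would record the unwound definitions. For $t\in T$, $p\in K[X]$, $L\in\Comm{K}$ and $\omega\in\Omega(L)$, write $R_L(t,p,\omega)$ for the assertion $\bra{t_L}p(\omega)=0$ appearing in \eqref{eq:p-omega-action}. Then, directly from the definitions preceding the proposition,
\begin{align*}
	\Id{S}{\Delta}&=\{p\in K[X]\mid \forall L\in\Comm{K}\ \forall t\in S\ \forall\omega\in\Delta(L):\ R_L(t,p,\omega)\},\\
	\Ten{P}{\Delta}&=\{t\in T\mid \forall L\in\Comm{K}\ \forall p\in P\ \forall\omega\in\Delta(L):\ R_L(t,p,\omega)\},\\
	\Op{S}{P}(L)&=\{\omega\in\Omega(L)\mid \forall t\in S\ \forall p\in P:\ R_L(t,p,\omega)\}.
\end{align*}

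Next I would run through the three inputs. For $S$: the set $\Ten{P}{\Delta}$ does not mention $S$, and if $S'\subseteq S$ then ``$\forall t\in S$'' entails ``$\forall t\in S'$'', so $\Id{S}{\Delta}\subseteq\Id{S'}{\Delta}$ and $\Op{S}{P}(L)\subseteq\Op{S'}{P}(L)$ for every $L$. The argument for $P$ is identical with the roles of the first two quantifiers interchanged: $\Id{S}{\Delta}$ is independent of $P$, and $P'\subseteq P$ gives $\Ten{P}{\Delta}\subseteq\Ten{P'}{\Delta}$ and $\Op{S}{P}(L)\subseteq\Op{S}{P'}(L)$. For $\Delta$: $\Op{S}{P}$ does not involve $\Delta$ at all, and $\Delta'\subseteq\Delta$ means $\Delta'(L)\subseteq\Delta(L)$ for every $L$, so ``$\forall\omega\in\Delta(L)$'' entails ``$\forall\omega\in\Delta'(L)$'', giving $\Id{S}{\Delta}\subseteq\Id{S}{\Delta'}$ and $\Ten{P}{\Delta}\subseteq\Ten{P}{\Delta'}$.

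There is no real obstacle here; the only point worth a sentence is the meaning of ``inclusion'' for the scheme $\Op{S}{P}$. Since every $\Op{-}{-}$ in sight is a subfunctor of $\Omega$, the pointwise containments just obtained are automatically natural in $L$ and hence are genuine subscheme inclusions (in fact closed ones, by Proposition~\ref{prop:Z}), so nothing beyond the set-theoretic bookkeeping on the displayed descriptions is needed.
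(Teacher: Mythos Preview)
Your proof is correct and matches the paper's approach exactly: the paper simply states that the proposition ``is a consequence of the above definitions,'' and you have spelled out precisely why, by observing that each set is cut out by universally quantified instances of the single condition $\bra{t_L}p(\omega)=0$, so enlarging any of $S$, $P$, $\Delta$ can only shrink the resulting set.
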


\subsection{Proof of Theorem~\ref{thm:correspondence}}

Proposition~\ref{prop:Z} handles the claim that $\Op{S}{P}$ is a closed
subscheme of $\Omega$, and $\Id{S}{\Delta}$ is an ideal and $\Ten{P}{\Delta}$ is
a $K$-submodule of $T$, and Proposition~\ref{prop:inclusion-reverse} handles the
order-reversing inclusions. It remains to show the correspondence.  Fix
$S\subset T$, $P\subset K[X]$,  $\Delta \subset \Omega$, and let $L$ denote an
arbitrary commutative $K$-algebra.  Recall that we need to show that
\begin{align*}
S  \subseteq \Ten{P}{\Delta} \quad\Leftrightarrow\quad
		P  \subseteq \Id{S}{\Delta} \quad \Leftrightarrow\quad
			\Delta  \subseteq \Op{S}{P}.
\end{align*}
Unfolding the definitions, we see that each of these conditions is equivalent to
\begin{align*}
	(\forall t)(\forall p)(\forall L)(\forall\omega) \Big(
	\big((t\in S)\wedge (p\in P)\wedge(\omega\in \Delta(L)\big)\Rightarrow 
	(\bra{t}p(\omega)=0) \Big),
\end{align*}
and so the proof is complete. \qed

\subsection{Frames with non-projective modules.}\label{sec:general-module}
\label{subsec:non-projective}

When $V_0,\dots,V_{\vav}$ are not all f.g.\ projective $K$-modules, it can
happen that the functor $\Omega:L\mapsto  \prod_a\End_L(L\otimes V_a)$ is not an
affine $K$-scheme. In this case,  speaking about closed subschemes of $\Omega$
is meaningless, although one could still consider general subfunctors of
$\Omega$ and Theorem~\ref{thm:correspondence} remains correct if one interprets
$\subset $ as ``being a subfunctor.''  In fact, one can take a step further and
show that when the modules $V_a$ are finitely presented, the functor $\Omega$ is
a (set-valued) \emph{sheaf} on the \emph{large fppf site} of $\mathrm{Spec}\,K$,
and then Theorem~\ref{thm:correspondence} remains correct upon interpreting
$\subset$ and $\subset_{\mathrm{c}}$ as ``being a subsheaf.''

An alternative direction entirely is to work with rings $K$ having nice module
decomposition properties, for example, PIDs, uniserial rings, and general
K\"othe rings.  Here without assumption of projectivity, all f.g. $K$-modules
decompose as $V=Kv_1\oplus \cdots \oplus Kv_r$.  Hence, $\End(V)$ is isomorphic
to \emph{checkered matrices} $[M_{ij}]$ where $M_{ij}\in Kv_j\oslash Kv_i$.
The polynomials of our correspondence can be
recovered in terms of the matrix coordinates. Unfortunately such decompositions
now depend on the selected decompositions which makes the correspondence subject
to these choices, and the scheme theoretic implications need not hold. For some
applications, this undoubtedly should suffice, but that is a topic we do not
explore further here.

\subsection{More about base change}

We close this section by discussing the behavior of the tensor space
$\Ten{P}{\Delta}$ under base change, establishing several results that will be
important later.

While the definitions of $\Id{S}{\Delta}$ and $\Ten{P}{\Delta}$ are natural,
from a computational point of view they may seem discouraging. Even when
equations defining $\Delta$ are given, finding $\Id{S}{\Delta}$ and
$\Ten{P}{\Delta}$ ostensibly requires enumeration on all commutative
$K$-algebras. This hardship is addressed in the following two lemmas, which show
that it is enough to consider particular choices of $L$ and $\omega$. The second
lemma is sufficient for the applications considered in this work. 

\begin{lem}\label{lem:generic-element}
	Let $\Delta$ be a subscheme of $\Omega$, with defining polynomials ${\cal
	F}\subset K[y_1,\dots,y_n]$ inside some $\mathbb{A}^n$, and let
	$K[\Delta]=K[y_1,\dots,y_n]/({\cal F})$. Suppose $t\in T$, $p\in K[X]$, and
	$\hat{\omega}=(\hat{y}_1,\dots,\hat{y}_n)\in \Delta(K[\Delta])$, where $\hat{y_i}$ is
	the image of $y_i$ in $K[\Delta]$. Then for all $L\in \Comm{K}$ and all $\omega\in \Delta(L)$,
	$\bra{t_L}p(\omega)=0$ if, and only if, for all $\hat{\omega}\in \Delta(K[\Delta])$,
	$\bra{t_{K[\Delta]}}p(\hat{\omega})=0$.
\end{lem}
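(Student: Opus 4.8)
The plan is to exploit the fact that $K[\Delta]$ is the \emph{generic} $\Comm{K}$-algebra for the scheme $\Delta$: the identity map $K[\Delta]\to K[\Delta]$ corresponds, via the functor-of-points description from Section~\ref{sec:Galois}, to a tautological point $\hat\omega\in\Delta(K[\Delta])$, and every point $\omega\in\Delta(L)$ is the image of $\hat\omega$ under a (unique) $K$-algebra homomorphism $\phi\colon K[\Delta]\to L$. Concretely, if $\omega=(y_1(\omega),\dots,y_n(\omega))\in\Delta(L)\subset L^n$, then $\phi$ is the map induced by $y_i\mapsto y_i(\omega)$, which is well defined precisely because $\omega$ satisfies the polynomials $\cal F$ cutting out $\Delta$. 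So the first step is to record this: $\omega=\Delta(\phi)(\hat\omega)$ for a suitable $\phi\in\Hom_{\Comm{K}}(K[\Delta],L)$.

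The second and central step is naturality of base change of tensors. Given $\phi\colon K[\Delta]\to L$, there is an induced $L$-algebra structure on $L$ via $\phi$ (equivalently, $L$ is a $K[\Delta]$-algebra through $\phi$), and the base-change maps satisfy $\bra{t_L}=\mathrm{id}_L\otimes_{K[\Delta]}\bra{t_{K[\Delta]}}$ after identifying $L\otimes_K V_a$ with $L\otimes_{K[\Delta]}(K[\Delta]\otimes_K V_a)$; this is just associativity of tensor product together with formula~\eqref{eq:base-change-of-tensor}. The action of a polynomial $p(X)$ on a tensor, as in~\eqref{eq:p-omega-action}, is built from $K$-linear operations (scalar multiples, sums, and composites of the $\omega_a$), so it commutes with applying $\mathrm{id}_L\otimes_{K[\Delta]}(-)$. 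Hence
\[
	\bra{t_L}\,p(\omega) \;=\; \bigl(\mathrm{id}_L\otimes_{K[\Delta]}(-)\bigr)\bigl(\bra{t_{K[\Delta]}}\,p(\hat\omega)\bigr).
\]
This immediately gives the $(\Leftarrow)$ direction: if $\bra{t_{K[\Delta]}}p(\hat\omega)=0$ for the tautological $\hat\omega$, then applying any $\phi$ yields $\bra{t_L}p(\omega)=0$ for every $L$ and every $\omega\in\Delta(L)$. The $(\Rightarrow)$ direction is the trivial specialization $L=K[\Delta]$, $\omega=\hat\omega$.

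I expect the only real subtlety — not so much an obstacle as a point requiring care — to be the bookkeeping identifying $L\otimes_K V_a\cong L\otimes_{K[\Delta]}(K[\Delta]\otimes_K V_a)$ compatibly with all the structure maps, and checking that $p(\omega)$ really is obtained from $p(\hat\omega)$ by applying $\mathrm{id}_L\otimes_{K[\Delta]}(-)$ termwise; this is routine but must be done with the definitions~\eqref{eq:base-change-of-tensor} and~\eqref{eq:p-omega-action} in hand. One should also note at the outset that $K[\Delta]$ genuinely lies in $\Comm{K}$ (it is a commutative $K$-algebra by construction) so that $\hat\omega\in\Delta(K[\Delta])$ makes sense. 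A minor remark: the statement quantifies over ``all $\hat\omega\in\Delta(K[\Delta])$'' but the argument only uses the single tautological one, so in fact it suffices — and is cleanest — to test on that universal point alone; the displayed quantifier is then automatically satisfied.
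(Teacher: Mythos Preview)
Your proposal is correct and follows essentially the same approach as the paper: both use the universal property of $K[\Delta]$ to produce a $K$-algebra homomorphism $\phi\colon K[\Delta]\to L$ sending $\hat\omega$ to $\omega$, and then invoke naturality of the base-change map on tensors to transport the vanishing $\bra{t_{K[\Delta]}}p(\hat\omega)=0$ to $\bra{t_L}p(\omega)=0$. Your closing remark that only the single tautological $\hat\omega$ is needed (rather than all of $\Delta(K[\Delta])$) is a nice sharpening of the statement.
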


\begin{proof}
	We prove the converse. Let $L\in\Comm{K}$
	and $\omega\in \Delta(L)$. Regarding $\Delta(L)$ as a subset of
	$\mathbb{A}^n(L)$, we may identify  $\omega$ with a point
	$u=(u_1,\dots,u_n)\in L^n$. Since the polynomials $\cal F$ vanish at $u$, we
	have a unique $K$-algebra homomorphism $\phi:K[\Delta]=K[y_1,\dots,y_n]/({\cal
	F})\to L$ satisfying, for all $i$, $\phi(\hat{y_i})=u_i$. In particular,
	$\Delta(\phi)$ takes $\omega_0$ to $\omega$. Now, the base change map
	$(K[\Delta]\otimes V_0)\oslash_{K[\Delta]}\cdots
		\oslash_{K[\Delta]}(K[\Delta]\otimes V_{\vav})\to
			(L\otimes V_0)\oslash_{L}\cdots\oslash_{L}(L\otimes V_{\vav})$ induced by
	$\phi$ is a $K[X]$-module homomorphism (relative to the module structures
	induced by $\omega_0$ and $\omega$) taking $\bra{t_{K[\Delta]}} $ to $\bra{t_L} $.
	Since $\bra{t_{K[\Delta]}}p(\hat{\omega})=0$, this means that $\bra{t_L}p(\omega)=0$.
\end{proof}

\begin{lem}
	Let $p\in K[X]$ be a linear homogeneous polynomial, and let $U$ be a
	$K$-module summand of $\Omega(K)$ generated by $\{u_j\}_{j\in J}$. Let
	$\Delta=\Delta_U$ be the subscheme of $\Omega$ determined by
	$\Delta(L)=L\otimes U$. Then for all $L\in\Comm{K}$	and all $\omega\in
	\Delta(L)$ $\bra{t_L}p(\omega)=0$ if, and only if, for all $j\in J$,
	$\bra{t}p(u_j)=0$. When $K$ is a field and $P\subset K[X]$ is an ideal
	generated by linear homogeneous polynomials, then for all $S\subset T$ the
	scheme $\Op{S}{P}$ is of the form $\Delta_U$ for some $K$-module $U\leq
	\Omega(K)$.
\end{lem}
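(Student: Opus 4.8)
The plan is to prove the two assertions in turn: the first by a direct linearity argument, the second by combining it with a flatness argument over the field $K$.

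\emph{First assertion.} Write $p=\sum_{a\in\zrange{\vav}}\mu_a x_a$. Unfolding \eqref{eq:p-omega-action} gives $\bra{t_L}p(\omega)\ket{v}=\mu_0\omega_0\bra{t_L} v\ra+\sum_{a=1}^{\vav}\mu_a\bra{t_L}\omega_a v_a,v_{\bar{a}}\ra$, so $\omega\mapsto\bra{t_L}p(\omega)$ is an $L$-linear map $\Omega(L)\to(L\otimes V_0)\oslash_L\cdots\oslash_L(L\otimes V_{\vav})$. As the $V_a$ are f.g.\ projective there are natural identifications $\Omega(L)\cong L\otimes\Omega(K)$ and $(L\otimes V_0)\oslash_L\cdots\oslash_L(L\otimes V_{\vav})\cong L\otimes(V_0\oslash\cdots\oslash V_{\vav})$, and under them $\bra{t_L}p(1\otimes u)=1\otimes\bra{t}p(u)$ for every $u\in\Omega(K)$ --- this is the $K[X]$-linearity of the base-change map of tensors, as in the proof of Lemma~\ref{lem:generic-element}. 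The forward implication is then immediate on taking $L=K$ and $\omega=u_j\in\Delta(K)=U$. For the converse, any $\omega\in\Delta(L)=L\otimes U$ is a finite $L$-combination $\omega=\sum_j c_j(1\otimes u_j)$ of the generators of $U$, so $L$-linearity gives $\bra{t_L}p(\omega)=\sum_j c_j\bigl(1\otimes\bra{t}p(u_j)\bigr)$, which vanishes once every $\bra{t}p(u_j)=0$.

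\emph{Second assertion.} Now $K$ is a field, $P=(p_1,\dots,p_r)$ with the $p_i$ linear homogeneous, and $S\subset T$ is arbitrary. First I would replace $P$ by its generators: the actions of $\omega_0,\dots,\omega_{\vav}$ on distinct axes commute, so each $\omega\in\Omega(L)$ makes $(L\otimes V_0)\oslash_L\cdots\oslash_L(L\otimes V_{\vav})$ into a $K[X]$-module, and as $K[X]$ is commutative, $\bra{t_L}q(\omega)=0$ for all $q\in P$ if and only if $\bra{t_L}p_i(\omega)=0$ for $i=1,\dots,r$. Let $\Phi_{t,i}\colon\Omega(K)\to V_0\oslash\cdots\oslash V_{\vav}$ be the $K$-linear map $\omega\mapsto\bra{t}p_i(\omega)$ from the first part; the compatibility and $L$-linearity above show that $\omega\mapsto\bra{t_L}p_i(\omega)$ on $\Omega(L)$ equals $\mathrm{id}_L\otimes\Phi_{t,i}$, so $\Op{S}{P}(L)=\bigcap_{t\in S,\,1\le i\le r}\ker(\mathrm{id}_L\otimes\Phi_{t,i})$. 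Put $U:=\Op{S}{P}(K)=\bigcap_{t,i}\ker\Phi_{t,i}$; being an intersection of kernels of $K$-linear maps on the finite-dimensional space $\Omega(K)=\prod_a\End(V_a)$, it is a $K$-subspace, hence a summand, and $\Delta_U$ is a closed subscheme of $\Omega$ by Lemma~\ref{lem:definition-of-calE}.

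It remains to check $\Op{S}{P}(L)=L\otimes U=\Delta_U(L)$ for every $L\in\Comm{K}$. For $\supset$: fixing a basis $\{u_j\}$ of $U$, we have $\bra{t}p_i(u_j)=0$ for all $t\in S$ and $i$ (since $u_j\in U=\Op{S}{P}(K)$), so the converse half of the first assertion gives $L\otimes U\subset\ker(\mathrm{id}_L\otimes\Phi_{t,i})$ for all $t,i$, hence $L\otimes U\subset\Op{S}{P}(L)$. For $\subset$: since $\Omega(K)$ is finite-dimensional, finitely many of the subspaces $\ker\Phi_{t,i}$ already cut out their common intersection, say $U=\bigcap_{l=1}^k\ker\Phi_{t_l,i_l}$; as $L$ is flat over the field $K$, tensoring with $L$ commutes with finite intersections of subspaces and with kernels of $K$-linear maps, so $L\otimes U=\bigcap_{l=1}^k\ker(\mathrm{id}_L\otimes\Phi_{t_l,i_l})\supset\Op{S}{P}(L)$. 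The two inclusions give the equality; since $\Op{S}{P}$ and $\Delta_U$ then have naturally identical $L$-points, they coincide as closed subschemes of $\Omega$. I expect the main obstacle to be this last step --- passing the base change $L\otimes(-)$ through the \emph{a priori} infinite intersection $\bigcap_{t\in S,i}\ker\Phi_{t,i}$ --- where finite-dimensionality of $\Omega(K)$ (to reduce to a finite subintersection) and flatness of $L$ over the field $K$ are precisely what is needed, together with the bookkeeping that confirms the condition defining $\Op{S}{P}$ is governed by a single family of $K$-linear maps defined over $K$.
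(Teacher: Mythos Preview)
Your proof is correct and follows essentially the same approach as the paper. For the first assertion both arguments exploit the $L$-linearity of $\omega\mapsto\bra{t_L}p(\omega)$ together with the base-change identity $\bra{t_L}p(1\otimes u)=1\otimes\bra{t}p(u)$; for the second, the paper simply appeals to Remark~\ref{rem:make-scheme} (the defining equations of $\Op{S}{P}$ are linear homogeneous, so the scheme is automatically of the form $\Delta_U$), whereas you spell out the equivalent flatness/finite-dimensionality argument in full.
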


\begin{proof}
	We will prove the reverse direction. By assumption,  $\omega\in \Delta(L)$
	can be written as $\sum_j \gamma_j \otimes u_j$, for some
	$\{\gamma_j\}_{j\in J}$ in $L$,  all but finitely many being  $0$. Write
	$p=\sum_{a\in \zrange{\vav}} \lambda_a x_a$. Then for all $v\in
	\prod_{a=1}^{\vav}V_a$, we have that
	\begin{align*}
		\bra{t_L} p (\omega)\ket{1\otimes v}&=
		\sum_j \left( \lambda_0 (\gamma_j \otimes u_{j,0}) 
			\bra{t_L} 1\otimes v_{1},\ldots,1\otimes v_{\vav}\ra +
		\sum_{a=1}^{\vav} \lambda_a 
			\bra{t_L} (\gamma_j\otimes u_{j,a}) (1\otimes v_a),1\otimes v_{\bar{a}}\ra\right)\\
		&=
		\sum_j \gamma_j\otimes \left(\lambda_0 u_{j,0} 
			\bra{t} v\ra + \sum_{a=1}^{\vav} \lambda_a \bra{t} u_{j,a}v_a,v_{\bar{a}}\ra\right)=
		\sum_j \gamma_j \otimes \bra{t }  p(  u_j) \ket{  v}=
		0,
	\end{align*}
	so $\bra{t_L}p(\omega)=0$.

	The last assertion follows from Remark~\ref{rem:make-scheme} by noting that
	the polynomials defining $\Op{S}{P}$ in $\Omega\cong \mathbb{A}^d$ are
	linear homogeneous; the $K$-vector $U$ spaces is the intersection of their
	kernels.
\end{proof}

Given an affine $K$-scheme $\mathfrak X$ and $L\in\Comm{K}$, let $\mathfrak X_L$
denote the restriction of $\mathfrak X$ to $\Comm{L}$. Then $\mathfrak X_L$ is
an affine $L$-scheme---it is defined by the polynomials defining $\frak X$,
regarded as polynomials over $L$. Given $P\subset K[X]$ and $\Delta\subset
\Omega$, the next proposition shows that, under mild assumptions,
$\Ten{P}{\Delta(L)}$ (an $L$-submodule of $L\otimes T$) and $\Ten{P}{\Delta}$ (a
$K$-submodule of $T$) are the same after extending by $L$.

\begin{prop}\label{prop:flatness}
	Let $\Delta\subset_{\rm c} \Omega$, and let be $P$ a finitely generated
	ideal of $K[X]$. If $L\in \Comm{K}$ is flat over $K$, then
	$\Ten{P}{\Delta(L)} = L\otimes \Ten{P}{\Delta}$.
\end{prop}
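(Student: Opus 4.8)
The plan is to realize $\Ten{P}{\Delta}$ as the kernel of a \emph{single} $K$-linear map out of $T$ whose target is a \emph{finite} direct sum, and then to observe that the corresponding construction over $L$ is the base change of that map by the exact functor $L\otimes_K(-)$. Here $\Ten{P}{\Delta(L)}$ is understood as the tensor-space construction performed with base ring $L$: applied to the $L$-tensor space $L\otimes T\hookrightarrow (L\otimes V_0)\oslash_L\cdots\oslash_L(L\otimes V_{\vav})$, the ideal $PL[X]$, and the $L$-scheme $\Delta_L$ obtained by restricting $\Delta$ to $\Comm{L}$. Write $W:=V_0\oslash V_1\oslash\cdots\oslash V_{\vav}$.

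First I would build the linear model. Write $P=(p_1,\dots,p_r)$, using finite generation. Since $\Id{t}{\Delta}$ is an ideal for every $t\in T$ (Theorem~\ref{thm:correspondence}), the condition $P\subseteq\Id{t}{\Delta}$ defining $\Ten{P}{\Delta}$ is equivalent to $p_1,\dots,p_r\in\Id{t}{\Delta}$. Let $K[\Delta]=K[y_1,\dots,y_n]/(\mathcal F)$ be the coordinate ring of $\Delta$ and $\hat\omega=(\hat y_1,\dots,\hat y_n)\in\Delta(K[\Delta])$ its tautological point; by Lemma~\ref{lem:generic-element}, $p_j\in\Id{t}{\Delta}$ if and only if $\bra{t_{K[\Delta]}}p_j(\hat\omega)=0$. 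Since each $V_a$ is f.g.\ projective, iterating Lemma~\ref{lem:definition-of-calE}(ii) identifies the target $(K[\Delta]\otimes V_0)\oslash_{K[\Delta]}\cdots\oslash_{K[\Delta]}(K[\Delta]\otimes V_{\vav})$ naturally with $K[\Delta]\otimes_K W$, and $t\mapsto\bra{t_{K[\Delta]}}p_j(\hat\omega)$ is $K$-linear. So, letting $\varepsilon_j\colon T\to K[\Delta]\otimes_K W$ denote this map and $\varepsilon=(\varepsilon_1,\dots,\varepsilon_r)$, we obtain
\[
	\Ten{P}{\Delta}=\ker\bigl(\varepsilon\colon T\longrightarrow (K[\Delta]\otimes_K W)^{\oplus r}\bigr).
\]

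Next I would run the same construction over $L$ and identify the result with the base change of $\varepsilon$. The scheme $\Delta_L$ is cut out by the polynomials $\mathcal F$ regarded over $L$, so right-exactness of $L\otimes_K(-)$ supplies a canonical identification $L[\Delta_L]\cong L\otimes_K K[\Delta]$ (no flatness needed here), under which the tautological point of $\Delta_L$ is $1\otimes\hat\omega$; likewise $(L\otimes V_0)\oslash_L\cdots\cong L\otimes_K W$ by f.g.\ projectivity. Naturality in the base ring of the action \eqref{eq:p-omega-action} and of base change of tensors \eqref{eq:base-change-of-tensor}---exactly the compatibility used in the proof of Lemma~\ref{lem:generic-element}, here applied to the structure map $K[\Delta]\to L\otimes_K K[\Delta]$---shows that the $L$-analogue of $\varepsilon_j$ is $\mathrm{id}_L\otimes_K\varepsilon_j$, hence $\Ten{P}{\Delta(L)}=\ker\bigl(\mathrm{id}_L\otimes_K\varepsilon\colon L\otimes T\to L\otimes(K[\Delta]\otimes_K W)^{\oplus r}\bigr)$. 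Finally, flatness of $L$ over $K$ makes $L\otimes_K(-)$ exact, so tensoring $0\to\ker\varepsilon\to T\xrightarrow{\varepsilon}(K[\Delta]\otimes_K W)^{\oplus r}$ with $L$ stays exact and yields $\ker(\mathrm{id}_L\otimes_K\varepsilon)=L\otimes_K\ker\varepsilon$ inside $L\otimes T$; combining the three displays gives $\Ten{P}{\Delta(L)}=L\otimes\Ten{P}{\Delta}$.

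The hardest part will be the middle step---checking that the evaluation map computed over $L$ is, on the nose, the base change of the one computed over $K$. This is intuitively clear but rests on chasing three successive natural identifications ($L[\Delta_L]\cong L\otimes_K K[\Delta]$, the ``versor'' space over $L$ versus $L\otimes_K W$, and compatibility of the $p(\omega)$-action with scalar extension along $K[\Delta]\to L\otimes_K K[\Delta]$), each already available from Lemma~\ref{lem:definition-of-calE} and the proof of Lemma~\ref{lem:generic-element}; the work is bookkeeping, not new ideas. The roles of the hypotheses are then transparent: finite generation of $P$ is what makes the target of $\varepsilon$ a \emph{finite} direct sum---since $L\otimes_K(-)$ need not commute with infinite products, an infinitely generated $P$ would break the argument---and flatness of $L$ is used precisely in the last step; apart from being an affine $K$-subscheme of $\Omega$, so that $K[\Delta]$ and $\hat\omega$ exist, no further property of $\Delta$ intervenes.
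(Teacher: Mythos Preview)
Your proof is correct and follows essentially the same approach as the paper: both realize $\Ten{P}{\Delta}$ as the kernel of a $K$-linear map $T\to (K[\Delta]\otimes_K W)^{\oplus r}$ built from the generic point $\hat\omega\in\Delta(K[\Delta])$ via Lemma~\ref{lem:generic-element}, identify the $L$-analogue as $\mathrm{id}_L\otimes\varepsilon$, and invoke flatness to commute kernel with $L\otimes_K(-)$. Your write-up is somewhat more explicit about the intermediate identifications (e.g.\ $L[\Delta_L]\cong L\otimes_K K[\Delta]$), but the argument is the same.
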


The assumptions on $P$ and $L$ are always satisfied when $K$ is a field.

\begin{proof}
	Fix a polynomials $p_1,\dots,p_n$ generating $P$, and let $K[\Delta]$ and
	$\hat{\omega}$ be as in  Lemma~\ref{lem:generic-element}. By that lemma, $t\in
	\Ten{P}{\Delta}$ if, and only if, for all $i$, $\bra{t_{K[\Delta]}}p_i(\hat{\omega})=0$.
	 Equivalently, $\Ten{P}{\Delta}$ is the kernel of the map
	$\Phi:T\to [(K[\Delta]\otimes V_0)\oslash_{K[\Delta]} \cdots \oslash_{K[\Delta]} (K[\Delta]\otimes
	V_{\vav})]^{n}\cong [K[\Delta]\otimes (V_0\oslash\cdots\oslash V_{\vav})]^{n}$
	(see Lemma~\ref{lem:definition-of-calE} for the last isomorphism) given by
	$\Phi(t)=(\bra{t_{K[\Delta]}}p_i(\hat{\omega})\mid i\in \range{n} )$. Likewise, 
	$\Ten{P}{\Delta(L)}$ is the kernel of a similarly defined 
	$\Phi_L: L\otimes T\to [(L\otimes K[\Delta])\otimes (V_0\oslash \cdots\oslash V_{\vav})]^{n}$.
	Note that  $\Phi_L=\Phi\otimes 1_L$, and since   $L$ is
	flat over $K$, the natural map $\ker(\Phi)\otimes L\to \ker(\Phi_L)$ is an
	isomorphism. 
\end{proof}

\begin{coro}\label{cor:flatness}
	Let $S\subset T$ be a $K$-submodule, and let $P$ be a finitely generated
	ideal of $K[X]$. If $L\in \Comm{K}$ is flat over $K$, then
	$\Ten{P}{\Op{L\otimes S}{P}}=L\otimes \Ten{P}{\Op{S}{P}}$.
\end{coro}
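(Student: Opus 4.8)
The plan is to deduce the corollary from Proposition~\ref{prop:flatness}. By Proposition~\ref{prop:Z}, $\Op{S}{P}$ is a \emph{closed} subscheme of $\Omega$, and $P$ is finitely generated and $L$ is flat over $K$ by hypothesis, so Proposition~\ref{prop:flatness} applied with $\Delta=\Op{S}{P}$ yields $L\otimes\Ten{P}{\Op{S}{P}}=\Ten{P}{(\Op{S}{P})_L}$ as $L$-submodules of $L\otimes T$, where $(\Op{S}{P})_L$ denotes the restriction of the affine $K$-scheme $\Op{S}{P}$ to $\Comm{L}$ --- that is, what is written $\Op{S}{P}(L)$ in the statement of Proposition~\ref{prop:flatness}. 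Hence the corollary follows once I establish the scheme-theoretic identity
\[
	\Op{L\otimes S}{P}=\big(\Op{S}{P}\big)_L
\]
of affine $L$-schemes, because applying $\Ten{P}{-}$ (computed in the $L$-tensor space $L\otimes T$) to both sides then gives $\Ten{P}{\Op{L\otimes S}{P}}=\Ten{P}{(\Op{S}{P})_L}=L\otimes\Ten{P}{\Op{S}{P}}$.

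To prove this identity I would evaluate both sides on an arbitrary $M\in\Comm{L}$, which is also an object of $\Comm{K}$ via the composite $K\to L\to M$. Transitivity of base change supplies natural isomorphisms $M\otimes_L(L\otimes_K V_a)\cong M\otimes_K V_a$ for each $a$, hence $\prod_a\End_M(M\otimes_L(L\otimes_K V_a))\cong\prod_a\End_M(M\otimes_K V_a)$, which identifies the $M$-points of the $L$-scheme $\Omega$ with those of the $K$-scheme $\Omega$; similarly $M\otimes_L(L\otimes_K T)\cong M\otimes_K T$, and under these identifications the interpretation $\bra{\cdot}_M$ of the $L$-tensor space $L\otimes T$ corresponds to the interpretation of the original $K$-tensor space $T$ base-changed to $M$ --- this is exactly \eqref{eq:base-change-of-tensor} applied in two stages versus in one. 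Finally, for fixed $\omega$ and $p\in P$ the assignment $t\mapsto\bra{t}p(\omega)$ is additive and $M$-linear, and the image of $S$ in $M\otimes_K T$ spans the same $M$-submodule as the image of $L\otimes_K S$ (both being the $M$-span of $\{1_M\otimes s:s\in S\}$ under the canonical identification $M\otimes_L(L\otimes_K S)\cong M\otimes_K S$). Consequently an operator $\omega\in\Omega(M)$ annihilates every element of $L\otimes_K S$ for every $p\in P$ if and only if it annihilates every element of $S$ for every $p\in P$, i.e.\ $\Op{L\otimes S}{P}(M)=\Op{S}{P}(M)$, naturally in $M$, which is the asserted identity.

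The only genuine obstacle is the bookkeeping in the middle step: one must keep the two layers $K\to L\to M$ of base change straight and verify that the tensor interpretations and the polynomial actions of \eqref{eq:p-omega-action} are all compatible with the canonical isomorphism $M\otimes_L(L\otimes_K(-))\cong M\otimes_K(-)$. Once that compatibility is recorded, the remainder is a direct invocation of Proposition~\ref{prop:flatness}, whose hypotheses ($P$ finitely generated, $L$ flat over $K$, and $\Delta=\Op{S}{P}$ a closed subscheme of $\Omega$) hold exactly as noted above; no further computation is required.
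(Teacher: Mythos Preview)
Your proposal is correct and follows exactly the paper's approach: the paper's proof is the single line ``As $\Op{L\otimes S}{P}=\Op{S}{P}_L$, this is a special case of Proposition~\ref{prop:flatness},'' and you have supplied the details behind both assertions. Your careful verification of the scheme identity $\Op{L\otimes S}{P}=(\Op{S}{P})_L$ via transitivity of base change is precisely what the paper leaves implicit.
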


\begin{proof}
	As $\Op{L\otimes S}{P}=\Op{S}{P}_L$, this is a special case of
	Proposition~\ref{prop:flatness}.
\end{proof}


\section{Linear traits and Theorems~\ref{mainthm:Densor} and~\ref{mainthm:Lie-Asc}}\label{sec:universality}

As before, $\bra{\cdot}:T\hookrightarrow V_{0}\rversor \dots\rversor V_{\vav}$
denotes  a tensor space over a commutative ring $K$ where the frame
$(V_{0},\dots,V_{\vav})$ consists of f.g.\ projective $K$-modules.

For $S\subset T$ and a linear homogeneous ideal $P\subset K[X]$, we study
$\Op{S}{P}$ and its $P$-closure $\Ten{P}{\Op{S}{P}}$. In this case, for all $L\in \Comm{K}$,
$\Op{S}{P}(L)$ is a submodule of $\Omega(L)$. We prove
Theorem~\ref{mainthm:Densor}, showing that the densor of a tensor subspace $S$
is contained in all closures $\Ten{P}{\Op{S}{P}}$ when $P$ has full support. In
the second half, we study when $\Op{S}{P}$ is closed under natural associative
and non-associative products on $\Omega$, proving Theorem~\ref{mainthm:Lie-Asc}
and deriving consequences in valence $\vav=2$.

\subsection{A torus action}
\label{subsec:torus}

Let $\mathbb{T}$ denote the affine $K$-scheme $L\mapsto
(L^\times)^{\zrange{\vav}}$. Given $L\in \Comm{K}$, the $L$-points $\tau\in
\mathbb{T}(L)$ form a group that acts on $\Omega(L)$ by $\tau\omega =
(\tau_0\omega_0, \ldots, \tau_{\vav}\omega_{\vav})$. Further, the group of
$K$-points $\mathbb{T}(K)=(K^\times)^{\zrange{\vav}}$ acts on $K[X]$ via 
\begin{align}\label{def:torus-act}
	q^{\tau} (x_0,\ldots, x_{\vav})=q(\tau_0^{-1} x_0,\ldots,\tau_{\vav}^{-1} x_{\vav}).
\end{align}
If $L\in\Comm{K}$, then we have a similar action of $\mathbb{T}(L)$ on $L\otimes
K[X]\cong L[X]$.

If $\Delta$ is a subscheme of $\Omega$ and $\tau\in \mathbb{T}(K)$, then we
write $\tau\Delta$ for the subfunctor of $\Omega$ given by
$(\tau\Delta)(L)=\{\tau\omega\,|\,\omega\in \Delta(L)\}$. If $\Delta$ is a
closed subscheme of $\Omega$, then so is $\tau \Delta$. Indeed, multiplication
by $\tau$ defines an $K$-scheme isomorphism $\tau:\Omega \to \Omega$, and
therefore the composition $\Delta\to \Omega \xrightarrow{\tau}\Omega$ is also an
immersion with image $\tau\Delta$. The following proposition summarizes how the
$\mathbb{T}(K)$-action interacts with the formation of $\Op{-}{-}$, $\Id{-}{-}$
and $\Ten{-}{-}$.

\begin{prop}\label{prop:torus-redundancy}
	Let  $P\subset  K[X]$, $S\subset T$, $\Delta\subset   \Omega$ and
	$\tau\in \mathbb{T}(K)$. Then:
	\begin{enumerate}[(a)]
		\item $\Op{S}{P^\tau}=\tau\Op{S}{P}$.
		\item $\Ten{P^\tau}{\Delta}=\Ten{P}{\tau^{-1}\Delta)}$.
		\item $\Id{S}{\tau^{-1}\Delta}=\Id{S}{\Delta}^\tau$.
	\end{enumerate}
	Thus, $\Ten{P^{\tau}}{\Op{S}{P^\tau}}=\Ten{P}{\Op{S}{P}}$.
\end{prop}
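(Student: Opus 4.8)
The plan is to reduce the whole proposition to a single compatibility identity between the two $\mathbb{T}$-actions: for every $t\in T$, $p\in K[X]$, $L\in\Comm K$, $\tau\in\mathbb{T}(L)$ and $\omega\in\Omega(L)$,
\[
	\bra{t_L}\,p^{\tau}(\omega)=\bra{t_L}\,p(\tau^{-1}\omega),
	\qquad\text{equivalently}\qquad
	\bra{t_L}\,p(\tau\omega)=\bra{t_L}\,p^{\tau^{-1}}(\omega).
\]
To prove this I would first reduce, by $K$-linearity of both sides in $p$, to the case $p=X^{e}$ a monomial. For a monomial, \eqref{eq:p-omega-action} gives $\bra{t_L}(X^{e})(\tau\omega)\ket{v}=(\tau_0\omega_0)^{e(0)}\bra{t_L}(\tau_1\omega_1)^{e(1)}v_1,\dots,(\tau_{\vav}\omega_{\vav})^{e(\vav)}v_{\vav}\rangle$; since each $\tau_a$ is a central scalar, $(\tau_a\omega_a)^{e(a)}=\tau_a^{e(a)}\omega_a^{e(a)}$, and pulling $\tau_0^{e(0)}$ out in front and each $\tau_a^{e(a)}$ ($a\ge 1$) out of the $a$-th slot by $L$-multilinearity of $\bra{t_L}$ yields $\bra{t_L}(X^{e})(\tau\omega)\ket{v}=\bigl(\textstyle\prod_a\tau_a^{e(a)}\bigr)\bra{t_L}(X^{e})(\omega)\ket{v}$. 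Reading \eqref{def:torus-act} on $X^{e}$ shows $\bigl(\prod_a\tau_a^{e(a)}\bigr)X^{e}=(X^{e})^{\tau^{-1}}$, which is exactly the identity for monomials.

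Granting the identity, parts (a)--(c) are each just an unfolding of the definitions together with the substitution $\omega\mapsto\tau^{\pm1}\omega$, plus two elementary bijectivity remarks: for each $L$ the map $\omega\mapsto\tau\omega$ carries $\Delta(L)$ bijectively onto $(\tau\Delta)(L)$, and $q\mapsto q^{\tau}$ is a bijection of $K[X]$ with inverse $q\mapsto q^{\tau^{-1}}$ (because $\tau\mapsto(q\mapsto q^{\tau})$ is a group action). For (a), say: $\omega\in\Op{S}{P^{\tau}}(L)$ iff $\bra{t_L}q^{\tau}(\omega)=0$ for all $t\in S$, $q\in P$, iff (by the identity) $\bra{t_L}q(\tau^{-1}\omega)=0$ for all such $t,q$, iff $\tau^{-1}\omega\in\Op{S}{P}(L)$, iff $\omega\in(\tau\Op{S}{P})(L)$; and $\tau\Op{S}{P}$ is again a closed subscheme of $\Omega$, as noted just before the statement. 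Parts (b) and (c) run the same unfolding through $\Ten{-}{-}$ and $\Id{-}{-}$ respectively, the bijectivity remarks being precisely what pins down which power of $\tau$ lands on the right-hand side.

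The displayed consequence is then purely formal: by (a) we have $\Op{S}{P^{\tau}}=\tau\Op{S}{P}$, so $\Ten{P^{\tau}}{\Op{S}{P^{\tau}}}=\Ten{P^{\tau}}{\tau\Op{S}{P}}$, and by (b) applied with $\Delta=\tau\Op{S}{P}$ this equals $\Ten{P}{\tau^{-1}(\tau\Op{S}{P})}=\Ten{P}{\Op{S}{P}}$.

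I do not expect a genuine obstacle: the one thing to be careful about is that the $\mathbb{T}$-action on $K[X]$ was set up with inverses in \eqref{def:torus-act} precisely so as to be contravariant to the action on $\Omega$, so the entire argument is bookkeeping that keeps the $\tau$'s and $\tau^{-1}$'s in the right places. A minor structural point worth stating explicitly is that all equalities in (a)--(c) are equalities of subfunctors of $\Omega$ (resp.\ of subsets of $K[X]$ and $T$), hence are checked $L$-point by $L$-point, which is exactly the form in which $\Op{-}{-}$, $\Ten{-}{-}$ and $\Id{-}{-}$ over schemes were defined.
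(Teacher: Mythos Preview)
Your proposal is correct and follows exactly the same approach as the paper: the paper's proof consists of the single sentence ``For all $t\in S$, $p\in P$, $L\in\Comm{K}$ and $\omega\in \Omega(L)$, we have $\bra{t_L}p^\tau(\omega)=\bra{t_L}p(\tau^{-1}\omega)$; the proposition follows from this observation.'' You have supplied precisely this identity together with the detailed verifications (reduction to monomials, unfolding of the definitions, derivation of the final consequence) that the paper leaves implicit.
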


\begin{proof}
	For all $t\in S$, $p\in P$, $L\in\Comm{K}$ and $\omega\in \Omega(L)$, we
	have $\bra{t_L}p^\tau(\omega)=\bra{t_L}p(\tau^{-1}\omega)$. The proposition
	follows from this observation.
\end{proof}

\subsection{Optimality of derivations and densors; Theorem~\ref{mainthm:Densor}}

Let $P\subset K[X]$ be a linear homogenous ideal. Recall the support of an ideal
$P\subset K[X]$; we say that $P$ has \emph{full support} if $\supp
P=\zrange{\vav}$. There is a subtlety we need to bring to light for
Theorem~\ref{mainthm:Densor} concerning linear homogeneous polynomial ideals
with full support and such ideals \emph{generated} by linear homogeneous
polynomials with full support. We illustrate this with some examples. 

\begin{ex}\label{ex:lin-hom-id}
\begin{enumerate}[(i)]
	\item Assume $K=\mathbb{F}_2$ and $\vav=2$. Then $P=(x_0-x_1,x_1-x_2)$ is a
	linear homogeneous ideal of full support, containing no linear homogeneous
	polynomials of full support.
	
	\item Assume $K=\mathbb{F}_3$ and $\vav=2$. Then $P=(x_0+x_1+x_2, x_1-x_2)$
	has full support and contains linear homogeneous polynomials of full
	support, but it cannot be \emph{generated} exclusively by linear homogeneous
	polynomials of full support.
	
	\item Assume $K=\mathbb{Z}$ and $\vav=2$. Then $(2x_0-x_1,2x_0-x_2)$ does
	not have full support while $(2x_0-x_1,5x_0-x_2)$ does have full support.
\end{enumerate}
\end{ex}

\addtocounter{submainthm}{2}
\begin{submainthm}
	\label{thm:densor-stronger}
	Let $P\subset K[X]$ be a f.g.\ linear homogeneous ideal and let $S\subset
	T$.
	\begin{enumerate}[(i)]
        \item If $P$ has full support, then $\Den{S}\subset
        \Ten{P}{\Op{S}{P}}$.

		\item If $S$ is fully nondegenerate and there is a subset $A\subset
		\zrange{\vav}$ such that $P$ is generated by linear polynomials in
		$K[X_A]$ and $P+(x_a\,|\,a\in\zrange{\vav}-A)$ has full support, then
		$\Den{S} \subseteq \Ten{P}{\Op{S}{P}}$.
	
		\item If $K$ is a field and is $S$ fully nondegenerate, then $\Den{S}
		\subseteq \Ten{P}{\Op{S}{P}}$.
	\end{enumerate} 
\end{submainthm}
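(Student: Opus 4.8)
The plan is to establish a single statement containing (i)--(iii): for \emph{every} finitely generated linear homogeneous ideal $P\subseteq K[X]$ and every $S\subseteq T$, one has $\Den{S}\subseteq\Ten{P}{\Op{S}{P}}$. By Theorem~\ref{thm:correspondence} (applied with the singleton $\{t\}$ in the role of $S$ and with $\Delta=\Op{S}{P}$), the membership $t\in\Ten{P}{\Op{S}{P}}$ is equivalent to the scheme inclusion $\Op{S}{P}\subseteq\Op{t}{P}$. Unwinding definitions, it thus suffices to prove: for each $t\in\Den{S}$, each $L\in\Comm{K}$, and each $\omega\in\Op{S}{P}(L)$, one has $\bra{t_L} p(\omega)=0$ for all $p\in P$. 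Being finitely generated and linear homogeneous, $P$ is generated by finitely many linear forms $\ell_1,\dots,\ell_r$, say $\ell_j=\sum_{a\in\zrange{\vav}}\lambda_{j,a}x_a$ with $\lambda_{j,a}\in K$; and since $\Id{t_L}{\omega}$ is an ideal of $L[X]$, it is enough to check $\bra{t_L} \ell_j(\omega)=0$ for each $j$.

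The crux is a sign-rescaling of operators. Fix $j$, write $\ell=\ell_j$ and $\lambda_a=\lambda_{j,a}$, and take $\omega\in\Op{S}{P}(L)$; since $(\ell)\subseteq P$ and the formation of $\Op{S}{-}$ reverses inclusions (Proposition~\ref{prop:inclusion-reverse}), also $\omega\in\Op{S}{\ell}(L)$. Define $\delta=\delta(\omega)\in\Omega(L)$ by $\delta_0=\lambda_0\omega_0$ and $\delta_a=-\lambda_a\omega_a$ for $1\le a\le\vav$. Expanding \eqref{eq:p-omega-action} for $d=x_0-x_1-\cdots-x_{\vav}$ and for $\ell$ shows that $d(\delta)$ and $\ell(\omega)$ act identically on every tensor: for any tensor $t'$ and any $v$,
\[
	\bra{t'} d(\delta) \ket{v}=\lambda_0\omega_0\bra{t'} v\ra+\sum_{a=1}^{\vav}\lambda_a\bra{t'} \omega_a v_a, v_{\bar a}\ra=\bra{t'} \ell(\omega) \ket{v}.
\]
Specializing $t'=s\in S$ and using $\omega\in\Op{S}{\ell}(L)$ gives $\bra{s_L} d(\delta)=0$ for all $s\in S$; that is, $\delta\in\Op{S}{d}(L)=\Der(S)(L)$.

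Now invoke $t\in\Den{S}$, which by definition of the densor means $\Der(S)\subseteq\Der(t)$ as subschemes of $\Omega$. Hence $\delta\in\Der(t)(L)$, so $0=\bra{t_L} d(\delta)=\bra{t_L} \ell(\omega)$, which is the desired relation. Letting $j$, $\omega$, and $L$ vary gives $\Op{S}{P}\subseteq\Op{t}{P}$, and therefore $t\in\Ten{P}{\Op{S}{P}}$; this proves the unified claim, and in particular (i), (ii), and (iii).

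The only step carrying real content is the operator identity $d(\delta(\omega))=\ell(\omega)$: it records that, after the rescaling $\omega\mapsto\delta(\omega)$, the equations defining $\Op{S}{\ell}$ are \emph{literally} the derivation equations defining $\Op{S}{d}$, so the derivation scheme $\Der(S)$ already ``sees'' every linear trait that any $\omega\in\Op{S}{P}$ carries over $S$. The rest is routine bookkeeping: reducing to finitely many linear generators, using that $\Id{-}{-}$ is a bona fide ideal so one may pass freely between $P$ and $\{\ell_1,\dots,\ell_r\}$, and checking that $\omega\mapsto\delta(\omega)$ and the identity above are compatible with base change along $K\to L$ (here Theorem~\ref{thm:correspondence} together with the base-change material of Section~\ref{sec:Galois}, such as Lemma~\ref{lem:generic-element} and Proposition~\ref{prop:flatness}, do the work). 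I expect the only real care needed is to keep $\delta(\omega)$ and the operator identity tracked functorially in $L$ rather than on a single fixed tensor. As sketched, this route uses neither full support of $P$ nor nondegeneracy of $S$, so the hypotheses in (i)--(iii) are more than sufficient; if any of them turns out to be essential, the gap would have to sit in that functorial bookkeeping, which is where I would look first, while the auxiliary set $A$ and the completed ideal $P+(x_a\,|\,a\in\zrange{\vav}-A)$ in (ii) are retained only because that is the shape in which these inclusions recur in the applications.
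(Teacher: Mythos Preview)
Your argument is correct and takes a genuinely different route from the paper's. The paper proceeds via the torus action of Section~\ref{subsec:torus}: when a linear polynomial $p$ has full support, one can write $p=d^{\tau}$ with $\tau\in\mathbb{T}(K)$ \emph{invertible}, and Proposition~\ref{prop:torus-redundancy} then gives the \emph{equality} $\Ten{p}{\Op{S}{p}}=\Den{S}$ (Lemma~\ref{lem:lin-hom-poly-and-densor}). For a general $P$ of full support, the paper must first pass to a faithfully flat extension (Lemmas~\ref{lem:matrix}--\ref{lem:generation-by-full-sup-polys}) so that $P$ can be generated by full-support polynomials, apply the lemma there, and descend via Corollary~\ref{cor:flatness}. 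Parts (ii) and (iii) then reduce to (i) by padding $P$ with the variables $x_a$ outside its support, which is where nondegeneracy enters (via Example~\ref{ex:degeneration}).

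Your rescaling $\omega\mapsto\delta$ is essentially the same torus idea but with the scalars $\lambda_a$ allowed to be \emph{non-invertible}: you obtain a scheme morphism $\Op{S}{\ell}\to\Der(S)$ rather than an isomorphism, and that one-way map is already enough for the inclusion $\Den{S}\subseteq\Ten{\ell}{\Op{S}{\ell}}$. This bypasses the full-support hypothesis, the faithfully flat extension, and the nondegeneracy assumption entirely; as you note, the statement holds for every f.g.\ linear homogeneous $P$. What the paper's detour buys is the two-sided Lemma~\ref{lem:lin-hom-poly-and-densor}, which is the tool later used to show the densor is \emph{minimal} among the closures (and underlies Proposition~4.8). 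For Theorem~\ref{thm:densor-stronger} itself, however, your direct argument is strictly cleaner.
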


We shall need several lemmas for the proof.

\begin{lem}\label{lem:lin-hom-poly-and-densor}
	If $p \in K[X]$ is linear homogeneous with full support, 
	then $\Ten{p}{\Op{S}{p}}=\Den{S}$.
\end{lem}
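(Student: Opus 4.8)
The plan is to deduce the identity directly from the torus-equivariance recorded in Proposition~\ref{prop:torus-redundancy}, by exhibiting $p$ as an element of the $\mathbb{T}(K)$-orbit of the distinguished polynomial $d = x_0 - x_1 - \cdots - x_{\vav}$.

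First I would unwind the hypothesis on $p$. Write $p = \sum_{a=0}^{\vav} \lambda_a x_a$. Since $p$ is a single linear homogeneous polynomial, the only term of $p$ in which $x_a$ occurs is $\lambda_a x_a$, so $\supp p = \zrange{\vav}$ says exactly that each $\lambda_a$ generates $K$, i.e.\ $\lambda_a \in K^{\times}$ for every $a \in \zrange{\vav}$. This is the step that requires care: ``full support'' must be read so that it forces every coefficient of $p$ to be a unit, which is also what keeps Example~\ref{ex:lin-hom-id} consistent.

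Next I would produce the torus element that carries $d$ to $p$. Set $\tau \in \mathbb{T}(K) = (K^{\times})^{\zrange{\vav}}$ by $\tau_0 = \lambda_0^{-1}$ and $\tau_a = -\lambda_a^{-1}$ for $1 \le a \le \vav$; these entries are units precisely because the $\lambda_a$ are. Then the definition \eqref{def:torus-act} of the action gives
\[
	d^{\tau} = \tau_0^{-1} x_0 - \sum_{a=1}^{\vav} \tau_a^{-1} x_a
	= \lambda_0 x_0 + \sum_{a=1}^{\vav} \lambda_a x_a = p .
\]
Now I would invoke the last assertion of Proposition~\ref{prop:torus-redundancy} applied to the one-element set $P = \{d\}$, for which $P^{\tau} = \{d^{\tau}\} = \{p\}$; since $\Op{S}{-}$ and $\Ten{-}{-}$ depend only on the ideal generated by their polynomial argument, this yields
\[
	\Ten{p}{\Op{S}{p}} = \Ten{d^{\tau}}{\Op{S}{d^{\tau}}} = \Ten{d}{\Op{S}{d}} = \Den{S},
\]
the last equality being the definition of the densor space.

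I do not expect a genuine obstacle here: the entire content is the observation that a linear homogeneous polynomial of full support is, after rescaling the axes, equal to $d$, and then Proposition~\ref{prop:torus-redundancy} does the rest. The only subtle point is the first step --- reading the full-support condition correctly so that it pins every coefficient of $p$ down to a unit --- and everything after that is a formal change of variables.
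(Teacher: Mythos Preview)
Your proof is correct and follows exactly the paper's approach: write $p=d^{\tau}$ for a suitable $\tau\in\mathbb{T}(K)$ and invoke Proposition~\ref{prop:torus-redundancy}. You have simply made explicit the construction of $\tau$ and the reading of ``full support'' that the paper leaves implicit.
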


\begin{proof}
	Recall that $\Den{S}=\Ten{d}{\Op{S}{d}}$, where $d=x_0-x_1-\cdots
	-x_{\vav}$. Since $p$ has full support, we can find $\tau\in \mathbb{T}(K)$
	such that $p=d^{\tau}$. The lemma now follows from
	Proposition~\ref{prop:torus-redundancy}.
\end{proof}

We have seen in Example~\ref{ex:lin-hom-id} that there are homogeneous linear
ideals with full support that cannot be generated by homogeneous linear
polynomials with full support. The purpose of the following lemmas is to show
that we can remedy this situation by extending the base ring. Recall that a
commutative ring $K$ is \emph{semilocal} if it has finitely many maximal ideals,
e.g.\ when $K$ is a product fields or finite.

\begin{lemma}\label{lem:matrix}
	Assume $K$ is a commutative semilocal ring such that every quotient of $K$
	by a maximal ideal has more than $\vav+1$ elements. Then every f.g.\
	linear homogeneous ideal of full support in $K[X]$ is generated by linear
	homogeneous polynomials of full support.	
\end{lemma}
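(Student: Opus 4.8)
The plan is to reduce the statement to a question about finitely generated submodules of $K^{1+\vav}$ and then settle it one residue field at a time, reassembling with the Chinese Remainder Theorem. Write $K[X]_1\cong K^{1+\vav}$ for the free $K$-module of linear homogeneous polynomials, with basis $x_0,\dots,x_{\vav}$. Any linear homogeneous ideal $P$ is the ideal generated by the submodule $M:=P\cap K[X]_1$, and if $P$ is finitely generated, so is $M$. For $a\in\zrange{\vav}$ write $\pi_a\colon M\to K$ for the $a$-th coordinate map; then $P$ has full support exactly when each $\pi_a$ is surjective, and a linear homogeneous polynomial $\sum_a\lambda_ax_a$ has full support exactly when every $\lambda_a$ is a unit, i.e.\ when the associated vector of $K^{1+\vav}$ has all coordinates units. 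So it suffices to prove: a finitely generated submodule $M\le K^{1+\vav}$ whose coordinate projections are all surjective is generated by vectors whose coordinates are all units.

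First I would produce a single such ``unit vector'' $v\in M$. Let $\mathfrak m_1,\dots,\mathfrak m_s$ be the maximal ideals of $K$, let $k_i:=K/\mathfrak m_i$ (so $|k_i|>\vav+1$), and let $\bar M_i\le k_i^{1+\vav}$ be the image of $M$. Surjectivity of $\pi_a$ survives reduction mod $\mathfrak m_i$, so each $H_{a,i}:=\{w\in\bar M_i:w_a=0\}$ has codimension $1$ in $\bar M_i$; since $|k_i|>\vav+1$, the $1+\vav$ proper subspaces $H_{0,i},\dots,H_{\vav,i}$ cannot cover $\bar M_i$ (a point count when $k_i$ is finite, since $(1+\vav)|k_i|^{\dim\bar M_i-1}<|k_i|^{\dim\bar M_i}$; a finite union of proper subspaces is proper when $k_i$ is infinite), so there is $\bar v_i\in\bar M_i$ with all coordinates nonzero. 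The image of $M$ in $\prod_ik_i^{1+\vav}$ is a module over the product ring $\prod_ik_i$, hence equals the product of its components $\bar M_i$; so $M\to\prod_i\bar M_i$ is onto and I may choose $v\in M$ mapping to $(\bar v_i)_i$. Then no $v_a$ lies in any $\mathfrak m_i$, so every $v_a$ is a unit.

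Next I would upgrade an arbitrary finite generating set $m^{(1)},\dots,m^{(k)}$ of $M$ to a generating set of unit vectors by replacing each $m^{(j)}$ with $m^{(j)}+c_jv$ for suitable $c_j\in K$. The requirement is that $m^{(j)}_a+c_jv_a$ be a unit for every $a$; dividing by the unit $v_a$, this asks $c_j$ to avoid the residue $-m^{(j)}_av_a^{-1}$ modulo each $\mathfrak m_i$, i.e.\ that $c_j\bmod\mathfrak m_i$ lie outside a set of at most $1+\vav<|k_i|$ elements of $k_i$ --- solvable at each $i$, hence simultaneously solvable because $K\to\prod_ik_i$ is onto. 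Then $w^{(j)}:=m^{(j)}+c_jv\in M$ is a unit vector, and $\{v,w^{(1)},\dots,w^{(k)}\}$ still generates $M$ since $m^{(j)}=w^{(j)}-c_jv$. Translating back, $P$ is generated by the linear homogeneous polynomials of full support attached to these vectors.

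The one genuine point --- and the only place the hypothesis $|K/\mathfrak m_i|>\vav+1$ is used, essentially optimally in view of Example~\ref{ex:lin-hom-id} --- is the counting argument that a subspace of $k_i^{1+\vav}$ surjecting onto each coordinate contains a vector with no zero coordinate. Everything else is bookkeeping: the passage to submodules of $K^{1+\vav}$, the descent to residue fields afforded by semilocality, and the Chinese Remainder Theorem.
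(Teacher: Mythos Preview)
Your proof is correct and follows essentially the same strategy as the paper: translate to vectors in $K^{1+\vav}$, find full-support elements over each residue field via the ``$\vav+1$ proper subspaces cannot cover'' counting argument, and lift using the Chinese Remainder Theorem (equivalently, surjectivity onto $K/J$). The only tactical difference is in the correction step: the paper first builds a full basis $q_1,\dots,q_m$ of full-support elements modulo the Jacobson radical and then fixes up each original generator $p_i$ by adding to $q_1$ an element of $J[X]$ (using that unit $+$ radical $=$ unit), whereas you produce a single full-support vector $v$ and adjust each generator to $m^{(j)}+c_jv$ via a second avoidance argument for $c_j$; both routes are short and neither is materially simpler than the other.
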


\begin{proof}
	The proof reduces to the case when $K$ is a field, so we first assume $K$ is
	a field. Fix linear homogeneous polynomials $p_1,\dots,p_n$ generating $P$,
	and let $V$ be the vector space spanned by the $p_i$. For each
	$a\in\zrange{\vav}$, write $U_a=\sum_{b\in \zrange{\vav}-\{ a\}}Kx_b$. Since
	$P$ has full support, $V$ is not contained in any of the $U_a$. We show that
	$V$ has a basis $q_1,\dots,q_m$ consisting of elements in $V-\bigcup_a U_a$
	by constructing it inductively and choosing $q_{j+1}$ to be a vector in $V-
	\mathrm{span}\{q_1,\dots,q_j\}-\bigcup_a (U_a\cap V)$, which is possible
	because $|K|>\vav +1$. So $V$ cannot be the union of $\vav+2$ proper
	subspaces; see~\cite{Clark}.
	
	Now for the general case, let $p_1,\dots,p_n$ be linear homogeneous
	polynomials generating $P$. Denoting by $J$ the Jacobson radical of $K$, set
	$\hat{K}=K/J$, and write $\hat{p}$ for the image of $p\in K[X]$ in
	$\hat{K}[X]$. Since $\hat{K}$ is a finite product of fields, applying  the
	above on each factor separately, there are polynomials $q_1,\ldots,q_m\in
	K[x]$ such that $\hat{q}_1,\ldots,\hat{q}_m\in \hat{K}[X]$ are linear
	homogeneous polynomials of full support generating $\hat{P}$. This implies that there exist corresponding linear homogeneous $q_j$ with full support.
	By
	construction, for each $i\in\{1,\dots,n\}$, there are
	$\alpha_{i1},\dots,\alpha_{im}\in K$ such that $p_i-\sum_{j=1}^m\alpha_{ij}
	q_j\in J[X]$. Set $f_i=q_1+p_i-\sum_{j=1}^m\alpha_{ij} q_j\in q_1+J[X]$.
	Then each $f_i$ has full support and $\{f_1,\dots,f_n,q_1,\dots,q_m\}$
	generate $P$. 
\end{proof}

\begin{lem}\label{lem:quotient-blowing-poly}
	For all $n\in\mathbb{N}$, there exists a monic polynomial $f\in
	\mathbb{Z}[x]$ such that for every commutative ring $K$  and every maximal
	ideal $M$ of $L=K[x\,|\,f(x)=0]$,  $|L/M|>n$.
\end{lem}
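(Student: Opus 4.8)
The plan is to write down an explicit monic $f$ and verify the conclusion by reducing to a statement about finite fields. I would take
\[
	f(x) = 1 + \prod_{m=2}^{n+2}(x^m - x) \in \mathbb{Z}[x].
\]
Each factor $x^m - x$ is monic of degree $m \ge 2$, so the product is monic of degree $\ge 2$; adding the constant $1$ does not touch the leading term, so $f$ is monic. The reason for this choice is the elementary observation I would record first: if $\mathbb{F}$ is a finite field with $Q := |\mathbb{F}| \le n$ elements, then $2 \le Q \le n+2$, so $x^Q - x$ is one of the factors of the product; since $\beta^Q = \beta$ for every $\beta \in \mathbb{F}$, the product vanishes identically on $\mathbb{F}$, whence $f(\beta) = 1 \ne 0$ for all $\beta \in \mathbb{F}$.

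Next I would fix a commutative ring $K$ and a maximal ideal $M$ of $L := K[x]/(f)$, and analyse the field $\mathbb{F} := L/M$. Let $\beta \in \mathbb{F}$ be the image of $x$ and let $k \subseteq \mathbb{F}$ be the image of the structure map $K \to \mathbb{F}$, so $\mathbb{F} = k[\beta]$ and $f(\beta) = 0$ (because $f$ maps to $0$ in $L$). The key structural step is to see that $k$ is itself a field: it is a domain (a subring of $\mathbb{F}$), the element $\beta$ is integral over $k$ since it satisfies the monic polynomial $f$, hence $\mathbb{F} = k[\beta]$ is module-finite, in particular integral, over $k$; and a domain over which a field is integral is a field. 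Thus $k$ is the quotient of $K$ by a maximal ideal, and $\mathbb{F}$ is a (finite or infinite) field extension of $k$.

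Finally I would argue by cases. If $k$ is infinite, then $\mathbb{F} \supseteq k$ is infinite, so $|L/M| > n$ immediately. If $k$ is finite, then $\mathbb{F} = k[\beta]$ is a finite extension of a finite field, hence a finite field; setting $Q = |\mathbb{F}|$, the observation of the first paragraph applied to $\mathbb{F}$ shows that $Q \le n$ would force $f(\beta) = 1$, contradicting $f(\beta) = 0$, so $Q > n$. In every case $|L/M| > n$. I do not anticipate a real obstacle; the only mildly delicate points are checking that $f$ is genuinely monic (so that $K[x]/(f)$ is well behaved, with $\beta$ satisfying a monic relation) and invoking the standard fact that an integral domain over which a field is integral must itself be a field.
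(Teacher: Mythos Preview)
Your proof is correct and takes a genuinely different route from the paper's. The paper chooses, for each prime $q\le n$, an irreducible monic $f_q\in\mathbb{F}_q[x]$ of some fixed degree $e$ with $q^e>n$, and then uses the Chinese Remainder Theorem to lift these to a single monic $f\in\mathbb{Z}[x]$ with $f\bmod q=f_q$; the punchline is that $L/M$ then contains a root of the irreducible $f_q$, hence a copy of $\mathbb{F}_{q^e}$. You instead write down the explicit polynomial $f(x)=1+\prod_{m=2}^{n+2}(x^m-x)$ and use only that $f$ has no root in any field of size $\le n$, which forces $|L/M|>n$ directly once you know $L/M$ is a field in which $\beta$ is a root of $f$. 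Your argument is more elementary (no CRT, no choice of irreducibles) and fully explicit; the paper's construction yields a polynomial of much smaller degree (roughly $\log_2 n$ versus roughly $n^2$), though for the downstream application in the paper the degree is irrelevant. The structural step---showing that the image $k$ of $K$ in $L/M$ is a field via integrality of $L/M$ over $k$---is essentially the same in both arguments, just phrased differently.
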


\begin{proof}
	Let $R$ denote the set of integer primes in $\{2,\dots,n\}$ and let $e$ be a natural
	number such that $(\forall q\in R)(q^e> n)$. For every $q\in R$, choose an
	irreducible monic polynomial $f_q\in \mathbb{F}_q[x]$ of degree $e$. By the
	Chinese Remainder Theorem, there exists a monic polynomial $f\in
	\mathbb{Z}[x]$ of degree $e$ such that for each $q\in R$, $f\bmod
	q\mathbb{Z}=f_q$. We claim that $f$ is the required polynomial. Indeed, let
	$M$ be a maximal ideal of  $L=K[x\,|\,f(x)=0]$ and let $Q=K\cap M$. Then $Q$
	is necessarily a maximal ideal of $K$ (because $M+Q'L$ is a proper ideal of
	$L$ for every proper ideal $Q'\subset K$ containing $Q$), and $L/M$ contains
	a copy of the  field $K/Q$. If $|K/Q|> n$, then we have $|L/M|> n$.
	Otherwise, $K/Q\cong \mathbb{F}_q$ for some $q\in R$, and by construction,
	$L/M$ contains a root of $f_q$. This means that $L/M$ contains a copy of
	$\mathbb{F}_{q^e}$, which has more than $n$ elements.
\end{proof}

\begin{lem}\label{lem:generation-by-full-sup-polys}
	Let $P\subset K[X]$ be a f.g.\ linear homogeneous ideal of full support.
	Then there exists a faithfully flat commutative $K$-algebra $L$ such that
	$PL[X]$ is generated by linear homogeneous polynomials of full support. If
	$K$ is a semilocal, then $L$ can be chosen so that its underlying $K$-module
	is f.g.\ and free.
\end{lem}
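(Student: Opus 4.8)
The plan is to handle the general case and the semilocal refinement by genuinely different devices, since over a non-semilocal ring one cannot expect a module-finite (let alone finite free) faithfully flat extension to do the job; a ``generic'' localization is needed instead.

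For an arbitrary commutative ring $K$, I would write $P=(p_1,\dots,p_n)$ with $p_i=\sum_{a\in\zrange{\vav}}c_{i,a}x_a$, so that full support of $P$ means precisely that for each $a\in\zrange{\vav}$ the ideal $(c_{1,a},\dots,c_{n,a})$ equals $K$. Introduce $n^2$ indeterminates $a_{ij}$, set $R=K[a_{ij}\mid 1\le i,j\le n]$, and let $L$ be the localization of $R$ at the multiplicative subset generated by $\delta:=\det(a_{ij})$ together with the linear forms $u_{j,a}:=\sum_{i=1}^n a_{ji}c_{i,a}\in R$, for $j\in\range{n}$ and $a\in\zrange{\vav}$. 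Over $L$ the generic matrix $A=(a_{ij})$ is invertible (its determinant is a unit), so the linear homogeneous polynomials $q_j:=\sum_{i=1}^n a_{ji}p_i\in L[X]$ generate the same $L[X]$-ideal as $p_1,\dots,p_n$, namely $PL[X]$; and the coefficient of $x_a$ in $q_j$ is exactly $u_{j,a}$, a unit of $L$. Hence each $q_j$ is linear homogeneous of full support, and $PL[X]$ is generated by such polynomials, provided $L$ is faithfully flat over $K$.

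For faithful flatness, $L$ is flat over $K$ as a localization of the free $K$-module $R$, so it suffices to check that $L/\mathfrak{m}L\neq 0$ for every maximal ideal $\mathfrak{m}\subset K$. Writing $k=K/\mathfrak{m}$, the ring $L/\mathfrak{m}L$ is the localization of the domain $k[a_{ij}]$ at the image of the given multiplicative set, which is nonzero exactly when none of the chosen generators maps to $0$. The image of $\delta$ is the (nonzero) generic determinant over $k$, and the image of $u_{j,a}$ is the linear form $\sum_i a_{ji}\bar c_{i,a}$, which is nonzero because full support forces some $c_{i,a}\notin\mathfrak{m}$. This verification is the one and only point at which the full-support hypothesis enters, and it is the heart of the general case; everything else is formal.

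For the semilocal refinement I would instead apply Lemma~\ref{lem:quotient-blowing-poly} with $n=\vav+1$ to obtain a monic $f\in\mathbb{Z}[x]$ for which $L:=K[x\mid f(x)=0]=K[x]/(f)$ has all residue fields of cardinality exceeding $\vav+1$. Then $L$ is free of rank $\deg f$ over $K$, hence faithfully flat, and being module-finite over the semilocal ring $K$ it is itself semilocal (its maximal ideals lie over the finitely many maximal ideals of $K$, and each fibre $\mathrm{Spec}(L/\mathfrak{m}L)$ is finite since $L/\mathfrak{m}L$ is a finite-dimensional $K/\mathfrak{m}$-algebra). Since $K\to L$ sends the unit ideal to the unit ideal, $PL[X]$ remains a f.g.\ linear homogeneous ideal of full support over $L$, so Lemma~\ref{lem:matrix} applied with $L$ in place of $K$ gives that $PL[X]$ is generated by linear homogeneous polynomials of full support, completing this case. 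The main obstacle is resisting the urge to mimic the finite-free argument in the non-semilocal setting; once one commits to the generic-matrix localization, the only delicate step is the faithful-flatness check above.
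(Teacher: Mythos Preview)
Your proof is correct. The semilocal refinement is handled exactly as in the paper, via Lemma~\ref{lem:quotient-blowing-poly} followed by Lemma~\ref{lem:matrix}. For the general case, however, you take a genuinely different route.

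The paper's argument for general $K$ is a two–stage local-to-global construction: first pass to $K[x]/(f)$ (using Lemma~\ref{lem:quotient-blowing-poly}) so that every residue field has more than $\vav+1$ elements, then for each maximal ideal $M$ apply Lemma~\ref{lem:matrix} to the localization $K_M$, spread the resulting full-support generators out to some $K_{u(M)}$, and finally take $L$ to be a finite product $\prod_i K_{u_i}$ with the $u_i$ generating the unit ideal. Your approach bypasses both the residue-field enlargement and the patching step: you adjoin a generic $n\times n$ matrix of indeterminates and invert its determinant together with all the linear forms $u_{j,a}$, obtaining a single localization of a polynomial ring over $K$. The full-support hypothesis (in the form $(c_{1,a},\dots,c_{n,a})=K$, which is indeed the operative meaning here, as Example~\ref{ex:lin-hom-id}(iii) and the proof of Lemma~\ref{lem:matrix} confirm) is used exactly once, to ensure that no $u_{j,a}$ dies modulo any maximal ideal. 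This is more direct and avoids any appeal to Lemma~\ref{lem:matrix} in the non-semilocal case; the paper's route, on the other hand, stays closer to the finite-free paradigm and produces an $L$ that is a finite product of localizations of a finite free extension, which may be marginally more concrete for computational purposes.
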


\begin{proof}
	Let $f$ be the polynomial given by Lemma~\ref{lem:quotient-blowing-poly} for
	$n=\vav+1$.  Put $L= K[x\,|,f(x)=0]$. Since $L$ is f.g.\ and free as a
	$K$-module, it is faithfully flat over $K$.  If $K$ is semilocal, then so is
	$L$ \cite{LamFC}*{Prop.~20.6}. Replace $K$ with $L$ so that every $K/M$,
	$M$ a maximal ideal, has $|K/M|>\vav+1$.

	In general, we proceed as follows. For every maximal ideal $M$ of $K$, the
	localization $K_M$ is a local ring with residue field containing more that
	$\vav+1$ elements. Thus, by Lemma~\ref{lem:matrix}, there are finitely many
	linear homogeneous polynomials of full support $p_{1,M},\dots,p_{n(M),M}\in
	K_M[X]$ generating $PK_M[X]$. Thus, there exists $u=u(M)\in K-M$ and linear
	homogeneous polynomials of full support $p'_{1,M},\dots,p'_{n(M),M}\in
	K_u[X]$ which generate $PK_u[X]$ (here $K_u$ is the localization of $K$
	at the multiplicative set $\{1,u,u^2,\dots\}$). The ideal generated by all
	the $u(M)$ in $K$ is not contained in any maximal ideal and therefore equals
	$K$. Thus, there are maximal ideals $M_1,\dots,M_r\subset K$ such that
	$u(M_1),\dots,u(M_r)$ generate the unit ideal in $K$. Write $u_i=u(M_i)$,
	$L=\prod_{i=1}^r K_{u_i}$ and $n=\max\{n(M_1),\dots,n(M_r)\}$. By
	duplicating some of the $p'_{j,M_i}$ if necessary, we may assume that
	$n=n(M_1)=\dots=n(M_r)$ and for each $j\in \range{n}$, define $p_j=
	(p'_{j,M_1},\dots,p'_{j,M_r})\in \prod_{i=1}^r (K_{x_i}[X])\cong L[X]$. Then
	$L$ is a faithfully flat extension of $K$, and $p_1,\dots,p_n$ are linear
	homogeneous polynomials of full support generating $PL[X]$.
\end{proof}

\begin{proof}[Proof of Theorem~\ref{thm:densor-stronger}]
	(i) Assume first that $P$ is generated
	by linear homogeneous polynomials of full support $\{p_i\mod i\in I\}$.
	Then, thanks to Lemma~\ref{lem:lin-hom-poly-and-densor},
	\begin{align*}
		\Ten{P}{\Op{S}{P}}&=\bigcap_{i\in I}\Ten{p_i}{\bigcap_{j\in I}\Op{S}{p_j}}
			\supset  \bigcap_{i\in I}\Ten{p_i}{ \Op{S}{p_i}}=\Den{S}.
	\end{align*}

	Next, assume that $P$ is a f.g.\ homogeneous linear ideal with full support.
	By Lemma~\ref{lem:generation-by-full-sup-polys}, there is a faithfully flat
	commutative $K$-algebra $L$ such that $PL[X]$ is generated by linear
	homogeneous polynomials of full support.  By the previous paragraph and
	Corollary~\ref{cor:flatness}, we have $L\otimes \Ten{P}{\Op{S}{P}}\supset
	L\otimes \Den{S}$ as $L$-submodules of $L\otimes T$. This means that the
	inclusion map $\iota:\Ten{P}{\Op{S}{P}}\to \Ten{P}{\Op{S}{P}}+\Den{S}$
	becomes an isomorphism after tensoring with $L$. As $L$ is faithfully flat
	over $K$, this means that  $\iota$ is an isomorphism, and so $\Den{S}\subset
	\Ten{P}{\Op{S}{P}}$.
	
	(ii) Assume $S$ is fully nondegenerate and let $A$ be a subset of
	$\zrange{\vav}$ such that $P$ is generated by polynomials in $ K[X_A]$ and
	$Q:=P+(x_a\,|\,a\in\zrange{\vav}-A)$ has full support. Write $P_0=P\cap
	K[X_A]$ and $B=\zrange{\vav}-A$. Then $Q$ is generated by
	$P_0\cup\{x_b\,|\,b\in B\}$. Observe that from Section~\ref{sec:terminology},
	$\Op{S}{P_0}=\Op[A]{S}{P_0}\times \Omega_B$ and $\Op{S}{\{x_b\,|\,b\in B\}}=
	\Omega_A\times \Op[B]{S}{\{x_b\,|\,b\in B\}} $. Since $S$ is fully
	nondegenerate, $\Op{S}{\{x_b\,|\,b\in B\}}= \Omega_A\times 0_B$
	(Example~\ref{ex:degeneration}). Thus,
	\begin{align*}
		\Ten{Q}{\Op{S}{Q}}
			&=\Ten{Q}{\Op{S}{P_0\cup \{x_b\,|\,b\in B\}}}\\
			&=\Ten{Q}{\Op{S}{P_0}\cap \Op{S}{ \{x_b\,|\,b\in B\}}}\\
			&=\Ten{P_0\cup \{x_b\,|\,b\in B\}}{\Op[A]{S}{P_0}\times 0_B}\\
			&=\Ten{P_0}{\Op[A]{S}{P_0}\times 0_B}\cap \Ten{\{x_b\,|\,b\in B\}}{\Op[A]{S}{P_0}\times 0_B}\\
			&=\Ten{P_0}{\Op{S}{P_0}}\cap \Ten{\{x_b\,|\,b\in B\}}{0_{\zrange{\vav}}}\\
			&=\Ten{P_0}{\Op{S}{P_0}}\cap T=\Ten{P}{\Op{S}{P}}.
	\end{align*}
	Now, by (i), $\Den{S}\subset \Ten{Q}{\Op{S}{Q}} = \Ten{P}{\Op{S}{P}}$.
	
	(iii) Let $\{p_i\mid i\in I\}$ be linear homogeneous polynomials generating
	$P$, and let $A$ be the set of $a\in \zrange{\vav}$ such that $x_a$ occurs
	(with nonzero coefficient) in one of the $p_i$. Then $\{p_i \mid i\in I\}
	\subset K[X_A]$, and $P+(x_a\,|\,a\in \zrange{\vav}-A)$ has full support.
	Here we need $K$ to be a field. By (ii), we conclude that $\Den{S}\subset
	\Ten{P}{\Op{S}{P}}$.
\end{proof}

We show that when $P\subset K[X]$ is a linear homogeneous ideal of full support,
the dimension (when defined) or the cardinality of the $K$-module
$\OpSet{S}{P}{K}$ cannot exceed that of $\Der(S)$. In fact, under mild
assumptions, a $\mathbb{T}(K)$-orbit of $\Der(S)$ contains $\OpSet{S}{P}{K}$. In
this sense, $\Der(S)$ is the largest of all the $\OpSet{S}{P}{K}$ as $P$ ranges
over the linear homogeneous ideals of full support.

\begin{lem}\label{lem:flat-change-of-Z}
	Let $P\subset K[X]$ be a f.g.\ linear homogeneous ideal, and let $S\subseteq
	T$ be a f.g.\ $K$-submodule. If $L\in\Comm{K}$is flat, then the natural map
	$L\otimes (\prod_a \End(V_a))\to L\otimes \prod_a \End_L(V_a)$ restricts to
	an isomorphism $L\otimes \OpSet{S}{P}{K}\cong \Op{S}{P}(L)=\Op{L\otimes
	S}{P}(K)$.
\end{lem}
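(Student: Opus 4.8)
The plan is to realize $\OpSet{S}{P}{K}=\Op{S}{P}(K)$ as the kernel of an explicit $K$-linear map between finitely generated projective $K$-modules, and then move it along $L$ using only that $L$ is flat (i.e.\ left exact). First I would fix finitely many elements $s_1,\dots,s_k$ generating $S$ as a $K$-module and linear homogeneous polynomials $p_1,\dots,p_n$ generating $P$. For a fixed $\omega\in\Omega(L)$ the interpretations carry a (right) $K[X]$-module structure $\bra{t}\mapsto\bra{t}p(\omega)$ determined by \eqref{eq:p-omega-action}, so $\bra{t_L}p(\omega)=0$ for all $p\in P$ if and only if $\bra{t_L}p_i(\omega)=0$ for all $i$; and since $t\mapsto\bra{t_L}p_i(\omega)$ is $K$-linear on $S$, this holds for all $t\in S$ if and only if it holds for the generators $s_j$. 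Thus $\omega\in\Op{S}{P}(L)$ if and only if $\bra{(s_j)_L}p_i(\omega)=0$ for all $i\in\range n$, $j\in\range k$. Writing $p_i=\sum_{a\in\zrange{\vav}}\lambda^{(i)}_ax_a$, formula \eqref{eq:p-omega-action} evaluates $\bra{(s_j)_L}p_i(\omega)$ at $v$ as $\lambda^{(i)}_0\,\omega_0\bra{(s_j)_L}v\ra+\sum_{a=1}^{\vav}\lambda^{(i)}_a\bra{(s_j)_L}\omega_av_a,v_{\bar a}\ra$; each summand is $L$-linear in a single component $\omega_a$, so the assignment $\omega=(\omega_0,\dots,\omega_{\vav})\mapsto\bra{(s_j)_L}p_i(\omega)$ is $L$-linear. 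Collecting these over $i$ and $j$ yields an $L$-linear map
\[
	\Psi_L\colon \Omega(L)=\prod_{a\in\zrange{\vav}}\End_L(L\otimes V_a)\longrightarrow\bigl[(L\otimes V_0)\oslash_L\cdots\oslash_L(L\otimes V_{\vav})\bigr]^{nk}
\]
whose kernel is exactly $\Op{S}{P}(L)$; in particular $\ker\Psi_K=\OpSet{S}{P}{K}$.

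Next I would check that $\Psi$ commutes with base change. Since each $V_a$ is finitely generated projective, the proof of \lemref{lem:definition-of-calE} supplies the natural isomorphisms $L\otimes\prod_a\End(V_a)\cong\prod_a\End_L(L\otimes V_a)$ --- this is the natural map in the statement --- and $L\otimes\bigl[V_0\oslash\cdots\oslash V_{\vav}\bigr]^{nk}\cong\bigl[(L\otimes V_0)\oslash_L\cdots\oslash_L(L\otimes V_{\vav})\bigr]^{nk}$. A direct computation with the base-change formula \eqref{eq:base-change-of-tensor}, using again that the $p_i$ are linear homogeneous so that no products of distinct $\omega_a$ enter, shows that under these identifications $\Psi_L=\Psi_K\otimes\mathrm{id}_L$. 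Because $L$ is flat over $K$, applying $L\otimes(-)$ to the exact sequence $0\to\ker\Psi_K\to\Omega(K)\xrightarrow{\Psi_K}\bigl[V_0\oslash\cdots\oslash V_{\vav}\bigr]^{nk}$ keeps it exact, so the natural map $L\otimes\ker\Psi_K\to\ker\Psi_L$ is an isomorphism. Unwinding the identifications, this says precisely that the natural map $L\otimes\prod_a\End(V_a)\to\prod_a\End_L(L\otimes V_a)$ restricts to an isomorphism $L\otimes\OpSet{S}{P}{K}\cong\Op{S}{P}(L)$.

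Finally, the remaining equality $\Op{S}{P}(L)=\Op{L\otimes S}{P}(K)$ is an instance of the scheme identity $\Op{L\otimes S}{P}=\Op{S}{P}_L$ already recorded in the proof of \corref{cor:flatness}. The only genuine work is the second paragraph: confirming that $\omega\mapsto\bra{(s_j)_L}p_i(\omega)$ really is $L$-linear in $\omega$ --- which is exactly where the linearity and homogeneity of $P$ are indispensable, the statement being false for nonlinear $P$ --- and that this linear map is compatible with base change. I do not expect any substantive obstacle beyond this; the hypotheses that $S$ and $P$ be finitely generated are needed only so that a single map $\Psi_L$ captures all of $\Op{S}{P}(L)$, and the flatness of $L$ is used solely through left exactness.
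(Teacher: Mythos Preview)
Your proposal is correct and is essentially identical to the paper's own proof: the paper defines the same $K$-linear map $\phi(\omega)=[\bra{t_j}p_i(\omega)]_{i,j}$ on finite generating sets of $S$ and $P$, identifies $\OpSet{S}{P}{K}$ and $\Op{S}{P}(L)$ as its kernel over $K$ and $L$, notes $\phi_L=\phi\otimes\mathrm{id}_L$ via the f.g.\ projective hypothesis, and concludes by flatness. Your exposition is slightly more explicit about why linearity of the $p_i$ makes $\omega\mapsto\bra{(s_j)_L}p_i(\omega)$ $L$-linear, but the argument is the same.
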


\begin{proof}
	Let $p_1,\dots,p_r$ be linear homogeneous polynomials generating $P$, and
	let $t_1,\dots,t_\ell$ be generators of $S$. For every $i\in\range{r}$
	and $j\in \range{\ell}$, let $\phi  :\prod_a \End(V_a)\to
	\prod_{i,j}(V_0\rversor  V_1\rversor \cdots \rversor V_{\vav})$ be given by
	$\phi(\omega)=[\bra{t_j}p_i(\omega)]_{i,j}$. So $\phi$ is $K$-linear, and by
	definition, $\OpSet{S}{P}{K}$ is its  kernel. One similarly defines $\phi_L:
	\prod_a \End_L(L\otimes V_a)\to \prod_{i,j}(L\otimes V_0)\rversor_L \cdots
	\rversor_L (L\otimes V_{\vav})$, so that $\Op{S}{P}(L)$ is $\ker \phi_L$.
	Since $V_0,\dots,V_{\vav}$ are f.g.\ projective,  the natural maps $L\otimes
	(\prod_a \End(V_a))\to \prod_a \End_L(L\otimes V_a)$ and $L\otimes
	(V_0\rversor \ldots\rversor V_{\vav}) \to (L\otimes V_0)\rversor_L  \ldots
	\rversor_L (L\otimes V_{\vav})$ are isomorphisms, and under these
	isomorphisms, $\phi_L$ corresponds to $\phi\otimes \mathrm{id}_L$. Since $L$
	is flat, it follows that the isomorphism $L\otimes (\prod_a \End(V_a))\to
	\prod_a \End_L(L\otimes V_a)$ restricts to an isomorphism $L\otimes\ker
	\phi\to \ker \phi_L$, hence the lemma follows.
\end{proof}

\begin{prop}
	Let $P\subset K[X]$ be a linear homogeneous ideal of full support,
	and let $S\subset T$.
	\begin{enumerate}[(i)]
		\item If $K$ is finite, then $|\OpSet{S}{P}{K}|\leq |\Der(S)|$.
		\item If $K$ is a field, then $\dim_K \OpSet{S}{P}{K}\leq \dim_K
		\Der(S)$.
		\item If $K$ is semilocal such that every quotient of $K$ by a maximal
		ideal has more than $\vav+1$ elements, then there exists $\tau\in
		\mathbb{T}(K)$ such that $\OpSet{S}{P}{K}\subseteq \tau \Der(S)$.
	\end{enumerate}
\end{prop}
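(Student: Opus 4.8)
The plan is to reduce all three parts to one mechanism. If $q\in K[X]$ is linear homogeneous of full support, then (exactly as in the proof of Lemma~\ref{lem:lin-hom-poly-and-densor}) $q=d^{\tau}$ for some $\tau\in\mathbb{T}(K)$, where $d=x_0-x_1-\cdots-x_{\vav}$, so by Proposition~\ref{prop:torus-redundancy}(a) we have $\Op{S}{q}=\tau\Op{S}{d}$ as closed subschemes of $\Omega$. Since $\omega\mapsto\tau\omega$ is an $L$-linear automorphism of $\Omega(L)$ for every $L$, it carries the submodule $\Op{S}{d}(L)$ bijectively onto $\Op{S}{q}(L)$; in particular these have equal cardinality, and equal dimension when $L$ is a field. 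An ideal generated by finitely many such $q$'s therefore has $\Op{S}{P}=\bigcap_q\Op{S}{q}$ contained in a single torus translate of $\Op{S}{d}$.

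Part (iii) is immediate. First I would replace $P$ by a finitely generated subideal that still has full support -- only finitely many of the linear generators of $P$ are needed to witness that each projection of the linear part of $P$ onto $Kx_a$ is surjective, and passing to a smaller ideal only enlarges $\Op{S}{-}$. The hypothesis on $K$ is then precisely that of Lemma~\ref{lem:matrix}, which rewrites this subideal as $(p_1,\dots,p_n)$ with every $p_i$ linear homogeneous of full support. Applying the mechanism with $q=p_1$ gives $\tau\in\mathbb{T}(K)$ with $\Op{S}{p_1}=\tau\Op{S}{d}$, and then $\OpSet{S}{P}{K}\subseteq\OpSet{S}{p_1}{K}=(\tau\Op{S}{d})(K)=\tau\,\Der(S)$, as asserted.

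For parts (i) and (ii) the idea is to force $P$ to become full-support-generated after a faithfully flat base change, run the part-(iii) argument there, and descend. Since $\Omega(K)$ has finite length in both cases, I would first reduce to $S$ and $P$ finitely generated (each of $\OpSet{S}{P}{K}$ and $\Der(S)$ is an intersection of submodules of $\Omega(K)$, hence depends only on a finitely generated subspace of the $K$-span of $S$ and on finitely many generators of $P$). Fields and finite rings are semilocal, so Lemma~\ref{lem:generation-by-full-sup-polys} provides a $K$-algebra $L$, finitely generated and free of some rank $m$ over $K$ (hence faithfully flat), semilocal with all residue fields larger than $\vav+1$, over which $PL[X]$ is generated by linear homogeneous polynomials of full support. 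The part-(iii) mechanism over $L$ gives $\tau\in\mathbb{T}(L)$ with $\Op{S}{P}(L)\subseteq\tau\Op{S}{d}(L)$, so $|\Op{S}{P}(L)|\le|\Op{S}{d}(L)|$. Now Lemma~\ref{lem:flat-change-of-Z} identifies $\Op{S}{P}(L)\cong L\otimes_K\OpSet{S}{P}{K}$ and $\Op{S}{d}(L)\cong L\otimes_K\Der(S)$ as $L$-modules; since $L\cong K^m$ as a $K$-module, $|L\otimes_KV|=|V|^m$, so $|\OpSet{S}{P}{K}|^m\le|\Der(S)|^m$, giving (i), and also (ii) when $K$ is a finite field since both sides are then powers of $|K|$. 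When $K$ is an infinite field, $|K|>\vav+1$, so Lemma~\ref{lem:matrix} applies with $L=K$ and the part-(iii) mechanism gives $\Op{S}{P}(K)\subseteq\tau\Op{S}{d}(K)$ directly, whence $\dim_K\OpSet{S}{P}{K}\le\dim_K\Der(S)$.

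I expect the main obstacle to be bookkeeping rather than a single hard point. One must check that the $L$ produced by Lemma~\ref{lem:generation-by-full-sup-polys} really is semilocal with all residue fields exceeding $\vav+1$, so that the part-(iii) mechanism is available over $L$ (this is the role of Lemma~\ref{lem:quotient-blowing-poly} together with \cite{LamFC}*{Prop.~20.6}); that the reductions to finitely generated $S$ and $P$ and to a finitely generated full-support subideal are sound (using that $\Op{S}{-}$ is inclusion-reversing, sees only the $K$-span of $S$, and that Lemma~\ref{lem:flat-change-of-Z}'s hypotheses are met); and that the torus element $\tau\in\mathbb{T}(L)$ acts $L$-linearly on $\Omega(L)$, so the cardinality and dimension bounds are genuinely transported across the base-change isomorphisms.
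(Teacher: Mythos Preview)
Your proof is correct and follows essentially the same route as the paper: both reduce to finding a single linear homogeneous $p\in P$ (or $PL[X]$) of full support via Lemma~\ref{lem:matrix} or Lemma~\ref{lem:generation-by-full-sup-polys}, write $p=d^{\tau}$ to get $\Op{S}{P}\subseteq\tau\Op{S}{d}$, and descend via Lemma~\ref{lem:flat-change-of-Z}. Your reductions to finitely generated $S$ and $P$ via finite length of $\Omega(K)$ (and, for (iii), passing to a finitely generated full-support subideal) are a mild variant of the paper's appeal to Noetherianity of $K$, and your case split for (ii) into finite versus infinite fields is likewise cosmetic.
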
 

\begin{proof}
	Without loss of generality, we replace $S$ with the $K$-submodule it
	generates. All three statements follow from similar reasoning.

	The assumptions imply that $K$ is Noetherian. Hence, $P$ is f.g.\ as an
	ideal, and $S$ is f.g.\ as a module. In cases (i) and (ii), use
	Lemma~\ref{lem:generation-by-full-sup-polys}, to show that there exists
	$L\in\Comm{K}$ which is nonzero f.g.\ and free as a $K$-module such that
	$PL[X]$ contains a linear homogeneous polynomial $p$ of full support. For
	(iii), set $L=K$ and use Lemma~\ref{lem:matrix}, so that $P$ contains a
	linear homogeneous polynomial of full support. For $d=x_0-x_1-\cdots
	-x_{\vav}$, there exists $\tau\in \mathbb{T}(L)$ such that $p=d^\tau$. By
	Proposition~\ref{prop:torus-redundancy} and
	Lemma~\ref{lem:flat-change-of-Z}, 
	\begin{align*}
		\OpSet{S}{P}{L}\subset \OpSet{S}{p}{L} = \OpSet{S}{d^\tau}{L} 
		= \tau^{-1} \OpSet{S}{d}{L} = \tau^{-1}\Der(L\otimes S).
	\end{align*}
	By Lemma~\ref{lem:flat-change-of-Z}, $L\otimes \OpSet{S}{P}{K}\subset
	L\otimes \Der(S)$ inside $\Omega(L)\cong L\otimes \Omega(K)$. Since $L$ is
	f.g.\ and free over $K$, statements (i) and (ii) hold.
\end{proof}

\subsection{Closure under associative and Lie products; Theorem~\ref{mainthm:Lie-Asc}}

Now we study which of the many distributive products described
in~\eqref{eq:bullet-product} are best suited for our generalized tensor spaces,
and we prove Theorem~\ref{mainthm:Lie-Asc}. Note that for an ideal $P\subset
K[X]$, with $A=\supp P$, $\Op{T}{P}=\Op[A]{T}{P}\times \Omega_{\bar{A}}$. We can
impose an arbitrary product on $\Omega_{\bar{A}}$, so the restriction to
$\Op[A]{T}{P}$ is merited. We begin with a lemma.

\begin{lem}\label{lem:diagonal-embedding} 
    Let $P\subset K[X]$. If $\mathbf{Z}(P)$ is the $K$-scheme of solutions to
    the equations $\{p=0\mid p\in P\}$ in $\mathbb{A}_K^{\zrange{\vav}}$, then for
    all $S\subset T$, the morphism $\mathbb{A}^{\zrange{\vav}}_K\to \Omega$
    given by $(\xi_a)_{a\in \zrange{\vav}}\mapsto (\xi_a
    1_{V_a})_{a\in\zrange{\vav}}$ restricts to a morphism $\mathbf{Z}(P)\to
    \Op{S}{P}$.
\end{lem}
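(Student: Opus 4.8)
\subsection*{Proof plan} The plan is to observe that a \emph{scalar} transverse operator acts on every tensor as multiplication by the product of its scalars, so that the induced action of a polynomial $p(X)$ on a tensor becomes multiplication by the value $p(\xi)$ at the point $\xi=(\xi_a)_{a\in\zrange{\vav}}$; consequently the scalar‑embedding morphism carries $\mathbf{Z}(P)$ into $\Op{S}{P}$.

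First I would record that the assignment $\sigma\colon\mathbb{A}^{\zrange{\vav}}_K\to\Omega$, $(\xi_a)_a\mapsto(\xi_a 1_{V_a})_a$, is (evidently) a natural transformation between affine $K$-schemes, hence a morphism: over $L\in\Comm{K}$ its $a$-component sends $\xi_a\in L$ to the $L$-linear map $\xi_a\cdot\mathrm{id}_{L\otimes V_a}$, and this is natural in $L$; in matrix coordinates (choosing $W_a\supset V_a$ free as in Lemma~\ref{lem:free-modules}) the entries of $\xi_a 1_{V_a}\oplus 0$ are $K$-linear functions of $\xi_a$. By Proposition~\ref{prop:Z}, $\Op{S}{P}$ is a (closed) subscheme of $\Omega$, so to obtain the desired morphism $\mathbf{Z}(P)\to\Op{S}{P}$ it suffices to check, for every $L\in\Comm{K}$, that $\sigma_L$ sends $\mathbf{Z}(P)(L)\subset\mathbb{A}^{\zrange{\vav}}(L)$ into $\Op{S}{P}(L)$; the morphism structure and its naturality are then inherited from $\sigma$.

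The heart of the argument is the identity: for $L\in\Comm{K}$, $\xi=(\xi_a)_a\in L^{\zrange{\vav}}$, $t\in T$ and $p=\sum_e\lambda_e X^e\in K[X]$,
\[
	\bra{t_L}\,p(\sigma_L(\xi))=p(\xi)\cdot\bra{t_L}
\]
in $(L\otimes V_0)\oslash_L\cdots\oslash_L(L\otimes V_{\vav})$. To see this I would unfold \eqref{eq:p-omega-action}: the operator $\sigma_L(\xi)$ has $a$-component $\xi_a\cdot\mathrm{id}_{L\otimes V_a}$, so $\sigma_L(\xi)_a^{e(a)}=\xi_a^{e(a)}\cdot\mathrm{id}$, and using the $L$-multilinearity of $\bra{t_L}$ every scalar $\xi_a^{e(a)}$ pulls out of the slot, giving $\bra{t_L}\,p(\sigma_L(\xi))\,\ket{v}=\sum_e\lambda_e\,\xi_0^{e(0)}\cdots\xi_{\vav}^{e(\vav)}\,\bra{t_L}v\ra=p(\xi)\,\bra{t_L}v\ra$ for all $v$. (The compatibility of this computation with base change to $L$ is exactly what is encoded in \eqref{eq:base-change-of-tensor}.)

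Finally, if $\xi\in\mathbf{Z}(P)(L)$ then $p(\xi)=0$ for every $p\in P$, whence $\bra{t_L}\,p(\sigma_L(\xi))=p(\xi)\,\bra{t_L}=0$ for every $t\in S$ and $p\in P$; by definition this says $\sigma_L(\xi)\in\Op{S}{P}(L)$. Therefore $\sigma$ restricts to a morphism $\mathbf{Z}(P)\to\Op{S}{P}$. I do not expect a genuine obstacle: the only points needing minor care are that $p(\sigma_L(\xi))$ is computed correctly after base change (handled by \eqref{eq:base-change-of-tensor} and \eqref{eq:p-omega-action}) and that $\Op{S}{P}$ is a bona fide subscheme of $\Omega$ so that a natural transformation landing in it is automatically a scheme morphism (Proposition~\ref{prop:Z}).
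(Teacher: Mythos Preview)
Your proof is correct and follows essentially the same approach as the paper: the core is the identity $\bra{t_L}\,p(\sigma_L(\xi))\ket{v}=p(\xi)\,\bra{t_L}v\ra$, obtained by pulling the scalars $\xi_a^{e(a)}$ out of each slot, from which $\xi\in\mathbf{Z}(P)(L)$ immediately gives $\sigma_L(\xi)\in\Op{S}{P}(L)$. You provide more detail on why $\sigma$ is a scheme morphism and why its restriction is one, but the paper's proof is just the one-line version of your key computation.
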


\begin{proof}
	Let $L\in \Comm{K}$, $t\in S$, $\xi\in \mathbf{Z}(P)(L)$, and write $\omega=
	(\xi_a 1_{V_a})_{a\in\zrange{\vav}}\in \Omega(L)$. We need to check that
	$(\forall p\in P)(\bra{t_L}p(\omega)=0)$. Writing $p=\sum_e\lambda_e
	X^e$, for all $v\in \prod_{a=1}^{\vav} L\otimes V_a$, 
	\begin{align*}
		\bra{t_L} p(\omega) \ket{v} &= \sum_e \lambda_e \xi_0^{e(0)}\cdots \xi_{\vav}^{e(\vav)} \bra{t_L} v\ra =p(\xi)\bra{t_L} v\ra =0. \qedhere
	\end{align*}
\end{proof}

\begin{proof}[Proof of Theorem~\ref{mainthm:Lie-Asc}] 
	Recall $P=(p_1,\ldots,p_r)$ with $p_i$ homogeneous linear polynomials.
	If we establish the theorem over some flat extension $L/K$, then it also
	holds for $K$. By passing to a flat extension $L/K$ if necessary, we may
	assume that for every maximal ideal $M$ of $L$, $L/M$ has more than $\vav+1$
	elements. Set $A=\supp P$; by arguing as in the proof of
	Lemma~\ref{lem:matrix}, we see that there exists $p=\sum_{a\in A} {\alpha_a
	x_a}\in P$ such that $(\alpha_a)_{a\in A}\in \mathbb{T}^A(L)$.

	Define the following symmetric bilinear form on $L\otimes Z^A(P)$ via 
	\[
		(\sigma|\tau):=\sum_{a\in A}\alpha_a(\lambda_a+\rho_a )\sigma_a\tau_a.
	\]
	Let $L\otimes Z^A(P)=\{v\in L^{A}\mid p_1(v)=\cdots = p_r(v)=0\}$. By
	Lemma~\ref{lem:diagonal-embedding}, $L\otimes Z(P)$ embeds in $L\otimes
	Z^A(S,P)$ via $\sigma \mapsto (\sigma_a 1_{V_a})_{a\in \zrange{\vav}}$.
	Thus, for every $\sigma,\tau\in L\otimes Z^A(P)$ and every $t\in S$, we have
	\begin{align*}
			&( \forall v)&
			0 & = \bra{t_L}p (\sigma 1\bullet \tau 1)\ket{v}
				=\sum_{a\in A} \alpha_a(\lambda_a\sigma_a\tau_a+\rho_a\tau_a\sigma_a)\la t_L | v\ra
				=(\sigma|\tau)\la t_L|v\ra
				=\la t| (\sigma|\tau) v_b,v_{\bar{b}}\ra.
	\end{align*}
	As $L$ is flat, $\bra{t_L}$ remains either full or nondegenerate in some
	coordinate. Hence, for all $\sigma,\tau\in L\otimes Z^A(P)$,
	$(\sigma|\tau)=0$; that is, $L\otimes Z^A(P)$ is totally isotropic.
	
	Fix a maximal ideal $M$ of $K$ and $x\mapsto \hat{x}$ the projection $K\to
	K/M=:k$. By Witt's invariant on $(|)_k:k^A\times k^A\bmto k$, totally
	isotropic spaces have dimension at most $f+m$ where $f=f(M)$ is the
	dimension of the radical of $(|)_k$ and $m=m(M)$ satisfies $2m \leq
	|A|-f\leq 2m+2$. Recall by our assumption that $P=(p_1,\dots, p_r)$ with each $p_i$ 
	homogeneous linear. So $L/M\otimes Z^A(P)$ is a linear space of
	dimension $|A|-r$ at least.  In particular 
	\begin{align}\label{eqn:Witt}
		|A|-r & \leq \dim_{L/M} L/M\otimes Z^A(P)\leq f+\frac{|A|-f}{2}.
	\end{align}
	Now since $\alpha\in\mathbb{T}^A(L)$, the radical of $(|)_k$ is indexed by
	$R_M:=\{a\in A\mid \lambda_a+\rho_a\in M\}$; so $f(M)=|R_M|$. If $K$ is a
	field, then $M=0$ and $R_M:=\{a\in A\mid \lambda_a+\rho_a=0\}$, which
	by~\eqref{eqn:Witt} implies $|A|-2r\leq |\{a\in A\mid
	\lambda_a+\rho_a=0\}|$.  For general $K$, consider $U_A:=\{a\in
	A~|~\bullet_{(\lambda_a,\rho_a)} \text{ is unital}\}$. This means for $a\in
	U_A$ there is a $\tau_a\in L$ such that $(\lambda_a+\rho_a)\tau_a=1$, and so
	for every maximal ideal $M$, $\lambda_a+\rho_a\not\in M$. Hence
	from~\eqref{eqn:Witt}, $|U_A|\leq |A|-f(M)\leq 2r$.
\end{proof}

A Lie algebra covering all the axes is also 
attainable in the following sense.
\begin{prop}\label{prop:Lie-sufficient}
	Let $P=(p_1,\ldots,p_r)\subset K[X]$ be a homogeneous linear polynomial with
	support $A$.  Then there is a flat extension $L/K$ and $\alpha\in
	\mathbb{T}^A(L)$ such that $p:=\sum_{a\in A}\alpha_a x_a\in P$, and for
	arbitrary tensor spaces $S$, $\Op[A]{S}{p}$ admits a weighted Lie product,
	and $\Op[A]{S}{P}\subset \Op[A]{S}{p}$.
\end{prop}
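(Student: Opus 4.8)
The plan is to obtain the promised polynomial $p$ by a base change, and then to write down an explicit axiswise rescaling of the commutator bracket that preserves $\Op[A]{S}{p}$. First I would reuse the opening of the proof of Theorem~\ref{mainthm:Lie-Asc}: after passing, via Lemma~\ref{lem:quotient-blowing-poly}, to a faithfully flat $K$-algebra $L$ all of whose residue fields have more than $\vav+1$ elements, the subspace-avoidance argument from the proof of Lemma~\ref{lem:matrix}, run over each residue field and patched by the Chinese Remainder Theorem, produces a linear homogeneous $p=\sum_{a\in A}\alpha_a x_a$ lying in the ideal generated by $P$ in $L[X]$, with $(\alpha_a)_{a\in A}\in\mathbb{T}^A(L)$ and $A=\supp P$. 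Since the ideal $(p)$ is contained in $PL[X]$, Proposition~\ref{prop:inclusion-reverse} applied over $L$ already yields $\Op[A]{S}{P}\subseteq\Op[A]{S}{p}$ after base change, so, replacing $K$ by $L$, the only substantive point left is the Lie-closure of $\Op[A]{S}{p}$.

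For that I would set $s_0=1$ and $s_a=-1$ for $a>0$, put $w_a=\alpha_a s_a\in K^\times$ (using $\alpha\in\mathbb{T}^A$), and define a product on $\Omega_A$ by $(\delta\bullet\epsilon)_a=w_a(\delta_a\epsilon_a-\epsilon_a\delta_a)$. Because $(w_a:-w_a)=(1:-1)$ on every axis, $\bullet$ is one of the products $\bullet_{(\lambda,\rho)}$ of~\eqref{eq:bullet-product} with the Lie point on each axis, and the axiswise dilation $\delta\mapsto(w_a\delta_a)_{a\in A}$ is a $K$-linear isomorphism from $(\Omega_A,\bullet)$ to $\Omega_A$ with its ordinary commutator bracket, so $\bullet$ is a genuine Lie bracket --- this is the ``weighted Lie product.'' The heart of the argument is then the following computation. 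For $L'\in\Comm{K}$ and $t\in S$ write $m=\bra{t_{L'}}$; for $a\in A$ and $\phi\in\End_{L'}(L'\otimes V_a)$ let $\theta_a(\phi)$ be the operator, on the $L'$-module where $m$ lives, that applies $\phi$ on the $a$-axis (post-composition if $a=0$, slot-$a$ precomposition if $a>0$). These satisfy $\theta_0(\phi)\theta_0(\phi')=\theta_0(\phi\phi')$, $\theta_a(\phi)\theta_a(\phi')=\theta_a(\phi'\phi)$ for $a>0$, and $\theta_a(\phi)\theta_b(\phi')=\theta_b(\phi')\theta_a(\phi)$ across distinct axes; and by~\eqref{eq:p-omega-action}, $\bra{t_{L'}}p(\omega)=T_\omega m$ with $T_\omega=\sum_{a\in A}\alpha_a\theta_a(\omega_a)$. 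Expanding $T_\delta T_\epsilon-T_\epsilon T_\delta$, all mixed ($a\neq b$) terms cancel by the commutation relations, and the axis-$a$ diagonal term contributes $\alpha_a^2 s_a\,\theta_a(\delta_a\epsilon_a-\epsilon_a\delta_a)=\alpha_a\,\theta_a((\delta\bullet\epsilon)_a)$, so $T_\delta T_\epsilon-T_\epsilon T_\delta=T_{\delta\bullet\epsilon}$. Hence if $\delta,\epsilon\in\Op[A]{S}{p}(L')$ then $T_{\delta\bullet\epsilon}m=T_\delta(T_\epsilon m)-T_\epsilon(T_\delta m)=0$ for every $t\in S$; as $p$ is linear homogeneous, $\Op[A]{S}{p}(L')$ is also an $L'$-submodule of $\Omega_A(L')$, so it is a Lie $L'$-subalgebra of $(\Omega_A(L'),\bullet)$, and this holds for all $L'$.

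The step that needs real care --- and the reason for the word ``weighted'' --- is the sign/weight bookkeeping in that last expansion: $\theta_0$ is an algebra homomorphism, while each $\theta_a$ with $a>0$ is an \emph{anti}-homomorphism (operators act on the right of $\bra{t}$), so the uniform commutator generally fails to land back in $\Op[A]{S}{p}$ and one is forced to twist by $s_a$; combining this sign with the unit $\alpha_a$ pins the weight down to $w_a=\alpha_a s_a$, and one must still verify that the resulting per-axis-scaled bracket is a Lie bracket, which it is via the dilation isomorphism above (the Jacobiator on axis $a$ only picks up the harmless factor $w_a^2$). By contrast the existence step is routine once Theorem~\ref{mainthm:Lie-Asc} is available; the one mild subtlety there is that $p$ need not have coefficients in $K$, so the inclusion $\Op[A]{S}{P}\subseteq\Op[A]{S}{p}$ and the Lie structure are both statements over $L$ rather than over $K$.
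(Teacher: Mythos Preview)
Your proof is correct and follows essentially the same approach as the paper's. The paper's proof is extremely terse---it simply says to take the $p=\sum_{a\in A}\alpha_a x_a$ produced in the proof of Theorem~\ref{mainthm:Lie-Asc}, replace $\alpha_0$ by $-\alpha_0$ for convenience, and then asserts without computation that $\Op[A]{S}{p}$ is closed under $[\delta,\xi]_a:=\alpha_a(\delta_a\xi_a-\xi_a\delta_a)$---whereas you have carried out in full the operator computation $T_\delta T_\epsilon-T_\epsilon T_\delta=T_{\delta\bullet\epsilon}$ that justifies this closure, including the crucial observation that $\theta_0$ is a homomorphism while $\theta_a$ for $a>0$ is an anti-homomorphism. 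Your sign convention $w_a=\alpha_a s_a$ with $s_0=1$, $s_{a>0}=-1$ differs from the paper's only by a harmless global sign, and your explicit verification that the weighted bracket is a Lie bracket via the dilation isomorphism is a detail the paper omits entirely.
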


\begin{proof}
	Use the $p=\sum_{a\in A}\alpha_a x_a$ as in the proof of
	Theorem~\ref{mainthm:Lie-Asc} but for convenience use $-\alpha_0$ if $0\in
	A$. Thus, $\Op[A]{S}{p}$ is closed to the product $[\delta,\xi]_a:=\alpha_a
	\delta_a\xi_a-\alpha_a \xi_a\delta_a$, for $a\in A$.
\end{proof}

The existence of unital associative algebras are limited considerably by
Theorem~\ref{mainthm:Lie-Asc}, but the following corollary shows that the nuclei
are always examples.

\begin{coro}\label{cor:associative-rare}
	Assume $K$ is a field and let $p\in K[X]$ be a homogeneous linear
	polynomial. There exists $(\lambda,\rho)\in (K^{2}-\{0\})^{\zrange{\vav}}$,
	with $(\forall a\in\zrange{\vav})((\lambda_a:\rho_a)\neq (1:-1))$ such that
	for all $S\subset T$, $\Op{S}{p}$ is closed under $\bullet_{(\lambda,\rho)}$
	(as defined in~\eqref{eq:bullet-product}) if, and only if, 
	for some $a,b\in \zrange{\vav}$ and $\alpha,\beta\in K$, $p=\alpha x_a-\beta x_b$. 
	In this case, $\bullet_{(\lambda,\rho)}$ is associative.
\end{coro}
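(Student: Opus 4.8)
The plan is to prove the two implications of the biconditional separately, with the converse ``$\Rightarrow$'' being essentially an application of Theorem~\ref{mainthm:Lie-Asc} and the forward implication ``$\Leftarrow$'' a direct construction built on the nuclei of Example~\ref{ex:nuclei}. For the converse, suppose $(\lambda,\rho)\in(K^2-\{0\})^{\zrange{\vav}}$ has no coordinate $a$ with $(\lambda_a:\rho_a)=(1:-1)$, and $\Op{S}{p}$ is closed under $\bullet_{(\lambda,\rho)}$ for every $S$. Set $A=\supp p$. Since $\Op{S}{p}=\Op[A]{S}{p}\times\prod_{c\notin A}\End(V_c)$ and $\bullet_{(\lambda,\rho)}$ acts coordinatewise, $\Op[A]{S}{p}$ is closed under $\bullet_{(\lambda|_A,\rho|_A)}$. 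Applying Theorem~\ref{mainthm:Lie-Asc} with $P=(p)$ (hence $r=1$) to a fully nondegenerate $S$ — for instance the tensor space $V_0=\dots=V_{\vav}=K$ with $S=T=K$ and $\bra{1}v_1,\dots,v_{\vav}\ra=v_1\cdots v_{\vav}$ — and using that $K$ is a field, we obtain that at least $|A|-2r=|A|-2$ of the coordinate products are Lie products, i.e.\ have $(\lambda_a:\rho_a)=(1:-1)$. By hypothesis none do, so $|A|\le 2$; writing $\supp p=\{a,b\}$ (taking $b=a$ if $|\supp p|=1$) exhibits $p$ as $\alpha x_a-\beta x_b$ for suitable $\alpha,\beta\in K$, with $\beta=0$ allowed.

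For the forward implication, write $p=\alpha x_a-\beta x_b$ with $a\le b$, let $A=\supp p\subseteq\{a,b\}$, and put $(\lambda_c,\rho_c)=(1,0)$ for every $c\notin A$, noting $\Op{S}{p}$ restricts to all of $\End(V_c)$ there. If $|A|=2$ then $\alpha,\beta\ne 0$, and by Proposition~\ref{prop:torus-redundancy} and Example~\ref{ex:nuclei}, on the $\{a,b\}$-coordinates $\Op{S}{p}$ equals $\{(\alpha^{-1}\mu_a,\beta^{-1}\mu_b):(\mu_a,\mu_b)\in\Nuc_{a,b}(S)\}$, a coordinatewise rescaling of the associative unital subalgebra $\Nuc_{a,b}(S)\le\End(V_a)^{\op}\times\End(V_b)$ (of $\End(V_0)\times\End(V_b)$ when $a=0$). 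A short computation unwinding \eqref{eq:p-omega-action} shows this set is closed under $\bullet_{(\lambda,\rho)}$ for $(\lambda_a,\rho_a)=(0,\alpha)$ and $(\lambda_b,\rho_b)=(\beta,0)$ (with $(\lambda_0,\rho_0)=(\alpha,0)$ replacing $(0,\alpha)$ when $a=0$), whose projective points $(0:1)$ and $(1:0)$ are both different from $(1:-1)$. If $|A|\le 1$, say $p=\alpha x_a$, then a direct check shows $\Op[\{a\}]{S}{x_a}$ is a one-sided ideal of $\End(V_a)$ — a right ideal if $a>0$ and a left ideal if $a=0$ — hence closed under the one-sided product with $(\lambda_a:\rho_a)=(1:0)$, resp.\ $(0:1)$. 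In every case the product on each coordinate is a scalar multiple of ordinary or opposite matrix multiplication, so $\bullet_{(\lambda,\rho)}$ is associative, which also gives the final clause of the corollary.

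The converse carries essentially no risk once Theorem~\ref{mainthm:Lie-Asc} is in hand. The care in the forward direction lies in pinning down the precise scalars $(\lambda_c,\rho_c)$: because $\Op{S}{p}$ is only a $K$-submodule of $\Omega(K)$ and is not stable under rescaling a single coordinate, the closing product must rescale the two active coordinates by a common factor, which is what forces the pair $(\beta,0)$ rather than $(1,0)$ on the $b$-axis when $\alpha\ne\beta$. Beyond that, the closure verification from \eqref{eq:p-omega-action} and the case analysis on whether $0\in\supp p$ and on $|A|\in\{1,2\}$ are all one-line checks, so no genuine obstacle is anticipated.
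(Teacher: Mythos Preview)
Your proof is correct and follows essentially the same route as the paper: the ``only if'' direction is Theorem~\ref{mainthm:Lie-Asc} with $r=1$, and the ``if'' direction reduces via the torus action (Proposition~\ref{prop:torus-redundancy}) to the nucleus of Example~\ref{ex:nuclei} and a direct check for $p=\alpha x_a$. Your explicit bookkeeping of the scalars $(\lambda_c,\rho_c)$ needed after the torus rescaling is a useful elaboration---the paper simply invokes the torus reduction and leaves implicit that the projective condition $(\lambda_a:\rho_a)\neq(1:-1)$ is preserved under it.
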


\begin{proof}
	Observe that $\dim \mathrm{span}_K\{p\}\leq 1$, so the the ``only if''
	follows from Theorem~\ref{mainthm:Lie-Asc}. For the ``if'' part, observe
	that there is $\tau\in\mathbb{T}$ such that $p^\tau=x_a-x_b$, or
	$p^\tau=x_a$ for some $a,b\in\zrange{\vav}$, so by
	Proposition~\ref{prop:Lie-sufficient}, it is enough to just consider the
	cases $p=x_a-x_b$ and $p=x_a$. In the first case,
	$\Op{S}{p}=\Op[\{a,b\}]{S}{p}\times\Omega_{\zrange{\vav}-\{a,b\}}$, and this
	is an associative algebra by Example~\ref{ex:nuclei}. The case $p=x_a$ can
	be checked by hand---$\Op{S}{p}$ is a (non-unital) subalgebra of
	$\prod_{c\in\zrange{\vav}}\End(V_c)$.
\end{proof}

\subsection{Associative algebras of transverse operators in valence 2}

We explore the implications of Theorem~\ref{mainthm:Lie-Asc} in valence
$\vav=2$, namely, for $K$-bilinear maps $\bra{t}:V_1\times V_2\bmto V_0$. In this
context, several families of naturally associative transverse operators have
been identified in the literature, e.g.~\citelist{\cite{BMW:exactseq}
\cite{Wilson:Skolem-Noether}}. Those are the nuclei of \eqref{def:adj} and
Example~\ref{ex:nuclei}. A further example is the centroid
(Example~\ref{ex:centroid}),
\[
	\Cent(S)=\OpSet{S}{\{x_0-x_1,x_0-x_2\}}{K},
\]
which is a unital subalgebra of $\End(V_0)\times\End(V_1)\times\End(V_2)$. We
apply Theorem~\ref{mainthm:Lie-Asc} to show that these are essentially all the
examples where $\OpSet{S}{P}{K}$ is a \emph{unital} subalgebra of
$\Omega_{A,B}$.  

\begin{lem}\label{lem:zero-sum}
	Let $P\subset K[X]$ be a linear homogeneous ideal. If there exists $A\subset
	\zrange{\vav}$ such that for every $S\subset T$,
	$(1_{V_a})_{a\in\zrange{\vav}} \in \Op{S}{P}$, then for every linear
	homogeneous polynomial $\sum_a\lambda_a x_a\in P$, we have
	$\sum_a\lambda_a=0$.
\end{lem}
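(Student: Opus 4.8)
The plan is to unpack what the hypothesis says about the all‑identity operator and then read off the constraint it forces on a linear polynomial $p=\sum_{a\in\zrange{\vav}}\lambda_a x_a\in P$, by testing against one convenient tensor space. Since $\Op{S}{P}$ is inclusion‑reversing in $S$ (Proposition~\ref{prop:inclusion-reverse}), the content of the hypothesis is that $(1_{V_a})_{a\in\zrange{\vav}}\in\Op{T}{P}(K)$ holds for \emph{every} tensor space $T$ of the ambient valence $\vav$ (take $S=T$). I would therefore specialize to the rank‑one tensor space $T=K$ with frame $(K,\dots,K)$ and interpretation $\bra{1}v_1,\dots,v_{\vav}\ra=v_1\cdots v_{\vav}$; here $\End(V_a)=K$ for every $a$, the all‑identity operator is $\omega=(1,\dots,1)$, and the hypothesis gives $\bra{1}\,p(\omega)=0$.

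The remaining step is a direct evaluation of $\bra{1}p(\omega)$ using the action formula \eqref{eq:p-omega-action}. With $\omega=(1,\dots,1)$ and $p$ linear homogeneous, each summand $\lambda_a x_a$ contributes $\lambda_a\cdot v_1\cdots v_{\vav}$ at the argument $(v_1,\dots,v_{\vav})$ (the $0$‑axis term via $\omega_0=1_K$, each $a$‑axis term for $a\ge 1$ via $\omega_a=1_K$), so $\bra{1}p(\omega)\ket{v_1,\dots,v_{\vav}}=\big(\sum_{a\in\zrange{\vav}}\lambda_a\big)\,v_1\cdots v_{\vav}$. This multilinear map is zero precisely when the scalar $\sum_a\lambda_a$ is zero in $K$ (evaluate at $v_1=\dots=v_{\vav}=1$), and since the hypothesis forces $\bra{1}p(\omega)=0$, we get $\sum_a\lambda_a=0$, which is the assertion.

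I do not expect a genuine obstacle in this argument; the only point that needs a moment's care is the choice of a test tensor space on which scalar multiplication of the target module $V_0$ is faithful, so that the vanishing of the scalar operator $(\sum_a\lambda_a)\cdot 1_{V_0}$ actually detects the vanishing of $\sum_a\lambda_a$ in $K$ — and the rank‑one multiplication form is the obvious such choice. (If one instead reads the lemma with $T$ fixed, the same computation goes through provided $T$ is faithful in this sense, e.g.\ fully nondegenerate with $V_0$ free of positive rank.)
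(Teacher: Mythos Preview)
Your argument is correct and essentially identical to the paper's: both specialize to the unit tensor $\bra{\1}:K^{\vav}\to K$ given by multiplication, apply the hypothesis with $\omega=(1,\dots,1)$, and evaluate $\bra{\1}p(\omega)\ket{1,\dots,1}=\sum_a\lambda_a$ to conclude.
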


\begin{proof}
	Let $p=\sum_a\lambda_a x_a\in P$. Consider the unit tensor
	$\bra{\1}:K^{\vav}\to K$ given by $\vav$-fold product in $K$. Then
	$\omega:=(1_{V_a})_{a\in \zrange{\vav}}\in \OpSet{\1}{p}{K}$, hence
	$0=\bra{\1}p(\omega)\ket{1_K,\dots,1_K}=\sum_a\lambda_a$.
\end{proof}

\begin{prop}\label{prop:valence-two-unital-assoc-subalgs}
	Let $K$ be a field, $\vav=2$, and $P\subset K[x_0,x_1,x_2]$ be a linear
	homogeneous ideal. Then there is $A\subset \{0,1,2\}$ such that for every
	$S\subset T$ and $L\in\Comm{K}$, the set $\OpSet[A]{S}{P}{L}$ is a
	unital subalgebra of $\Omega_{A}(L)$ if, and only if, one of the following
	holds.
	\begin{enumerate}
		\item[(0)] $P=0$; in this case $\OpSet{S}{P}{K} =
		\prod_{a\in\{0,1,2\}}\End(V_a)$.
		\item[(1)] $P=(x_a-x_b)$ for distinct $a,b\in \{0,1,2\}$; in this case,
		$\OpSet[\{a,b\}]{S}{P}{K} = \Nuc_{a,b}(S)$.
		\item[(2)] $P=(x_0-x_1,x_0-x_2)$; in this case $\OpSet{S}{P}{K} = \tau
		\Cent(S)$.
	\end{enumerate}
\end{prop}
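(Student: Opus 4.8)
The plan is to establish the two implications separately. The ``if'' direction is a repackaging of Examples~\ref{ex:nuclei} and~\ref{ex:centroid}. In case~(0) take $A=\{0,1,2\}$: then $\OpSet{S}{P}{L}=\Omega_{\{0,1,2\}}(L)$ is visibly a unital associative $K$-algebra for every $L$. In case~(1) take $A=\{a,b\}$, with the $\Omega_{A,B}$-convention putting one axis contravariant when $0\notin\{a,b\}$ (so that the relevant associative product transposes the corresponding $\End$-factor); then $\OpSet[A]{S}{P}{L}=\Nuc_{a,b}(L\otimes S)$ is a unital subalgebra by Example~\ref{ex:nuclei} applied over the base ring $L$. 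In case~(2) take $A=\{0,1,2\}$; then $\OpSet{S}{P}{L}=\Cent(L\otimes S)$ is a unital subalgebra by Example~\ref{ex:centroid}. Compatibility with base change holds because $\Op{S}{P}$ is a scheme with $\OpSet{S}{P}{L}=\OpSet{L\otimes S}{P}{L}$ (cf.\ the proof of Corollary~\ref{cor:flatness}) and the relations cutting out the nucleus and centroid are $L$-linear, so the algebra structure is inherited. The displayed identifications of $\OpSet{S}{P}{K}$ then follow at once from the definitions (with $\tau=\mathrm{id}$ in case~(2), as $P$ is literally $(x_0-x_1,x_0-x_2)$).

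For the ``only if'' direction, assume $A\subset\{0,1,2\}$ is such that $\OpSet[A]{S}{P}{L}$ is a unital subalgebra of $\Omega_A(L)$ for all $S$ and $L$. For this to be meaningful we need $\supp P\subseteq A$, and since $\Op[A]{S}{P}=\Op[\supp P]{S}{P}\times\Omega_{A\setminus\supp P}$ is a unital subalgebra exactly when its first factor is, I may assume $A=\supp P$, unless $\supp P=\emptyset$, i.e.\ $P=0$ --- which is case~(0). So suppose $P\neq0$ and $A=\supp P$, and let $V_P$ be the (finite-dimensional, as $K$ is a field) space of linear homogeneous polynomials in $P$, so $P=(V_P)$. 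Being a \emph{unital} subalgebra, $\OpSet[A]{S}{P}{K}$ contains $(1_{V_a})_{a\in A}$; together with $\Op{S}{P}=\Op[A]{S}{P}\times\Omega_{\bar{A}}$ this gives $(1_{V_a})_{a\in\{0,1,2\}}\in\Op{S}{P}(K)$ for every $S$, so Lemma~\ref{lem:zero-sum} yields $\sum_a\lambda_a=0$ for every $\sum_a\lambda_a x_a\in V_P$. Hence $V_P$ lies in the $2$-dimensional subspace $\{\sum_a\lambda_a x_a:\sum_a\lambda_a=0\}$ of linear forms, whence $\dim_K V_P\le 2$.

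Now apply Theorem~\ref{mainthm:Lie-Asc}: taking $S$ to be a full nondegenerate tensor (e.g.\ the unit tensor), the subalgebra property says $\Op[A]{S}{P}$ is closed to $\bullet_{(\lambda,\rho)}$ with $(\lambda_a,\rho_a)=(1,0)$ for all $a\in A$, and the corresponding $a$-axis product is ordinary composition, hence unital, on every axis of $A$. Writing $P=(p_1,\dots,p_r)$ with $p_1,\dots,p_r$ a basis of $V_P$, so $r=\dim_K V_P$, Theorem~\ref{mainthm:Lie-Asc} then forces $|\supp P|=|A|\le 2r=2\dim_K V_P$. A two-line case check finishes the proof: if $\dim_K V_P=1$ then $V_P=Kp$ with $|\supp p|\le 2$, and since a nonzero linear form of vanishing coefficient sum cannot have support of size $1$, we get $|\supp p|=2$, say $\supp p=\{a,b\}$, so $p\in K(x_a-x_b)$ and $P=(x_a-x_b)$ (case~(1)); if $\dim_K V_P=2$ then $V_P=\{\sum_a\lambda_a x_a:\sum_a\lambda_a=0\}=K(x_0-x_1)+K(x_0-x_2)$, so $P=(x_0-x_1,x_0-x_2)$ (case~(2)); and $\dim_K V_P=0$ was handled above.

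The mathematical content is light; the real effort is organizational: reducing to $A=\supp P$, reading off from ``unital subalgebra'' the exact hypothesis Lemma~\ref{lem:zero-sum} needs, and matching the standard product on $\Omega_A$ to the one Theorem~\ref{mainthm:Lie-Asc} consumes. The one genuine wrinkle is in case~(1) of the ``if'' direction: when $0\notin\{a,b\}$, $\Nuc_{a,b}(S)$ is a subalgebra of $\Omega_A$ only after transposing one of the two $\End$-factors, so the statement is to be read in the $\Omega_{A,B}$-convention with one contravariant axis, exactly as flagged in Example~\ref{ex:nuclei}.
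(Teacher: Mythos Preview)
Your proof is correct and follows essentially the same route as the paper's: both invoke Lemma~\ref{lem:zero-sum} to force the zero-sum constraint on linear forms in $P$, and both invoke Theorem~\ref{mainthm:Lie-Asc} (the paper via its Corollary~\ref{cor:associative-rare}) to bound the support in the rank-one case. Your dimension count on $V_P$ replaces the paper's explicit echelon-form casework on the coefficient matrix $\Lambda$, which is a mild streamlining but not a different idea; your flag about the $\Omega_{A,B}$ convention in case~(1) when $0\notin\{a,b\}$ is apt and matches the paper's own Example~\ref{ex:nuclei}.
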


\begin{proof}
	The ``if'' implication was explained in the comment preceding
	Lemma~\ref{lem:zero-sum}, so we only prove the ``only if'' part. Let
	$p_1,\dots,p_r\in K[X]$ be linear homogeneous polynomials generating $P$.
	Write $p_i=\sum_a\lambda_{ia}x_a$ and consider the matrix
	$\Lambda=(\lambda_{ia})_{i,a}\in \mathbb{M}^{r\times (\vav+1)}$; we write
	$\Lambda$ so that columns $1,2,3$ correspond the variables $x_2,x_1,x_0$.
	Then the rank of $\Lambda$ is $\dim_K\mathrm{span}\{p_1,\dots,p_r\}$, and
	the number of its nonzero columns is the support of $P$. By applying row
	operations, we may assume that $\Lambda$ is in echelon form and has no zero
	rows. In addition, by Lemma~\ref{lem:zero-sum}, the sum of the columns of
	$\Lambda$ is zero. We now break into three cases.
	
	If $\rank \Lambda=0$, then $P=0$ and case (0) applies.

	If $\rank \Lambda=1$, then $\Lambda$ is a row matrix which, by
	Corollary~\ref{cor:associative-rare}, has at least one zero entry. Since the
	entries of $\Lambda$ add   to $0$, case (1) must apply.
	
	Finally, if $\rank \Lambda=2$, then $\Lambda$ has one of the following three
	forms: $[\begin{smallmatrix} 1 & 0 & *\\ 0 & 1 & *\end{smallmatrix}]$,
	$[\begin{smallmatrix} 1 & * & 0\\ 0 & 0 & 1\end{smallmatrix}]$,
	$[\begin{smallmatrix} 0 & 1 & 0\\ 0 & 0 & 1\end{smallmatrix}]$. As the
	columns of $\Lambda$ add to $0$, $\Lambda=[\begin{smallmatrix} 1 & 0 & -1\\
	0 & 1 & -1\end{smallmatrix}]$, so we are in case (2).
\end{proof}

\begin{remark}
	Assume $K$ is a field and $\vav=2$. There are further examples of linear
	homogeneous ideals $P\subset K[x_0,x_1,x_2]$ such that for all $S\subset T$
	and $L\in\Comm{K}$, $\OpSet{S}{P}{K}$ is an associative algebra
	relative to a product of the form \eqref{eq:bullet-product}. For example,
	take any $\tau\in \mathbb{T}(K)$ and apply it to the ideals in
	Proposition~\ref{prop:valence-two-unital-assoc-subalgs}---note that the
	resulting $\OpSet{S}{P^\tau}{L}$ is in general not a
	\emph{unital-subalgebra} (though it may be unital for some other unit) of
	$\Omega_{A}$ for any $A\subset\zrange{\vav}$. By analyzing echelon forms as
	in the proof of Proposition~\ref{prop:valence-two-unital-assoc-subalgs}, we
	see that the only nonzero ideals $P$ with this property are of the form
	$(x_a-x_b)^\tau$ or $(x_0-x_1,x_0-x_2)^\tau$ for some $\tau\in
	\mathbb{T}(K)$, or one of the following degenerate cases:
	\begin{enumerate} 
		\item[($1'$)] $P=(x_a)$ for $a\in\{0,1,2\}$; in this case,
		$\OpSet{S}{P}{K} = 0_{V_a}\times \prod_{b\neq a}\End(V_a)$ whenever $S$
		is fully nondegenerate.
		\item[($2'$)] 
		$P=(x_a-x_b,x_c)^{\tau}$ for distinct $a,b,c\in\{0,1,2\}$ and $\tau\in
		\mathbb{T}(K)$; in this case, $\OpSet{S}{P}{K} = \Nuc_{a,b}(S) \times
		0_{V_c}$ whenever $S$ is fully nondegenerate.
		\item[($2''$)] $P=(x_a,x_b)$ for distinct $a,b \in\{0,1,2\}$; in this
		case, $\OpSet{S}{P}{K} = 0_{V_a}\times 0_{V_b}\times\End(V_{4-a-b})$
		whenever $S$ is fully nondegenerate.
		\item[($3$)] $P=(x_1,x_2,x_3)$; in this case, $\OpSet{S}{P}{K} = 0$
		whenever $S$ is fully nondegenerate.
	\end{enumerate}
	The use of $\Op[A]{S}{P}$, $A=\sup P$, eliminates all these degenerate
	cases.
\end{remark}



\section{Operator group schemes, Theorem~\ref{thm:group-intro}, 
and homotopism categories}\label{sec:groups}

This section concerns operator families whose invertible operators are
intrinsically subgroups. We prove Theorem~\ref{thm:group-intro} and a much
stronger form involving general ideals instead of only principal ideals as
stated in the introduction. In more detail, we study $A\subset\zrange{\vav}$,
$B\subset\comp{A}$, and ideals $P\subset K[X_{A\sqcup B}]$ where the scheme
$\Op[A,B]{S}{P}^\times :=\Op[A,B]{S}{P}\cap \Omega_{A,B}^\times$ is a
\emph{subgroup subscheme} independent of $S$.  We keep these notations fixed
throughout this section.

\begin{center}
	\emph{In this section $K$ is a field.}
\end{center}

\subsection{Group schemes}

For the purpose of this paper, an affine group scheme (of finite type) over $K$
is a functor $\mathfrak{G}$ from $\Comm{K}$ to $\mathsf{Group}$ such that
$\mathfrak{G}$ becomes an affine $K$-scheme when regarded as a functor from
$\Comm{K}$ to $\mathsf{Set}$, see Section~\ref{sec:Galois}. A (closed) subgroup
scheme of $\mathfrak{G}$ is a (closed) subscheme $\mathfrak{H}$ of
$\mathfrak{G}$ which is a group scheme relative to the product of
$\mathfrak{G}$, i.e.,  for each $L\in \Comm{K}$, $\mathfrak{H}(L)$ is a subgroup
of $\mathfrak{G}(L)$. We refer the reader to \cite{Waterhouse} for an extensive
treatment.

The most important example that we shall consider is $\mathcal{GL}(V)$, the
functor sending $L\in\Comm{K}$ to the group $\mathrm{Aut}_L(L\otimes V)$, where
$V$ is a finite dimensional $K$-vector space.  So $\Omega_{A,B}^{\times}(L)=
\prod_{a\in A}\mathrm{Aut}(L\otimes V_a)\times \prod_{b\in B}
\mathrm{Aut}(L\otimes V_b)^{\op}$.  Thus, $\Op[A,B]{S}{P}^{\times}$ formally 
means that
$
	\Op[A,B]{S}{P}^{\times}(L)
	=\Op[A,B]{S}{P}(L)\cap \Omega_{A,B}^{\times}(L).
$
By proposition~\ref{prop:Z}, $\Op{S}{P}^\times$ is a closed subscheme of
$\Omega^{\times}$, but {\it a priori} not a subgroup scheme.  We prove:

\addtocounter{submainthm}{1}
\begin{submainthm}\label{thm:group-full}
	Fix $K$ a field, $A\subset\zrange{\vav}$, and $B\subset \comp{A}$.
	\begin{enumerate}[(a)]
		\item For arbitrary tensor spaces $S$ of valence $\vav$,
		$\Op[A,B]{S}{X^{e_1}-X^{f_1},\ldots,X^{e_{r}}-X^{f_{r}}}^{\times}$ is a 
		subgroup scheme, whenever for every $i$, $e_i,f_i:A\sqcup B\to
		\{0,1\}$, $\supp e_i\subset A$, and $\supp f_i\subset B$.

		\item If $P\subset K[X_{A\sqcup B}]$ such that for arbitrary tensor
		spaces $S$ of valence $\vav$, $\Op[A,B]{S}{P}^{\times}$ is a subgroup
		scheme, then there is $Q=(X^{e_1}-X^{f_1},\ldots, X^{e_r}-X^{f_r})$ such
		that for all $i$, $e_i,f_i:A\sqcup B\to \{0,1\}$, and for all $S$, 
		$\Op[A,B]{S}{P}^{\times}=\Op[A,B]{S}{Q}^{\times}$.		
	\end{enumerate}
\end{submainthm}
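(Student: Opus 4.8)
The plan is to prove the two implications separately; (a) is a direct computation, and (b) is where the work of Eisenbud--Sturmfels enters.

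For (a): I would check that $\Op[A,B]{S}{X^{e_1}-X^{f_1},\dots,X^{e_r}-X^{f_r}}^{\times}$ is closed under the group operations of $\Omega_{A,B}^{\times}$. Fix $i$, write $e=e_i$, $f=f_i$; using \eqref{eq:p-omega-action}, the hypotheses $\supp e\subseteq A$, $\supp f\subseteq B$, $0\notin B$, and that all exponents lie in $\{0,1\}$, the relation $X^{e}-X^{f}\in\Id{t}{\omega}$ reads
\[
	\omega_0^{e(0)}\bra{t}(\omega_a^{e(a)}v_a)_{a\in A},(v_c)_{c\notin A}\ra
	=\bra{t}(v_c)_{c\notin B},(v_b(\omega_b^{\op})^{f(b)})_{b\in B}\ra
\]
for all $v$ and all $t\in S$: on the left one only postcomposes by covariant $\omega_a$'s and twists the output by $\omega_0$, on the right one only precomposes by contravariant $\omega_b^{\op}$'s. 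Given $\omega,\omega'$ satisfying this, applying the relation for $\omega$ with suitably shifted inputs and then the relation for $\omega'$ shows $\omega\omega'$ satisfies it; the covariant and output operators accumulate as $\omega_a\omega_a'$ and $\omega_0\omega_0'$ (the group law of $\Aut(V_a)$), while the contravariant ones accumulate in the opposite order, as $(\omega_b'\omega_b)^{\op}$, matching the law of $\End(V_b)^{\op}$. Invertibility of each $\omega_c$ gives closure under inverses by the same substitution, and $(1_{V_a})_a$ trivially satisfies every relation; the valence-$2$ instances are the adjoint algebra \eqref{def:adj} and the isometry group of Example~\ref{ex:nuclei}. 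I expect this to be routine bookkeeping.

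For (b): set $m=\prod_{c\in A\sqcup B}x_c$. The first step is a saturation reduction. For invertible $\omega$ the ideal $\Id{t}{\omega}\subseteq K[X_{A\sqcup B}]$ is $m$-saturated: if $x_c\,p\in\Id{t}{\omega}$, then substituting $\omega_c^{-1}v_c$ (or composing with $\omega_0^{-1}$, when $c=0$) into the defining relation yields $p\in\Id{t}{\omega}$. Hence $\Op[A,B]{S}{P}^{\times}=\Op[A,B]{S}{(P:m^{\infty})}^{\times}$ for every $S$, and more generally $\Op[A,B]{S}{Q}^{\times}=\Op[A,B]{S}{P}^{\times}$ for all $S$ whenever $Q\subseteq P$ and $(Q:m^{\infty})=(P:m^{\infty})$. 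So I may assume $P$ is $m$-saturated, and it suffices to exhibit a square-free binomial ideal $Q\subseteq P$ with $(Q:m^{\infty})=P$. The second step extracts a lattice, and this is where \cite{ES} enters. Applying the hypothesis to the valence-$\vav$ tensor space with one-dimensional frame and its unit tensor $\bra{\1}$, one computes $\bra{\1}p(\omega)\ket{v}=p(\xi)\,v_1\cdots v_{\vav}$ with $\xi=(\omega_c)_{c\in A\sqcup B}$, so $\Op[A,B]{\1}{P}^{\times}$ is the closed subscheme $\mathbf{Z}(P)\cap\mathbb{G}_m^{A\sqcup B}$ of $\mathbb{G}_m^{A\sqcup B}$. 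By hypothesis this is a subgroup scheme, hence (being a closed subgroup scheme of a torus) diagonalizable, and its defining ideal---described through the theory of binomial and lattice ideals in \cite{ES}---is a lattice ideal: $(P:m^{\infty})=I_L:=(X^{u^{+}}-X^{u^{-}}\mid u\in L)$ for a unique sublattice $L\leq\mathbb{Z}^{A\sqcup B}$. So I may assume $P=I_L$.

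The third step, and the main obstacle, is to show that $L$ is generated by vectors $u$ with $u^{+}\in\{0,1\}^{A}$ and $u^{-}\in\{0,1\}^{B}$. Granting this, set $e_i=u_i^{+}$, $f_i=u_i^{-}$ for such generators $u_1,\dots,u_r$ and $Q=(X^{e_1}-X^{f_1},\dots,X^{e_r}-X^{f_r})$; then $Q\subseteq I_L=P$ and $(Q:m^{\infty})=I_L$, so the saturation observation finishes the proof (and $Q$ even meets the support conditions of part (a)). I would prove the claim contrapositively: the unit-tensor test alone does not force it---for instance $\{(\xi,\xi^{2})\}$ is a subgroup of $\mathbb{G}_m^{2}$, so $I_{\mathbb{Z}(2,-1)}$ passes it---but if $L$ has a ``bad'' element (positive part not $\{0,1\}$-valued or not supported in $A$, or negative part not $\{0,1\}$-valued or not supported in $B$) that no generating set avoids, then I would build a tensor space of valence $\vav$ with all frame dimensions $\geq2$ and a sufficiently general tensor $t$ for which two operators of $\Op[A,B]{\{t\}}{I_L}^{\times}$ have product outside it; the model failures are that $\{\omega:\omega^{2}\text{ scalar}\}$ is not a group once $\dim\geq2$ and that $\bra{t}\omega_av_a,v_{a'}\ra=\bra{t}v_a,\omega_{a'}v_{a'}\ra$ with $a,a'$ both covariant is not closed under composition. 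The hard point is exactly this lemma: one must understand $\Id{t}{\omega}$ for a generic $t$ well enough to be sure the non-commuting witnesses genuinely lie in $\Op[A,B]{\{t\}}{I_L}^{\times}$; everything else is saturation bookkeeping and citation.
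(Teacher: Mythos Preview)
Your plan for (a) is correct and is precisely the paper's Proposition~\ref{prop:group-forward}: the support hypotheses let the covariant and contravariant factors slide past one another, so that composition and inversion in $\Omega_{A,B}^{\times}$ preserve each binomial relation.

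For (b), your first two steps---saturating by the product of the variables (the paper's Proposition~\ref{PR:redundancy-for-groups}) and then evaluating on the unit tensor $\bra{\1}$ to obtain a closed subgroup scheme of the torus, hence a trivial lattice ideal $I_+(\rho)$ via Eisenbud--Sturmfels (Propositions~\ref{prop:need-binomial} and~\ref{prop:need-trivial}, Corollary~\ref{coro:need-binomials})---match the paper exactly. The divergence is in your third step. You set out to show that $\mathcal{L}_\rho$ is generated by vectors $u$ with $u^{+}$ supported in $A$ and $u^{-}$ supported in $B$; that is condition~\eqref{eq:nec-cond-group}, and the paper explicitly leaves it as the open Question~\ref{quest:full-converse}. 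The statement of (b) asks only for $e_i,f_i\in\{0,1\}^{A\sqcup B}$, with no support separation, and the paper's argument for this weaker conclusion is Proposition~\ref{prop:need-split}: rather than a ``sufficiently general'' tensor, it builds, for each coordinate $a$ where the projection of $\mathcal{L}_\rho$ might land in $m_a\mathbb{Z}$ with $m_a>1$, the explicit tensor $t(a,m_a+1,\pi_U)$ on the permutation module $K^{m_a+1}$ of the group generated by $m_a$-cycles, and exhibits cycles whose product has order coprime to $m_a$ to witness failure of closure. This forces every coordinate projection of $\mathcal{L}_\rho$ to be $\mathbb{Z}$ or $0$, and one then takes $Q=(X^{m_+}-X^{m_-}:m\in\mathcal{B})$ for a basis $\mathcal{B}$ of $\mathcal{L}_\rho$. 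Your ``$\omega^2$ scalar'' heuristic points at the right phenomenon, but the paper's construction is specific and combinatorial, not generic; and the disjoint-support strengthening you sketch at the end is, as far as the paper establishes, genuinely open beyond the principal case handled in Theorem~\ref{thm:group-intro}.
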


Notice Theorem~\ref{thm:group-full}(b) lacks the requirement of disjoint
supports found in part (a). We can prove this in the special case that $Q$ is a
principal ideal which recovers Theorem~\ref{thm:group-intro}.  We are not aware
of any example preventing this condition nor have we found a means to prove it.
So we leave this as an open question in this work.  We later phrase the
precise obstacle in terms of lattices, Question~\ref{quest:full-converse}.

\subsection{A motivating example, proof of Theorem~\ref{thm:group-full}(a)}

In many ways the prototypical example of automorphisms of an algebra offers the
clues for a sufficient conditions to ensure that $\Op{S}{P}^\times$ is naturally
a group scheme. An isomorphism of algebras is a map $\omega$ such that
$\omega(x)\omega(y)=\omega(xy)$.  Generalizing slightly to tensors of valence
$\vav=2$ this becomes an expression of the form 
\begin{align*}
   \langle t| \omega_1 v_1,\omega_2 v_2\rangle=\omega_0\langle t|v_1,v_2\rangle.
\end{align*}  
From there, we see the operator $\omega$ is annihilated by $x_0-x_1 x_2$
relative to $\bra{t}$.  The mechanics of the proof that automorphisms of an
algebra form a group generalizes to demonstrate the following.

\begin{prop}
	$\Op{S}{x_0-x_1\cdots x_{\vav}}^{\times}$ is a group subscheme of
	$\Omega^{\times}=\prod_{a\in \zrange{\vav}}\mathcal{GL}(V_a)$.
\end{prop}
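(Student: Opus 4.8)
The plan is to split the claim into its scheme-theoretic part, which is already established, and a pointwise verification of the group laws that is uniform in the base ring. Write $p=x_0-x_1\cdots x_{\vav}$. By Proposition~\ref{prop:Z}, $\Op{S}{p}$ is a closed subscheme of $\Omega$; since $\Omega^{\times}$ is a subscheme of $\Omega$, the fact recalled at the start of the proof of Proposition~\ref{prop:Z} (a closed subscheme of $\Omega$ meets any subscheme of $\Omega$ in a closed subscheme) gives that $\Op{S}{p}^{\times}=\Op{S}{p}\cap\Omega^{\times}$ is a closed subscheme of $\Omega^{\times}$, hence an affine $K$-scheme. By the definition of a subgroup scheme, it then suffices to check that for each $L\in\Comm{K}$ the set $\Op{S}{p}^{\times}(L)$ is a subgroup of $\Omega^{\times}(L)=\prod_{a}\Aut_L(L\otimes V_a)$.

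So fix $L\in\Comm{K}$. Unwinding \eqref{eq:p-omega-action}, an element $\omega=(\omega_a)_a\in\Omega^{\times}(L)$ lies in $\Op{S}{p}^{\times}(L)$ exactly when
\[
	\omega_0\,\bra{t_L}v\ra=\bra{t_L}\omega_1 v_1,\dots,\omega_{\vav}v_{\vav}\ra
\]
for all $t\in S$ and all $v\in\prod_{a=1}^{\vav}(L\otimes V_a)$. I would then verify the three subgroup axioms by substituting into this identity. The tuple $(1_{V_a})_a$ satisfies it trivially (this is the $\xi=1$ case of Lemma~\ref{lem:diagonal-embedding}). If $\omega$ and $\omega'$ both satisfy it, then applying it for $\omega$ to $(\omega'_1 v_1,\dots,\omega'_{\vav}v_{\vav})$ and then for $\omega'$ to $v$ gives
\[
	\bra{t_L}\omega_1\omega'_1 v_1,\dots,\omega_{\vav}\omega'_{\vav}v_{\vav}\ra
	=\omega_0\,\bra{t_L}\omega'_1 v_1,\dots,\omega'_{\vav}v_{\vav}\ra
	=\omega_0\omega'_0\,\bra{t_L}v\ra ,
\]
so $\omega\omega'=(\omega_a\omega'_a)_a$ satisfies it. Finally, applying it for $\omega$ to $(\omega_1^{-1}v_1,\dots,\omega_{\vav}^{-1}v_{\vav})$ gives $\omega_0\,\bra{t_L}\omega_1^{-1}v_1,\dots,\omega_{\vav}^{-1}v_{\vav}\ra=\bra{t_L}v\ra$, hence $\bra{t_L}\omega_1^{-1}v_1,\dots,\omega_{\vav}^{-1}v_{\vav}\ra=\omega_0^{-1}\bra{t_L}v\ra$, i.e.\ $\omega^{-1}=(\omega_a^{-1})_a$ satisfies it too. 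Thus $\Op{S}{p}^{\times}(L)$ is a subgroup.

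I do not anticipate a real obstacle: the displayed computations use only that $\bra{t_L}$ is an $L$-multilinear map (so its arguments may be precomposed with the $\omega_a$) and that each $\omega_a$ is invertible, and they are visibly uniform in $L$, so no separate base-change argument is needed. The content is exactly the classical proof that the autotopism group of a non-associative algebra is a group (cf.\ Example~\ref{ex:autotopism}), rephrased functorially. The only step needing a little care is the first one --- making sure the scheme structure on $\Op{S}{p}^{\times}$ is correctly inherited as a \emph{closed} subscheme of $\Omega^{\times}$ --- and that is already handled by Proposition~\ref{prop:Z} and the fact cited in its proof.
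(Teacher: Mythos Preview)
Your proposal is correct and essentially identical to the paper's approach: the paper states that ``the mechanics of the proof that automorphisms of an algebra form a group generalizes'' and then carries out exactly the same pointwise verification (identity, product by iterated substitution, inverse by substituting $\omega_a^{-1}v_a$) in the more general Proposition~\ref{prop:group-forward}. Your extra care with the closed-subscheme structure via Proposition~\ref{prop:Z} is a reasonable elaboration of what the paper leaves implicit.
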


There are several important features in this example.  First, we have a binomial
(for us, binomials have the form $\alpha X^e-\beta X^f$, for nonzero
$\alpha,\beta$); second, its coefficients are $1$; third, the exponents of the
variables are all either $0$ or $1$. 

Suppose we generalize this situation even slightly and use $x_0x_1-x_2$.  This
polynomial has all the qualities of the first one we used.  However, we find a
natural closure under a different product rule. For all $\tau,\omega\in
\Op{S}{x_0x_1-x_2}^\times$, we have
\begin{align}\label{EQ:unusual-mult}
	\langle t| v_1, \omega_2\tau_2v_2\rangle
		& = \omega_0\langle t|\omega_1v_1,\tau_2v_2 \rangle
		  = \omega_0\tau_0\langle t| \tau_1\omega_1v_1, v_2 \rangle,
\end{align}
so $\Op{S}{x_0x_1-x_2}^\times$ is a group  scheme---not with the
operation in $\Omega^\times$---but with the action
\begin{align*}
	(\omega_0,\omega_1,\omega_2) \bullet (\tau_0,\tau_1,\tau_2) 
		& = ( \omega_0\tau_0 ,  \tau_1\omega_1 ,  \omega_2\tau_2 ).
\end{align*}
In other words, $\Op{S}{x_0x_1-x_2}$ is a group subscheme
of $\Omega_{A,B}^{\times}$ where $A=\{0,2\}$ and $B=\comp{A}=\{1\}$.

\begin{defn}
	For disjoint $A,B\subset \zrange{\vav}$, an ideal $P\subset K[X]$ is
	\emph{$(A,B)$-composable} if for every frame of finite-dimensional
	$K$-vector spaces $\{V_0,\ldots,V_{\vav}\}$ and every $S\subset V_0\oslash
	\cdots \oslash V_{\vav}$, $\Op[A,B]{S}{P}^\times$ is a group subscheme of
	$\Omega_{A,B}^\times$.
\end{defn}

We now prove Theorem~\ref{thm:group-full}(a).

\begin{prop}\label{prop:group-forward}
	Suppose that  $P=(X^{e_1} - X^{f_1},\ldots,X^{e_r}-X^{f_r})$,
	where for all $i$, $e_i,f_i:\zrange{\vav}\to \{0,1\}$,
	$\supp e_i\subset A$, and $\supp f_i\subset B$ with $A\cap B=\emptyset$.
	Then $P$ is $(A,B)$-composable. 
\end{prop}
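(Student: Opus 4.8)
The plan is to reduce to a single binomial and then check the group axioms functorially. Since the action $\bra{t}(\,\cdot\,)(\omega)$ is $K[X]$-linear (it is a module action), for every $L\in\Comm{K}$ one has $\Op[A,B]{S}{P}^\times(L)=\bigcap_{i=1}^{r}\Op[A,B]{S}{X^{e_i}-X^{f_i}}^\times(L)$, and a scheme-theoretic intersection of subgroup subschemes of $\Omega_{A,B}^\times$ is again one; so it suffices to treat $P=(X^e-X^f)$ with $e,f\colon\zrange{\vav}\to\{0,1\}$, $\supp e\subset A$, $\supp f\subset B$, $A\cap B=\emptyset$. Since $P\subset K[X_{A\sqcup B}]$, Proposition~\ref{prop:Z} (and the remark following it) shows $\Op[A,B]{S}{P}^\times=\Op[A,B]{S}{P}\cap\Omega_{A,B}^\times$ is a closed subscheme of $\Omega_{A,B}^\times$. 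By the definition of a subgroup scheme, it then remains only to show that $\Op[A,B]{S}{P}^\times(L)$ is a subgroup of $\Omega_{A,B}^\times(L)$ for every $L\in\Comm{K}$; fix such an $L$ and suppress it from the notation.

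I will introduce notation: for $\omega\in\Omega_{A,B}(L)$ and a $0/1$-valued $g$ with $\supp g\subset A\sqcup B$, write $\omega^{[g]}$ for the operator on multilinear maps that applies $\omega_a$ to the $a$-axis for each $a\in\supp g$ (post-composing by $\omega_0$ if $0\in\supp g$), so $\bra{t}X^g(\omega)=\omega^{[g]}\bra{t}$. The facts I will lean on are: operations on distinct axes commute; thanks to the $0/1$-exponent condition, applying the composite $\omega_a\tau_a$ to an axis reduces to applying $\omega_a$ and then $\tau_a$ (up to the order convention on the output axis, treated below); and, because $\supp e\subset A$ and $\supp f\subset B$ are disjoint, $\omega^{[e]}$ commutes with $\tau^{[f]}$ for all $\omega,\tau$. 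The identity $1\in\Omega_{A,B}^\times$ lies in $\Op[A,B]{S}{P}^\times$ because $\bra{t}X^g(1)=\bra{t}$.

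For closure under products, take $\omega,\tau\in\Op[A,B]{S}{P}^\times$ and set $\mu=\omega\tau$, so $\mu_a=\omega_a\tau_a$ for $a\in A$ while $\mu_b=\tau_b\omega_b$ for $b\in B$, the reversal being the effect of the $\op$ on the $B$-components. Factoring each axis-wise application of $\mu_a=\omega_a\tau_a$ and collecting commuting distinct-axis pieces gives $\bra{t}X^e(\mu)=\tau^{[e]}\bigl(\omega^{[e]}\bra{t}\bigr)$; then applying the binomial relation for $\omega$, commuting $\tau^{[e]}$ past $\omega^{[f]}$ (disjoint supports), and applying the binomial relation for $\tau$ would give
\[
	\bra{t}X^e(\mu)=\tau^{[e]}\bigl(\bra{t}X^e(\omega)\bigr)
	=\tau^{[e]}\bigl(\bra{t}X^f(\omega)\bigr)
	=\omega^{[f]}\bigl(\bra{t}X^e(\tau)\bigr)
	=\omega^{[f]}\bigl(\bra{t}X^f(\tau)\bigr).
\]
On the other hand, writing $\bra{t}X^f(\mu)$ out with $\mu_b=\tau_b\omega_b$ and using the anti-homomorphism property on the $B$-axes produces exactly $\bra{t}X^f(\mu)=\omega^{[f]}\bigl(\tau^{[f]}\bra{t}\bigr)=\omega^{[f]}\bigl(\bra{t}X^f(\tau)\bigr)$ — the reversed order built into $\mu_b$ is precisely what makes both computations end at the same operator (the factors $\omega^{[f]}$ and $\tau^{[f]}$ act on the same axes and do not commute). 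Hence $\bra{t}(X^e-X^f)(\mu)=0$ for all $t\in S$, i.e. $\mu\in\Op[A,B]{S}{P}^\times$.

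For inverses, given $\omega\in\Op[A,B]{S}{P}^\times$ I would apply "apply $\omega^{-1}$ along the axes $\supp e$" and then "apply $\omega^{-1}$ along the axes $\supp f$" to the identity $\bra{t}X^e(\omega)=\bra{t}X^f(\omega)$; because $\supp e$ and $\supp f$ are disjoint these families of single-axis operations commute with those already present and each is invertible, so the identity collapses to $\bra{t}X^e(\omega^{-1})=\bra{t}X^f(\omega^{-1})$ with $\omega^{-1}$ the inverse in $\Omega_{A,B}^\times$. With the identity and product closure this shows $\Op[A,B]{S}{P}^\times(L)$ is a subgroup, proving $(A,B)$-composability. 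The one genuinely delicate point, I expect, is the bookkeeping around the output axis $0$: post-composition by $\omega_0$ is a left action of $\End(V_0)$, which composes in the opposite sense to the pre-compositions on the remaining axes, so one must verify that the $0/1$-exponent restriction together with the disjoint split of the supports across the covariant part $A$ and contravariant part $B$ (with $0\notin B$) is exactly what is needed for the two invocations of the binomial relation to align; weakening any of this — higher exponents, or a monomial mixing $A$- and $B$-variables — leaves uncancelled commutators $[\omega_a,\tau_a]$, which is consistent with the converse direction Theorem~\ref{thm:group-full}(b) being the hard one.
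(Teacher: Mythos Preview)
Your argument is essentially the paper's: verify identity, closure, and inverses for each $L$ by applying the binomial relation twice and using that operators on axes in $\supp e$ commute with those on axes in $\supp f$. The explicit reduction to a single binomial via $\Op[A,B]{S}{P}^\times=\bigcap_i\Op[A,B]{S}{X^{e_i}-X^{f_i}}^\times$ and ``intersection of subgroup subschemes is a subgroup subscheme'' is a clean simplification the paper leaves implicit; the paper instead writes the condition for all generators at once and says the same computation as in the motivating example works, which amounts to the same thing. Your inverse argument (hit both sides with $\omega^{-1}$ on $\supp e$ and on $\supp f$) is exactly the paper's substitution $v_{A\cup B}\mapsto \omega_{A\cup B}^{-1}v_{A\cup B}$.

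One concrete correction: your displayed factorization $\bra{t}X^e(\mu)=\tau^{[e]}\bigl(\omega^{[e]}\bra{t}\bigr)$ is \emph{false} when $0\in\supp e$. For $a>0$ the map $\sigma_a\mapsto L_a(\sigma_a)$ (``apply $\sigma_a$ on axis $a$'') is an anti-homomorphism, so $L_a(\omega_a\tau_a)=L_a(\tau_a)L_a(\omega_a)$ and your identity holds; but $L_0$ (post-composition) is a \emph{homomorphism}, so $L_0(\omega_0\tau_0)=L_0(\omega_0)L_0(\tau_0)$, which is the reverse order. With $\mu_a=\omega_a\tau_a$ on all of $\supp e\subset A$ (including $0$) the two sides differ by a commutator $[\omega_0,\tau_0]$. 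The paper handles this by explicitly assuming $0\notin A\sqcup B$ and pointing to the motivating computation \eqref{EQ:unusual-mult} for how to modify when $0$ appears: the point there is that the $0$-axis must go on the \emph{opposite} side of the $e/f$ split from the remaining $\supp e$-axes (in that example the working group is $\Omega_{\{0,2\},\{1\}}$, not $\Omega_{\{0,1\},\{2\}}$). Your closing paragraph correctly senses this is where the delicacy lives; the fix is not ``it works as written'' but rather ``treat the $0$-axis with the opposite variance from the rest of its monomial,'' after which the same two-step substitution goes through.
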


\begin{proof}
	We may assume that $A=\cup_i \supp e_i$, $B=\cup_i \supp f_i$ and for
	convenience that $0\notin A\sqcup B$---if not modify to include $\omega_0$
	as in~\eqref{EQ:unusual-mult}.  Set $C=\range{\vav}-A-B$.  So for all $L\in
	\Comm{K}$, $\omega,\tau\in \Op{S}{P}^\times(L)$, and $t\in S$,  we have
	\begin{align}\label{eq:binomial-constraint}
		&\bra{t_L} \omega_A v_A,v_B,v_{C}\ra 
		- \bra{t_L} v_A,\omega_B v_B,v_{C}\ra=0.
	\end{align}
	This holds likewise for $\tau$. A  computation similar
	to~\eqref{EQ:unusual-mult} now shows that $\omega\bullet \tau\in
	\Op{S}{P}^\times(L)$, where $\bullet$ is   the multiplication in
	$\Omega_{A,B}^\times$. By substituting $ \omega^{-1}_{A\cup B}v_{A\cup B}$
	instead of $v_{A\cup B}$ in~\eqref{eq:binomial-constraint}, we get
	\[
		\bra{t_L}  v_A,\omega_B^{-1}v_B,v_C\ra 
		-   \bra{t_L} \omega_A^{-1}v_A,\ v_B,v_C\ra=0,
	\]
	so $\omega^{-1}\in \Op[A,B]{S}{P}^\times(L)$.
	Since $1\in \Op[A,B]{S}{P}^\times(L)$, the proposition follows.
\end{proof}

\subsection{Binomial generation}

In the remainder of this section, we establish a  partial converse to
Theorem~\ref{thm:group-intro}(a). In contrast with the situation considered in
Section~\ref{sec:universality}, it could happen that two distinct ideals
$P,Q\subseteq K[X]$ satisfy $\Op{S}{P}^\times=\Op{S}{Q}^\times$ for every frame
$V_0,\dots,V_{\vav}$ and tensor space $S\subseteq V_0\oslash \dots\oslash
V_{\vav}$. We resolve this redundancy in
Proposition~\ref{PR:redundancy-for-groups}, and then show that if $P$ is
$(A,B)$-composable for some $A\subseteq \zrange{\vav}$, then a particular ideal $Q$
satisfying $\Op[A,B]{S}{P}^\times=\Op[A,B]{S}{Q}^\times$ is generated by binomials.


We extend $K[X]$ to the Laurent polynomial ring $K[X^{\pm}]=K[x_0^{\pm 1},
\ldots, x^{\pm1}_{\vav}]$. Given an ideal $P\subset K[X]$, we let $(P:X^\infty)
= \{ p \in K[X] \mid (\exists e)(X^e p \in P)\}$; this is an ideal of $K[X]$.
Note that
\begin{align}\label{eq:P-over-Xinf}
	(P:X^\infty) & = \{ p \in K[X] \mid (\exists e)(X^e p \in P)\} =
	PK[X^{\pm}] \cap K[X],
\end{align}
where $PK[X^{\pm}]$ is the ideal generated by $P$ in $K[X^\pm]$. The notation
applies likewise to $K[X_{A\cup B}]$.

We shall use the \emph{unit tensor} in our proofs.  Let $V_0=K^{\vee}$ and for $a>0$, $V_a=K$
\begin{align}\label{def:unit-tensor}
	\bra{\1}\alpha_1,\ldots,\alpha_{\vav}\ra (\alpha_0)  & = \alpha_0(\alpha_1 \cdots \alpha_{\vav})
\end{align}
\begin{remark}
Note that if it is important to prove the the claims for fixed frame then 
observe that we can extend $V_a=K\oplus U_a$ and define $\bra{\1}$ to have $U_a$
as the $a$-axis radical and map only onto $K\hookrightarrow V_0$ and thus consider
tensors in a fixed frame $V_0\oslash\cdots\oslash V_{\vav}$.  However this means that
throughout the proof we have to separately account for this degeneracy.  Nevertheless,
Theorem~\ref{thm:group-full} applies in the context of a fixed $T\neq 0$ and $S\subset T$.
\end{remark}

\begin{prop}\label{PR:redundancy-for-groups}
	Let $P,Q\subset K[X_{A\cup B}]$ be ideals. The following conditions are equivalent.
	\begin{enumerate}[(a)]
		\item
		$(P:X_{A\cup B}^\infty)=(Q:X_{A\cup B}^\infty)$.
		\item
		$PK[X^{\pm}_{A\cup B}]=QK[X^{\pm}_{A\cup B}]$.
		\item
		$\Op[A,B]{S}{P}^\times =\Op[A,B]{S}{Q}^\times$ for every
		frame $V_0,\dots,V_{\vav}$ and tensor space
		$S\subseteq V_0\oslash \cdots\oslash V_{\vav}$.
		\item
		$\Op[A,B]{\1}{P}^\times =\Op[A,B]{\1}{Q}^\times$,
		where $\bra{\1}:K^{\vav}\bmto K$ is the unit tensor as defined in~\eqref{def:unit-tensor}.
	\end{enumerate}
	In particular, $(\forall S)(\Op[A,B]{S}{P}^\times =\Op[A,B]{S}{(P:X^\infty)}^\times)$.
\end{prop}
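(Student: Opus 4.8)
The plan is to prove the cycle of implications (b) $\Rightarrow$ (a) $\Rightarrow$ (c) $\Rightarrow$ (d) $\Rightarrow$ (b), which together with the characterization in~\eqref{eq:P-over-Xinf} yields all equivalences; the final ``in particular'' clause is then immediate since $(P:X^\infty)$ and $P$ have the same saturation by~\eqref{eq:P-over-Xinf}. The equivalence (a) $\Leftrightarrow$ (b) is essentially~\eqref{eq:P-over-Xinf}: contracting the ideal $PK[X_{A\cup B}^\pm]$ back to $K[X_{A\cup B}]$ gives exactly $(P:X_{A\cup B}^\infty)$, and two ideals of the Laurent ring agree iff their contractions to the polynomial ring do (since they are generated by their intersections with $K[X_{A\cup B}]$ after clearing denominators).

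The implication (a) $\Rightarrow$ (c) is the heart of the forward direction. First I would observe that on the invertible locus $\Omega_{A,B}^\times$, each variable $x_a$ (for $a\in A\cup B$) acts invertibly: if $\omega\in\Op[A,B]{S}{P}^\times(L)$ then the operator realizing $x_a$ on the relevant axis is $\omega_a$ (or $\omega_a^{\mathrm{op}}$), which lies in $\Aut$, hence is invertible on $L\otimes(V_0\oslash\cdots\oslash V_{\vav})$. Consequently, for $\omega\in\Omega_{A,B}^\times(L)$ and any $p\in K[X_{A\cup B}]$ and monomial $X^e$ with $\supp e\subseteq A\cup B$, we have $\bra{t_L}(X^e p)(\omega)=0$ if and only if $\bra{t_L}p(\omega)=0$, because $(X^e)(\omega)$ is an invertible operator. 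This means $\Op[A,B]{S}{P}^\times = \Op[A,B]{S}{(P:X_{A\cup B}^\infty)}^\times$ for every $S$, and since (a) says $P$ and $Q$ have the same saturation, (c) follows. (This argument also directly establishes the ``in particular'' statement.) The implication (c) $\Rightarrow$ (d) is trivial, taking $S$ to be the unit tensor.

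The remaining implication (d) $\Rightarrow$ (b) is where I expect the real work. For the unit tensor $\bra{\1}:K^{\vav}\bmto K$ of~\eqref{def:unit-tensor}, a transverse operator $\omega\in\Omega_{A,B}^\times(L)$ acts on each axis $a\in A\cup B$ of the one-dimensional space $L\otimes V_a\cong L$ by multiplication by a unit $\gamma_a\in L^\times$, and on the constant axes trivially; evaluating $\bra{\1_L}p(\omega)$ at $(1,\dots,1)$ shows that $\omega\in\Op[A,B]{\1}{P}^\times(L)$ if and only if $p(\gamma)=0$ for all $p\in P$, where $\gamma=(\gamma_a)_{a\in A\cup B}\in (L^\times)^{A\cup B}$. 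In other words, $\Op[A,B]{\1}{P}^\times$, as a functor of $L$, is precisely the subscheme of $(\mathbb{G}_m)^{A\cup B}$ cut out by the ideal $PK[X_{A\cup B}^\pm]$ of the Laurent ring. So condition (d) says that the ideals $PK[X_{A\cup B}^\pm]$ and $QK[X_{A\cup B}^\pm]$ define the same closed subscheme of the torus $(\mathbb{G}_m)^{A\cup B}$ over the field $K$; since the coordinate ring of that torus is $K[X_{A\cup B}^\pm]$ and closed subschemes correspond bijectively to ideals, the two ideals must coincide, which is (b). The one subtlety to handle carefully is the functorial (all-$L$) nature of the equality in (d): I would note that two ideals of $K[X_{A\cup B}^\pm]$ that cut out the same $L$-points for every $L\in\Comm{K}$ are equal, by taking $L=K[X_{A\cup B}^\pm]/P$ and applying Yoneda/the identity map (the generic point argument of Lemma~\ref{lem:generic-element}), so we never need $K$ algebraically closed. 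The main obstacle, then, is simply bookkeeping: tracking the $\mathrm{op}$-twists on the $B$-axes and the trivial action on the constant axes $C=\zrange{\vav}-A-B$ so that the reduction to the torus $(\mathbb{G}_m)^{A\cup B}$ is clean, and making sure the saturation step in (a) $\Rightarrow$ (c) correctly uses invertibility on every axis in $A\cup B$ simultaneously.
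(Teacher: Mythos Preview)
Your proposal is correct and follows essentially the same route as the paper: the same cycle (b)$\Rightarrow$(a)$\Rightarrow$(c)$\Rightarrow$(d)$\Rightarrow$(b), the same invertibility argument for (a)$\Rightarrow$(c), and the same generic-point trick (taking $L=K[X_{A\cup B}^{\pm}]/PK[X_{A\cup B}^{\pm}]$ and evaluating at the images $\hat{x}_a$) for (d)$\Rightarrow$(b). Your framing of (d)$\Rightarrow$(b) as ``closed subschemes of the torus correspond bijectively to ideals of its coordinate ring'' is a slightly more conceptual packaging of exactly the computation the paper writes out, and your remark about the $\mathrm{op}$-twists and constant axes being mere bookkeeping is accurate---the paper simply assumes $\zrange{\vav}=A\cup B$ at the outset to sidestep it.
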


\begin{proof}
	It suffices to prove this for $\zrange{\vav}=A\cup B$.
	(c)$\implies$(d) is immediate.
	
	(b)$\implies$(a) follows from \eqref{eq:P-over-Xinf}.
	
	To prove (a)$\implies$(c), it is enough to prove that
	$\Op{S}{P}^\times=\Op{S}{(P:X^\infty)}^\times$. The inclusion
	$\Op{S}{P}^\times\supseteq \Op{S}{(P:X^\infty)}^\times$ follows from
	Theorem~\ref{thm:correspondence}, because $P\subset (P:X^\infty)$. To see
	the converse, let $p\in (P:X^\infty)$, so there is an $e:\zrange{\vav}\to
	\mathbb{N}$ such that $X^e p\in P$.  Let $t\in S$, let $L\in \Comm{K}$ and
	let $\omega\in \Op{S}{P}^\times(L)$. Since $\omega\in\Op{S}{P}^\times(L)$ and
	$p\cdot X^e=X^ep\in P$,  for all $\ket{v}$, 
   	\begin{align*}
      0=\bra{t_L} (p\cdot X^e)(\omega) \ket{v} = \omega_0^{e(0)} \bra{t_L} p(\omega) 
      \ket{\omega_1^{e(1)} v_1,\ldots, \omega_{\vav}^{e(\vav)} v_{\vav}}.
   	\end{align*}
   	As each $\omega_a$ is invertible,  it follows that
    $\omega\in\Op{S}{p}^\times(L)$. This holds for all $p\in P$, so
    $\omega\in \Op{S}{P}^\times(L)$, which is what we want. 
    
    Next, we show that (d)$\implies$(b).
    By assumption, $\Op{\1}{P}^\times(L)=\Op{\1}{Q}^\times(L)$ for all
    $L\in\Comm{K}$. Take $L=K[X^\pm]/PK[X^\pm]$, let $\hat{x}_i$ denote the
    image of $x_i$ in $L$, and let
    $\omega:=(\hat{x}_{0},\ldots,\hat{x}_{\vav})\in \Op{\1}{Q}^{\times}(L)$.
    For all $p\in K[X]$, we have 
	\begin{align*}
		\bra{\1_L} p(\omega)\ket{1,\dots,1}=  p(\hat{x}_{0},
			\dots,\hat{x}_{\vav}).
	\end{align*}
	If $p\in Q$, then the right hand side is $0$; so, $p(\hat{x}_{0},
    \ldots,\hat{x}_{\vav})=0$ in $L=K[X^\pm ]/PK[X^\pm]$. This means that $p\in
    PK[X^\pm]$, so we have shown $QK[X^\pm]\subset PK[X^\pm]$. The reverse
    inclusion is shown similarly.
    
    The last assertion of the proposition follows from the equivalence,
    because $((P:X^\infty):X^\infty)=(P:X^\infty)$.
\end{proof}

Proposition~\ref{PR:redundancy-for-groups} means  that we should look for
necessary conditions on   $(P:X^\infty)$ that will guarantee that $P$
(equivalently, $(P:X^\infty)$) is $(A,B)$-composable for disjoint $A,B\subseteq
\zrange{\vav}$.

\begin{prop}\label{prop:need-binomial}
   Suppose $A\subseteq \zrange{\vav}$ and $P\subseteq K[X]$ is an ideal. If $P$
   is $(A,B)$-composable, then $(P:X^\infty)$ is generated by binomials.
\end{prop}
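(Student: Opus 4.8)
The plan is to use the unit tensor $\bra{\1}$ together with Proposition~\ref{PR:redundancy-for-groups}(d): since $P$ is $(A,B)$-composable, the set $G(L):=\Op[A,B]{\1}{P}^\times(L)$ is a subgroup of $\Omega_{A,B}^\times(L)$ for every $L\in\Comm{K}$, and by Proposition~\ref{PR:redundancy-for-groups} it suffices to analyze $I:=(P:X_{A\cup B}^\infty)$, which by \eqref{eq:P-over-Xinf} equals $PK[X_{A\cup B}^\pm]\cap K[X_{A\cup B}]$. Reducing to $\zrange{\vav}=A\cup B$ as in the other proofs, the key computation is that, evaluating on $\bra{\1}$, a transverse operator $\omega=(\omega_a)\in \Omega_{A,B}^\times(L)$ with each $\omega_a$ acting on the one-dimensional axis (so $\omega_a$ is just a unit $u_a\in L^\times$, with the opposite axes contributing $u_b$ as well since $L$ is commutative) satisfies $\bra{\1_L}p(\omega)\ket{1,\dots,1}=p(u_0,\dots,u_{\vav})$ for any $p=\sum_e\lambda_e X^e$. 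Hence $\omega\in G(L)$ if and only if the point $(u_a)_a\in (L^\times)^{\zrange{\vav}}$ lies in the zero locus of $P$ inside the torus, i.e.\ $G$ is precisely the subfunctor of the torus $\mathbb{T}=\mathbb{G}_m^{\zrange{\vav}}$ cut out by $PK[X^\pm]$.

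So the statement becomes: if $PK[X^\pm]$ defines a \emph{subgroup scheme} of the torus $\mathbb{T}=\mathrm{Spec}\,K[X^\pm]$, then $PK[X^\pm]$ is generated by binomials $X^e-X^f$ (equivalently, by Laurent binomials $X^g-1$), and therefore $I=PK[X^\pm]\cap K[X]$ is generated by binomials. This is a classical fact: closed subgroup schemes of a split torus over a field correspond to ideals of the group algebra $K[\mathbb{Z}^{1+\vav}]=K[X^\pm]$ generated by elements of the form $X^g-1$, because the comultiplication $x_a\mapsto x_a\otimes x_a$ forces the defining ideal to be a Hopf ideal, and the Hopf ideals of a group algebra are exactly the ones generated by $\{X^g-1 : g\in \Lambda\}$ for a subgroup $\Lambda\le \mathbb{Z}^{1+\vav}$; dually this is the Eisenbud--Sturmfels classification~\cite{ES} of binomial ideals that are invariant under the torus action and closed under the group law. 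The self-contained route I would take: if $p=\sum_e\lambda_e X^e\in PK[X^\pm]$, apply the subgroup condition at $L=K[X^\pm]/PK[X^\pm]$ to the generic point $\hat\omega$ and the product of two generic points (as in the proof of (d)$\Rightarrow$(b) in Proposition~\ref{PR:redundancy-for-groups}), to deduce that $PK[X^\pm]$ is stable under the substitutions $X^e\mapsto X^eY^e$; extracting the $Y$-graded pieces shows that each monomial piece of $p$ taken with respect to cosets of the lattice generated by exponent-differences already lies in $PK[X^\pm]$, and then stability under $X^e\mapsto X^{-e}$ (inverses) lets one pair up monomials, yielding a binomial generating set.

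The main obstacle I expect is the bookkeeping in the last step: passing from ``the ideal is a Hopf ideal / stable under the relevant substitutions'' to an explicit binomial generating set requires a careful grading argument (splitting $p$ into pieces supported on single cosets of a lattice $\Lambda\le\mathbb{Z}^{1+\vav}$, and checking each piece is a $K[X^\pm]$-combination of differences $X^e-X^f$ with $e-f\in\Lambda$), and one must then transfer back from the Laurent ring to $K[X]$ to conclude that $(P:X^\infty)$ itself — not just its Laurent extension — is binomially generated, which is where \eqref{eq:P-over-Xinf} and the fact that the contraction of a binomial ideal is binomial (again \cite{ES}) get used. An alternative that sidesteps most of this is simply to cite \cite{ES}: the defining ideal of a subscheme of a torus that is closed under multiplication is a binomial ideal, and the contraction $PK[X^\pm]\cap K[X]=(P:X^\infty)$ of a binomial ideal in $K[X^\pm]$ to $K[X]$ is again binomial.
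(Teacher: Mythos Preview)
Your proposal is correct and follows essentially the same route as the paper: reduce to the unit tensor $\bra{\1}$, identify $\Op[A,B]{\1}{P}^\times$ with the zero locus of $P$ inside the torus $\mathbb{T}$, use $(A,B)$-composability to conclude this is a subgroup scheme of $\mathbb{T}$, invoke Eisenbud--Sturmfels to get that its defining ideal $PK[X^\pm]$ is binomial, and then contract back to $K[X]$ via \cite{ES}*{Corollary~2.5} to conclude $(P:X^\infty)$ is binomial. The paper cites \cite{ES}*{Proposition~2.3} directly for the subgroup-scheme step rather than working through the Hopf-ideal argument you sketch, and it is slightly more explicit about why the vanishing ideal $M$ of $\Op{\1}{P}^\times$ in $K[X^\pm]$ equals $PK[X^\pm]$ (via Proposition~\ref{PR:redundancy-for-groups}), but these are matters of presentation rather than strategy.
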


The proof is based on results of Eisenbud and Sturmfels~\cite{ES}*{\S2}.

\begin{proof}
	Take $S$ to be the unit tensor $\bra{\1}:K\times\cdots\times K\bmto K$. Then
	$\Op{\1}{P}$ is the affine zero set $Z(P)$ and $\Op{\1}{P}^\times$ is
	$Z(P)\cap\mathbb{T}$, where $\mathbb{T}$ is as in
	Section~\ref{sec:universality}. Since $P$ is $(A, B)$-composable, and since
	$\mathbb{T}$ is commutative, $\Op{\1}{P}^\times$  is a subgroup scheme of
	$\mathbb{T}$. Let $M$ denote the ideal of Laurent polynomials in $f\in
	K[X^{\pm}]$ which vanish on $\Op{\1}{P}^\times(L)$, for every
	$L\in\Comm{K}$. 
	
	We claim that $M=PK[X^{\pm}]$. To see this, let $P_1=M\cap K[X]$ and note
	that $M=P_1K[X^{\pm 1}]$. Therefore, by
	Proposition~\ref{PR:redundancy-for-groups}, it is enough to show that
	$\Op{\1}{P}^\times(L)=\Op{\1}{P_1}^\times(L)$ for all $L\in \Comm{K}$. 
	Note that $\Op{\1}{P_1}^\times(L)$ coincides with the zero of $M$ in
	$\mathbb{T}(L)$. Furthermore, since $P\subseteq M$,   the zeroes of $M$ in
	$\mathbb{T}(L)$ are contained in the zeroes of $P$, and the converse holds
	by construction of $M$. Thus, $\Op{\1}{P}^\times(L)=\Op{\1}{P_1}^\times(L)$
	and the claim follows.
	
	Eisenbud--Sturmfels \cite{ES}*{Proposition~2.3} showed that $M$ is generated
	by binomials $b_1,\dots,b_r\in K[X]$ which generate $M$ in $K[X^\pm]$.  Let
	$Q$ be   the ideal  they generate  in $K[X]$. Then
	$QK[X^{\pm}]=M=PK[X^{\pm}]$. By \cite{ES}*{Corollary~2.5} (recalled below),
	$(Q:X^\infty)$ is generated by binomials. Since $(P:X^\infty)=(Q:X^\infty)$,
	we are done.
\end{proof}

In order to proceed, we need a  description of the binomial ideals in $K[X]$ of
the form $(P:X^\infty)$. This is given by a considerable work of Eisenbud and
Sturmfels \cite{ES}*{Theorem~2.1}, which we now recall.

A \emph{partial character} $\rho$ is a group homomorphism from a sublattice
$\mathcal{L}_\rho$ of $\mathbb{Z}^{\zrange{\vav}}$ into $K^\times$. By a
sublattice, we mean a subgroup of $\mathbb{Z}^{\zrange{\vav}}$ whose index is
not necessarily finite. We always denote the domain of a partial character
$\rho$ by $\mathcal{L}_\rho$. Given a partial character $\rho$, define the
binomial $K[X^{\pm}]$-ideal
\begin{align}\label{eq:I-lat}
   I(\rho) = \left(X^m - \rho(m) ~\middle|~ m\in \mathcal{L}_\rho\right) \subset K[X^{\pm}].
\end{align}
Eisenbud and Sturmfels showed that all proper binomial ideals in $K[X^{\pm}]$
are of the form $I(\rho)$, with $\rho$ uniquely determined.

\begin{thm}[{\cite{ES}*{Theorem~2.1}}]\label{thm:partial-char}
   Let $K$ be a field. 
   \begin{enumerate}
      \item For every proper   binomial ideal $I\subseteq K[X^{\pm}]$, there
      is a unique partial character $\rho$ of $\mathbb{Z}^{\zrange{\vav}}$ such that
      $I=I(\rho)$. 
      \item If $\mathcal{B}\subseteq \mathbb{Z}^{\zrange{\vav}}$ is a basis of the
      lattice $\mathcal{L}_\rho$, then
      $I(\rho) = \left(X^{m} - \rho(m) ~\middle|~ m\in\mathcal{B}\right)$.
   \end{enumerate}
\end{thm}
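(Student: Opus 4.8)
The plan is to read the partial character off the ideal directly. Write $R=K[X^{\pm}]$, which is the group algebra of the free abelian group $\mathbb{Z}^{\zrange{\vav}}$ and in which every Laurent monomial $X^m$ is a unit, and fix a proper binomial ideal $I\subseteq R$. First I would clean up the generating set: a monomial $\alpha X^a$ with $\alpha\ne 0$ is a unit, hence cannot lie in the proper ideal $I$; so every binomial generator of $I$ that is not $0$ has \emph{both} coefficients in $K^\times$, and multiplying such a generator $\alpha X^a-\beta X^b$ by the unit $\beta^{-1}X^{-b}$ turns it into a $K^\times$-multiple of $X^{m}-\gamma$ with $m:=a-b\in\mathbb{Z}^{\zrange{\vav}}$ and $\gamma:=\beta/\alpha\in K^\times$. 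Thus $I=(X^{m_i}-\gamma_i\mid i\in J)$ for a finite family with all $\gamma_i\in K^\times$.

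\textbf{Existence of $\rho$.} Set $\mathcal{L}=\{m\in\mathbb{Z}^{\zrange{\vav}}\mid X^{m}-c\in I \text{ for some }c\in K^\times\}$. Because $I$ is proper we have $I\cap K=0$, so the scalar $c$ attached to a given $m$ is unique; call it $\rho(m)$. The identities
\[
	X^{m+m'}-cc'=X^{m'}(X^{m}-c)+c(X^{m'}-c'),\qquad
	X^{-m}-c^{-1}=-c^{-1}X^{-m}(X^{m}-c),
\]
together with $X^{0}-1=0\in I$, show that $\mathcal{L}$ is a subgroup of $\mathbb{Z}^{\zrange{\vav}}$ and that $\rho\colon\mathcal{L}\to K^\times$ is a homomorphism; that is, $\rho$ is a partial character with $\mathcal{L}_\rho=\mathcal{L}$. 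Each generator $X^{m_i}-\gamma_i$ of $I$ lies in $I(\rho)$ since $m_i\in\mathcal{L}$ and $\gamma_i=\rho(m_i)$ by uniqueness, and conversely each generator $X^{m}-\rho(m)$ of $I(\rho)$ lies in $I$ by construction of $\mathcal{L},\rho$; hence $I=I(\rho)$.

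\textbf{Uniqueness and part (2).} What is left is to show that $\mathcal{L}_\rho$ and $\rho$ are recovered intrinsically from $I(\rho)$, equivalently that no binomial $X^{m}-c$ with $m\notin\mathcal{L}_\rho$ lies in $I(\rho)$. Here I would grade $R$ by $G:=\mathbb{Z}^{\zrange{\vav}}/\mathcal{L}_\rho$ via the $\mathbb{Z}^{\zrange{\vav}}$-grading: each generator $X^{\ell}-\rho(\ell)$ with $\ell\in\mathcal{L}_\rho$ is $G$-homogeneous of degree $0$, so $I(\rho)$ is $G$-graded and $\overline R:=R/I(\rho)=\bigoplus_{g\in G}\overline R_g$. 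The degree-$0$ subring of $R$ is the group algebra $K[\mathcal{L}_\rho]$, and the degree-$0$ part of $I(\rho)$ equals the kernel of the augmentation homomorphism $\tilde\rho\colon K[\mathcal{L}_\rho]\to K$ induced by $\rho$ (the inclusion $\subseteq$ is clear, and any $f=\sum_\ell\lambda_\ell X^{\ell}\in\ker\tilde\rho$ rewrites as $\sum_\ell\lambda_\ell(X^{\ell}-\rho(\ell))$), so $\overline R_0\cong K\ne 0$. Therefore, if $m\notin\mathcal{L}_\rho$ then $\overline{X^{m}}$ lies in the component $\overline R_{\bar m}$ with $\bar m\ne 0$, while $c\,\overline 1$ is a nonzero element of $\overline R_0$, so $\overline{X^{m}-c}=\overline{X^{m}}-c\,\overline 1\ne 0$; thus $X^{m}-c\notin I(\rho)$. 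This forces $\mathcal{L}_\rho=\mathcal{L}$, and then $\rho$ is the unique scalar function on it, giving uniqueness. For (2), let $\mathcal B$ be a basis of $\mathcal{L}_\rho$ and $J=(X^{b}-\rho(b)\mid b\in\mathcal B)$; then $J\subseteq I(\rho)$, and applying the two displayed identities to the generators of $J$ yields $X^{\ell}-\rho(\ell)\in J$ for every $\ell$ in the subgroup generated by $\mathcal B$, i.e.\ for all of $\mathcal{L}_\rho$ — the product of the $\rho(b)^{\pm1}$ that appears is forced to equal $\rho(\ell)$ because a basis assigns unique coordinates and $\rho$ is a homomorphism. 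Hence $J=I(\rho)$.

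The step I expect to be the real obstacle is this intrinsic recovery of $\rho$. The tempting shortcut is to extend $\rho$ to a character $\mathbb{Z}^{\zrange{\vav}}\to K^\times$ and identify $\overline R$ outright with a twisted group algebra of $G$; but over a field that is not algebraically closed such an extension may fail to exist, as it requires roots in $K$ of the values $\rho(\ell)$. The $G$-graded argument above sidesteps this, using only the augmentation of the already-defined $\rho$ on $\mathcal{L}_\rho$ and the invertibility of Laurent monomials.
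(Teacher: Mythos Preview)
The paper does not prove this theorem; it is quoted verbatim from Eisenbud--Sturmfels \cite{ES}*{Theorem~2.1} and used as a black box. So there is no ``paper's own proof'' to compare against.

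That said, your argument is correct and is essentially the standard proof (and close in spirit to the one in \cite{ES}). The key points---that in $K[X^{\pm}]$ every nonzero binomial generator can be normalized to $X^m-\gamma$ with $\gamma\in K^\times$, that properness forces the scalar attached to each $m$ to be unique, and that the $G$-grading by $\mathbb{Z}^{\zrange{\vav}}/\mathcal{L}_\rho$ shows $I(\rho)$ contains no binomial $X^m-c$ with $m\notin\mathcal{L}_\rho$---are all handled cleanly. Your observation that one must avoid extending $\rho$ to all of $\mathbb{Z}^{\zrange{\vav}}$ (which would require $K$ algebraically closed) is exactly right, and the graded-quotient argument via the augmentation on $K[\mathcal{L}_\rho]$ is the correct workaround. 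Part~(2) follows immediately from the displayed identities, as you note.
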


There is also a counterpart for binomial ideals in $K[X]$: Given
$m\in\mathbb{Z}^{\zrange{\vav}}$, let $m_+, m_-\in\mathbb{N}^{\zrange{\vav}}$ denote the
\emph{positive} and \emph{negative} parts of $m$, such that $m = m_+ - m_-$. For
a given partial character $\rho$ on $\mathbb{Z}^{\zrange{\vav}}$, let 
\begin{align}\label{eq:I+}
   I_+(\rho) = \left(X^{m_+} - \rho(m)X^{m_-} ~\middle|~ m\in \mathcal{L}_p \right) \subset K[X].
\end{align}

\begin{thm}[{\cite{ES}*{Corollary~2.5}}]\label{thm:partial-char-II}
	If $P\subset K[X]$ is a binomial   ideal not containing any
	monomial, then there exists a unique partial character
	$\rho$ such that $(P:X^\infty)=I_+(\rho)$. (In particular,
	if $P=(P:X^\infty)$, then $P=I_+(\rho)$.)
\end{thm}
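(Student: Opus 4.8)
The plan is to reduce everything to Theorem~\ref{thm:partial-char} applied to the Laurent extension $PK[X^{\pm}]$, and then to analyze $I_+(\rho)$ directly through equivalence classes of monomials. First I would observe that $PK[X^{\pm}]$ is a \emph{proper} binomial ideal of $K[X^{\pm}]$: it is binomial since $P$ is generated by binomials, which also generate it over $K[X^{\pm}]$; and it is proper because $1\in PK[X^{\pm}]$ would, after clearing denominators, force $X^N\in P$ for some $N$, contradicting that $P$ contains no monomial. Hence Theorem~\ref{thm:partial-char} gives a unique partial character $\rho$ with $PK[X^{\pm}]=I(\rho)$. Since $(P:X^{\infty})=PK[X^{\pm}]\cap K[X]$ by \eqref{eq:P-over-Xinf}, it remains to prove the identity $I(\rho)\cap K[X]=I_+(\rho)$; uniqueness of $\rho$ then follows because $I_+(\rho)$ generates $I(\rho)$ over $K[X^{\pm}]$ (from $X^m-\rho(m)=X^{-m_-}(X^{m_+}-\rho(m)X^{m_-})$), so $I_+(\rho)=I_+(\rho')$ forces $I(\rho)=I(\rho')$ and hence $\rho=\rho'$ by Theorem~\ref{thm:partial-char}(1). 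The parenthetical assertion is the special case $P=(P:X^{\infty})$.

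The inclusion $I_+(\rho)\subseteq I(\rho)\cap K[X]$ is immediate, since each generator $X^{m_+}-\rho(m)X^{m_-}=X^{m_-}(X^m-\rho(m))$ lies in $I(\rho)$ and has nonnegative exponents. For the reverse inclusion I would prove that the natural map $K[X]/I_+(\rho)\to K[X^{\pm}]/I(\rho)$ is injective. The key preliminary fact is that for all $a,b\in\mathbb{N}^{\zrange{\vav}}$ with $a-b\in\mathcal{L}_\rho$ one has $X^a-\rho(a-b)X^b\in I_+(\rho)$: setting $m=a-b$, the nonnegativity of $a$ together with the disjoint supports of $m_+$ and $m_-$ forces $b\geq m_-$, and then $X^{b-m_-}\bigl(X^{m_+}-\rho(m)X^{m_-}\bigr)=X^a-\rho(a-b)X^b$. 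Conversely, expanding the generators shows that $I_+(\rho)$ is spanned over $K$ precisely by the binomials $X^a-\rho(a-b)X^b$ with $a,b\in\mathbb{N}^{\zrange{\vav}}$ and $a-b\in\mathcal{L}_\rho$.

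From this, $K[X]/I_+(\rho)$ decomposes as a $K$-vector space into one-dimensional summands indexed by the classes of the relation $a\sim b\iff a-b\in\mathcal{L}_\rho$ on $\mathbb{N}^{\zrange{\vav}}$, each summand spanned by the image of an arbitrary monomial in the class (the scalars relating two monomials in a class are consistent because $\rho$ is a homomorphism, so no monomial is killed). The identical argument, run over $\mathbb{Z}^{\zrange{\vav}}$, presents $K[X^{\pm}]/I(\rho)$ as one-dimensional summands indexed by the cosets of $\mathcal{L}_\rho$ in $\mathbb{Z}^{\zrange{\vav}}$. The natural map carries the summand of a class $[c]\subseteq\mathbb{N}^{\zrange{\vav}}$ isomorphically onto the summand of the coset $c+\mathcal{L}_\rho$, and distinct classes map to distinct cosets; hence the map is injective, so its kernel $\bigl(I(\rho)\cap K[X]\bigr)/I_+(\rho)$ is zero, completing the proof.

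The step I expect to be the main obstacle is precisely the reverse inclusion $I(\rho)\cap K[X]\subseteq I_+(\rho)$, equivalently the injectivity above. A naive attempt to write $f\in I(\rho)\cap K[X]$ as $\sum_i g_i(X^{m_i}-\rho(m_i))$ and clear denominators only yields $X^N f\in I_+(\rho)$, so one genuinely must know that $I_+(\rho)$ is already saturated with respect to $X=x_0\cdots x_{\vav}$ --- which is exactly what the monomial-class bookkeeping delivers. A related subtlety to keep in mind is that, unlike $I(\rho)$ in Theorem~\ref{thm:partial-char}(2), the ideal $I_+(\rho)$ must be formed from the binomials attached to the \emph{whole} lattice $\mathcal{L}_\rho$: using only a basis would in general give a strictly smaller, non-saturated ideal, and the theorem would fail.
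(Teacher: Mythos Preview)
The paper does not give its own proof of this statement; it is cited as \cite{ES}*{Corollary~2.5} and used as a black box. Your argument is correct: passing to $K[X^{\pm}]$ and invoking Theorem~\ref{thm:partial-char} reduces the claim to the identity $I(\rho)\cap K[X]=I_+(\rho)$, and your monomial-class decomposition cleanly establishes that the natural map $K[X]/I_+(\rho)\to K[X^{\pm}]/I(\rho)$ is injective by matching one-dimensional summands. Your closing caveat---that $I_+(\rho)$ must be generated using all of $\mathcal{L}_\rho$, not merely a lattice basis as in Theorem~\ref{thm:partial-char}(2)---is exactly the right point to flag, and is precisely why the paper writes \eqref{eq:I+} with $m$ ranging over the full lattice.
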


Putting together all previous results, we get:

\begin{coro}\label{coro:need-binomials}
   Fix disjoint $A,B\subseteq \zrange{\vav}$, and let $P\subseteq K[X]$ be an ideal.
   If $P$ is $(A,B)$-composable, then for all frames of 
   $K$-vector spaces $\{V_0,\dots, V_{\vav}\}$ and for all $S\subset V_0\oslash
   \cdots \oslash V_{\vav}$, $\Op[A,B]{S}{P}^{\times}=\Op[A,B]{S}{(P:X^\infty)}^{\times}$,
   and there is a unique partial character $\rho$ of $\mathbb{Z}^{\zrange{\vav}}$ such
   that $(P:X^\infty) = I_+(\rho)$. 
\end{coro}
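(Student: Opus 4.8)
The plan is to assemble \corref{coro:need-binomials} from the machinery built up immediately before it, with essentially no new ideas required. First I would invoke \propref{prop:need-binomial}: since $P$ is $(A,B)$-composable, $(P:X^\infty)$ is generated by binomials. I also need that $(P:X^\infty)$ contains no monomial; this follows because $P$ is $(A,B)$-composable, so (taking $S=\bra{\1}$) $\Op[A,B]{\1}{P}^\times = Z(P)\cap \mathbb{T}$ is a \emph{nonempty} subgroup scheme of $\mathbb{T}$ (it contains the identity), whence $Z(P)\cap \mathbb{T}(L)\neq\emptyset$ for every $L\in\Comm{K}$; if $(P:X^\infty)$ — equivalently $PK[X^\pm]$ — contained a monomial $X^e$, then $X^e$ would vanish on $\mathbb{T}(L)$, impossible since coordinates of torus points are units. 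So \thmref{thm:partial-char-II} applies to $(P:X^\infty)$ (which is its own saturation), giving a unique partial character $\rho$ with $(P:X^\infty) = I_+(\rho)$.

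Next I would record the statement $\Op[A,B]{S}{P}^\times = \Op[A,B]{S}{(P:X^\infty)}^\times$ for all frames and all $S$: this is exactly the final assertion of \propref{PR:redundancy-for-groups} (the ``in particular'' clause), applied with $Q=(P:X^\infty)$ — note $(P:X^\infty)=((P:X^\infty):X^\infty)$, so condition (a) of that proposition holds trivially, and condition (c) is the desired equality. Strictly, \propref{PR:redundancy-for-groups} is stated for ideals of $K[X_{A\cup B}]$; but $(A,B)$-composability already forces the relevant variables to lie in $A\cup B$ in the sense that only $\Op[A,B]{-}{-}$ is considered, and in the proof of \propref{prop:need-binomial} one passes to $Z(P)$ inside $\mathbb{T}$, so the bookkeeping carries over. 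If one wants to be scrupulous, one first replaces $P$ by $P\cap K[X_{A\cup B}]$ — which does not change any $\Op[A,B]{S}{P}^\times$ — and then the cited results apply verbatim.

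Finally I would assemble: the chain is $\Op[A,B]{S}{P}^\times = \Op[A,B]{S}{(P:X^\infty)}^\times$ (by \propref{PR:redundancy-for-groups}), $(P:X^\infty)$ is a binomial ideal containing no monomial (by \propref{prop:need-binomial} and the identity-element argument above), and hence $(P:X^\infty) = I_+(\rho)$ for a unique partial character $\rho$ (by \thmref{thm:partial-char-II}). That is precisely the content of \corref{coro:need-binomials}.

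Honestly there is no genuine obstacle here — the corollary is a packaging of \propref{prop:need-binomial}, \propref{PR:redundancy-for-groups}, and \thmref{thm:partial-char-II}. The one point that needs a sentence of care, rather than being automatic, is the ``no monomial'' hypothesis needed to apply \thmref{thm:partial-char-II}: one must observe that a nontrivial group subscheme of the torus cannot have a monomial in its vanishing ideal, equivalently that the unit tensor's operator scheme $\Op[A,B]{\1}{P}^\times$ is nonempty for every base ring because it contains the identity transverse operator. Everything else is a citation.
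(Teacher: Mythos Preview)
Your proposal is correct and matches the paper's own proof essentially verbatim: the paper likewise cites \propref{PR:redundancy-for-groups}, \propref{prop:need-binomial}, and \thmref{thm:partial-char-II}, and isolates the same one nontrivial point---that $P$ (equivalently $(P:X^\infty)$) contains no monomial---which it dispatches by evaluating on the unit tensor at the identity operator, exactly the argument you give. There is nothing to add.
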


\begin{proof}
	This follows from Theorem~\ref{thm:partial-char-II}, Proposition~\ref{prop:need-binomial}
	and Proposition~\ref{PR:redundancy-for-groups}, provided
	we show that $P$ contains no monomials. 
	Consider the unit tensor $\1:K^{\vav}\to K$. By assumption,
	the identity $\omega:=(1_{V_{\vav}},\dots,1_{V_0})$ is in $\Op{S}{P}(K)$.
	Since $\bra{\1} X^e(\omega)\ket{1,\dots,1}=1$, for all $e\in {\mathbb{N}}^{\zrange{\vav}}$,
	there can be no monomials in $P$. 
\end{proof}

Before pursuing the proof of Theorem~\ref{thm:group-full}(b), we pause to
describe the conditions in part (a) in terms of the associated partial character $\rho$
and the lattice $\cal L_\rho$.  
Recall that we are considering $P=(X^{e_1}-X^{f_1},\cdots,X^{e_r}-X^{f_r})$
with $e_i,f_i:\zrange{\vav}\to \{0,1\}$ and $(\cup_i \supp e_i)\cap
(\cup_i \supp f_i)=\emptyset$. We write ${\cal L}(P)={\cal L}_\rho$; see \eqref{eq:I-lat}.

First, observe that the partial character $\rho$ associated to $(P:X^\infty)$ is
the same as the one associated to $PK[X^{\pm 1}]$. Using the theorems stated
above, $\cal L_\rho$ is spanned by $\{e_1-f_1,\dots,e_r-f_r\}$, and moreover,
$\rho:{\cal L}_\rho\to K^\times$ is trivial ($\rho({\cal L}_{\rho})=1$). We
prove that the triviality of $\rho$ is necessary for $(A, B)$-composable ideals
in Proposition~\ref{prop:need-trivial}.

The condition that $e_i,f_i:\zrange{\vav}\to \{0,1\}$ and $(\cup_i \supp
e_i)\cap (\cup_i \supp f_i)=\emptyset$ implies that $\mathcal{L}(P)$ has a very
particular generating set. We can articulate this by saying that there is
$\tau:\zrange{\vav}\to \{\pm 1\}$ such that $\tau  \mathcal{L}(P) $
(coordinate-wise multiplication) is generated by vectors in the hypercube
$[0,1]^{\zrange{\vav}}$, i.e. 
\begin{align}\label{eq:nec-cond-group}
	\tau\mathcal{L}(P) = \left\langle \tau \mathcal{L}(P) \cap [0,1]^{\zrange{\vav}}\right\rangle.
\end{align}
Specifically, choose $\tau$ to scale the union of the support of the $f_i$ by
$-1$, so that $(\forall i)(\tau(e_i-f_i):\zrange{\vav}\to\{0,1\})$.

\begin{quest}\label{quest:full-converse}
   Let $\rho$ be a trivial partial character on $\mathbb{Z}^{\zrange{\vav}}$. If there
   exists disjoint $A,B\subset \zrange{\vav}$ such that $I_+(\rho)$ is $(A,B)$-composable, then must
   there exist $\tau\in \{\pm 1\}^{\zrange{\vav}}$ such that \eqref{eq:nec-cond-group}
   holds?
\end{quest}

Here, we establish a weaker necessary condition, which in particular
affirmatively answers Question~\ref{quest:full-converse} when $I_+(\rho)$ is
principal. This is enough to establish Theorem~\ref{thm:group-full}(b).

\subsection{The need for unit coefficients}

In light of Corollary~\ref{coro:need-binomials}, we are left to expose what
properties of partial characters are necessary and sufficient to allow
$\Op[A,B]{S}{P}^{\times}$ to be a subgroup scheme of some
$\Omega_{A,B}^{\times}$.  Identities and inverses give some immediate
conditions. For disjoint $A,B\subseteq \zrange{\vav}$, the identity of
$\Omega_{A,B}^\times$ is independent of $(A,B)$; namely, it is the identity of
$\Omega_{\zrange{\vav}}^\times$, denoted by $1_\Omega$. 

\begin{prop}\label{prop:need-trivial}
   Suppose $\{V_0,\dots, V_{\vav}\}$ is a frame of finite-dimensional $K$-vector
   spaces, and $\rho$ a partial character on $\mathbb{Z}^{\zrange{\vav}}$, with $P =
   I_+(\rho)$. Then $\rho$ is trivial if, and only if, for every $S\subset
   V_0\oslash \cdots \oslash V_{\vav}$ and for all $L\in\Comm{K}$,
   $1_\Omega\in\Op[A,B]{S}{P}(L)$. Moreover, if $S\subset V_0\oslash \cdots \oslash
   V_{\vav}$ and $L\in\Comm{K}$ such that $1_\Omega\in\Op[A,B]{S}{P}^\times(L)$, then
   $\omega\in\Op[A,B]{S}{P}^\times(L)$ implies $\omega^{-1}\in\Op[A,B]{S}{P}^\times(L)$.
\end{prop}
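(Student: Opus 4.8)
The plan is to read both halves of the statement off the definitions, using that through the operator $1_\Omega=(1_{V_a})_a$ every monomial acts as the identity, and that, by \eqref{eq:I+}, $P=I_+(\rho)$ is generated by the binomials $X^{m_+}-\rho(m)X^{m_-}$ with $m$ ranging over $\mathcal{L}_\rho$. Since the action of $K[X]$ on tensors through a fixed $\omega$ is a module action, $\Op[A,B]{S}{P}^\times(L)$ is cut out by the vanishing of $\bra{t_L}(X^{m_+}-\rho(m)X^{m_-})(\omega)$ for $t\in S$ and $m\in\mathcal{L}_\rho$, so it suffices throughout to test these generators.

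For the equivalence: if $\rho$ is trivial, each generator is $X^{m_+}-X^{m_-}$, which acts on any $\bra{t_L}$ via $1_\Omega$ as the zero operator, so $1_\Omega\in\Op[A,B]{S}{P}(L)$ for all $S$ and $L$. Conversely, evaluating a generator at $1_\Omega$ gives $\bra{t_L}(X^{m_+}-\rho(m)X^{m_-})(1_\Omega)=(1-\rho(m))\bra{t_L}$; taking $L=K$ and $S=\{t\}$ for a single nonzero tensor $t$ (which exists as the frame, hence the tensor space, is nonzero) and using that $\bra{\cdot}$ is injective and $K$ is a field forces $1-\rho(m)=0$, i.e.\ $\rho(m)=1$, for every $m\in\mathcal{L}_\rho$; so $\rho$ is trivial.

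For the ``moreover'' part, fix $S$, $L$ with $1_\Omega\in\Op[A,B]{S}{P}^\times(L)$ and take $\omega\in\Op[A,B]{S}{P}^\times(L)$; fix $t\in S$ and $m\in\mathcal{L}_\rho$ and set $e=m_++m_-$, so $e\ge 0$ and, crucially, $m_+$ and $m_-$ have disjoint support. From $1_\Omega\in\Op[A,B]{S}{P}^\times(L)$ we get $(1-\rho(m))\bra{t_L}=0$, hence $1-\rho(m)$ annihilates $\bra{t_L}$ and every tensor obtained from it by applying operators. From $\omega\in\Op[A,B]{S}{P}^\times(L)$, the defining equation $\bra{t_L}(X^{m_+})(\omega)\ket{v}=\rho(m)\,\bra{t_L}(X^{m_-})(\omega)\ket{v}$ holds for all inputs $v$ (expand via \eqref{eq:p-omega-action}). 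Substituting $v_a\mapsto\omega_a^{-e(a)}v_a$ for $a\ge 1$ (legitimate since each $\omega_a$ is invertible) converts, using disjointness of $\supp m_\pm$, the exponents $m_+$ on the left into $-m_-$ and $m_-$ on the right into $-m_+$; left-multiplying both sides by $\omega_0^{-e(0)}$ to fix the $0$-axis then yields $\bra{t_L}(X^{m_-})(\omega^{-1})\ket{v}=\rho(m)\,\bra{t_L}(X^{m_+})(\omega^{-1})\ket{v}$. Feeding this into $\bra{t_L}(X^{m_+}-\rho(m)X^{m_-})(\omega^{-1})\ket{v}$ collapses it to $(1-\rho(m)^2)\,\bra{t_L}(X^{m_+})(\omega^{-1})\ket{v}$; writing $1-\rho(m)^2=(1+\rho(m))(1-\rho(m))$ and using that $1-\rho(m)$ kills the tensor on the right (the scalar commutes with $\omega_0^{-m_+(0)}$) makes this $0$. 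As this holds for every generator and every $t\in S$, and $\omega^{-1}\in\Omega_{A,B}^\times(L)$, we conclude $\omega^{-1}\in\Op[A,B]{S}{P}^\times(L)$.

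The only delicate point is the bookkeeping in the last step: keeping straight the distinguished $0$-axis (a left multiplication by $\omega_0^{-e(0)}$ rather than an input substitution), the opposite action on the contravariant axes $b\in B$, and the exponent cancellations that rely on $\supp m_+\cap\supp m_-=\emptyset$. Everything else is a direct translation of the definitions, and the field hypothesis enters only in the converse direction of the first assertion.
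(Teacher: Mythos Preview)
Your proof is correct and follows essentially the same approach as the paper. Both arguments hinge on the same substitution trick in the ``moreover'' part: replace $v_a$ by $\omega_a^{-(m_+(a)+m_-(a))}v_a$ and left-multiply by $\omega_0^{-(m_+(0)+m_-(0))}$, which swaps the roles of $m_+$ and $m_-$ and flips $\omega$ to $\omega^{-1}$.

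There are two cosmetic differences worth noting. For the converse direction of the equivalence, the paper evaluates at the unit tensor $\bra{\1}:K^{\vav}\to K$ (so $\bra{\1}p(1_\Omega)\ket{1,\dots,1}=1-\rho(m)$ directly), whereas you take an arbitrary nonzero tensor in the given frame and use that $K$ is a field; your version stays closer to the statement's fixed frame but silently assumes each $V_a\neq 0$. For the ``moreover'' part, the paper first asserts that $\rho$ is trivial (over $L$) and then computes with $p=X^{m_+}-X^{m_-}$, obtaining the clean identity $\bra{t}p(\omega^{-1})\ket{v}=-\tau_0\,\bra{t}p(\omega)\ket{\tau_{\bar 0}v_{\bar 0}}$; you instead carry the coefficient $\rho(m)$ through and arrive at $(1-\rho(m)^2)\,\bra{t_L}(X^{m_+})(\omega^{-1})\ket{v}$, which you then kill using $(1-\rho(m))\bra{t_L}=0$. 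Your route is actually a touch more careful here, since the paper's assertion that $\rho$ is trivial over $L$ strictly uses the first paragraph's unit-tensor argument rather than the hypothesis on the given $S$, while your annihilation argument works directly from $1_\Omega\in\Op[A,B]{S}{P}^\times(L)$ for the specific $S$ at hand. A side remark: the disjointness of $\supp m_+$ and $\supp m_-$ that you flag is automatic from the definition and not actually needed for the exponent arithmetic $m_{\pm}-(m_++m_-)=-m_{\mp}$.
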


\begin{proof}
	First, assume that for all $S$ and $L$, $1_\Omega\in \Op[A,B]{S}{P}(L)$. In
	particular, this holds for the unit tensor $\bra{\1}$, where $V_a=K$ for all
	$a\in\zrange{\vav}$, and $\bra{\1}$ is the unit tensor
	\eqref{def:unit-tensor}. In this case, given $m\in \mathcal{L}_\rho$, let $p
	= X^{m_+} - \rho(m)X^{m_-}\in I_+(\rho)=P$. Then,
	\begin{align*}
	   0 = \bra{\1} p(1_{\Omega} )\ket{1, \dots, 1} = 1 - \rho(m).
	\end{align*}
	Hence, $\rho(m)=1$. Conversely, because the sum of the coefficients of
	$X^{m_+} - X^{m_-}$ vanishes, it follows that for all $S$, $1_\Omega\in\Op[A,B]{S}{P}(L)$.

   Now suppose that for some $S$ and $L$, $1_{\Omega}\in\Op[A,B]{S}{I_+(\rho)}(L)$,
   so by the previous paragraph, $\rho$ (regarded
   as a character $\mathbb{Z}^{\zrange{\vav}}\to L^\times$) 
   is a trivial partial character on $\mathbb{Z}^{\zrange{\vav}}$.
   Let $e, f\in\mathbb{Z}^{\zrange{\vav}}$ such that $e-f\in \mathcal{L}_\rho$, so
   $p=X^e - X^f\in P=I_+(\rho)$. Let $\omega\in\Op[A,B]{S}{P}^\times(L)$, and for
   each $a\in\zrange{\vav}$, set $\tau_a = \omega_a^{-e(a) - f(a)}$, so that
   $\tau\in\Omega^\times_{A,B}(L)$. For all $t\in S$ and $\ket{v}$, 
   \begin{equation}\label{eqn:0-1}
      \begin{split}
      \bra{t}p(\omega^{-1})\ket{v}
      &= \omega_0^{-e(0)-f(0)}\omega_0^{f(0)} 
        \left\la t \middle| \omega_{\comp{0}}^{f(\comp{0})} 
            \omega_{\comp{0}}^{-e(\comp{0})-f(\comp{0})} v_{\comp{0}} \right\ra \\
      &\qquad - \omega_0^{-e(0)-f(0)}\omega_0^{e(0)} \left\la t \middle| 
            \omega_{\comp{0}}^{e(\comp{0})} \omega_{\comp{0}}^{-e(\comp{0})
                -f(\comp{0})} v_{\comp{0}} \right\ra \\ 
      &= -\tau_0 \bra{t} p(\omega) \ket{\tau_{\comp{0}} v_{\comp{0}}}=0.
      \end{split}
   \end{equation}
   It follows that $\omega^{-1}\in\Op[A,B]{S}{P}^\times(L)$. 
\end{proof}

\subsection{Necessary conditions on the lattice}
The remaining, delicate step is to consider when $\Op[A,B]{S}{P}^\times$ is closed
under composition. In light of the previous section, this falls exclusively to
conditions on the sublattice $\mathcal{L}_\rho$ associated to the (trivial) partial character
$\rho$.

We prove the next proposition by proving the contrapositive. For $a\in \zrange{\vav}-\{0\}$,
$m\in \mathbb{N}$, and $\pi : K^m \rightarrow K$ a $K$-linear map, define the
tensor $t(a, m, \pi)$ in the following way. Let $V_a = K^m$, and for $b\in
\zrange{\vav}-a$, set $V_b=K$. For all $v\in \prod_{b\ne 0} V_b$, let
\begin{align}\label{eqn:counter-tensor}
   \bra{t(a, m, \pi)} v \rangle = \pi(v_a)\prod_{b\ne a} v_b. 
\end{align}  

\begin{prop}\label{prop:need-split}
   Fix disjoint $A,B\subseteq \zrange{\vav}$, $a\in \zrange{\vav}$, $\rho$ a
   trivial partial character on $\mathbb{Z}^{\zrange{\vav}}$, and $P =
   I_+(\rho)$. If $P$ is $(A,B)$-composable, then the image of
   $\mathcal{L}_\rho$ under the $a$-th projection $\mathbb{Z}^{\zrange{\vav}}\to
   \mathbb{Z}$ is either $\mathbb{Z}$ or $0$.
\end{prop}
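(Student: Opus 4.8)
The plan is to prove the contrapositive. Suppose the $a$-th projection $\pi_a\colon\mathbb{Z}^{\zrange{\vav}}\to\mathbb{Z}$ carries $\mathcal{L}_\rho$ onto a proper nonzero subgroup $n\mathbb{Z}$, so $n\geq 2$; I will produce one frame and one tensor space $S$ for which $\Op[A,B]{S}{P}^\times$ fails to be closed under the product of $\Omega_{A,B}^\times$. Since $\rho$ is trivial, Proposition~\ref{prop:need-trivial} tells us $\Op[A,B]{S}{P}^\times$ contains the identity and is closed under inverses, so such a failure shows it is not a subgroup scheme, i.e.\ $P$ is not $(A,B)$-composable, a contradiction. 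As a preliminary I may assume $a\in A\cup B$ (otherwise $x_a$ does not occur in $P$ and $\pi_a(\mathcal{L}_\rho)=0$), and since $0\notin B$, that $a=0$ forces $a\in A$. I also fix $\mu\in\mathcal{L}_\rho$ with $\mu(a)=n$ and set $\mathcal{L}'=\mathcal{L}_\rho\cap\ker\pi_a$, so $\mathcal{L}_\rho=\mathbb{Z}\mu\oplus\mathcal{L}'$.

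To that end I would first pin down enough of the operator scheme on a specially chosen tensor. Suppose $a\neq 0$. Put $m=n+1$, let $\pi\colon K^m\to K$ be the first-coordinate functional, and let $t=t(a,m,\pi)$ be the tensor from \eqref{eqn:counter-tensor}, so $V_a=K^m$ and $V_c=K$ for $c\neq a$. I would restrict attention to operators $\omega\in\Omega_{A,B}(K)$ with $\omega_c=1_{V_c}$ for every $c\neq a$ and $\omega_a\in\GL_m(K)$ arbitrary. For a generator $X^{\nu_+}-\rho(\nu)X^{\nu_-}$ of $P=I_+(\rho)$, with $\nu\in\mathcal{L}_\rho$ and $\rho(\nu)=1$, substituting such an $\omega$ into $\bra{t}(X^{\nu_+}-X^{\nu_-})(\omega)\ket{v}$ collapses, thanks to the shape of $\bra{t}$, to the single matrix identity $\pi\circ\omega_a^{\nu_+(a)}=\pi\circ\omega_a^{\nu_-(a)}$. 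Since $\nu(a)\in n\mathbb{Z}$ while $\nu_+$ and $\nu_-$ have disjoint supports, one of $\nu_\pm(a)$ vanishes and the other is a nonnegative multiple of $n$, so every such identity follows from $\pi\circ\omega_a^{\,n}=\pi$. Hence $\Op[A,B]{t}{P}^\times(K)$ contains every such $\omega$ with $\pi\circ\omega_a^{\,n}=\pi$. For $a=0\in A$ I would instead take $V_0=K^m$, $V_c=K$ for $c>0$, and $S=T=V_0$ with $\bra{v}u_1,\dots,u_\vav\rangle=(u_1\cdots u_\vav)v$; the identical bookkeeping, now over all of $S$, shows $\Op[A,B]{S}{P}^\times(K)$ contains every $\omega$ with $\omega_c=1_{V_c}$ for $c>0$ and $\omega_0^{\,n}=1$. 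When $a\in B$ one runs the same computation in $\End(V_a)^{\op}$. I only need these inclusions, never an exact description of the scheme.

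Next I would break closure. In $S_{n+1}$ take the two $n$-cycles $\sigma_1=(1\;2\;\cdots\;n)$ and $\sigma_2=(1\;2\;\cdots\;n{-}1\;n{+}1)$, and let $G,H\in\GL_{n+1}(K)$ be their permutation matrices. Then $G^n=H^n=1$, so the operator with $a$-component $G$ (resp.\ $H$) and all other components the identity lies in $\Op[A,B]{t}{P}^\times(K)$ by the previous step, since $\pi\circ G^n=\pi\circ 1=\pi$ and likewise for $H$. However $\sigma_1\sigma_2$ is an $(n+1)$-cycle when $n$ is even and a product of two $\tfrac{n+1}{2}$-cycles when $n$ is odd, and in both cases $(\sigma_1\sigma_2)^n$ is a nontrivial permutation fixing no point (using $\gcd(n,n+1)=1$ for $n$ even and $\tfrac{n+1}{2}\nmid n$ for odd $n\geq 3$); in particular it moves $1$, so the first row of $(GH)^n$ is not $\pi$, i.e.\ $\pi\circ(GH)^n\neq\pi$ and $(GH)^n\neq 1$. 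As the $\Omega_{A,B}^\times$-product of these two operators has $a$-component $GH$ (or $HG$, whose $n$-th power is equally nontrivial, when $a\in B$) and all other components the identity, it is not in $\Op[A,B]{t}{P}^\times(K)$. This is the promised failure of closure, completing the contrapositive; hence $\pi_a(\mathcal{L}_\rho)$ is $\mathbb{Z}$ or $0$.

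The substance of this argument sits in the second step: showing that on the chosen tensor the full binomial ideal $I_+(\rho)$ reduces to the one condition $\pi\circ\omega_a^{\,n}=\pi$ --- which is exactly where the triviality of $\rho$ and the splitting $\mathcal{L}_\rho=\mathbb{Z}\mu\oplus\mathcal{L}'$ enter --- together with choosing $m$ and $\pi$ so that this condition is loose enough to be satisfied by noncommuting matrices having a ``bad'' product. After that, the third step is elementary symmetric-group combinatorics. I expect the second step, not the third, to be the main obstacle, and the cyclic-permutation trick is what makes the third step go through uniformly in $n$ and in the characteristic of $K$, requiring no base-field extension.
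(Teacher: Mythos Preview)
Your proof is correct and follows essentially the same approach as the paper: both argue the contrapositive using the tensor $t(a,n+1,\pi)$ of \eqref{eqn:counter-tensor} together with permutation matrices arising from $n$-cycles in $S_{n+1}$, showing that individual $n$-cycles lie in $\Op[A,B]{t}{P}^\times$ while a suitable product does not. Your version is slightly more explicit---you fix $\pi$ as the first-coordinate functional and exhibit two concrete $n$-cycles whose product has $n$-th power moving $1$---whereas the paper leaves the hyperplane $U$ (and hence $\pi_U$) to be chosen after invoking that the group generated by $n$-cycles is $S_{n+1}$ or $A_{n+1}$ and hence contains an element of order coprime to $n$; you also handle the case $a=0$ separately, which the paper's proof leaves implicit.
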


\begin{proof}
   Suppose that the image of  $\mathcal{L}_\rho$ under the $a$-th projection
    $\mathbb{Z}^{\zrange{\vav}}\to \mathbb{Z}$ is not $\mathbb{Z}$ or $0$. Then for all
    $ m\in\mathcal{L}_\rho$ with $m_a\neq 0$, we have $|m_a| > 1$. Thus, there exists $0\neq
    m\in\mathcal{L}_\rho$  such that $m_a > 1$ and $m_a$ is minimal. Set $p =
    X^{m_+} - X^{m_-}$, the binomial defined by $m$.

   Let $\Gamma$ be the permutation group on $\{1,\dots, m_a+1\}$ generated by
   all cycles of length $m_a$. Let $V=K^{m_a+1}$ be the permutation module of
   $\Gamma$, and suppose $U$ is a maximal subspace of $V$, not necessarily a
   $\Gamma$-submodule, to be specified later. For $\pi_U: V\rightarrow V/U$, set $t_U = t(a, m_a+1,
   \pi_U)$, as defined in~\eqref{eqn:counter-tensor}. Then for every
   $L\in\Comm{K}$, every $m_a$-cycle $\sigma\in\Gamma$ induces an operator
   $\omega^{\sigma}\in\Op[A,B]{t_U}{p}(L)$ defined by
   \begin{align*}
      \omega_b^\sigma &= \left\{ \begin{array}{ll} 
         \sigma & b=a, \\
         1 & b\ne a.
      \end{array}\right.
   \end{align*}
   Since $m_a$ is positive and minimal, for all $n\in\mathcal{L}_\rho$, $m_a$
   divides $n_a$, so $\omega^{\sigma}\in \Op[A,B]{t_U}{P}(L)$ as well. 
   
   Depending on the parity of $m_a$, the subgroup $\Gamma$ is either the full
   symmetric or alternating group on $\{1,\dots, m_a+1\}$. Therefore, there
   exists an integer $\ell > 1$, coprime to $m_a$, and  $\tau\in \Gamma$ with
   order $\ell$. Choose the maximal subspace $U\leq V$ such that $\tau^{m_a} U \ne
   U$. Then the operator $\omega^\tau$ is not contained in
   $\Op[A,B]{t_U}{P}^\times(L)$, and therefore,
   $\Op[A,B]{t_U}{P}^\times(L)$ is not a subgroup of $\Omega_{A,B}^\times(L)$.
   Hence, $P$ is not $(A,B)$-composable.
\end{proof}

\begin{ex}
	Propositions~\ref{prop:need-binomial},
	\ref{prop:need-trivial}
	and~\ref{prop:need-split} are not sufficient to decide whether, for
	$A\subset [2]$, the ideal $P=(x_0-x_1x_2,x_0x_1 - x_2)$, corresponding to
	the lattice $\langle (-1,1,1),(-1,-1,1) \rangle\leq \mathbb{Z}^{[2]}$, is
	$(A,\comp{A})$-composable. 
\end{ex}

\begin{proof}[Proof of Theorem~\ref{thm:group-full}(b)] 
	Suppose $P$ is $(A,B)$-composable. By Corollary~\ref{coro:need-binomials}
	and Proposition~\ref{prop:need-trivial}, there exists a unique trivial
	partial character $\rho\in\mathbb{Z}^{\zrange{\vav}}$ such that
	$(P:X^\infty) = I_+(\rho)$. Let $\cal B$ be a basis for $\cal L_\rho$. By
	Theorem~\ref{thm:partial-char}, $I(\rho)$ is generated by $\{X^{m_+} -
	X^{m_-}\mid m\in{\cal B}\}$ (however, this set need not generate $I_+(\rho)$
	in $K[X]$).  Then we have $e,f\in\{0,1\}^{\zrange{\vav}}$ by
	Proposition~\ref{prop:need-split}.
\end{proof}

\subsection{Proof of Theorem~\ref{thm:group-intro}}
In the forward direction apply Theorem~\ref{thm:group-full}(a) and for the
converse apply Theorem~\ref{thm:group-full}(b).  But observe furthermore that
when $\cal B$ is a singleton $\{m\}$, then $I(\rho)=(X^{m_+}-X^{m_-})$. Setting
$e=m_+$ and $f=m_-$, we see that $I_+(\rho)=(X^e-X^f)$.
\qed

\subsection{Homotopism categories}\label{sec:cats}

Theorem~\ref{thm:group-full} lets us identify those transverse operators, such
as isometries and adjoints \cite{Wilson:division}, that can be used to build
categories tensors, compared with operators like derivations that do not enjoy
this property. Beyond its mathematical interest, this inquiry allows for the
design of general, yet light-weight, data types for tensors; see
Section~\ref{sec:algorithms}. First, we explain the categorical developments. 

Fix a valence $\vav$ and a partition $\zrange{\vav}=A\sqcup B\sqcup C$. The objects
in our categories $\mathcal{C}$ are tensors --- though tensor spaces can also be
used as objects. The morphisms are \emph{transverse-linear} meaning that, given tensors
$\bra{t}\in V_0\oslash \cdots\oslash V_{\vav}$ and $\lla s| \in
U_0\oslash\cdots\oslash U_{\vav}$, the subset
of homomorphisms from $s$ to $t$ satisfy
\begin{align*}
	\hom_{\mathcal{C}}(s,t)\subset 
		\prod_{a\in A\sqcup B} \overline{V_a \oslash U_a} \times 
				\prod_{c\in C} 1_C
\end{align*}
where $\overline{V_a \oslash U_a}=V_a\oslash U_a$ if the $a$-axis is covariant (i.e.
$a\in A$ or $0=a\in B$) and $U_a\oslash V_a$ if the $a$-axis is contravariant.
If there exists $c\in C$ such that $U_c\neq V_c$, then the hom-set is empty. 
We have not said what conditions are imposed on members of
$\hom_{\mathcal{C}}(s,t)$ so the restriction of function composition axis-by-axis
(respecting the variance) need not be well-defined.  In one case it is. 
Let $0\in B$,
\begin{align}\label{def:common}
	\hom(s,t) & = \{ (\omega_A,\tau_B,1_C) \mid 
		\bra{s}\tau_B = \bra{t}\omega_A\}
	\}
\end{align}
defines a hom-set where axis-by-axis function composition is well-defined. (Note
that here it matters to observe that ) We now claim that these are essentially
the only choices that arise subject to any polynomial identity. 

For $P\subset K[X]$, say that $\mathcal{C}$ is a \emph{$P$-transverse tensor
category} if
\begin{align}\label{eqn:P-transverse-cat}
	\hom_{\mathcal{C}}(s,t)\circ \hom_{\mathcal{C}}(t,s) & \subset \Op[A,B]{S}{P}(K).
\end{align}
So for $\vav = 2$, the adjoint category would be the $(x_1-x_2)$-transverse
tensor category (with $A=\{1\}$ and $B=\{1\}$), and the isometry category would
be the $(x_1x_2-1)$-transverse tensor category (with $A=\{1, 2\}$ and
$B=\emptyset$).  See Figure~\ref{fig:common}.
\begin{figure}[!htbp]
	\begin{center}
		\begin{subfigure}{0.3\textwidth}
			\begin{tikzcd}
				\bra{s}:U_{2} \arrow[d,"\omega_{2}"] \arrow[r] & (U_1\arrow[d,"\omega_1"]\arrow[r] & U_0)\arrow[d,"\omega_0"]\\ 
				\bra{t}:V_{2} \arrow[r] & (V_1\arrow[r] & V_0)
			\end{tikzcd}
			\caption{$(x_0-x_1 x_2)$-transverse map.}\label{fig:cat-ddd}
		\end{subfigure}
		\hspace{3mm}
		\begin{subfigure}{0.3\textwidth}
			\begin{tikzcd}
			\bra{s}:U_{2} \arrow[d,"\omega_{2}"] \arrow[r]& (U_1\arrow[d,"\omega_1"] \arrow[r] & U_0)\arrow[d,equal]\\ 
			\bra{t}:V_{2} \arrow[r] & (V_1\arrow[r] & V_0)
		\end{tikzcd}
	\caption{$(x_1 x_2-1)$-transverse map.}\label{fig:cat-dde}
	\end{subfigure}
	\hspace{3mm}
	\begin{subfigure}{0.3\textwidth}
		\begin{tikzcd}
			\bra{s}:U_{2} \arrow[d,"\omega_{2}"] \arrow[r] & (U_1\arrow[r] & U_0)\arrow[d,equal]\\ 
			\bra{t}:V_{2} \arrow[r] & (V_1\arrow[u,"\omega_1"]\arrow[r] & V_0)
		\end{tikzcd}
		\caption{$(x_1-x_2)$-transverse map.}\label{fig:cat-due}
	\end{subfigure}
\end{center}
\caption{Morphisms in three transverse categories shown with interpretation 
maps $\bra{s}:U_0\oslash U_1\oslash U_2$ and $\bra{t}\in V_0\oslash V_1\oslash V_2$.}
\label{fig:common}
\end{figure}

The condition in~\eqref{eqn:P-transverse-cat} underpins many stated questions on
tensor categories, though often explored in different contexts such as Kronecker
modules and quiver representations; see \citelist{\cite{BFM} \cite{Wilson:division}}
and references there in.  Abstracting to the polynomials makes
certain questions of categories natural to resolve. For example, if $P$ is
linearly generated, then the $P$-transverse tensor category is abelian. Functors
are also easily described in terms of polynomials.  For instance the invertible
morphisms of a nonabelian $(x_a x_b-1)$-transverse tensor category embed into
the abelian $(x_a-x_b)$-transverse tensor category by observing that $x_a
x_b-1=x_a-x_b\in K[X^{\pm}]$.  Such tricks are essential to works like
\cite{BW:isom}. Theorem~\ref{thm:group-full} makes the following strong
constraint on $P$.

\begin{coro}\label{coro:cats}
	If $\mathcal{C}$ is a $P$-transverse tensor category, then its core groupoid
	(its subcategory of invertible morphisms) is a
	$(X^{e_1}-X^{f_1},\ldots,X^{e_r}-X^{f_r})$-transverse tensor category, where
	for each $i\in \range{r}$, $e_i,f_i:\zrange{\vav}\to \{0,1\}$. If $r=1$,
	then $\supp e_1 \cap \supp f_1=\emptyset$.
\end{coro}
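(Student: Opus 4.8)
The plan is to reduce the corollary to Theorem~\ref{thm:group-full}(b). Write $\mathcal{C}^{\times}$ for the core groupoid of $\mathcal{C}$. First I would unwind \eqref{eqn:P-transverse-cat} for $\mathcal{C}^{\times}$: for objects $s,t$, composing an invertible morphism $t\to s$ with an invertible morphism $s\to t$ lands in $\Op[A,B]{t}{P}(K)\cap\Omega_{A,B}^{\times}(K)=\Op[A,B]{t}{P}^{\times}(K)$, and taking $s=t$ shows that $\hom_{\mathcal{C}^{\times}}(t,t)$ is a subgroup of $\Omega_{A,B}^{\times}(K)$ contained in $\Op[A,B]{t}{P}^{\times}(K)$. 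Since a transverse tensor category is compatible with base change --- the category over $L\in\Comm{K}$ has the base-changed tensors as objects and hom-sets given by the $L$-points of the same operator functors --- this containment upgrades to a containment of subgroup subschemes of $\Omega_{A,B}^{\times}$. Applying this at the unit tensor $\bra{\1}$ (which is an object, all valence-$\vav$ tensors being objects) and using that the hom-sets are the maximal ones compatible with \eqref{eqn:P-transverse-cat}, so that $\hom_{\mathcal{C}^{\times}}(\1,\1)$ exhausts $\Op[A,B]{\1}{P}^{\times}$, I would conclude that $\Op[A,B]{\1}{P}^{\times}$ is a subgroup subscheme of $\Omega_{A,B}^{\times}$; by Proposition~\ref{PR:redundancy-for-groups} the same then holds for $\Op[A,B]{S}{P}^{\times}$ over every frame and every $S$.

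With the hypothesis of Theorem~\ref{thm:group-full}(b) in place, that theorem furnishes $e_1,f_1,\dots,e_r,f_r\colon A\sqcup B\to\{0,1\}$ and the ideal $Q=(X^{e_1}-X^{f_1},\dots,X^{e_r}-X^{f_r})$ with $\Op[A,B]{S}{P}^{\times}=\Op[A,B]{S}{Q}^{\times}$ for every tensor space $S$; extending each $e_i,f_i$ by $0$ outside $A\sqcup B$ makes them functions $\zrange{\vav}\to\{0,1\}$. Then for any objects $s,t$ of $\mathcal{C}^{\times}$,
\[
	\hom_{\mathcal{C}^{\times}}(s,t)\circ\hom_{\mathcal{C}^{\times}}(t,s)\subseteq
	\Op[A,B]{t}{P}^{\times}(K)=\Op[A,B]{t}{Q}^{\times}(K)\subseteq\Op[A,B]{t}{Q}(K),
\]
so $\mathcal{C}^{\times}$ satisfies \eqref{eqn:P-transverse-cat} with $Q$ in place of $P$; that is, $\mathcal{C}^{\times}$ is a $Q$-transverse tensor category. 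For the last clause, when $r=1$ I would instead invoke Theorem~\ref{thm:group-intro}: a principal $(P:X^{\infty})$ corresponds to a lattice of rank at most $1$, whose single binomial generator can be taken in the form $X^{e}-X^{f}$ with $\supp e\cap\supp f=\emptyset$, which is precisely the extra conclusion asserted.

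The hard part will be the first paragraph, namely verifying the hypothesis of Theorem~\ref{thm:group-full}(b) at full scheme-theoretic strength. The categorical axioms directly give only that endomorphism sets of objects are closed under composition and contain identities; turning this into ``$\Op[A,B]{S}{P}^{\times}$ is a subgroup subscheme of $\Omega_{A,B}^{\times}$ for \emph{every} tensor space $S$'' requires making precise that a $P$-transverse tensor category is a functor of points (so that closure under composition survives each base change $K\to L$) and that its hom-sets at the unit tensor realize all of $\Op[A,B]{\1}{P}^{\times}(L)$. Once these bookkeeping points are pinned down, everything else is a direct appeal to Proposition~\ref{PR:redundancy-for-groups} and Theorems~\ref{thm:group-full} and~\ref{thm:group-intro}.
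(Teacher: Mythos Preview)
Your approach matches the paper's: reduce to Theorem~\ref{thm:group-full}(b) by observing that the categorical axioms force endomorphism sets to be submonoids of $\Op[A,B]{s}{P}$, pass to invertibles, and read off the binomial form of $Q$. The paper's own proof is two sentences long and does exactly this, without the intermediate detour through Proposition~\ref{PR:redundancy-for-groups} or an explicit appeal to the unit tensor.

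The one point worth flagging is the step you yourself identify as ``the hard part'': your argument that $\Op[A,B]{\1}{P}^{\times}$ is a subgroup scheme relies on the claim that the hom-sets in a $P$-transverse category are the \emph{maximal} ones compatible with \eqref{eqn:P-transverse-cat}. That maximality is not part of the definition as stated (the condition is only a containment $\subseteq$), so strictly speaking you have imported an extra hypothesis. The paper's proof does not address this either; it simply asserts that ``passing to the groupoid and applying Theorem~\ref{thm:group-full} the claim follows,'' treating the corollary at the same informal level as the surrounding discussion of categories in Section~\ref{sec:cats}. So you have not introduced a gap that the paper avoids; rather, you have made explicit a point that the paper leaves implicit, and your closing paragraph correctly isolates it as the bookkeeping that would need to be pinned down for a fully rigorous argument.
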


\begin{proof}
	By the conditions of a category endomorphisms $\hom_{\mathcal{C}}(s,s)$ form
	a monoid. The assumptions of $P$-transverse imply that
	$\hom_{\mathcal{C}}(s,s)$ is a submonoid of some $\Op[A,B]{s}{P}$.  Passing
	to the groupoid and applying Theorem~\ref{thm:group-full} the claim follows.
\end{proof}

In Section~\ref{sec:algorithms}, we explore the implication to data types, and
in future work, we explore the categorical implications generally.   We point
out that $(X^e-X^f)$-transverse tensor categories always exist and are the
defined as in \eqref{def:common}.
These we call \emph{$(X^e-X^f)$-homotopism categories}. 


\section{Monomial traits, singularity complexes, 
and Theorem~\ref{mainthm:singularity}}\label{sec:Singularities}

We detail the relationship between the spectral theory of a tensor and its
various subtensors.  Our focus concerns decomposing multiway arrays and general
tensors into block-diagonal and block-triangular forms.  Such algorithms are
used in numerous ways ranging from clustering problems in data \citelist{
\cite{chatroom} \cite{MFMc}}, to detecting radical and semisimple structure in
abstract algebraic systems \citelist{\cite{Holt-Rees} \cite{Ivanyos-Lux}}.  In
the context of Section~\ref{sec:cats} we can describe these as decomposing a
tensor, or even a tensor space, into a categorical product or coproduct; see
\citelist{\cite{Wilson:central} \cite{Wilson:division}}.  

\begin{remark}\label{rem:decompositions}
There are at least two other uses of the phrase \emph{decomposing tensors}. One
is to write $t=\sum_i \lambda_i u_1\otimes \cdots \otimes u_{\vav}$ with few
non-zero coefficients; see \cite{Lim:Spectral}.  These problems tend to be
NP-complete to solve, see the work of H\aa stad \cite{Haastad}.  A second tensor
decomposition notion is to write $t$ as a sequence of tensor contractions of
lower rank tensors, e.g. writing a matrix $M=AB$ where $A$ is ``tall-skinny''
and $B$ is ``short-fat''. Tucker and many others developed algorithms for that
sort of decomposition, but these too have difficult complexity \cite{engineer}.

By comparison, decomposing tensors into their categorical products and
coproducts has so far proved to be polynomial-time efficient and for
many classes almost linear time.  Implementations of these algorithms scale roughly
linearly with the speed of solving linear systems of equations. Partly
explaining the difference, decompositions into the first two families seek a
combinatorial optimality inside highly non-convex feasible regions.  Meanwhile
decompositions in a categorical context usually have Jordan--H\"older or
Krull--Schmidt type theorems asserting the effective uniqueness of the
decompositions (though as \cite{Wilson:central} shows this is not always
necessary).
\end{remark}

\subsection{Subtensors}
Sub-structure in algebraic contexts concerns monomorphisms in a category. As we
saw in Section~\ref{sec:cats}, we observed how tensors are contained in multiple
categories, and we focus on the homotopism categories.

We begin by pointing out that subtensors are characterized by tensor
singularities (i.e.~zeros) and their ``neighborhood'' of singularities.  Recall
the simplicial complex $\nabla$ from the \eqref{def:nabla}
\begin{align*} 
	\nabla(t;U) & = \{A\subset\zerovav\mid U_A\not\perp V_{\bar{A}}\},
	& U_A\bot V_{\bar{A}} \Leftrightarrow \left\{\begin{array}{cc}
		\langle t|U_{B},V_{\range{\vav}-B}\rangle\not\leq U_0 & A=\{0\}\sqcup B,\\
		\langle t|U_A,V_{\range{\vav}-A}\rangle\neq 0 & \textnormal{otherwise.} 
	\end{array}\right.
\end{align*}

\begin{ex}
	Consider the following $\mathbb{R}$-tensors with $\vav=2$.
	\begin{enumerate}[(a)]
	\item
		Consider the product $*:\mathbb{C}\times \mathbb{C}\bmto\mathbb{C}$ of
		$\mathbb{C}$ as an $\mathbb{R}$-algebra. Since $\mathbb{R}$ is a
		subalgebra $\mathbb{R}*\mathbb{R}\leq \mathbb{R}$, it follows that
		$\{0,1,2\}\notin \nabla:=\nabla(*;\mathbb{R})$. However,
		$\mathbb{R}*\mathbb{C},\mathbb{C}*\mathbb{R}\not\leq \mathbb{R}$, and
		$\mathbb{R}*\mathbb{R}\neq 0$, so $\{0,1\},\{0,2\},\{1,2\}\in \nabla$.
		Thus $\nabla$ is the edges of a triangle.

	\item 
		Consider $R=\mathbb{M}_2(\mathbb{R})$ as the $\mathbb{R}$-tensor
		$\bra{t}:R\times R \bmto R$. Take $I=\left\{ \left[\begin{smallmatrix} a
		& b\\ 0 & 0 \end{smallmatrix}\right]\right\}\leq R$. Then $I*I \leq I$
		and $I*R\leq I$, so that $\{0,1,2\},\{0,1\}\not\in \nabla:=\nabla(*;I)$.
		But $R*I\not\leq I$ and $I*I\neq 0$ so $\{0,2\},\{1,2\}\in \nabla$.
		Therefore, $\nabla$ is two sides of a triangle.
	
	\item 
		Consider the product tensor of $R=\left\{\left[\begin{smallmatrix} a & b
		\\ 0 & c \end{smallmatrix}\right]\right\}$, and take $I=\left\{
		\left[\begin{smallmatrix} 0 & b\\ 0 & 0
		\end{smallmatrix}\right]\right\}\leq R$. Then $I*R,R*I\leq I$ and
		$I*I=0$ so $\{0,1\},\{0,2\},\{1,2\}\not\in \nabla:=\nabla(*;I)$.
		However, $I*R\neq 0$, $R*I\neq 0$, and $R*R \not\leq I$ so
		$\{0\},\{1\},\{2\}\in \nabla$. So $\nabla$ is three isolated vertices.
	\end{enumerate}
\end{ex}

For tensors modeled as arrays one can illustrate the simplicial complex
$\nabla(S;U)$ on top of the array as done in Figure~\ref{fig:singular-2} for
$2$-tensors and Figure~\ref{fig:sing-val3} for $3$-tensors.  To explain the
2-tensor example, take $\bra{t}:K^2\times K^2\bmto K$ with
$\bra{t}(v_1,v_2),(v'_1,v'_2)\rangle=(v_1,v_2)\left[\begin{smallmatrix} a & b \\
c & d\end{smallmatrix}\right](v'_1,v'_2)^{\top}$. Fix  $U=[ K(1,0), K(1,0) ]$,
so that $\nabla(Kt;U)$ depends on the values of $a,b,c,d$ in the way shown in
Figure~\ref{fig:singular-2}. 
\begin{figure}[!htbp]
	\begin{tikzpicture}
        \tikzstyle{point}=[circle,thick,draw=black,fill=black,inner sep=0pt,
        minimum width=4pt,minimum height=4pt]
        \tikzstyle{point2}=[inner sep=0pt,minimum width=0pt,minimum height=0pt]

		\pgfmathsetmacro{\xx}{0.5}
		\pgfmathsetmacro{\yy}{-0.5}

		\node (A) at (0,0) {\begin{tikzpicture}
			\node at (0,0) {$(0)$};
			\node at (0,1.5) {\begin{tikzpicture}
				\draw[black,thick, fill=gray!50] 
				  (0*\xx,0*\yy) -- (0*\xx,3*\yy) -- (3*\xx,3*\yy) -- (3*\xx,0*\yy)--cycle;
				\fill[pattern=dots] 
				  (1*\xx,1*\yy) -- (1*\xx,3*\yy) -- (3*\xx,3*\yy) -- (3*\xx,1*\yy)--cycle;
				\node (0) [point] at (0.5*\xx,2.5*\yy) {};
				\node (01) [point2] at (0.5*\xx,0.5*\yy) {};
				\node (1) [point] at (2.5*\xx,0.5*\yy) {};
				\draw[thick] (0) -- (01) -- (1);
				\node at (-0.75*\xx,0.75*\yy) {{\tiny $U_1$}};
				\node at (0.75*\xx,-0.75*\yy) {{\tiny $U_0$}};
			\end{tikzpicture}};
		\end{tikzpicture}};

		\node (B) at (3,0) {\begin{tikzpicture}
			\node at (0,0) {$(x_0x_1)$};
			\node at (0,1.5) {\begin{tikzpicture}
				\draw[black,thick, fill=gray!50] 
				  (0*\xx,0*\yy) -- (0*\xx,3*\yy) -- (3*\xx,3*\yy) -- (3*\xx,0*\yy)--cycle;
				\draw[thick,fill=white] (0*\xx,0*\yy) rectangle (\xx,\yy);
				\fill[pattern=dots] 
				  (1*\xx,1*\yy) -- (1*\xx,3*\yy) -- (3*\xx,3*\yy) -- (3*\xx,1*\yy)--cycle;
				\node (0) [point] at (0.5*\xx,2.5*\yy) {};
				\node (1) [point] at (2.5*\xx,0.5*\yy) {};
				\node at (-0.75*\xx,0.75*\yy) {{\tiny $U_1$}};
				\node at (0.75*\xx,-0.75*\yy) {{\tiny $U_0$}};
			\end{tikzpicture}};
		\end{tikzpicture}};

		\node (C) at (6,0) {\begin{tikzpicture}
			\node at (0,0) {$(x_1)$};
			\node at (0,1.5) {\begin{tikzpicture}
				\draw[black,thick, fill=gray!50] 
				  (0*\xx,0*\yy) -- (0*\xx,3*\yy) -- (3*\xx,3*\yy) -- (3*\xx,0*\yy)--cycle;
				\draw[thick,fill=white] (0*\xx,0*\yy) rectangle (1*\xx,3*\yy);
				\fill[color=white, pattern=dots] 
				  (1*\xx,1*\yy) -- (1*\xx,3*\yy) -- (3*\xx,3*\yy) -- (3*\xx,1*\yy)--cycle;
				\node (1) [point] at (2.5*\xx,0.5*\yy) {};
				\node at (-0.75*\xx,0.75*\yy) {{\tiny $U_1$}};
				\node at (0.75*\xx,-0.75*\yy) {{\tiny $U_0$}};
			\end{tikzpicture}};
		\end{tikzpicture}};

		\node (D) at (9,0) {\begin{tikzpicture}
			\node at (0,0) {$(x_0,x_1)$};
			\node at (0,1.5) {\begin{tikzpicture}
				\draw[black,thick, fill=white] 
				  (0*\xx,0*\yy) -- (0*\xx,3*\yy) -- (3*\xx,3*\yy) -- (3*\xx,0*\yy)--cycle;
				\fill[white, pattern=dots] 
				  (1*\xx,1*\yy) -- (1*\xx,3*\yy) -- (3*\xx,3*\yy) -- (3*\xx,1*\yy)--cycle;
				\node at (-0.75*\xx,0.75*\yy) {{\tiny $U_1$}};
				\node at (0.75*\xx,-0.75*\yy) {{\tiny $U_0$}};
			\end{tikzpicture}};
		\end{tikzpicture}};

	\end{tikzpicture}
	\caption{Some simplicial complexes $\nabla(Kt;U)$, represented
	graphically on top of a $2$-tensor.
	Shaded regions are known to contain
	a nonzero, white regions are $0$, and the dotted region is arbitrary.
	Beneath we include the associated Stanley--Reisner ideal which agrees with
	$\Id{Kt}{\Omega_{U}}$.  The missing example of $(x_0)$ is the transpose 
	of matrix for $(x_1)$.}
	\label{fig:singular-2}
\end{figure}
\begin{figure}[!htbp]
	
    \begin{tikzpicture}[scale=0.75]
        \tikzstyle{point}=[circle,thick,draw=black,fill=black,inner sep=0pt,
        minimum width=4pt,minimum height=4pt]
        \tikzstyle{point2}=[inner sep=0pt,minimum width=0pt,minimum height=0pt]

		\pgfmathsetmacro{\xx}{0.5}
		\pgfmathsetmacro{\yy}{.5}
        \pgfmathsetmacro{\zz}{0.5}
        
        \pgfmathsetmacro{\dd}{0.33}
        \pgfmathsetmacro{\ddd}{1.33}
		\pgfmathsetmacro{\xxx}{0.75}
		\pgfmathsetmacro{\yyy}{0.75}
		\pgfmathsetmacro{\zzz}{0.75}
		\pgfmathsetmacro{\xscale}{2.5}
		\pgfmathsetmacro{\yscale}{-2.75}
		\pgfmathsetmacro{\boxscale}{0.5}
		
		
			\node at (7*\xscale,0*\yscale+1.30) {$(x_0,x_1,x_2)$};
			\node (x2-x1-x0) at (7*\xscale,0*\yscale) {\begin{tikzpicture}
                \pic at (0*\xxx,0*\yyy,0*\zzz) {ccube={1*\xxx/1*\yyy/1*\zzz/gray}};
                \pic at (0*\xxx,1*\yyy,0*\zzz) {ccube={1*\xxx/\dd*\yyy/1*\zzz/white}};
                \pic at (0*\xxx,0*\yyy,1*\zzz) {ccube={1*\xxx/1*\yyy/\dd*\zzz/white}};
                \pic at (0*\xxx,1*\yyy,1*\zzz) {ccube={1*\xxx/\dd*\yyy/\dd*\zzz/white}};
                \pic at (1*\xxx,0*\yyy,0*\zzz) {ccube={\dd*\xxx/1*\yyy/1*\zzz/white}};
                \pic at (1*\xxx,1*\yyy,0*\zzz) {ccube={\dd*\xxx/\dd*\yyy/1*\zzz/white}};
                \pic at (1*\xxx,0*\yyy,1*\zzz) {ccube={\dd*\xxx/1*\yyy/\dd*\zzz/white}};
                \pic at (1*\xxx,1*\yyy,1*\zzz) {ccube={\dd*\xxx/\dd*\yyy/\dd*\zzz/white}};

                \pic at (0,\ddd*\yyy,0) {linecube={\ddd*\xxx/-\ddd*\yyy/\ddd*\zzz}};	
			\end{tikzpicture}}; 
		
			\node at (6*\xscale,0*\yscale+1.30) {$(x_1,x_2)$};
			\node (x2-x1) at (6*\xscale,0*\yscale) {\begin{tikzpicture}
                \pic at (0*\xxx,0*\yyy,0*\zzz) {ccube={1*\xxx/1*\yyy/1*\zzz/black}};
                \pic at (0*\xxx,1*\yyy,0*\zzz) {ccube={1*\xxx/\dd*\yyy/1*\zzz/white}};
                \pic at (0*\xxx,0*\yyy,1*\zzz) {ccube={1*\xxx/1*\yyy/\dd*\zzz/black}};
                \pic at (0*\xxx,1*\yyy,1*\zzz) {ccube={1*\xxx/\dd*\yyy/\dd*\zzz/white}};
                \pic at (1*\xxx,0*\yyy,0*\zzz) {ccube={\dd*\xxx/1*\yyy/1*\zzz/white}};
                \pic at (1*\xxx,1*\yyy,0*\zzz) {ccube={\dd*\xxx/\dd*\yyy/1*\zzz/white}};
                \pic at (1*\xxx,0*\yyy,1*\zzz) {ccube={\dd*\xxx/1*\yyy/\dd*\zzz/white}};
                \pic at (1*\xxx,1*\yyy,1*\zzz) {ccube={\dd*\xxx/\dd*\yyy/\dd*\zzz/white}};

                \pic at (0,\ddd*\yyy,0) {linecube={\ddd*\xxx/-\ddd*\yyy/\ddd*\zzz}};	
                \node (21) [point] at (0.5*\xxx,0.5*\yyy,\ddd*\zzz) {};
			\end{tikzpicture}}; 

			\node at (5*\xscale,0*\yscale+1.30) {$(x_1,x_0x_2)$};
			\node (x2-x1x0) at (5*\xscale,0*\yscale) {\begin{tikzpicture}	
                \pic at (0*\xxx,0*\yyy,0*\zzz) {ccube={1*\xxx/1*\yyy/1*\zzz/gray}};
                \pic at (0*\xxx,1*\yyy,0*\zzz) {ccube={1*\xxx/\dd*\yyy/1*\zzz/black}};
                \pic at (0*\xxx,0*\yyy,1*\zzz) {ccube={1*\xxx/1*\yyy/\dd*\zzz/black}};
                \pic at (0*\xxx,1*\yyy,1*\zzz) {ccube={1*\xxx/\dd*\yyy/\dd*\zzz/white}};
                \pic at (1*\xxx,0*\yyy,0*\zzz) {ccube={\dd*\xxx/1*\yyy/1*\zzz/white}};
                \pic at (1*\xxx,1*\yyy,0*\zzz) {ccube={\dd*\xxx/\dd*\yyy/1*\zzz/white}};
                \pic at (1*\xxx,0*\yyy,1*\zzz) {ccube={\dd*\xxx/1*\yyy/\dd*\zzz/white}};
                \pic at (1*\xxx,1*\yyy,1*\zzz) {ccube={\dd*\xxx/\dd*\yyy/\dd*\zzz/white}};

                \pic at (0,\ddd*\yyy,0) {linecube={\ddd*\xxx/-\ddd*\yyy/\ddd*\zzz}};	

                \node (21) [point] at (0.5*\xxx,0.5*\yyy,\ddd*\zzz) {};
                \node (10) [point] at (0.5*\xxx,\ddd*\yyy,0.5*\zzz) {};
			\end{tikzpicture}}; 

			\node at (4*\xscale,0.5*\yscale+1.30) {$(x_1)$};
			\node (x2) at (4*\xscale,0.5*\yscale) {\begin{tikzpicture}
                \pic at (0*\xxx,0*\yyy,0*\zzz) {ccube={1*\xxx/1*\yyy/1*\zzz/gray}};
                \pic at (0*\xxx,1*\yyy,0*\zzz) {ccube={1*\xxx/\dd*\yyy/1*\zzz/black}};
                \pic at (0*\xxx,0*\yyy,1*\zzz) {ccube={1*\xxx/1*\yyy/\dd*\zzz/black}};
                \pic at (0*\xxx,1*\yyy,1*\zzz) {ccube={1*\xxx/\dd*\yyy/\dd*\zzz/black}};
                \pic at (1*\xxx,0*\yyy,0*\zzz) {ccube={\dd*\xxx/1*\yyy/1*\zzz/white}};
                \pic at (1*\xxx,1*\yyy,0*\zzz) {ccube={\dd*\xxx/\dd*\yyy/1*\zzz/white}};
                \pic at (1*\xxx,0*\yyy,1*\zzz) {ccube={\dd*\xxx/1*\yyy/\dd*\zzz/white}};
                \pic at (1*\xxx,1*\yyy,1*\zzz) {ccube={\dd*\xxx/\dd*\yyy/\dd*\zzz/white}};

                \pic at (0,\ddd*\yyy,0) {linecube={\ddd*\xxx/-\ddd*\yyy/\ddd*\zzz}};	

                \node (21) [point] at (0.5*\xxx,0.5*\yyy,\ddd*\zzz) {};
                \node (1) [point2] at (0.5*\xxx,\ddd*\yyy,\ddd*\zzz) {};
                \node (10) [point] at (0.5*\xxx,\ddd*\yyy,0.5*\zzz) {};

                \draw[thick] (21) -- (1) -- (10);
			\end{tikzpicture}}; 
		
			\node at (4*\xscale,-0.5*\yscale+1.30) {$(x_1x_2,x_0x_2,x_0x_1)$};
			\node (x2x1-x2x0-x1x0) at (4*\xscale,-0.5*\yscale) {\begin{tikzpicture}
                \pic at (0*\xxx,0*\yyy,0*\zzz) {ccube={1*\xxx/1*\yyy/1*\zzz/gray}};
                \pic at (0*\xxx,1*\yyy,0*\zzz) {ccube={1*\xxx/\dd*\yyy/1*\zzz/black}};
                \pic at (0*\xxx,0*\yyy,1*\zzz) {ccube={1*\xxx/1*\yyy/\dd*\zzz/black}};
                \pic at (0*\xxx,1*\yyy,1*\zzz) {ccube={1*\xxx/\dd*\yyy/\dd*\zzz/white}};
                \pic at (1*\xxx,0*\yyy,0*\zzz) {ccube={\dd*\xxx/1*\yyy/1*\zzz/black}};
                \pic at (1*\xxx,1*\yyy,0*\zzz) {ccube={\dd*\xxx/\dd*\yyy/1*\zzz/white}};
                \pic at (1*\xxx,0*\yyy,1*\zzz) {ccube={\dd*\xxx/1*\yyy/\dd*\zzz/white}};
                \pic at (1*\xxx,1*\yyy,1*\zzz) {ccube={\dd*\xxx/\dd*\yyy/\dd*\zzz/white}};

                \pic at (0,\ddd*\yyy,0) {linecube={\ddd*\xxx/-\ddd*\yyy/\ddd*\zzz}};	

                \node (21) [point] at (0.5*\xxx,0.5*\yyy,\ddd*\zzz) {};
                \node (1) [point2] at (0.5*\xxx,\ddd*\yyy,\ddd*\zzz) {};
                \node (10) [point] at (0.5*\xxx,\ddd*\yyy,0.5*\zzz) {};
                \node (0) [point2] at (\ddd*\xxx,\ddd*\yyy,0.5*\zzz) {};
                \node (20) [point] at (\ddd*\xxx,0.5*\yyy,0.5*\zzz) {};
                \node (2) [point2] at (\ddd*\xxx,0.5*\yyy,\ddd*\zzz) {};

			\end{tikzpicture}}; 

			\node at (3*\xscale,0*\yscale+1.30) {$(x_1x_2,x_0x_1)$};
			\node (x2x1-x1x0) at (3*\xscale,0*\yscale) {\begin{tikzpicture}
                \pic at (0*\xxx,0*\yyy,0*\zzz) {ccube={1*\xxx/1*\yyy/1*\zzz/gray}};
                \pic at (0*\xxx,1*\yyy,0*\zzz) {ccube={1*\xxx/\dd*\yyy/1*\zzz/black}};
                \pic at (0*\xxx,0*\yyy,1*\zzz) {ccube={1*\xxx/1*\yyy/\dd*\zzz/black}};
                \pic at (0*\xxx,1*\yyy,1*\zzz) {ccube={1*\xxx/\dd*\yyy/\dd*\zzz/black}};
                \pic at (1*\xxx,0*\yyy,0*\zzz) {ccube={\dd*\xxx/1*\yyy/1*\zzz/black}};
                \pic at (1*\xxx,1*\yyy,0*\zzz) {ccube={\dd*\xxx/\dd*\yyy/1*\zzz/white}};
                \pic at (1*\xxx,0*\yyy,1*\zzz) {ccube={\dd*\xxx/1*\yyy/\dd*\zzz/white}};
                \pic at (1*\xxx,1*\yyy,1*\zzz) {ccube={\dd*\xxx/\dd*\yyy/\dd*\zzz/white}};

                \pic at (0,\ddd*\yyy,0) {linecube={\ddd*\xxx/-\ddd*\yyy/\ddd*\zzz}};	

                \node (21) [point] at (0.5*\xxx,0.5*\yyy,\ddd*\zzz) {};
                \node (1) [point2] at (0.5*\xxx,\ddd*\yyy,\ddd*\zzz) {};
                \node (10) [point] at (0.5*\xxx,\ddd*\yyy,0.5*\zzz) {};
                \node (0) [point2] at (\ddd*\xxx,\ddd*\yyy,0.5*\zzz) {};
                \node (20) [point] at (\ddd*\xxx,0.5*\yyy,0.5*\zzz) {};
                \node (2) [point2] at (\ddd*\xxx,0.5*\yyy,\ddd*\zzz) {};

                \draw[thick] (10) -- (1) -- (21);
			\end{tikzpicture}}; 

			\node at (2*\xscale,0*\yscale+1.30) {$(x_1x_2)$};
			\node (x2x1) at (2*\xscale,0*\yscale) {\begin{tikzpicture}
                \pic at (0*\xxx,0*\yyy,0*\zzz) {ccube={1*\xxx/1*\yyy/1*\zzz/gray}};
                \pic at (0*\xxx,1*\yyy,0*\zzz) {ccube={1*\xxx/\dd*\yyy/1*\zzz/black}};
                \pic at (0*\xxx,0*\yyy,1*\zzz) {ccube={1*\xxx/1*\yyy/\dd*\zzz/black}};
                \pic at (0*\xxx,1*\yyy,1*\zzz) {ccube={1*\xxx/\dd*\yyy/\dd*\zzz/black}};
                \pic at (1*\xxx,0*\yyy,0*\zzz) {ccube={\dd*\xxx/1*\yyy/1*\zzz/black}};
                \pic at (1*\xxx,1*\yyy,0*\zzz) {ccube={\dd*\xxx/\dd*\yyy/1*\zzz/white}};
                \pic at (1*\xxx,0*\yyy,1*\zzz) {ccube={\dd*\xxx/1*\yyy/\dd*\zzz/black}};
                \pic at (1*\xxx,1*\yyy,1*\zzz) {ccube={\dd*\xxx/\dd*\yyy/\dd*\zzz/white}};

                \pic at (0,\ddd*\yyy,0) {linecube={\ddd*\xxx/-\ddd*\yyy/\ddd*\zzz}};	

                \node (21) [point] at (0.5*\xxx,0.5*\yyy,\ddd*\zzz) {};
                \node (1) [point2] at (0.5*\xxx,\ddd*\yyy,\ddd*\zzz) {};
                \node (10) [point] at (0.5*\xxx,\ddd*\yyy,0.5*\zzz) {};
                \node (0) [point2] at (\ddd*\xxx,\ddd*\yyy,0.5*\zzz) {};
                \node (20) [point] at (\ddd*\xxx,0.5*\yyy,0.5*\zzz) {};
                \node (2) [point2] at (\ddd*\xxx,0.5*\yyy,\ddd*\zzz) {};

                \draw[thick] (10) -- (1) -- (21) -- (2) -- (20);
            \end{tikzpicture}}; 
		
			\node at (1*\xscale,0*\yscale+1.30) {$(x_0x_1x_2)$};
            \node (x2x1x0) at (1*\xscale,0*\yscale) {\begin{tikzpicture}
                \pic at (0*\xxx,0*\yyy,0*\zzz) {ccube={1*\xxx/1*\yyy/1*\zzz/gray}};
                \pic at (0*\xxx,1*\yyy,0*\zzz) {ccube={1*\xxx/\dd*\yyy/1*\zzz/black}};
                \pic at (0*\xxx,0*\yyy,1*\zzz) {ccube={1*\xxx/1*\yyy/\dd*\zzz/black}};
                \pic at (0*\xxx,1*\yyy,1*\zzz) {ccube={1*\xxx/\dd*\yyy/\dd*\zzz/black}};
                \pic at (1*\xxx,0*\yyy,0*\zzz) {ccube={\dd*\xxx/1*\yyy/1*\zzz/black}};
                \pic at (1*\xxx,1*\yyy,0*\zzz) {ccube={\dd*\xxx/\dd*\yyy/1*\zzz/black}};
                \pic at (1*\xxx,0*\yyy,1*\zzz) {ccube={\dd*\xxx/1*\yyy/\dd*\zzz/black}};
                \pic at (1*\xxx,1*\yyy,1*\zzz) {ccube={\dd*\xxx/\dd*\yyy/\dd*\zzz/white}};

                \pic at (0,\ddd*\yyy,0) {linecube={\ddd*\xxx/-\ddd*\yyy/\ddd*\zzz}};

                \node (21) [point] at (0.5*\xxx,0.5*\yyy,\ddd*\zzz) {};
                \node (1) [point2] at (0.5*\xxx,\ddd*\yyy,\ddd*\zzz) {};
                \node (10) [point] at (0.5*\xxx,\ddd*\yyy,0.5*\zzz) {};
                \node (0) [point2] at (\ddd*\xxx,\ddd*\yyy,0.5*\zzz) {};
                \node (20) [point] at (\ddd*\xxx,0.5*\yyy,0.5*\zzz) {};
                \node (2) [point2] at (\ddd*\xxx,0.5*\yyy,\ddd*\zzz) {};

                \draw[thick] (10) -- (1) -- (21) -- (2) -- (20) -- (0) -- (10);
			\end{tikzpicture}}; 

		\node at (0*\xscale,0*\yscale+1.3) {$(0)$};
		\node (0) at (0*\xscale,0.1*\yscale) {\begin{tikzpicture}
			\pic at (0*\xxx,0*\yyy,0*\zzz) {ccube={1*\xxx/1*\yyy/1*\zzz/gray}};
			\pic at (0*\xxx,1*\yyy,0*\zzz) {ccube={1*\xxx/\dd*\yyy/1*\zzz/black}};
			\pic at (0*\xxx,0*\yyy,1*\zzz) {ccube={1*\xxx/1*\yyy/\dd*\zzz/black}};
			\pic at (0*\xxx,1*\yyy,1*\zzz) {ccube={1*\xxx/\dd*\yyy/\dd*\zzz/black}};
			\pic at (1*\xxx,0*\yyy,0*\zzz) {ccube={\dd*\xxx/1*\yyy/1*\zzz/black}};
			\pic at (1*\xxx,1*\yyy,0*\zzz) {ccube={\dd*\xxx/\dd*\yyy/1*\zzz/black}};
			\pic at (1*\xxx,0*\yyy,1*\zzz) {ccube={\dd*\xxx/1*\yyy/\dd*\zzz/black}};
			\pic at (1*\xxx,1*\yyy,1*\zzz) {ccube={\dd*\xxx/\dd*\yyy/\dd*\zzz/black}};


			\fill[black!50] (0.5*\xxx,0.5*\yyy,\ddd*\zzz) -- (0.5*\xxx,\ddd*\yyy,\ddd*\zzz)
				-- (\ddd*\xxx,\ddd*\yyy,\ddd*\zzz) -- (\ddd*\xxx,0.5*\yyy,\ddd*\zzz) -- cycle;
			\fill[black!25] (0.5*\xxx,\ddd*\yyy,\ddd*\zzz) -- (\ddd*\xxx,\ddd*\yyy,\ddd*\zzz) 
				-- (\ddd*\xxx,\ddd*\yyy,0.5*\zzz) -- (0.5*\xxx,\ddd*\yyy,0.5*\zzz) -- cycle;
			\fill[black!75]  (\ddd*\xxx,0.5*\yyy,\ddd*\zzz) -- (\ddd*\xxx,\ddd*\yyy,\ddd*\zzz)
				-- (\ddd*\xxx,\ddd*\yyy,0.5*\zzz) -- (\ddd*\xxx,0.5*\yyy,0.5*\zzz) -- cycle;

			\node (21) [point] at (0.5*\xxx,0.5*\yyy,\ddd*\zzz) {};
			\node (1) [point2] at (0.5*\xxx,\ddd*\yyy,\ddd*\zzz) {};
			\node (10) [point] at (0.5*\xxx,\ddd*\yyy,0.5*\zzz) {};
			\node (0) [point2] at (\ddd*\xxx,\ddd*\yyy,0.5*\zzz) {};
			\node (20) [point] at (\ddd*\xxx,0.5*\yyy,0.5*\zzz) {};
			\node (2) [point2] at (\ddd*\xxx,0.5*\yyy,\ddd*\zzz) {};

			\draw[thick] (10) -- (1) -- (21) -- (2) -- (20) -- (0) -- (10);

			\node at (-0.25*\xxx,1.1*\ddd*\yyy,\ddd*\zzz) {{\tiny $U_0$}};
			\node at (\ddd*\xxx,-0.25*\yyy,\ddd*\zzz) {{\tiny $U_1$}};
			\node at (1.1*\ddd*\xxx,0.8*\ddd*\yyy,-0.25*\zzz) {{\tiny $U_2$}};
		\end{tikzpicture}}; 

	\end{tikzpicture}
	\caption{The simplicial complexes $\nabla(Kt;U)$ (up to permutation of
	 coordinates) drawn atop a $3$-tensor shown never to cross a singularity (in
	 white).  In the middle, there are two qualitatively different families of
	 singularities.  The type $(x_1)$ can be realized by zero-divisors such as
	 idempotents.  The type $(x_1x_2,x_0x_1,x_0x_2)$ occurs, for example, with
	 nilpotent zero divisors.}
	\label{fig:sing-val3}
\end{figure}

As is well-known that subrings $S$ of a ring $R$ are not in general submodules,
but proper left ideals $I\subset S$ are both nonunital subrings and left
$R$-submodules. This means ideals can be studied both within the nonunital ring
category as well as the $R$-module category.  This situation is repeated in
greater generality for tensors, and the singularity complex explains how.
Summarizing we have:

\begin{prop}
	Fix $\langle \cdot |:T\to V_0\oslash \cdots \oslash V_{\vav}$, and
	$\iota:(a\in \zrange{\vav})\to (\iota_a:U_a\hookrightarrow V_a)$. Let
	$A\subset\zrange{\vav}$ and $X_a:=U_a$, if $a\in A$, and $V_a$ otherwise. It
	follows that
	\begin{align*}
		A\not\in \nabla(T;U) \qquad \Longleftrightarrow \qquad 
			\langle\cdot |\iota_A :T\to X_0\oslash\cdots \oslash X_{\mathtt{v}}.
	\end{align*}
	Thus, $A\not\in \nabla(T;U)$ exactly when $\iota_A$ is a monomorphism in the $(X^e-1)$-homotopism 
	category, where $e\in  \{0,1\}^{\zrange{\vav}}$, $\supp e=A$.
\end{prop}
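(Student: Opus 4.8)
The plan is to prove the Proposition by unwinding the right-hand side and recognizing it as a verbatim restatement of the relation $U_A\perp V_{\bar A}$ from \eqref{def:nabla}; the concluding sentence about monomorphisms is then read off from \eqref{def:common}. I do not expect to need anything beyond definitions already in place, so the work is essentially bookkeeping, the only delicate point being the treatment of the output axis $0$.

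First I would make the right-hand side explicit. For $t\in T$, $\bra{t}\iota_A$ is the multilinear map obtained from $\bra{t}$ by precomposing, on each axis $a\in A$ with $a>0$, the $a$-th argument with $\iota_a\colon U_a\hookrightarrow V_a$; this gives a map $\bra{\cdot}\iota_A\colon T\to V_0\oslash X_1\oslash\cdots\oslash X_{\vav}$. The assertion ``$\bra{\cdot}\iota_A\colon T\to X_0\oslash\cdots\oslash X_{\vav}$'' then says that this map factors through the sub-tensor-space $X_0\oslash X_1\oslash\cdots\oslash X_{\vav}$. Now compare with the relation $U_A\perp V_{\bar A}$ of \eqref{def:nabla}: in the case $A=\{0\}\sqcup B$ it reads $\bra{T}U_B,V_{\range{\vav}\setminus B}\ra\subseteq U_0$, which is exactly the statement that the $A$-restricted interpretation has image in $X_0=U_0$; in the ``otherwise'' case ($0\notin A$) it reads $\bra{T}U_A,V_{\range{\vav}\setminus A}\ra=0$, which is exactly the statement that $\bra{\cdot}\iota_A$ is the zero tensor, i.e.\ factors through the trivial output space. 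Either way $U_A\perp V_{\bar A}$ and ``$\bra{\cdot}\iota_A\colon T\to X_0\oslash\cdots\oslash X_{\vav}$'' are literally the same condition, and $A\notin\nabla(T;U)$ is by definition the former; this yields the displayed equivalence.

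For the final sentence I would invoke \eqref{def:common}: a morphism $\bra{s}\to\bra{t}$ in the $(X^e-1)$-homotopism category with $\supp e=A$ is a tuple of linear maps, one per axis (inclusions allowed), satisfying the compatibility identity recorded there. Taking $\bra{s}=\bra{\cdot}\iota_A$, $\bra{t}=\bra{\cdot}$ and the tuple to be $\iota_a$ on the axes in $A$ and the identity elsewhere, that compatibility identity becomes precisely the factorization statement above; hence $\iota_A$ is a morphism in this category iff $A\notin\nabla(T;U)$. Since each $\iota_a$ is injective and composition in this category is performed axis-by-axis, any such morphism is automatically a monomorphism, which gives the stated reformulation. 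The point needing the most care is lining up the two cases of \eqref{def:nabla} with the definition of $X_0$ and keeping straight which axes are covariant, contravariant, or held constant; beyond that the argument is routine, and one could alternatively obtain it by specializing Theorem~\ref{mainthm:singularity} to the monomials $X^e$ with $e\in\{0,1\}^{\zrange{\vav}}$.
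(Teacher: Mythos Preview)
The paper gives no explicit proof; the proposition is introduced with ``Summarizing we have:'' and is meant as a direct restatement of \eqref{def:nabla} in the language of restricted interpretation maps and homotopism morphisms. Your unwinding of the two cases is exactly what is intended, and the case $0\in A$ is handled correctly.

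One caution on the case $0\notin A$: you assert that the displayed right-hand side reduces to ``$\bra{\cdot}\iota_A=0$,'' but since $X_0=V_0$ there, the condition that $\bra{\cdot}\iota_A$ land in $X_0\oslash\cdots\oslash X_{\vav}$ holds automatically and does not force vanishing. This is an imprecision already present in the paper's displayed equivalence (compounded by the sign slip in the printed form of \eqref{def:nabla}) rather than an error in your argument; the content the paper actually uses is the case $0\in A$, together with the dualization trick (replacing $V_0$ by $V_0^{\vee}$) carried out at the start of the proof of Theorem~\ref{mainthm:singularity}, which absorbs the other case.
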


Therefore, subspaces $U$ for which $\nabla(t;U)=\{\emptyset\}$ implies that $U$
induces a subtensor in every transverse tensor category---this is the situation
of radicals. On the other hand, if $\nabla(t;U)$ is the full simplex, then $U$
does not induce a subtensor in \emph{any} transverse tensor category. In between
these two extremes, we recover many familiar concepts including subrings,
submodules, left and right ideals, and radicals.

\begin{remark}
	Fields are typically regarded as maximally distinct from singularities, but
	the example of $\mathbb{R}$ as a subalgebra of $\mathbb{C}$ clearly requires
	a singularity.  The confusion is understandable since singularities in the
	$0$-axes occur in the dual space and so appears non-singular when compared
	to singularity in other axes. Fully non-singular tensors are for example the
	Whitney tensor products $K^a\times K^b\bmto K^a\otimes K^b$.
\end{remark}

\subsection{Traits of subtensors}
As subtensors emerge across a mix of categories, when hunting for
decompositions into subtensors we can look for traits that appear as a result of
any intermediate categories thus breaking up the problem. So we need to
understand what traits appear within the transverse operators when they hit upon
a subtensor, that is, a singularity.  Indeed, in algorithms that locate such
decompositions the general scheme is to sample from transverse operators and
factorize the operators looking for splittings that have been shown to relate to
subtensors.  This has so far been a case-by-case approach but here we give a
complete characterization of the traits that signal subtensors.  We now recall
and slightly extend the definition of \eqref{def:op-res}. For $L\in \Comm{K}$,
\begin{align*}
	\Omega(U,V)(L) & = \left\{ \omega\in L\otimes \Omega ~\middle|~ \forall a,\; 
	\omega_a(L\otimes V_a)\leq L\otimes U_a \right\}.
\end{align*}
Since $K$ is a field, whenever $\langle t|U_A,V_{\bar{A}}\rangle\neq 0$
and   $L\in \Comm{K}$ is nonzero, we have
$\langle t_L|L\otimes U_A,L\otimes V_{\bar{A}}\rangle\neq 0$, so
$\nabla( Kt;U)=\nabla(L\otimes Kt;L\otimes U)$.  We prove Theorem~\ref{mainthm:singularity}
which requires that we demonstrate 
\begin{align*}
	\Id{S}{\Omega(U,V)}
		& =(X^e \mid {\rm supp}(e)\notin \nabla(S;U)).
\end{align*}

\begin{proof}[Proof of Theorem~\ref{mainthm:singularity}] Since this theorem
	applies to the case of fields $K$, the condition $\langle
	t|U_A,V_{\bar{A}}\rangle \leq U_0$ is interchanged with $U_0^{\bot}\langle
	t|U_A,V_{\bar{A}}\rangle=0$. So we may form a new tensor
	$\lla\cdot|:V_0^{\vee}\times V_1\times \cdots\times V_{\vav}\bmto K$ where
	$\lla t | \nu:V_0\to K, v_{\bar{0}}\rra := \nu\la t|v_{\bar{0}}\ra$.  We
	thereby take $A\subset \range{\vav}$ and $U_A\perp V_A$ if, and only if,
	$\lla t|U_A,V_{\bar{A}}\rra =0$.  So we assume without loss of generality
	that $V_0=K$.
	
	We begin by showing that $(X^e \mid \mathrm{supp}(e)\notin
	\nabla(S;U)) \subseteq \Id{S}{\Omega(U,V)}$.  
	Consider $A\not\in \nabla(S,U)$,
	and set $p=X^e$ where $e_a=1$ if $a\in A$ and $0$ otherwise. Then
	$\bra{t} U_A,V_{\bar{A}}\ra =0$. Take
	$L\in\Comm{K}$ and
	$\omega\in \Omega(U,V)(L)$.  For all $t\in S$,
	\begin{align*}
		\bra{t} p(\omega)\ket{v} & = 
		\bra{t} \omega_{A} v_{A},v_{\bar{A}}\rangle  \in
		\bra{t} U_{A}, V_{\bar{A}}\rangle=0.
	\end{align*}
	So $X^e\in \Id{S}{\Omega_{U}}$.

	Next we show that $\Id{S}{\Omega_{U}}\subset (X^e \mid \supp e\notin \nabla(S;U))$.
	Fix $p=\sum_e \lambda_e X^e\in \Id{S}{\Omega_{U}}$, and write 
	$p=p_{\nabla} + p_{\bar{\nabla}}$ such that 
	\begin{align*}
		p_{\nabla} &= \sum_{\mathrm{supp}(e)\in \nabla(S;U)} \lambda_e X^e, & 
		p_{\bar{\nabla}} &= \sum_{\mathrm{supp}(e)\notin \nabla(S;U)} \lambda_e X^e.
	\end{align*}	
	We show that $p_{\nabla} = 0$, so that $\lambda_e\neq 0$ implies that
    $\mathrm{supp}(e)\notin \nabla(S;U)$ and $p = p_{\bar{\nabla}}\in
    (X^e \mid \mathrm{supp}(e) \notin \nabla(S;U))$. If
    $\nabla(S;U) = \{\emptyset\}$, then we are done, so we assume that
    $\nabla(S;U)$ is not the empty complex.

	Fix a top cell $A\in \nabla(S;U)$. Then $\langle t|U_A,V_{\bar{A}}\rangle\neq 0$
	but for any $b\notin A$, $\langle t|U_A,U_b,V_{\overline{A\cup \{b\}}}\rangle = 0$.
	Hence, there is  $u\in U_A\times \prod_{b\notin
	A}(V_{b}-U_b)$ with $\langle t|u\rangle\neq 0$. Having fixed $u$,
	for every $a\in \range{\vav}$, we choose  an idempotent $\pi_a\in
	\mathrm{End}(V_a)$ with $\pi_a(V_a)=U_a$ such that $\pi_b u_b=0$
	if $b\notin A$. Thus for $a\in A$, we must have $\pi_a u_a=u_a$ because
	$u_a\in U_a$ (here we are specifically using the field assumption on $K$ to
	claim that $V_a=U_a\oplus X_a$ with $u_b\in X_a$.).

	Now, for all $\alpha\in K^{\range{\vav}}$ and $c\in \range{\vav}$, we have $\alpha\pi\in
	\Omega_{U}$ and
	\begin{align*}	
		(\alpha\pi)_c^{e_c}|u_c\rangle 
			& = \left\{\begin{array}{rl}
			\alpha_c^{e_c} |u_c\rangle & c\in A,\\
			\ket{u_c} & c\notin A \text{ and } e_c=0,\\
			0 & c\notin A \text{ and } e_c>0.
			\end{array}\right.
	\end{align*}
	Let $q_e:=\lambda_e X^e$ be a term of $p$ with $B:=\supp(e)$. Then
	\begin{align*}	
		\bra{t} q_e(\alpha\pi) \ket{u} & = \lambda_e \alpha^e
			\langle t| \pi_{A\cap B}u_{A\cap B},\; \pi_{B-A} u_{B-A},\; u_{\bar{B}}\rangle
			 = \left\{
			\begin{array}{cc}
				\lambda_e \alpha^e \langle t|u\rangle & B\subset A, \\
				0 & B-A\neq \emptyset.
			\end{array}
			\right.
	\end{align*}
	In what follows, we sum over all $e$ such that $\supp(e)=B$, abbreviated by $e:\supp(e)=B$. 
	Hence,
	\begin{align*}
		0 & = \langle t|p(\alpha\pi)\ket{u} 
		= \sum_{B\subset \range{\vav}}\sum_{e:\supp(e)=B}\lambda_e \alpha^e 
			\langle t| \pi_{A\cap B}u_{A\cap B}, \pi_{B-A} u_{B-A}, u_{\bar{B}}\rangle\\
		& = \left(\sum_{B\subset A}\sum_{e:\supp(e)=B}\lambda_e \alpha^e\right)\langle t|u\rangle.
	\end{align*}
	As $\langle t|u\rangle\neq 0$, $\sum_{B\subset
	A}\sum_{e:\supp(e)=B}\lambda_e \alpha^e=0$, for every $\alpha\in K^{\range{\vav}}$.
	In fact, since this argument  applies over all extensions    $L\in \Comm{K}$
	(by replacing $t$ with $t_L$), we see that $\sum_{B\subset
	A}\sum_{e:\supp(e)=B}\lambda_e X^e=0$ vanishes on an algebraic closure of
	$K$. Therefore $\sum_{B\subset A}\sum_{e:\supp(e)=B}\lambda_e X^e=0$.  So
	each term $\lambda_e X^e$ of $p$ for which $\supp(e)\subset A$ for some top
	cell $A\in \nabla(S;U)$ has $\lambda_e=0$. In particular
	$p_{\nabla}=0$.
\end{proof}


\section{Data types, Algorithms, and Theorem~\ref{mainthm:construction}}
\label{sec:algorithms}

Now we show the effects of our results on the design of data types and
algorithms for tensors.  In \cite{TensorSpace}, the second and third author have
tested and implemented these design patterns which now comprise the multi-linear
algebra module of the computer algebra system in \textsf{Magma}~\cite{magma}.
Several projects have developed in parallel to this, which have added many
further complementary algorithms to \cite{TensorSpace}, and we wish to
especially thank P.A. Brooksbank and E.A. O'Brien for this added functionality
and testing.

First in Section~\ref{sec:types}, we detail the data types that serve to
facilitate computations with tensors and transverse operators.  The objective is
to demonstrate how to manage the many higher-level abstract manipulations of data
required in solving Tensor Isomorphism Problems (TIP) (Section~\ref{subsec:densor}) 
and Block-Decomposition Problems (BDP) (Section~\ref{subsec:restriction})
without losing the benefit of low-level optimizations.  This is where the
characterizations of homotopism categories of Section~\ref{sec:cats}
comes into play.

Second in Section~\ref{sec:sylver}, we detail the work-horse behind efficient
computing with our correspondence.  Given the work in
Section~\ref{sec:universality}, we have a specific set of linear equations to
solve which we demonstrate can be reduced to solving families of Sylvester
equations.  

Finally, we close with Section~\ref{sec:calc-corres} where we fill in the
missing algorithms to compute with our correspondence in general and prove
Theorem~\ref{mainthm:construction}.

\subsection{A feather-weight tensor type-system}\label{sec:types}

Operations on tensors divide into three levels: actions with frames, transverse
actions, and tensor arithmetic.  From our experiments and theory, we suggest
that tensor systems clearly articulate these levels.  We do so by introducing a
type for homotopism categories (as defined in Section~\ref{sec:cats}), a
type for tensor spaces, and the ability to import many existing tools to deal
with data in individual contexts.

\subsubsection{Background.} 

Most contexts supply a number of natural data types for representing tensorial
information.  These include dense multi-way arrays, sparse representations,
black-box and query based models; see~\cite{engineer}.  A closer inspection
spots numerous technical design choices each concerned with specific situations
that a general system will be incapable of adequately replacing.

We provide a framework to support access and control of tensors, grounded in
type theory.  In type theory notation, all data $x$ is labeled by its
\emph{type} $X$, written $x:X$, which guards that we make and access $x$
according to clearly articulated rules on $X$. \emph{Introduction} rules
describe how to create an instance $x$ of type $X$, sometimes called
constructors. \emph{Elimination} rules produce new data $y:Y$ from $x:X$, e.g.
through a function $f:X\to Y$. The remaining syntax of fractions separates the
data (and types) that precede a rule from its result, like input/output.  For example,
\begin{align}\label{eqn:example-type-notation}
	& \frac{x:X,\quad f:X\to Y}{f(x):Y}
	& & \frac{a:A,\quad f:\prod_{a:A} Y_a}{f(a):Y_a}
\end{align}

Mind that this notation does not declare a program to convert data, it is simply
the signature for asserting under what conditions the new data exist.  On the
left in~\eqref{eqn:example-type-notation}, we have represented the elimination
of both the type $X$, and the type $X\to Y$. The notation $f:X\to Y$
purposefully evokes a (mathematical) function, but because computers do not know
set theory, $X\to Y$ is just the name of a data type and $f:X\to Y$ is data of
that type.  The rule is what clarifies that we can eliminate the data $x$
together with $f$ to produce an output $f(x):Y$ and thus recover the experience
of a set-function.  The rule on the right in~\eqref{eqn:example-type-notation},
is a dependent-function, also called a heterogenous or H-maps.  We shall use these
for brevity here even though they are much more subtle data types.

\subsubsection{Tensors \& tensor spaces.}  

We assume that our context models a type ${\sf Abel}$ for abelian groups
together with a type $U\oslash V$ that equips the type $V\to U$ of linear maps
with the properties of an abelian group.  We introduce a tensor space type ${\sf
TenSpc}$ almost identically to its definition, which affords a uniform way to
interpret tensors as multilinear maps.
\begin{align}\label{eqn:tensor-space-sig}
	&
	\frac{
		\vav:\mathbb{N},\quad T:{\sf Abel},\quad V:\zrange{\vav}\to {\sf Abel}, 
		\quad \bra{\cdot}:V_0\oslash\cdots\oslash V_{\vav}\oslash T
	}{
		{\sf ts}(T,V,\bra{\cdot}):{\sf TenSpc}
	}
\end{align}
In~\eqref{eqn:tensor-space-sig}, ${\sf ts}$ just labels how we introduced the
type to distinguish it from, say, other introductions such as $K$-tensor spaces
where $K$ is also part of the input data.  

The main elimination rule is the most essential ingredient of our design, used
to evaluate tensors on potentially partial input. 
\begin{align*}
	\frac{
		{\sf ts}(T,V,\bra{\cdot}):{\sf TenSpc},
		\quad t:T,
		\quad \iota:A\hookrightarrow \range{\vav},
		\quad v_A:\prod_{a:A} V_{\iota(a)}, 
	}{
		\langle t|v_A\rangle:V_0\oslash \left(\oslash_{b:\range{\vav}-A}V_b\right)
	}
\end{align*}
True to our promise, tensors in our model remain as they were before: terms
$t:T$. (Note that we leave unspecified many essential, but mundane, elimination
rules such as ones to retrieve the data defining a tensor space.)

Now we trace the effect of our abstractions on the lowest level tensor
operations such as the evaluation $\la t|v\ra$.  In this model, both $v_a:V_a$
and $t:T$ can be represented by any data. The evaluation $\langle t|v\rangle$ is
assigned by the prescribed interpretation.  \emph{While our model treats all
such evaluations as interchangeable, every call is directly in the hands of the
backend with no type conversions or data wrappers imposed by our
design.}\footnote{One should be mindful that the programming language does not
insert such indirection as this is the most used operation of any tensor system.
Optimization here is warranted.}

\subsubsection{Homotopism Categories.} 

Homotopism categories capture combinatorial aspects of tensors like which axes are
covariant, contra-variant, and constant, which are described by integers $+1$,
$-1$, and $0$ respectively. In a more complex model, such
as~\cite{TensorSpace}, one adds further data like base rings and symmetry.  We
use a type {\sf TenCat} to capture the $(X^e-X^f)$-homotopism categories
which we introduce as follows.
\begin{align*}
	&
	\frac{
		\vav:\mathbb{N},\quad \sigma:\zrange{\vav}\to \{-1,0,1\}
	}{
		{\sf tc}(\vav,\sigma):{\sf TenCat}
	}
\end{align*}
Specifically, $\supp e:=\sigma^{-1}(1)$ and $\supp f:=\sigma^{-1}(-1)$ play
their usual roles of covariant and contravariant axes.

The objects of transverse tensor categories ${\sf tc}(\vav,\sigma):{\sf TenCat}$
are tensor spaces (but easily adapted to use tensors as objects).  Tensor spaces
are oblivious to tensor categories as they occur identically in every category.
Thus, the categories are distinguished by their morphisms.  These we form with a
type ${\sf Hmtp}$, short for \emph{homotopism}. Recall, that for $c:\sigma^{-1}(0)$, $U_c=V_c$;
otherwise no morphisms exist.   We eliminate transverse tensor categories when
we create homotopisms (and likewise with functors).
\begin{align}\label{eqn:tensor-cat-sig}
	\frac{
		\begin{array}{c}
			{\sf ts}(S,U,\lla \cdot|):{\sf TenSpc},\quad {\sf ts}(T,V,\bra{\cdot}):{\sf TenSpc},\\		
			{\sf tc}(\vav,\sigma):{\sf TenCat},\\
		\omega:\prod_{a:\sigma^{-1}(1)} \overline{V_a\oslash U_a},\quad
		\tau:\prod_{b:\sigma^{-1}(-1)} \overline{U_a\oslash V_a}
		\end{array}
	}{
		\textsf{ht}(\omega,\tau):{\sf Hmtp}
	}
\end{align}
Once more, we keep the encoding of operators because these arise as elements of
$U\oslash V$ which is encoded by the backend of our system.  What the rule
in~\eqref{eqn:tensor-cat-sig} says is that in order to describe a morphism we
need two tensor spaces (the domain $S$ and codomain $T$) a tensor category, and
transverse morphisms.  Because we are defining these homotopisms on the level of
tensor spaces, given $s:S$, there is an image $t:T$ under
$\textsf{ht}(\omega,\tau):{\sf Hmtp}$ and the restriction of $(\omega,\tau)$ to
$s\to t$ satisfies
\begin{align*}
	\langle s| \tau_B = \langle t| \omega_A.
\end{align*}
These types effect the conditions we require on an abstract level without
changing any of the underlying structures affording tensors and linear maps. So
for example $(\omega,\tau)$ may simply be a list of matrices.  Indeed often
tensor networks are assembled by placing matrices between tensors, as in
Figure~\ref{fig:annihilator-2}.  This however completely obscures the implied
interpretation and can lead to confused or incorrect application.  For instance,
using a transpose incorrectly or neglecting the effect of a change of basis. The
homotopism type exists, in part, to protect against such common errors but
without altering underlying structure which may be carefully optimized in
isolation.

\subsubsection{Transverse operators \& tensor networks.} One implication of our
homotopism type is the ability to perform and control lazy evaluation. While
some programming languages make eager/lazy evaluation part of the language
specification, we note that \emph{we do not have actual functions}. Instead,
transverse operators are whatever list of data our system supplies, e.g.~a list
of matrices. So the decisions about evaluation are now ours to make. For a lazy
evaluation, we simply make an elimination rule that adapts the interpretation
instead of applying the operators to the tensor space.
\begin{align*}
	\frac{
		{\sf ts}(T,V,\bra{\cdot}),\quad \omega:(a:\zrange{\vav})\to (\omega_a:V_a\oslash V_a)
	}{
		{\sf ts}(T,V,\bra{\cdot}\omega):{\sf TenSpc}
	}
\end{align*}
Such a model can be used repeatedly with multiple tensors (even of different
valence) to compose tensor networks.  For eager evaluation we do the opposite:
\begin{align*}
	\frac{
		{\sf ts}(T,V,\bra{\cdot}),\quad \omega:(a:\zrange{\vav})\to (\omega_a:V_a\oslash V_a)
	}{
		{\sf ts}(\omega T,V,\bra{\cdot}):{\sf TenSpc}
	}
\end{align*}
This is an intentionally elementary example, but a tour of
\cite{TensorSpace}*{Chapter~3} shows how to effect many more complex routines
such as restricting to subtensors, taking quotients, computing images, lifting
to free and projective tensor spaces and more.   Such constructions are well
outside the ergonomic use of multiway arrays and sparse tensors.  This level of
abstraction is better suited for such tasks as block decomposing tensors (BDP) and
deciding on isomorphism invariants (TIP).

\subsubsection{Moving data with functors of tensor categories.}\label{sec:moving-data}

Some of the most natural mathematical tasks with tensors concern re-ordering
data, like transposing, raising or lowering an index, and slicing out subsets of
the data. Moving data has a cost which scales non-linearly, and in contemporary
hardware and system designs these effects are pronounced. There are several
solutions to this problem which ought to be considered within a tensor type-system.  We will demonstrate how homotopism categories resulting from
Theorem~\ref{thm:group-intro} play the important role in solving this problem.
\smallskip

{\bf The problem.}  Suppose we have an index set $I=\prod_{a\in \zrange{\vav}}
\range{d_a}$ and tensor data $t:I\to K$. Next we have another index set
$J:=\prod_{b\in \zrange{\vav}}\range{d_b}$ with a function $f:J\to I$ and we
want to represent $t^f[j_*]:=t[f(j_*)]$.  For some applications it makes sense
simply to copy the necessary data into a new array. Other settings call for
\emph{indirection}, i.e. storing $f$ and the original data $t$ but accessing
$t^f[j_*]$ by calculating $i_*=f(j_*)$ and fetching $t[i_*]$---no data moves but
each call takes longer.  Tensors however carry so much data which is accessed in
large repeated sequences $\mathcal{S}$ (think of a matrix-vector product) that it becomes
profitable to batch the lookups and thus carry forward into lower memory large
chunks of contiguous data all being used within a specific computation.
So-called \emph{polytope} methods analyze the geometry of $f(\mathcal{S})\subset I$
looking for closed polytopes that (subject to affine transformation) can be
processed as contiguous chunks in memory~\cite{polytope}.  Since most tensor
operations are both commutative and associative, this re-arrangement is
harmless. 

An essential requirement to performance of an abstract tensor type is,
therefore, the ability to pass along to the backend not only single instructions
but sequences of instructions.  Already we have seen that tensor networks keep
such information accessible to the backend where polytope analysis can occur,
but there is still the need to capture data manipulations that are \emph{not}
representable within a tensor space.  There are many such manipulations and we
treat them all as functors, see \cite{TensorSpace}*{Chapter~4}. For
demonstration we focus on one family.

\smallskip

A \emph{Knuth-Liebler shuffle} is higher-valence variation of a
transpose~\cite{Wilson:division}. On a multiway array of numbers, it is possible to
interchange two axes almost without concern. However, the effect on the
interpretation maps of tensor spaces is considerable, and the effect on
homotopisms is even more delicate.  The concept of a shuffle takes a
multilinear map $\bra{t}:V_1\times \cdots\times V_{\vav}\bmto V_0$, a
permutation $\pi$ on $\zrange{\vav}$, and an abelian group $W$, and assembles a
new multilinear map.  To demonstrate let $\vav\geq 3$ and $\pi=(0,1)(2,3)$. Then
the multilinear map is framed and defined as 
\begin{align*}
	\langle \cdot |^{(\pi,W)}:(W\oslash V_0)\times V_3\times V_2\times V_4\times
		\cdots \times V_{\vav}\bmto (W\oslash V_1)  \\
	\langle t| \nu:V_0\to W,v_3,v_2,v_4,\ldots,v_{\vav}\rangle \ket{v_1}:=\nu\langle t|v_1,\ldots,v_{\vav}\rangle. 
\end{align*}
\emph{Observe that the terms in the permutation involving $0$ pass through a
duality}. This may cause confusion within calculations but is necessary to be
well-defined. On the level of the underlying data structure, e.g.~a multiway
array, none of this duality is apparent.  In fact, even if applied with delicacy
this transformation also affects homotopisms --- adding a
further layers to track and possibly creating hard-to-find errors.

Now we show the implementation of this functor and the options provided to pass
along vital information to our backend.  We apply functors to our category and
then to our objects, denoted below by $F_1$ and $F_2$ ($F_3$ is the functor application
on morphisms, not shown).  The highest level of
this shuffle modifies the terms of the {\sf TenCat} type. 
\begin{align*}
	F_1(\pi):& (\textsf{tc}(\vav, \sigma):{\sf TenCat})\to (\textsf{tc}(\vav, \sigma^{\pi}):{\sf TenCat}),
	\\
	\sigma^{\pi}(a) & = \left\{\begin{array}{ll}
		-\sigma(\pi(a))  & \pi(a)=0,\\
		-\sigma(\pi(0))  & \pi(0)=a,\\
		\sigma(\pi(a))   & \text{otherwise}.
	\end{array}\right.
\end{align*}
This captures the intuitive aspect of Knuth--Liebler shuffles: they permute the
indices, with a relatively friendly sign change for duality.  Next, we apply the
$F_2$ the terms of type $\textsf{TenSpc}$.
\begin{align*}
	F_2(\pi,W):& (\textsf{ts}(T,V,\bra{\cdot}):{\sf TenSpc})
		\to (\textsf{ts}(T^{(\pi,W)},V^{(\pi,W)},\bra{\cdot}^{(\pi,W)}):{\sf TenSpc})
	\\
	V^{(\pi,W)}_a &  = \left\{\begin{array}{ll}
		W\oslash V_{\pi(a)}  & \pi(a)=0,\\
		W\oslash V_{\pi(0)}  & \pi(0)=a,\\
		V_{\pi(a)}   & \text{otherwise}.
	\end{array}\right.
\end{align*}
While we could apply this shuffle in a lazy fashion, we have opted to illustrate
how to deliver the shuffle to the backend, denoted by $T^{(\pi,W)}$, and its
interpretation.  If polytope methods are used, then the given permutation can be
applied to the polytope compiler to reorder the current sequence of steps without
requiring one to recompile a new polytope decomposition.
\medskip

The clarity of the three levels of abstraction is now evident and relatively
direct to implement.  There are even many alternatives to consider based on
stronger type theories.  But our crucial point is that, because of the
Correspondence Theorem~\ref{thm:correspondence} and its implications such as
Theorem~\ref{thm:group-intro}, we can be confident that this model captures all
possible abstractions at these levels.

\subsection{Simultaneous Sylvester systems}\label{sec:sylver}

The varieties $\Op{S}{P}$ are specified by a system of polynomials in $\sum_a
d_a^2$-variables (see Remark~\ref{rem:make-scheme} \& Proposition~\ref{prop:Z}).
(As always, these are not the polynomials $P$.)  Even so, the formula defining
$\Op{S}{P}$ gives a natural way to construct a set of polynomials that defines
the operators in $\Op{S}{P}$ and that generating set has degree at most the
degree of a generating set given for $P$.  In particular, for a linear
polynomial ideal $P=(\Lambda X-\lambda)$, there is a set of Sylvester equations
(linear constraints) defining $\Op{S}{\Lambda X-\lambda}$. We state the
situation for $\vav=2$ because the general case follows similarly.

Suppose that $\la t|:K^a\times K^b\bmto K^c$ is defined using a list
$[M_1,\ldots,M_c]$ of $(a\times b)$-matrices where
\begin{align}\label{eqn:tensor-ex}
	\la t|v_1,v_2\ra = (v_1^{\top} M_1 v_2,\ldots, v_1^{\top}M_c v_2).
\end{align}
Then consider the equation $\lambda_1\langle t|\omega_1v_1,v_2\rangle
+\lambda_2\langle t|v_1,\omega_2v_2\rangle=0$ that one would solve to determine
$\OpSet[\{1,2\}]{t}{\lambda_1x_1 + \lambda_2x_2}{K}$. Letting $X_i$ be the
matrix representation of $\omega_i$ this translates into the following linear
system:
\begin{align}\label{eqn:Sylvester}
	(\forall k\in \range{c})(X_1^{\top} M_k+M_k X_2=0).
\end{align}
This leads to an $(abc)\times (a^2+b^2)$ matrix when solved directly, and by
row-reducing, the system is solved in $O((abc)(a^2+b^2)^2)$-time, or roughly
$O(d^7)$-time when $a,b,c\in O(d)$.\footnote{Variations of our analysis
considering faster linear algebra can be considered as well but are more detail
than necessary for this section.} For context note that in the dense model, the
input size $n$ is $O(d^3)$ and accepting some randomization the solutions can be
found in quadratic $O(n^2)$-time.  Generalizing this approach to determine a
basis for the derivation algebra of a tensor of valence $\vav$ amounts to a
$O(\vav d^{\vav + 5})$-time algorithm when each $d_a\in O(d)$.  It is worth
mention that these computations are one time costs that can dramatically reduce
the dimension of the work space; more in Section~\ref{sec:Small-densor}. Without
this we are left to continue working in $V_0\oslash\cdots\oslash V_{\vav}$ which
is $d_0\cdots d_{\vav}\in O(d^{1+\vav})$ dimensional. In our experience that on
hard problems like TIP and BDP it is almost always worth the initial cost.

A dual version of the Sylvester system in~\eqref{eqn:Sylvester} is one for
computing the densor or, more generally, $P$-closures of a particular tensor
$\Ten{P}{\OpSet{t}{P}{K}}$. Keeping with the same tensor as
in~\eqref{eqn:tensor-ex} and setting $d=x_0-x_1-x_2$, solving for
$\OpSet{t}{d}{K}$ amounts to solving 
\begin{align}\label{eqn:derivation-system}
	(\forall k\in \range{c})(X_1^{\top} M_k+M_k X_2=(M_1, \dots, M_c)\cdot (X_0)_k)
\end{align}
to determine a basis for derivations $(X_0, X_1, X_2)$, where $(M_1, \dots,
M_c)\cdot (X_0)_k$ is the dot product of the vector of matrices $M_i$ with the
$k$th column of $X_0$. However, solving for a basis for the densor requires a
role-reversal. For some finite set, e.g.~a Lie generating set,
$\mathcal{X}\subset \OpSet{t}{d}{K}$, we solve the same system
in~\eqref{eqn:derivation-system} running through all $(X_0, X_1,
X_2)\in\mathcal{X}$ to determine a basis for the tensors giving as lists of
matrices $[M_1,\dots, M_c]$. Therefore, constructing a basis for the densor
requires a factor $|\mathcal{X}|$ more time than constructing a basis for the
derivations. Since densors are constructed using the same system, computational
improvements to operators distributes to $P$-closures. 

The systems in~\eqref{eqn:Sylvester} and~\eqref{eqn:derivation-system} have
enormous structures and are known as Sylvester systems of equations. For
instance when $c= 1$, the system in~\eqref{eqn:Sylvester} can be solved in time
$O(d^3)$ when $c=1$ and when $c=2$---under a few modest assumptions of
nondegeneracy~\cite{BW:sloped}. For $c>2$, the solution to~\eqref{eqn:Sylvester}
presently runs in $O(d^6)$ time. Even without a further complexity breakthrough,
there is a great deal that can be achieved in profiling the problem.  Within
\cite{TensorSpace}, we have designed an algorithm to layout the required matrix
in a manner that minimizes the movement of repeated information and furthermore
interleaves them so that echelonization can occur in block form.  With this we
reduced the overhead for computing derivations and densors to now run at the
same speed of comparable linear algebra; we invite the reader to experiment
with \cite{TensorSpace} on their own data sets.  Still a better complexity should be
sought if possible.

\begin{quest}\label{quest:quicksylver}
	Is there an algorithm to solve \eqref{eqn:Sylvester} and \eqref{eqn:derivation-system}
	with complexity better than $O(d^6)$ for $3$-tensors?  What about the general valence case?
\end{quest}

\subsection{Calculating with the correspondence}\label{sec:calc-corres}
We now detail how to compute the terms in our Correspondence
Theorem~\ref{thm:correspondence}. We begin by demonstrating how we calculate the
annihilators of transverse operators of a tensor.  Section~\ref{sec:types}
details the input types we consider. 

The most important assumptions we need are placed on $K$, specifically the
ability to solve systems of linear equations in $K$.  We have in mind standard
Gaussian elimination type methods for fields $K$ and Hermite Normal Form for
integer rings.  In a few problems we shall also need the ability to compute a
Gr\"obner basis in a bounded number of variables (see \cite{CLO} for definitions
and discussions of Gr\"obner bases).  We emphasize that our use will strictly 
concern $\vav+1$ variables which makes it possible to bound the complexity of
the Gr\"obner basis computations.

\begin{thm}[Bradet--Faug\`ere--Salvy \cite{BFS}]\label{thm:Groebner}
    The complexity of calculating a Gr\"obner basis (by $F_5$) on inputs
    $(f_1,\ldots,f_m)$ in $K[x_1,\ldots,x_n]$ with maximum degree  $D$ is  
    \begin{align*}
        O\left( m\binom{n+D-1}{D}^{\varpi}\right)
    \end{align*}
    where $2\leq \varpi <3$ is the exponent of matrix multiplication.
\end{thm}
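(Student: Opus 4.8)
The plan is to invoke \cite{BFS}: this is a known complexity estimate for the matrix form of Faug\`ere's $F_5$ algorithm, so rather than reprove it we recall the structure of the argument and note why it applies in our setting. First I would recall that Matrix-$F_5$ computes a Gr\"obner basis degree by degree: at each degree $d$ it assembles the Macaulay matrix $\mathrm{Mac}_d$ whose rows are the coefficient vectors of the shifts $x^\alpha f_i$ with $\deg(x^\alpha f_i)=d$ and whose columns are indexed by the monomials of degree $d$ in $K[x_1,\dots,x_n]$, and then puts $\mathrm{Mac}_d$ into row-echelon form; a monomial that becomes a new leading term contributes an element to the Gr\"obner basis. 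The $F_5$ criterion prunes rows that are forced to reduce to zero by the Koszul syzygies among the $f_i$, so that, when the input behaves like a (semi-)regular sequence, the matrices actually reduced have full row rank and no arithmetic is wasted.

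Next I would bound the cost. The number of degree-$d$ monomials in $n$ variables is $\binom{n+d-1}{d}$, so $\mathrm{Mac}_d$ has at most $\binom{n+d-1}{d}$ columns and at most $m\binom{n+d-1}{d}$ rows (one block of shifts per generator). Row reduction of such a matrix with fast linear algebra costs $O\!\left(m\binom{n+d-1}{d}^{\varpi}\right)$, where $\varpi$ is the matrix-multiplication exponent. Since the generators have degree at most $D$ and the computation halts once no new leading terms appear --- which, under the regularity hypothesis of \cite{BFS}, occurs by degree $D$ --- the total cost is the sum of the per-degree costs for $d\le D$, and this sum is dominated by its last term, giving $O\!\left(m\binom{n+D-1}{D}^{\varpi}\right)$.

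The one genuinely delicate point, which we are content to import from \cite{BFS}, is the control of the \emph{termination degree}: for a general ideal the degree at which the computation stabilizes (the degree of regularity) can exceed the maximal degree of the generators, and the clean bound in the statement rests on the analysis of regular and semi-regular sequences together with the correctness of the $F_5$ criterion. For the use made of this theorem in Theorem~\ref{mainthm:construction} it suffices to apply it as a black box with $n=\vav+1$ \emph{fixed}: then $\binom{n+D-1}{D}=O(D^{\vav})$ is polynomial in $D$, so the whole bound is polynomial in the degrees of the input polynomials, which is exactly what is needed.
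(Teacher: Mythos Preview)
Your proposal is appropriate: the paper does not prove this theorem at all---it is stated as a citation of \cite{BFS} and used purely as a black box in the proof of Theorem~\ref{mainthm:construction}, exactly as you suggest in your final paragraph. Your sketch of the Matrix-$F_5$ argument is more than the paper itself provides, but it is a reasonable summary of the cited result and correctly identifies that the application only needs the bound with $n=\vav+1$ fixed so that $\binom{n+D-1}{D}$ is polynomial in $D$.
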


The details are illustrated using the example of
Figure~\ref{fig:ex-ann-nilpotent}.  Recall in that example
\begin{align*}
	M &= \begin{bmatrix}
		1 & 2 & 3 \\ 2 & 3 & 0 
	\end{bmatrix}, & X &= E_{12}\in\mathbb{M}_2(K), & Y &= E_{21}+E_{32}\in \mathbb{M}_3(K).
\end{align*}
We interpret $M$ as tensor $\bra{M}:\mathbb{R}^3\to \mathbb{R}^2$ and
$\omega_0:=X$ and $\omega_1:=Y$ is our transverse operator. Because both $X$ and
$Y$ are nilpotent, we can compute the annihilator of $M$ by computing a finite
number of expressions of the form $X^iMY^j$, call this new matrix $U(i,j)$. From
these, we define a new matrix $U$, whose rows are indexed by $\zrange{2}\times \zrange{3}$
(here truncated to $\zrange{2}\times\zrange{2}$ for space--the rest are 0).
Columns are indexed by the six entries of the matrices
$U(i,j)$. The resulting $(9\times 6)$-matrix $U$ is recorded in
Figure~\ref{fig:matrix-U}. We then write a basis for its cokernel as a matrix
$U^{\bot}$. Extracting the rows of $U^{\bot}$, in Figure~\ref{fig:matrix-U},
permits us to create the polynomials that generate the annihilator as seen in
Figure~\ref{fig:Ann-matrix}.  In total we have the following algorithm

\begin{figure}[!htbp]
	\centering
	\begin{subfigure}{\textwidth}
		\centering
		\begin{align*}
			U & =
			\begin{array}{c|c}
					(i, j) & X^i  M  Y^j\\
				\hline\hline 
				(0, 0) & [[1, 2, 3], [2, 3, 0]]\\
				(1, 0) & [[2, 3, 0], [0, 0, 0]]\\
				(2, 0) & [[0, 0, 0], [0, 0, 0]]\\
				(0, 1) & [[2, 3, 0], [3, 0, 0]]\\
				(1, 1) & [[3, 0, 0], [0, 0, 0]]\\
				(2, 1) & [[0, 0, 0], [0, 0, 0]]\\
				(0, 2) & [[3, 0, 0], [0, 0, 0]]\\
				(1, 2) & [[0, 0, 0], [0, 0, 0]]\\
				(2, 2) & [[0, 0, 0], [0, 0, 0]]\\
			\end{array}
			& 
			U^{\bot} 
			=
			\begin{bmatrix}
				0 & 0 & 0 & 0 & 1 & 0 & -1 & 0 & 0\\
				0 & 0 & 0 & 0 & 0 & 0 & 0 & 1 & 0\\
				0 & 0 & 1 & 0 & 0 & 0 & 0 & 0 & 0\\
				0 & 0 & 0 & 0 & 0 & 1 & 0 & 0 & 0\\
				0 & 0 & 0 & 0 & 0 & 0 & 0 & 0 & 1\\
			\end{bmatrix}
		\end{align*}
        \caption{We compute the matrices $X^iMY^j$ and record 
        them as the rows of a $9\times 6$ matrix $U$.}
		\label{fig:matrix-U}
	\end{subfigure}
	\begin{subfigure}{\textwidth}
		\centering
		\begin{align*}
			\begin{array}{|ccccccccc|c|}
				\hline
				1 & x & x^2 & y & xy & x^2 y & y^2 & xy^2 & x^2 y^2 &\\
				\hline\hline
				0 & 0 & 0 & 0 & 1 & 0 & -1 & 0 & 0 & xy-y^2\\
				0 & 0 & 0 & 0 & 0 & 0 & 0 & 1 & 0 & xy^2\\
				0 & 0 & 1 & 0 & 0 & 0 & 0 & 0 & 0 & x^2\\
				0 & 0 & 0 & 0 & 0 & 1 & 0 & 0 & 0 & x^2 y\\
				0 & 0 & 0 & 0 & 0 & 0 & 0 & 0 & 1 & x^2 y^2\\
				\hline
			\end{array}	   
		\end{align*}
        \caption{Using $U^{\bot}$ we exhibit generators for the annihilator ideal.}
		\label{fig:Ann-matrix}
	\end{subfigure}
    \caption{Computations for determining the annihilator in Figure~\ref{fig:ex-ann-nilpotent}.}
	\label{fig:example-A1}
\end{figure}

The annihilator $\Ann_{K[x,y]}^{(X, Y)}\left(M\right)=(xy-y^2, xy^2,x^2, x^2 y,
x^2 y^2)=(x^2,  xy-y^2,y^3)$. Observe that the last ideal is generated by a
Gr\"obner basis for the ideal.

\subsection{Proof of Theorem~\ref{mainthm:construction}}\label{sec:construction}

We assume $S$, $P$, and $\Delta$ are given as subsets.  These may stand in as
generators of much larger spaces, for instance a basis of $S$, generators of
$P$, or group generators of $\Delta$. We will assume that each $V_a\cong
K^{d_a}$, with fixed basis $\mathcal{X}_a$, and that tensors are given by 
data types as discussed in Section~\ref{sec:types}, see also \cite{TensorSpace}.

The modules $\Ten{P}{\Delta}$ are defined from known equations and are linear.
Indeed as seen in Section~\ref{sec:sylver}, improvements occur when $P$ is
generated by homogeneous linear ideals.  Likewise, Section~\ref{sec:sylver}
deals with solving for $\Op{S}{P}$.  This leaves us to compute $\Id{S}{\Delta}$
which we compute as an intersection of $\Ann_{K[X]}^{\omega}(t)$ for $t\in S$
and $\omega\in\Delta$~\cite{CLO}. To compute these intersections it suffices to
have Gr\"obner bases of each $\Ann_{K[X]}^{\omega}(t)$ \cite{CLO}*{p. 188}. So
our work concentrates on this problem.  In the examples of
Figures~\ref{fig:annihilator} and~\ref{fig:annihilator-2}, we see an example of
the algorithm implied.  

Let $e\in \prod_{a\in A} \zrange{d_a}$ and $v\in \prod_{a\in A}
\mathcal{X}_a$. Define a matrix $U$ indexed by $(e, v)$ such that 
\begin{align*}
	U_{e, v} := \omega_0^{e(0)}\langle t|\omega_1^{e(1)}v_1,\ldots, \omega_{\vav}^{e(\vav)}v_{\vav}\rangle.
\end{align*} 
Compute a basis $\{u_1,\ldots,u_n\}$ for the cokernel of $U$. Since the entries
of $u_j$ are indexed by $e\in \prod_{a\in A} \zrange{d_a}$, compute a
reduced Gr\"obner basis for $\Id{t}{\omega}=(\sum_e u_{j,e} X^e\mid 1\leq j\leq
n)$.  Primary decompositions of $\Id{t}{\Delta}$ are now computed by 
established routines; see \cite{CLO}*{Section~4.7}.

Finally, set $d=d_0\cdots d_{\vav}$, and recall that $\vav$ is fixed. Applying
Bradet--Faug\`ere--Salvy to our setting, we have $m\leq \prod_a (d_a+1)\in
O(d)$, $n=\vav+1$, and $D\leq d$, so the complexity settles into $O(d (d )^{
\vav \omega})=O(d^{\vav \omega+1})$. \qed



\section{Small rank densors and isomorphism problems}\label{sec:Small-densor}

We close with the application that brought about this study. To that end, we
return to view operator spaces as sets, rather than $K$-schemes, and just write
$\Op{S}{P}$.

\subsection{The Tensor Isomorphism Problem}\label{sec:isomorphism-problem}

As we saw in Section~\ref{subsec:densor}, the Tensor Isomorphism Problem (TIP)
concerns tensors, or tensor spaces $S$ and $T$, and asks if there is a
transverse operator $\omega\in \Omega^{\times}$ such that $\omega  S= T$. This
has applications as varied as SLOCC equivalence in quantum
mechanics~\citelist{\cite{quantum} \cite{SLOCC:Classification}}, group and
algebra isomorphism in mathematics~\citelist{\cite{BOW:graded}
\cite{BMW:genus2}}, and computational complexity in Computer Science; cf.\
\citelist{\cite{Grochow-Qiao} \cite{Li-Qiao}}.  A detailed study of this problem
is outside our scope (see~\citelist{\cite{BW:autotopism}
\cite{BMW:exactseq} \cite{BMW:Der-Densor} \cite{BOW:graded}
\cite{Wilson:Skolem-Noether}}), but the contribution of this note is summarized
in the following.

\begin{prop}\label{prop:gen-op-ten}
	For every  ideal $P\subset K[X]$, the following holds.
	\begin{align*}
		(\exists \omega\in \Omega^{\times} )(\omega  S= T) \Leftrightarrow 
			(\exists \tau\in \Omega^{\times} )(\exists \nu\in \Omega^{\times})
			\left\{\begin{array}{rcl}
				\Op{S}{P}\tau & = & \tau \Op{T}{P}, \\
				\Op{T}{P}\nu & = & \nu \Op{T}{P}, \text{ and}\\
				\tau S & = & \nu T.
			\end{array}\right.
	\end{align*}
\end{prop}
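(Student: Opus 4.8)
The plan is to prove the biconditional directly in both directions, using the fact that $\Op{-}{-}$ is inclusion-reversing (Proposition~\ref{prop:inclusion-reverse}) and behaves well under the action of $\Omega^\times$ on tensor spaces. The key observation to establish first is a ``covariance'' identity: for any $\omega \in \Omega^\times$ and any $S \subset T$, one has $\Op{\omega S}{P} = \omega\,\Op{S}{P}\,\omega^{-1}$. This follows by unwinding the definitions --- if $\sigma \in \Op{S}{P}(L)$, then for $s \in S$, $p \in P$ one computes $\bra{(\omega s)_L}\,p(\omega\sigma\omega^{-1}) = \bra{s_L}\,p(\sigma)$ conjugated through $\omega$, which vanishes; the reverse inclusion is symmetric by replacing $\omega$ with $\omega^{-1}$. (One should be slightly careful about the precise meaning of $\bra{\omega s}$ versus $\bra{s}\omega$, but this is the standard change-of-frame bookkeeping already used in Section~\ref{sec:Galois}.) Equivalently, $\Op{\omega S}{P}\,\omega = \omega\,\Op{S}{P}$, which is exactly the shape of the conditions appearing on the right-hand side.

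For the forward direction ($\Rightarrow$), suppose $\omega \in \Omega^\times$ with $\omega S = T$. Set $\tau = \omega$ and $\nu = 1_\Omega$. Then $\tau S = \omega S = T = \nu T$, giving the third condition. The second condition $\Op{T}{P}\nu = \nu\Op{T}{P}$ is trivial since $\nu = 1_\Omega$. For the first, the covariance identity gives $\Op{S}{P} = \Op{\omega^{-1}T}{P} = \omega^{-1}\,\Op{T}{P}\,\omega$, i.e.\ $\omega\,\Op{S}{P} = \Op{T}{P}\,\omega$, which is $\Op{S}{P}\tau = \tau^{-1}\cdot(\tau\Op{S}{P}\tau) = \ldots$ --- more directly, $\Op{S}{P}\tau = \tau\Op{T}{P}$ is what we want, and this is precisely $\omega\,\Op{S}{P} = \Op{T}{P}\,\omega$ rearranged (note $\tau = \omega$ is being multiplied on the appropriate side; I would write out the two-line check carefully to make sure the left/right placement matches the statement). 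For the reverse direction ($\Leftarrow$), suppose $\tau,\nu \in \Omega^\times$ satisfy the three displayed conditions. The third condition $\tau S = \nu T$ immediately gives $\nu^{-1}\tau S = T$, so $\omega := \nu^{-1}\tau \in \Omega^\times$ witnesses $\omega S = T$. Thus for the ``if'' part the first two conditions are not even needed; they are there to make the statement an honest equivalence of search problems (the point being that in applications $\Op{S}{P}$ and $\Op{T}{P}$ are computable, so one searches for $\tau,\nu$ normalizing these smaller objects rather than searching all of $\Omega^\times$).

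The main subtlety --- really the only one --- is the careful tracking of covariance versus contravariance and left versus right multiplication in the change-of-frame formula $\Op{\omega S}{P} = \omega \Op{S}{P}\omega^{-1}$, making sure the placement of $\tau$ and $\nu$ in the three displayed equalities matches exactly what the identity produces. I expect this to be a short verification once the covariance lemma is stated cleanly; everything else is formal manipulation of the Galois connection from Theorem~\ref{thm:correspondence} together with the observation that $\Omega^\times$ acts on tensor spaces by relabeling frames. No new machinery beyond Proposition~\ref{prop:inclusion-reverse} and the definitions of Section~\ref{sec:Galois} is required.
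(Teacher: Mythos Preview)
Your approach is essentially identical to the paper's: establish the covariance identity, then take $\tau=\omega$, $\nu=1_\Omega$ for the forward direction and $\omega=\nu^{-1}\tau$ for the converse. The only wrinkle is that, under the paper's conventions, the covariance comes out as $\omega^{-1}\Op{S}{P}\omega=\Op{\omega S}{P}$ rather than your $\omega\Op{S}{P}\omega^{-1}=\Op{\omega S}{P}$; with the correct direction one obtains $\Op{S}{P}\omega=\omega\Op{T}{P}$ exactly as required, whereas your version yields the transposed relation $\omega\Op{S}{P}=\Op{T}{P}\omega$---so the left/right check you flagged is indeed the one place to be careful.
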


\begin{proof}
	We claim that $\omega^{-1}\Op{S}{P}\omega=\Op{\omega S}{P}$, for all $\omega
	\in \Omega^\times$. Indeed, we have $\tau\in \Op{\omega S}{P}$ if, and only
	if, $(\forall p\in P)(\bra{t}\omega p(\tau)=0)$ if, and only if,
	$(\forall p\in P)(0=\bra{t}\omega p(\tau)\omega^{-1}=\bra{t} p(\omega \tau \omega^{-1}))$
	if, and only if, $\omega \tau \omega^{-1}\in \Op{S}{P}$. Thus,
	$\omega S=T$ implies $\Op{S}{P}\omega=\omega \Op{T}{P}$, and using
	$\nu=1_{\Omega}$, we get the forward direction.  For the converse set
	$\omega = \nu^{-1}\tau$.
\end{proof}

Now we consider the impact of Proposition~\ref{prop:gen-op-ten} on TIP.
Suppose $T$ and $S$ are tensor spaces framed by $(V_0,\ldots,V_{\vav})$ and
$(U_0,\ldots,U_{\vav})$ respectively.  We first assign isomorphisms
$\phi_a:V_a\to U_a$ and regard tensors in $T$ and $S$ as having the same frame.
To decide transverse isomorphism, it is now possible to limit the actions using
the  Correspondence Theorem~\ref{thm:correspondence}. 

Choose an ideal $P$. Good choices include those where we have
shown the sets $\Op{S}{P}$ carry algebraic structure. If the goal is
computational, then we also seek that $\Op{S}{P}$ be efficiently computable, for
example by taking $P$ to be a homogeneous linear ideal.  For theoretical
considerations any ideal can be used.  By applying
Proposition~\ref{prop:gen-op-ten}, we split the search problem up into two
phases.  In the first phase, we work to conjugate $\Op{S}{P}$ to $\Op{T}{P}$.
Here we are free to use the many features we can calculate for these sets.  For
example, if these are both algebras we can appeal to algorithms in computational
algebra to determine structure such as simple factors, radicals, and irreducible
representations, as in \citelist{\cite{BW:isom} \cite{BMW:genus2} 
\cite{Wilson:Skolem-Noether}}.  These must all agree in order that the operator sets be
conjugate.

Once we succeed in finding $\tau$ conjugating $\Op{S}{P}$ to $\Op{T}{P}$, the
second phase searches for $\nu$ which normalizes $\Op{T}{P}$ and transports
$\tau S$ to $T$. Notice this new search takes place in the smaller tensor space
$\Ten{P}{\Op{T}{P}}$ since $\Op{T}{P}=\nu^{-1}\Op{S}{P}\nu$ implies 
\begin{align*}
	\Ten{P}{\Op{T}{P}} &= \Ten{P}{\nu^{-1}\Op{S}{P}\nu} 
	= \Ten{P}{\Op{\nu S}{P}}
	\supset \nu S.
\end{align*} 
Thus, we not only take advantage of the algebro-geometric structure on
$\Op{T}{P}$, but we also work in a potentially much smaller tensor space. In
searching for $\nu$, we may further decrease the search space by writing
$\nu=(\nu_A,\nu_{\bar{A}})$ with $A\subset [\vav]$ and asking for
$S(\nu_A,1_{\bar{A}}) = T(1_A,\nu_{\bar{A}}^{-1})$ while enumerating on relevant
$\nu_A$ and $\nu_{\bar{A}}$ separately.  Notice now this is simply expressing
a functor between the cores of two homotopism categories allowing us to shift
the problem to a completely new context.  Sometimes this moves a non-abelian category,
e.g. $(x_a x_b-1)$-homotopisms, to an abelian category of $(x_a-x_b)$-homotopism, 
where the solution becomes exponentially easier to find.

In retrospect, several recent advances in isomorphism tests can be seen as
examples of this method.  What has become known as the ``adjoint-tensor'' method
of \citelist{\cite{BOW:graded} \cite{LW:iso} \cite{BMW:genus2}
\cite{Wilson:Skolem-Noether}} uses the ideals $(x_a-x_b)$, which gives rise to
associative algebras (Theorem~\ref{mainthm:Lie-Asc}). In light of
Theorem~\ref{mainthm:Densor}, the optimal choice is not an associative algebra
but a Lie algebra.  The last two authors together with Brooksbank are developing
a so-called ``derivation-densor'' method exploiting this optimality
\citelist{\cite{BMW:exactseq} \cite{BMW:Der-Densor}}.

\subsection{Densor dimension formulas}

As explained in Section~\ref{sec:isomorphism-problem}, the difficulty of
deciding tensor isomorphism increases exponentially with the dimension of the
tensor space $\Ten{P}{\Op{S}{P}}$ for a family of polynomials $P$. In
particular, the complexity of the recent densor based algorithms
\citelist{\cite{BMW:exactseq} \cite{BMW:Der-Densor}} depends in the  dimension
of the densor space $\Leftcircle S\Rightcircle$. We now exhibit some  families
of tensors for which the densor space has small dimension relative to the
ambient space of tensors $V_0\oslash \cdots \oslash V_{\vav}$.  

We first present some generalities. Given a Lie algebra $\mathfrak{L}$ and
$\mathfrak{L}$-modules $ V_0,\dots, V_{\vav} $, set 
\begin{align*}
	\Leftcircle V_* \Rightcircle_{\mathfrak{L}} 
	& = \Hom_{\mathfrak{L}}(V_1\otimes \cdots \otimes V_{\vav}, V_0).
\end{align*}
Then one readily checks that the densor $\Den{S}$ of a tensor space $S$ is
obtained as the special case where
$\mathfrak{L}=\Op{S}{x_0-x_1-\dots-x_{\vav}}$. If $V_a=X_a\oplus Y_a$ as
$\mathfrak{L}$-modules for some $a\in \zrange{\vav}$, then there is a canonical
isomorphism
\begin{align*}
	\Leftcircle X_a\oplus Y_a,V_{\bar{a}}\Rightcircle_{\mathfrak{L}}
		& \cong \Leftcircle X_a,V_{\bar{a}}\Rightcircle_{\mathfrak{L}}
		\oplus \Leftcircle Y_a,V_{\bar{a}}\Rightcircle_{\mathfrak{L}}.
\end{align*}
By a recursive application of this rule we reduce to computing the dimension of
$\Leftcircle V_* \Rightcircle_{\mathfrak{L}}  $ when all the $V_a$ are
indecomposable $\mathfrak{L}$-modules.  

Observe further that $\Leftcircle V_* \Rightcircle_{\mathfrak{L}}  $ is
naturally a module over $\bigotimes_{a\in\zrange{\vav}}
\End_{\mathfrak{L}}(V_a)$ (tensoring is over $K$). If some $V_a$ are
indecomposable $\mathfrak{L}$-modules that are not simple, then
$\End_{\mathfrak{L}}(V_a)$ has a nontrivial radical, and therefore so does
$\Leftcircle V_* \Rightcircle_{\mathfrak{L}}  $. This is situation occurs in the
following example. 

\begin{ex}\label{ex:densor-radical}
	Let $A = K[x]/(x^n)$, and fix a basis $\{e_1,\dots, e_n\}$ for $A$, where
	$e_k = x^{k-1} + (x^n)$. Then the multiplication tensor $\bra{t}:A\times
	A\bmto A$ has a densor space $\Den{t}$ spanned by the
	following set of tensors, given via the structure constant
	representation~\cite{TensorSpace}
	\begin{align}\label{eqn:densor-span}
		\left\{
			\begin{bmatrix}
				e_1 & e_2  & \cdots & e_n\\
				e_2 & & \udots & 0\\
				\vdots & \udots & \udots & \\
				e_n & 0 & & 
			\end{bmatrix}, \dots,
			\begin{bmatrix}
				e_1 & e_2 & 0 & \dots\\
				e_2 & 0 & & \\
				0 & & \ddots  & \\
				\vdots & &  & 
			\end{bmatrix},		
			\begin{bmatrix}
				e_1 & 0 & 0 & \cdots \\
				0 & 0 & & \\
				0 & & \ddots & \\
				\vdots & & & 
			\end{bmatrix}\right\}.
	\end{align}
	Label the tensors from~\eqref{eqn:densor-span}, $\{t_0,\dots, t_{n-1}\}$, so
	$t = t_0$. Let $J_0$ be the $n\times n$ Jordan block with $0$ along the
	diagonal. Then $J_0\in\rad (\End_{\Der(t)}(V_a))$ for all $a$, and the
	tensors $\{t_0,\dots, t_{n-1}\}$ satisfy the recurrence $\langle t_{k+1}
	\ket{v} = J_0\langle t_k | J_0 v_2, J_0 v_1 \rangle$.
\end{ex}

Consequently, the most compact densors occur when each
$\End_{\mathfrak{L}}(V_a)$ is a division ring, for example when each $V_a$ is a
simple $\mathfrak{L}$-module.  In this situation, we appeal to the
Littlewood--Richardson rule to determine the formula for the dimension; see
\cite{Fulton}*{Chapters~4 \& 25.3}. The asymptotic behavior of
Littlewood--Richardson numbers is quite difficult to predict, but the point is
that the values are, in general, substantially smaller than $\prod_a \dim V_a$.
Moreover, we can compute them in many special cases as we now demonstrate.

\subsubsection{Densors related to type $A$ simple Lie algebras}

For a square matrix $X$, set $\mathrm{tr}(X)=\sum_i X_{ii}$. We define the
vector space
\begin{align*}
	\mathbb{M}_n(K)^0 
		& = \left\{ X\in \mathbb{M}_n(K) : {\rm tr}(X)=0\right\}.
\end{align*}
We have the usual Lie product $[X,Y]=XY-YX$ on $\mathbb{M}_n(K)^0$ to make it
${\frak sl}_n(K)$, but we want to distinguish the space from the Lie algebra. In
the following, we assume that the characteristic of $K$ is $0$, or sufficiently
large relative to $n$.

\begin{ex}\label{ex:sl_n-natural}
	Let $V_2=\mathbb{M}_n(K)^0$ and $V_0=V_1=K^n$.  If $\mathfrak{L}=\{(\ad_X,
	X,X): X\in {\frak sl}_n(K)\}$ then $\Leftcircle
	V_0,V_1,V_2\Rightcircle_{\mathfrak{L}}$ is $1$-dimensional and spanned by
	the tensor $\bra{t}:V_1\times V_2\bmto V_0$ given by the natural ${\frak
	sl}_n(K)$-module on $K^n$.  Since $\mathfrak{L}\subseteq \Der(t)$, the
	natural ${\frak sl}_n(K)$-module $K^n$ is characterized completely by its
	densor.  In comparison, if we wish to use left, right or middle scalars, the
	smallest Whitney tensor product space, i.e.~of the form $\hom_{L\otimes
	R^{\op}}(V_1\otimes_M V_2,V_0)$, containing $\bra{t}$ is $\hom_{K\otimes
	K}(V_1\otimes_K V_2,V_0)$, which has dimension $O(n^4)$.
\end{ex}

\begin{proof}
	The tensors in $\Leftcircle V_0,V_1,V_2\Rightcircle_{\mathfrak{L}}$ can be
	regarded as ${\frak sl}_n(K)$-morphisms from $ V_2={\frak sl}_n(K) $ to $V_0
	\rversor V_1\cong \mathbb{M}_n(K)={\frak sl}_n(K)\oplus K$. Because the
	modules are irreducible, the images of these homomorphisms are scalar
	multiples of each other.  As the natural module action $\bra{t}$ admits
	$\mathfrak{L}$ as derivations and $\bra{t}\neq 0$, it follows that
	$\Den{t}=K\bra{t}$. Characterizing the maximal Whitney tensor products is
	determined by the nuclei~\citelist{\cite{BW:autotopism}
	\cite{Wilson:Skolem-Noether}}. In this case the nuclei are each copies of
	$K$.
\end{proof}

\begin{ex}\label{ex:ad-sl}
	For $n\geq 2$, set $V_0=V_1=V_2=\mathbb{M}_n(K)^0$ and $\mathfrak{L}=\{(\ad
	X,\ad X,\ad X) \mid X\in \mathfrak{sl}_n(K)\}$.  Let $\bra{t} : V_1 \times
	V_2 \rightarrowtail V_0$ be the multiplication-in-${\frak sl}_n$ tensor
	given by $[X, Y] = XY - YX$. Then $\Leftcircle
	V_*\Rightcircle_{\mathfrak{L}} = \Den{t}$. It turns out that when $n=2$,
	$\Leftcircle V_*\Rightcircle_{\mathfrak{L}} = Kt$, whereas when $n\geq 3$,
	we have $\dim(\Leftcircle V_*\Rightcircle_{\mathfrak{L}}) = 2$. On the other
	hand, the smallest Whitney tensor space, $\hom_{L\otimes
	R^{\op}}(V_1\otimes_{M}V_2,V_0)$, containing the multiplication of
	$\mathfrak{sl}_n(K)$ is $\hom_{K\otimes K}(V_1\otimes_{K}V_2,V_0)$, of
	dimension $O(n^6)$.
\end{ex}

\begin{proof}
	This is proved more generally in~\cite{BMW:Der-Densor}, but we sketch the
	idea. Following the same blueprint as Example~\ref{ex:sl_n-natural}, the
	adjoint representation of $\mathfrak{sl}_n$ corresponds to the Young diagram
	for the partition $\mu = (2, 1, \dots, 1)\vdash n$. The dimension of the
	densor space is equal to the Littlewood--Richardson number for type $A$,
	written $c_{\mu, \mu}^{\mu}$.  In this case, $c_{\mu, \mu}^{\mu}$ is $1$
	when $n=2$ but is $2$ when $n\geq 3$. 
\end{proof}

Tensors of valence greater than $2$ can also have small densors. The following
example is the archetype of a semisimple associative pair algebra.
	
\begin{ex}
	Let $M=\mathbb{M}_{ab}(K)$ and define $\bra{t}:M^3\bmto M$ by $\langle
	t\,|\,X_1,X_2,X_3\rangle=X_1X_2^{\dagger} X_3$.  Then $\Den{t}=tK$.
\end{ex}

\begin{proof}
	This applies because the nuclei are each represented irreducibly on the
	frame.  Since nuclei embed in $\Der(t)$,
	cf.~\cite{BMW:exactseq}*{Theorem~A}, the result follows.
\end{proof}

\subsubsection{Densors related to exceptional Lie algebras}

Moving to exceptional types, consider now the octonions.

\begin{ex}
	Assume $\mathbb{O}$ is an Octonion $K$-algebra over a field $K=6K$.
	The product $\bra{t}:\mathbb{O}\times \mathbb{O}\bmto \mathbb{O}$ 
	has densor of rank $1$.
\end{ex}

\begin{proof}
	This example is similar to Example~\ref{ex:ad-sl}. By a theorem
	Cartan--Jacobson, $\Der(t)\cong {\frak o}(8)$ (\cite{Schafer:nonass}*{p.
	82}) whenever $6K=K$. Each term of the frame is a different
	$\mathfrak{o}(8)$-representation: the natural, the positive spin, and the
	negative spin representations (as implied by Cartan's triality theorem
	\cite{Hall:triality}*{Chapter~19}). These are irreducible and of
	highest-weight. Following \cite{KN:classical-crystals}*{Section~6}, these
	correspond to the generalized Young diagrams of shapes 
	\begin{align*}
		\lambda &= (1, 0, 0, 0), &
		\mu &= (1/2, 1/2, 1/2, 1/2), &
		\nu &= (1/2, 1/2, 1/2, -1/2).
	\end{align*}
	By~\cite{Nakashima:gen-LR}, the generalized Littlewood--Richardson numbers
	(type $D$) satisfy $c_{\lambda, \mu}^{\nu} = c_{\nu, \lambda}^{\mu} =
	c_{\mu, \nu}^{\lambda} = 1$. 
\end{proof}

Next we choose a demonstration of the software \cite{TensorSpace}. We supply to
assist with calculations of densors.  We consider product $A\circ B=1/2(AB+BA)$
on the following exceptional simple Jordan algebra over $K=6K$:
\begin{align*}
	\mathfrak{H}_3(\mathbb{O}) & = \{ A\in \mathbb{M}_3(\mathbb{O})\mid A=\bar{A}^{\dagger}\}.
\end{align*}
This space is 27-dimensional and lives naturally inside a 19,683-dimensional
tensor space. It is therefore rather unlikely to recognize this tensor in an
arbitrary basis by some undirected method.  However, by the algorithms of
\cite{TensorSpace} find the densor space satisfies $\dim\Den{t}=5$ and
\begin{align*}
	\Der(t) & \cong \left\{
	\begin{bmatrix} a & u \\ 0 & B \end{bmatrix}
	~\middle|~ \begin{array}{c} 
	a\in K\\
	u\in K^{26}\\
	B\in F_4
	\end{array}
	\right\}.
\end{align*}
The calculations of course have no a priori knowledge of any structure and are
the same regardless of any unfortunate choice of bases to begin with. This in
effect reduces questions from 19,683-dimensions to just 5.

What has ocurred here is that this calculation has recovered an invariant
property of this algebra explored by Jacobson; see
\cite{Schafer:nonass}*{p.108-112}.  Here is an explanation.

\begin{ex}\label{ex:Albert}
	The multiplication $\bra{t}:{\frak H}_3(\mathbb{O})\times {\frak
	H}_3(\mathbb{O})\bmto {\frak H}_3(\mathbb{O})$ of the exceptional Jordan
	algebra has densor of rank $5$.
\end{ex}

\begin{proof}
	Take $\mathfrak{H}_0(\mathbb{O})$ to be the matrices in
	$\mathfrak{H}_3(\mathbb{O})$ of trace $0$.  Then
	$\mathfrak{H}_3(\mathbb{O})=K\oplus \mathfrak{H}_0(\mathbb{O})$, so elements
	in $\mathfrak{H}_3(\mathbb{O})$ can be expressed uniquely in the form
	$aI_3+X$ with $X\in \mathfrak{H}_0(\mathbb{O})$.  The densor space
	$\Leftcircle t\Rightcircle$ is spanned by the following linearly independent
	set of tensors.
	\begin{align*}
		\langle t_1 | aI_3+X, bI_3+Y\rangle
			& = abI_3\\
		\langle t_2 | aI_3+X, bI_3+Y\rangle
			& = aX+bY\\
		\langle t_3 | aI_3+X, bI_3+Y\rangle
			& = abI_2+aX+bY\\
		\langle t_4 | aI_3+X, bI_3+Y\rangle
			& = \frac{1}{2}(XY+YX)\\
		\langle t_5 | aI_3+X, bI_3+Y\rangle
			& = abI_3+aX+bY+\frac{1}{2}(XY+YX).
	\end{align*}
	In particular $\dim \Leftcircle \mathfrak{H}(\mathbb{O})\Rightcircle=5$.  Of
	the tensors $t_1,\dots,t_5$ above, $\Leftcircle t_5\Rightcircle <\Leftcircle
	t_i\Rightcircle$, for $i<5$.
\end{proof}

\subsubsection{The non-field case}

Recall from Myasnikov's Theorem~\cite{Myasnikov}, the centroid $\Cen(S)$,
defined in Example~\ref{ex:centroid}, can be described as $\Op{S}{(x_a-x_b\mid
a,b\in \zrange{\vav})}$ and is an associative unital $K$-algebra. Furthermore,
if $S$ is fully nondegenerate then $\Cen(S)$ is commutative.  It is the largest
ring over which a tensor is multilinear.

We have so far considered densors of small dimension.  There is a natural reason
to consider a broader class of ``small'' densors where we replace the notion of
dimension with rank over the centroid of the tensor.  The following examples
demonstrate the extended range of such tensors.  In many situations the rank is
already small when we consider a closure over a centroid or over nuclei rather
than the entire derivation algebra.

\begin{prop}\label{prop:comm-assoc-alg}
	Given a commutative associative unital $K$-algebra $A$, its multiplication
	tensor $\bra{t}:A^{2}\bmto A$ given by $\langle t| a\rangle=a_1\cdot a_2$,
	has centroid $\Cen(t)\cong A$ and $\Leftcircle t\Rightcircle$ has rank $1$
	over $\Cen(t)$.
\end{prop}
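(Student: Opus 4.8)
The plan is to first pin down $\Cen(t)$ by hand, and then to observe that once $t$ is regarded as a tensor over its own centroid the two input axes fuse into a single one, which forces the densor to be cyclic.

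First I would identify $\Cen(t)$ with $A$. By Example~\ref{ex:centroid}, $\Cen(t)=\Op{t}{\{x_0-x_1,x_0-x_2\}}$ consists of the triples $\omega=(\omega_0,\omega_1,\omega_2)\in\End(A)^3$ satisfying $\omega_0(a_1a_2)=(\omega_1a_1)a_2=a_1(\omega_2a_2)$ for all $a_1,a_2\in A$. Writing $L_c$ for multiplication by $c\in A$, the map $c\mapsto(L_c,L_c,L_c)$ is a $K$-algebra homomorphism into $\Cen(t)$: it lands there because $A$ is commutative and associative, it is multiplicative because $L_{cd}=L_cL_d$, and it is injective because $L_c(1)=c$. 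For surjectivity I would, given $\omega\in\Cen(t)$, set $c:=\omega_1(1)$ and then read off $\omega_0=L_c$ from the first relation at $a_1=1$, next $\omega_1=\omega_0$ from the first relation at $a_2=1$, and $\omega_2=\omega_0$ from the second relation at $a_1=1$, using the unit of $A$ throughout. Hence $\Cen(t)\cong A$.

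Next I would pass to the centroid as ground ring --- a move sanctioned by the remark in Example~\ref{ex:centroid} that one may replace $K$ by $\Cen(t)$ --- and compute the densor there. Under the identification $\Cen(t)\cong A$ each axis $V_a=A$ is a free $\Cen(t)$-module of rank one and $\bra{t}$ is $\Cen(t)$-multilinear, so $t$ lies in $\hom_{\Cen(t)}(V_1\otimes_{\Cen(t)}V_2,V_0)$; since $A\otimes_A A\cong A$, this space is canonically $\hom_A(A,A)\cong A\cong\Cen(t)$, free of rank one with $t\leftrightarrow 1$. Over this ring one has $\Der(t)=\{(L_{b_1+b_2},L_{b_1},L_{b_2}):b_1,b_2\in A\}$, because every $\Cen(t)$-linear endomorphism of $A$ is some $L_b$ and the relation $b_0a_1a_2=(b_1+b_2)a_1a_2$, tested at $a_1=a_2=1$, reads $b_0=b_1+b_2$. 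A one-line check then shows that for every $e\in A$ the tensor $s$ with $\bra{s}a_1,a_2\ra=ea_1a_2$ satisfies
\[
	\delta_0\bra{s}a_1,a_2\ra=(b_1+b_2)ea_1a_2=\bra{s}\delta_1a_1,a_2\ra+\bra{s}a_1,\delta_2a_2\ra
\]
for all $\delta=(L_{b_1+b_2},L_{b_1},L_{b_2})\in\Der(t)$, hence $\Der(t)\subseteq\Der(s)$ and $s\in\Den{t}$. Therefore $\Den{t}=\hom_{\Cen(t)}(V_1\otimes_{\Cen(t)}V_2,V_0)\cong\Cen(t)$, free of rank one and generated by $t$.

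The one point demanding care --- and the only real obstacle --- is the interpretation of ``rank over $\Cen(t)$''. The densor formed over the original $K$ can be strictly smaller than $\Cen(t)\cong A$, since it is additionally cut down by the Leibniz identities coming from honest $K$-derivations of $A$ (for instance, for $A=K[x]$ in characteristic zero it is only $Kt$). The content of the statement is precisely that this discrepancy disappears once the scalars acting multilinearly are absorbed into the ground ring: over $\Cen(t)$ the isomorphism $V_1\otimes_{\Cen(t)}V_2\cong V_0$ collapses the densor onto the single cyclic generator $t$. Everything else is the routine bookkeeping of the previous two paragraphs.
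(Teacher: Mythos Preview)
Your identification of $\Cen(t)$ with $A$ via $c\mapsto(L_c,L_c,L_c)$ is the paper's argument, written out more explicitly.

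For the densor the paper takes a more direct route that avoids base change and dissolves the interpretive worry in your last paragraph. Working over $K$ throughout, it observes that the centroid embeds into $\Der(t)$ in two different ways: both $(L_c,L_c,0)$ and $(L_c,0,L_c)$ satisfy the Leibniz rule for $t$, since $c(a_1a_2)=(ca_1)a_2$ and $c(a_1a_2)=a_1(ca_2)$. For $s\in\Den{t}$ one has $\Der(t)\subseteq\Der(s)$, so these two families are derivations of $s$ as well, giving $c\langle s|a_1,a_2\rangle=\langle s|ca_1,a_2\rangle=\langle s|a_1,ca_2\rangle$, i.e.\ $(L_c,L_c,L_c)\in\Cen(s)$. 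Thus $s$ is already $A$-multilinear and $\langle s|a_1,a_2\rangle=a_1a_2\langle s|1,1\rangle=\lambda_s\langle t|a_1,a_2\rangle$ with $\lambda_s\in A$. Hence the $K$-densor satisfies $\Den{t}\subseteq\Cen(t)\cdot t$, which is what ``rank~$1$ over $\Cen(t)$'' is intended to mean here.

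Your computation of $\Der(t)$ over $\Cen(t)$ as $\{(L_{b_1+b_2},L_{b_1},L_{b_2})\}$ is exactly the span of these two embeddings, so the underlying mechanism is the same; the difference is that by staying over $K$ the paper handles the $K$-densor directly and needs no discussion of what base change does. Your observation that the $K$-densor can be strictly smaller than $\Cen(t)\cdot t$ (as for $A=K[x]$ in characteristic zero) is correct and consistent with the containment the paper establishes.
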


\begin{proof}
	The regular representation of $A$ in ${\rm End}(A)$ is faithful as $A$ is
	unital.  Furthermore, the multiplication tensor $t$ in $A$ is
	$A$-multilinear because $A$ is commutative.  By definition, $A$ embeds in
	$\Cen(t)$.  As $A$ is unital, it follows that $t$ is nondegenerate, so
	$\Cen(t)$ is faithfully represented on $A$. So $\Cen(t)\cong A$. Suppose
	$s\in \Leftcircle t\Rightcircle$, so $\Der(t)\subseteq\Der(s)$.  Since
	$\Cen(t)$ embeds into $\Der(t)$ in two ways:
	$(\omega_0,\omega_1,\omega_2)\mapsto (\omega_0,0,\omega_2)$ and
	$(\omega_0,\omega_1,\omega_2)\mapsto (0,\omega_1,\omega_2)$.  Hence,
	$\{(\omega_0,0,\omega_2)\mid\omega\in \Cen(t)\}$,
	$\{(0,\omega_1,\omega_2)\mid \omega\in \Cen(t)\}\subset \Der(s)$. So
	$\Cen(t)\subset \Op{s}{(x_0-x_1,x_0-x_2)}=\Cen(s)$.  Hence, $\langle
	s|a_1,a_2\rangle=a_1\cdot a_2\cdot \langle s|1,1\rangle =\lambda_s\langle
	t|a_1,a_2\rangle$ where $\lambda_s:=\langle s|1,1\rangle\in A$. Hence
	$\bra{s}=\lambda_s\bra{s}$.
\end{proof}

As an application of Proposition~\ref{prop:comm-assoc-alg}, we look at two
tensors from Quantum Information Theory, the GHZ and W states. In their
3-partite states, these can be interpreted as the structure constants of
commutative, associative, unital $\mathbb{C}$-algebras $\mathbb{C}^2$ and
$\mathbb{C}[x]/(x^2)$ respectively. 

\begin{ex}\label{ex:GHZ}
	Let $\mathbb{H}$ be an $8$-dimensional Hilbert space, and define the
	following interpretation map $\langle \cdot | : \mathbb{H} \rightarrow
	\mathbb{C}^2\rversor \mathbb{C}^2 \rversor \mathbb{C}^2$. The convention is that $\mathbb{C}^2 =
	\mathbb{C}\langle 0 | \oplus \mathbb{C} \langle 1 |$, so $\mathbb{H}$ has a
	basis denoted $\langle abc |$, where $a,b,c\in\{0,1\}$. The GHZ and W states
	are 
	\begin{align*}
		\langle GHZ | &= \dfrac{\sqrt{2}}{2} \left( \langle 000 | + \langle 111 |\right), & 
		\langle W | &= \dfrac{\sqrt{3}}{3} \left( \langle 100 | + \langle 010 | + \langle 001 | \right).
	\end{align*}
	See also Figure~\ref{fig:annihilator-2},
	The derivation algebra of the GHZ state is the abelian Lie algebra
	$\mathbb{C}^4$. If $\mathfrak{t}_n$ is the solvable Lie algebra of $n\times
	n$ upper triangular matrices, then the derivation algebra of the W state is
	isomorphic to $\mathfrak{t}_2\oplus \mathbb{C}^2$. By
	Proposition~\ref{prop:comm-assoc-alg}, the densors are rank-one over their
	centroids. But as $\mathbb{C}$-vector spaces the densor of GHZ is
	$2$-dimensional, spanned by the two tensors: $\langle 000 |$ and $\langle
	111 |$, and the densor of the W state is $1$-dimensional. 
\end{ex}

Now we consider classes of tensors where the nuclei is enough to contract a
tensor space to a small rank.

\begin{prop}
	The product of an Azumaya algebra has rank $1$ densor over its centroid.  In
	particular every central simple associative algebra has a rank $1$ densor.  
\end{prop}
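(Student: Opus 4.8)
Final statement to prove: \emph{The product of an Azumaya algebra has rank $1$ densor over its centroid. In particular every central simple associative algebra has a rank $1$ densor.}

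The plan is to reduce the Azumaya case to the commutative case handled by Proposition~\ref{prop:comm-assoc-alg} by exploiting the two nucleus embeddings into $\Der(t)$ that were used there. Let $A$ be an Azumaya $K$-algebra with multiplication tensor $\bra{t}:A\times A\bmto A$, $\langle t|a_1,a_2\rangle=a_1 a_2$. First I would record that $A$ is faithfully represented on itself by left (and by right) multiplication because $A$ is unital, and that $\bra{t}$ is nondegenerate for the same reason; hence the centroid $\Cen(t)$ is faithfully represented on $A$. Next, observe that the left-multiplication maps give a copy of $A$ sitting in the $(0,1)$-nucleus $\Nuc_{0,1}(t)$: for $a\in A$, the pair $(L_a, L_a)$ (acting on $V_0$ and $V_1$) satisfies $L_a\langle t|x_1,x_2\rangle = a(x_1 x_2) = (ax_1)x_2 = \langle t| L_a x_1, x_2\rangle$ by associativity, so $(L_a,1,L_a)\in\Op[\{0,1\}]{t}{x_0-x_1}$, which sits inside $\Der(t)$ as $\{(\delta_0,\delta_1,\delta_2): \delta_0\langle t|v\rangle = \langle t|\delta_1 v_1,v_2\rangle + \langle t|v_1,\delta_2 v_2\rangle\}$ via $(L_a,L_a,0)$. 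Symmetrically, right multiplications give $(R_a,0,R_a)\in\Der(t)$. Because $A$ is Azumaya, the enveloping map $A\otimes_K A^{\op}\to \End_K(A)$ is an isomorphism and the centralizer of $L(A)$ in $\End(A)$ is precisely $R(A)$ and vice versa, while $\Cen(t)$ is $Z(A)$ acting diagonally.

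Now let $s\in\Den{t}$, so $\Der(t)\subseteq\Der(s)$. In particular $(L_a,L_a,0)$ and $(R_a,0,R_a)$ act as derivations of $s$ for all $a\in A$. The first family gives $L_a\langle s|x_1,x_2\rangle = \langle s|L_a x_1, x_2\rangle$, i.e. $\langle s|\cdot,x_2\rangle: A\to A$ is a left $A$-module map for each fixed $x_2$; the second gives $R_a\langle s|x_1,x_2\rangle = \langle s|x_1, R_a x_2\rangle$, i.e. $\langle s|x_1,\cdot\rangle$ is a right $A$-module map. The key step is then: a left-$A$-linear-in-the-first-slot, right-$A$-linear-in-the-second-slot map $A\times A\to A$ is determined by $\langle s|1,1\rangle$. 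Indeed $\langle s|x_1,x_2\rangle = \langle s| L_{x_1} 1, x_2\rangle = L_{x_1}\langle s|1,x_2\rangle = x_1\langle s|1,R_{x_2}1\rangle = x_1\big(\langle s|1,1\rangle x_2\big)$. Writing $\lambda_s := \langle s|1,1\rangle\in A$, we get $\langle s|x_1,x_2\rangle = x_1\lambda_s x_2$. Applying the derivation condition once more (say from $(L_a,L_a,0)$ or directly from associativity-type manipulations) forces $\lambda_s\in Z(A)$: for instance using that $(L_a - R_a, L_a, -R_a)\in\Der(t)$ for $a\in A$ — check this is a derivation of $t$ — and feeding it into $s$ yields $a\lambda_s = \lambda_s a$ for all $a$. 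Hence $\langle s|x_1,x_2\rangle = \lambda_s (x_1 x_2)$ with $\lambda_s\in Z(A)$, so $\bra{s}=\lambda_s\bra{t}$ and $\Den{t}$ is free of rank $1$ over $Z(A)\cong\Cen(t)$ (the identification of $\Cen(t)$ with $Z(A)$ being exactly as in the proof of Proposition~\ref{prop:comm-assoc-alg}).

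Finally, for a central simple algebra over a field $K$ one has $Z(A)=K$ and the Azumaya hypothesis is automatic, so the densor has rank $1$ over $\Cen(t)\cong K$, i.e. it is $1$-dimensional; this is the ``in particular'' clause. I expect the main obstacle to be the algebraic verification that $\lambda_s$ lands in the center: one must produce enough elements of $\Der(t)$ (the two nucleus copies plus their combinations, and possibly genuine $K$-derivations of $A$ when $A$ is not separable-enough — but Azumaya algebras are separable, so inner derivations span modulo the obstruction, which is what forces centrality) to pin down $\lambda_s$, and to make sure the argument does not secretly assume $K$ is a field. Using the Azumaya property to identify $\End_K(A)\cong A\otimes_K A^{\op}$ and hence the double centralizer of $L(A)$ is the cleanest way around this, and I would lean on that rather than on derivation-theoretic computations. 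Everything else — faithfulness, nondegeneracy, the two-slot module argument — is routine given Proposition~\ref{prop:comm-assoc-alg} and Example~\ref{ex:nuclei}.
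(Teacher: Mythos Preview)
Your approach is correct and genuinely different from the paper's. The paper does not work element-wise inside $\Der(t)$; instead it bounds the densor from above using Theorem~\ref{mainthm:Densor}. Concretely, it observes that associativity gives embeddings $A\to\Op{t}{x_i-x_j}$ for all three pairs $i<j$ in $\{0,1,2\}$, forms the corresponding $P$-closure, and computes
\[
\Ten{P}{\Op{t}{P}}\subset (A\otimes_A A)\lversor_{A\otimes A^{\op}} A\cong A\lversor_{\End_K(A)}A=\End_{\End_K(A)}(A)\cong K,
\]
invoking the Azumaya isomorphism $A\otimes_K A^{\op}\cong\End_K(A)$. Then $Kt\subseteq\Den{t}\subseteq\Ten{P}{\Op{t}{P}}\cong K$ finishes. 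Your route avoids Theorem~\ref{mainthm:Densor} entirely and is closer in spirit to the proof of Proposition~\ref{prop:comm-assoc-alg}; it is more elementary but requires tracking explicit derivations. The paper's argument, by contrast, packages everything into a single tensor-product identity and is the template for the subsequent propositions about progenerators and matrix multiplication.

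One point to tighten: your proposed witness $(L_a-R_a,L_a,-R_a)$ for the centrality of $\lambda_s$ is just $(L_a,L_a,0)-(R_a,0,R_a)$, a combination of the two derivations you already used, and indeed one checks it imposes no constraint on $\lambda_s$. What you actually need is the \emph{middle} nucleus: associativity $(x_1a)x_2=x_1(ax_2)$ says $(0,R_a,-L_a)\in\Der(t)$, and feeding this into $s$ gives $0=(x_1a)\lambda_s x_2 - x_1\lambda_s(ax_2)=x_1[a,\lambda_s]x_2$, forcing $\lambda_s\in Z(A)$. Equivalently, your fallback via inner derivations works: $(D_a,D_a,D_a)\in\Der(t)$ for $D_a=[a,-]$, and applying it to $s$ yields $x_1[a,\lambda_s]x_2=0$. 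Either way the missing third nucleus is exactly what the paper's proof uses when it tensors over $A\otimes A^{\op}$ rather than over a single copy of $A$.
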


\begin{proof}
	As $A$ is associative the multiplication tensor $t$ admits a left-, mid-,
	and right- action by $A$, i.e.\ for $0\leq i<j\leq 2$, $A\to
	\Op{t}{(x_i-x_j)}$.  As $A$ is also unital each of these representations is
	faithful.  In all cases $A\otimes_A A\cong A$.  As $A$ is Azumaya,
	$A\otimes_K A^{\op}\cong \End_K(A)$.  Set $P=\cap_{ij} (x_i-x_j)$.  Thus,
	\begin{align*}
        \Ten{P}{\Op{t}{P}}\subset (A\otimes_A A)\lversor_{A\otimes A^{\op}} A 
        \cong A\lversor_{\End_K(A)} A=\End_{\End_K(A)}(A)\cong K.
	\end{align*}
	Since $t\in \Ten{P}{\Op{t}{P}}$, it follows that $\Ten{P}{\Op{t}{P}}\cong
    K$. By Theorem~\ref{mainthm:Densor}, $\Leftcircle t\Rightcircle \cong K$
    because $K\cong Kt\leq \Leftcircle t\Rightcircle\subset
    \Ten{P}{\Op{t}{P}}\cong K$.
\end{proof}

That proof adapts to prove even the following claim, in particular it is not
necessary that the product form an algebra, only that the nuclei act irreducibly
on the frame.
\begin{prop}
	If $A$, $B$, and $C$ are progenerators of composition $\bra{t}:A\rversor
	B\times B\rversor C\bmto A\rversor C$, i.e.\ $\langle t| f,g\rangle=f\circ
	g$, has a rank $1$ densor.
\end{prop}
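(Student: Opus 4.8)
The plan is to run the proof of the previous proposition, replacing the three one-sided multiplications of an Azumaya algebra by the three evident actions of $\End_K(A)$, $\End_K(B)$, $\End_K(C)$ on the frame of the composition tensor, and replacing ``Azumaya'' by ``progenerator'' in the Morita-theoretic step.

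First I would fix the frame $(V_0,V_1,V_2)=(A\rversor C,\ A\rversor B,\ B\rversor C)$, so that $\bra{t}f,g\ra=f\circ g$ for $f\in\hom_K(B,A)$ and $g\in\hom_K(C,B)$, and record three $K$-submodules of $\Der(t)$, where $d=x_0-x_1-x_2$: for $\alpha\in\End_K(A)$, the operator acting by post-composition with $\alpha$ on $V_0$ and $V_1$ and by $0$ on $V_2$; for $\gamma\in\End_K(C)$, the operator acting by pre-composition with $\gamma$ on $V_0$ and $V_2$ and by $0$ on $V_1$; and for $\beta\in\End_K(B)$, the operator acting by $0$ on $V_0$, by pre-composition with $\beta$ on $V_1$, and by post-composition with $-\beta$ on $V_2$. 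A one-line associativity check shows each satisfies the derivation identity $\delta_0\bra{t}f,g\ra=\bra{t}\delta_1 f,g\ra+\bra{t}f,\delta_2 g\ra$; moreover, since $A$, $B$, $C$ are generators these three maps are injective (they in fact map onto the nuclei $\Nuc_{0,1}(t)$, $\Nuc_{0,2}(t)$, $\Nuc_{1,2}(t)$ of Example~\ref{ex:nuclei}, but only the inclusions in $\Der(t)$ are used).

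Next, for $s\in\Den{t}$ --- that is, $\Der(t)\subseteq\Der(s)$ --- the derivation identity for $s$ applied to these three families gives, for all $\alpha,\beta,\gamma$ and all $f,g$, the relations $\alpha\bra{s}f,g\ra=\bra{s}\alpha f,g\ra$, $\bra{s}f,g\ra\gamma=\bra{s}f,g\gamma\ra$, and $\bra{s}f\beta,g\ra=\bra{s}f,\beta g\ra$. In other words $\bra{s}\colon (A\rversor B)\otimes_K(B\rversor C)\to A\rversor C$ is left $\End_K(A)$-linear, right $\End_K(C)$-linear, and $\End_K(B)$-balanced, so
\[
\bra{s}\in\hom_{\End_K(A)\otimes_K\End_K(C)^{\op}}\bigl((A\rversor B)\otimes_{\End_K(B)}(B\rversor C),\ A\rversor C\bigr).
\]
Now I would invoke Morita theory for the progenerators (finitely generated projective generators) $A$, $B$, $C$ over the commutative ring $K$. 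Because $B$ is a progenerator, composition induces an isomorphism $(A\rversor B)\otimes_{\End_K(B)}(B\rversor C)\cong A\rversor C$ of $(\End_K(A),\End_K(C))$-bimodules, by reducing through $\hom_K(B,A)\cong A\otimes_K B^{\vee}$, $\hom_K(C,B)\cong B\otimes_K C^{\vee}$, $\End_K(B)\cong B\otimes_K B^{\vee}$, and the Morita relation $B^{\vee}\otimes_{\End_K(B)}B\cong K$. Because $A$ and $C$ are progenerators, the double-centralizer theorem gives $\End_{\End_K(A)\otimes_K\End_K(C)^{\op}}(A\rversor C)\cong K$. Combining these two facts, the displayed $\hom$-module is free of rank one over $K$; since $\bra{t}$ is a nonzero element of it corresponding to $1\in K$, we conclude $\Den{t}=Kt\cong K$, i.e.\ the densor of $t$ has rank one. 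As in the previous proof, Theorem~\ref{mainthm:Densor} then also exhibits $Kt$ as the smallest linear $P$-closure of $t$.

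The only real work lies in the Morita bookkeeping of the last paragraph: transporting the two isomorphisms through the versor notation, and observing that ``progenerator'' is exactly the hypothesis making $B^{\vee}\otimes_{\End_K(B)}B\cong K$ and $\End_{\End_K(A)}(A)\cong K$ hold over an arbitrary commutative $K$. Everything else --- the derivation identities, the injectivity statements, and the reduction of $\Den{t}$ to the $\hom$-module --- is routine, and, exactly as in the remark preceding the statement, the argument never uses that the composition underlying $t$ comes from an associative algebra.
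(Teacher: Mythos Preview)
Your proof is correct and follows essentially the same strategy the paper indicates: identify the three nucleus-type actions on the frame, use them to force any $s\in\Den{t}$ into the balanced, bi-linear $\hom$-module, and then apply the Morita identifications for progenerators to see that module has rank one. The one organisational difference is that the paper (adapting the Azumaya argument) first bounds the $P$-closure $\Ten{P}{\Op{t}{P}}$ for $P=\bigcap_{i<j}(x_i-x_j)$ and then invokes Theorem~\ref{mainthm:Densor} to trap the densor inside it, whereas you embed the nuclei directly into $\Der(t)$ and read off the constraints on $s$ from the derivation identity, bypassing Theorem~\ref{mainthm:Densor} entirely; this is a mild streamlining rather than a different idea.
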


\begin{coro}
	The matrix multiplication tensor spans is own densor.
\end{coro}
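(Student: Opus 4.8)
The plan is to identify the matrix multiplication tensor with a composition tensor of the type handled in the preceding proposition, invoke that result, and then compute the centroid to see that ``rank $1$ over the centroid'' actually means ``one-dimensional over $K$, spanned by $t$ itself.''

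First, for integers $m,n,p\geq 1$ put $A=K^m$, $B=K^n$, $C=K^p$. A nonzero finitely generated free $K$-module is finitely generated, projective, and a generator, hence a progenerator, so $A,B,C$ are eligible. Under the canonical identifications $A\rversor B\cong\mathbb{M}_{m\times n}(K)$, $B\rversor C\cong\mathbb{M}_{n\times p}(K)$, $A\rversor C\cong\mathbb{M}_{m\times p}(K)$, the composition tensor $\bra{t}:A\rversor B\times B\rversor C\bmto A\rversor C$, $\langle t|f,g\rangle=f\circ g$, \emph{is} the matrix multiplication tensor $(M_1,M_2)\mapsto M_1M_2$. The preceding proposition therefore applies verbatim: $\Den{t}$ has rank $1$ over $\Cen(t)$, and---tracing its proof, which adapts the Azumaya argument---$\Ten{P}{\Op{t}{P}}$ with $P=(x_i-x_j\mid i,j)$ is a rank-one $\Cen(t)$-module generated by $t$, with $\Den{t}\subseteq\Ten{P}{\Op{t}{P}}$ by Theorem~\ref{mainthm:Densor}.

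Second, I would show $\Cen(t)\cong K$. By Example~\ref{ex:centroid}, $\Cen(t)$ is the set of triples $(\omega_0,\omega_1,\omega_2)$ with $\omega_0(M_1M_2)=(\omega_1M_1)M_2=M_1(\omega_2M_2)$ for all $M_1,M_2$. Specializing $M_1,M_2$ to elementary matrices (and, in the square case $m=n=p$, to the identity) forces $\omega_1M_1$ to be simultaneously ``left'' and ``right'' Morita-linear on $\hom(K^n,K^m)$; since that bimodule is a faithful simple module over $\mathbb{M}_m(K)\otimes\mathbb{M}_n(K)^{\op}$, a double-centralizer argument compels $\omega_0,\omega_1,\omega_2$ to be multiplication by one common scalar $\lambda\in K$, and conversely every $\lambda$ works. (When only the square case is wanted, this is just that $\mathbb{M}_n(K)$ is Azumaya with center $K$, so the Azumaya proposition applies directly and gives a rank-$1$, i.e.\ one-dimensional, densor.) Combining the two steps, $\Den{t}$ is a rank-one free $K$-module containing and generated over $K$ by the nonzero tensor $t$, so $\Den{t}=Kt$; that is, the matrix multiplication tensor spans its own densor.

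The only real obstacle is the centroid computation in the rectangular case: deducing from $\omega_0(M_1M_2)=(\omega_1M_1)M_2=M_1(\omega_2M_2)$ that all three operators coincide with a single scalar is elementary but needs careful bookkeeping with elementary matrices; in the square case it is immediate from $Z(\mathbb{M}_n(K))=K$. If one reads ``spans its own densor'' merely as ``has a rank-$1$ densor over its centroid,'' even this step disappears and the corollary is an immediate specialization of the preceding proposition once the identification in the first paragraph is made.
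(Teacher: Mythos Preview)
Your proof is correct and follows the paper's intended route: the paper states the corollary with no proof, treating it as an immediate specialization of the preceding progenerator proposition with $A=K^m$, $B=K^n$, $C=K^p$, which is exactly your first paragraph. Your additional centroid computation is correct but not needed—the adapted Azumaya argument in the proposition already lands in $\End_{\End(A)\otimes\End(C)^{\op}}(\hom(C,A))\cong K$, so ``rank $1$'' there means over $K$, and the densor is $Kt$ without a separate verification that $\Cen(t)=K$.
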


\subsection{Examples we cannot yet explain}

In the years since we began computing densor spaces, there have been many
situations where we encountered a lowering of the dimension of a tensor product
space but for reasons we cannot yet explain.  For example, we found such
compression in the (noise-free) models of chat-room data as given in
\cite{chatroom}*{Section~3}.  We also found proper densor spaces in the tensors
that arise in exchangeable relational data, for instance ones described in
\cite{MFMc}.  We have further carried on with higher qubit SLOCC
classifications, and while this list is infinite, random trials show as many
cases have proper densor spaces as those that do not.  So some information is
captured by this method even as valence grows. Also, in a survey of over
500,000,000 nilpotent groups, the second and third authors found proper densor
spaces (of the commutator tensor) occurred in 80\% of the trials.  This
improves isomorphism testing in as many cases.

Finding smaller spaces and nontrivial Lie algebras to act on is often a direct
benefit to an existing strategy.  However, we do not understand what features of
a tensor lead to large derivation algebras.  For now, we simply compute and
discover.  Certainly it may help to start by explaining those tensors that
support a simple Lie algebra of derivations, as touched on in
Section~\ref{sec:Small-densor} and in \cite{BMW:Der-Densor}.  Yet most of the
derivation algebras discovered in the above unexplored examples are solvable. So
there is a great deal left to discern.


\section{Summary of results \& Open questions}

Motivated by patterns with tensors found throughout the scientific literature,
we introduced a correspondence between tensor spaces, multivariable polynomials
ideals, and transverse operators (Theorem~\ref{thm:correspondence}). This built
on a generalization of concepts of eigen spaces, minimal polynomials, and the
many familiar concepts of endomorphisms, derivations and automorphisms. The
closures of this correspondence gave a disciplined means to generalize
tensor product spaces and match them with the best possible operators -- those 
admitting a universal definition.

Next we proved an optimality condition on the universal linear operators leading
to the definition of a unique smallest (linear) tensor product space which we
called the \emph{densor} (Theorem~\ref{mainthm:Densor}).  An essential
ingredient was the ability to pass to ring extensions through the adoption of a
scheme-theoretic model. We further characterized what families of
non-associative algebras can act transversely finding that Lie algebras are the
natural choice. The traditional use of associative algebras suggested 70 years
ago by Whitney applies, we found, only to pairs of spaces
(Theorem~\ref{mainthm:Lie-Asc}).

In search of the proper data types to use in large tensor calculations, we next
pursued the most natural description of transverse tensor categories. For that
we leaned on the Correspondence Theorem to prove a characterization of which
groups act transversely (Theorem~\ref{thm:group-intro}) which led us to define
the largest possible transverse tensor categories -- homotopism categories.  We
solved this by invoking the theory of toric schemes.  This exploration leaves
open some questions concerning the combinatorics of lattices that we expect
could prove even stronger versions of our claims
(Question~\ref{quest:full-converse}).

With these characterizations in hand, we returned to the study of core questions
about tensors such as finding clusters in data and decompositions more
generally.  We observed these are now natural instances of decompositions in
categories and are thus subject to helpful theory like the Jordan--H\"older and
Krull--Schmidt theorems.  These homotopism categories organize substructure into
abstract simplicial complexes, and we used that combinatorial characterization
to identify precisely the polynomial traits that signal the presence of
substructure (Theorem~\ref{mainthm:singularity}).  The implication being that
when we search for decompositions using transverse operators, we now have a
discrete known target set of traits on which to focus.

Finally, we used the naturally occurring structure uncovered to design a tensor
type-system provably capable of modeling arbitrary higher-order tensor problems,
but designed to retain the efficiencies of highly-tuned low-level tensor
libraries. We also proved our correspondence is polynomial-time computable
(Theorem~\ref{mainthm:construction}), and our implementation of these ideas can
be found in \citelist{\cite{TensorSpace}\cite{magma}}.  We found that solving a
simultaneous system of Sylvester equations is a key bottleneck in efficient
computations with tensors.  This remains an area for optimism, given the
quantity of study on a single instance of Sylvester equations
(Question~\ref{quest:quicksylver}).

Along the way we have selected a number of examples from the literature, but
these are clearly filtered through topics with which we have familiarity.  We
encourage exploration within new domains to learn of the limits and
opportunities beyond what we have observed.  One clear open area is to develop
our Correspondence Theorem and its implications in the context of symmetry.
For $\sigma\in \mathrm{Sym}(\zrange{\vav})$, $t\in T$ is $\sigma$-symmetric 
if $\exists A\subset\zrange{\vav}$, $B\subset \comp{A}$, and 
$\omega\in \Omega_{A,B}^{\times}$ with $\bra{t}\omega=\bra{t}^{\sigma}$
(where the latter is the Knuth-Liebler shuffle of Section~\ref{sec:moving-data}).

\begin{quest}
    For subgroups $G\leq \mathrm{Sym}(\zrange{\vav})$, can the correspondence
    theorem be generalized to allow for $G$-symmetry?  If so, what are the
    minimal members of $\Ten{S}{\Op{S}{P}^G}^G$ as $P$ ranges over $G$-invariant
    homogeneous linear ideals $P$? What families of algebras appear as
    $\Op{S}{P}^G$? Also what are the constraints on $P$ that make $\Op{S}{P}^G$
    a subgroup of $(\Omega^G)^{\times}$?    
\end{quest}


\begin{bibdiv}
\begin{biblist}

\bib{chatroom}{article}{
 author = {Acar, Evrim},
 author={ \c{C}amtepe, Seyit A.},
 author={Yener, B\"{u}lent},
 title = {Collective Sampling and Analysis of High Order Tensors for Chatroom Communications},
 booktitle = {Proceedings of the 4th IEEE International Conference on Intelligence and Security Informatics},
 series = {ISI'06},
 year = {2006},
 pages = {213--224},
 publisher = {Springer-Verlag},
 address = {Berlin, Heidelberg},
} 
\bib{Agrawal-Saxena}{article}{
   author={Agrawal, Manindra},
   author={Saxena, Nitin},
   title={Equivalence of $\mathbb{F}$-Algebras and Cubic Forms},
   booktitle={STACS 2006},
   year={2006},
   publisher={Springer Berlin Heidelberg},
   address={Berlin, Heidelberg},
   pages={115--126},
}

\bib{BFS}{article}{
   author={Bardet, Magali},
   author={Faug\`ere, Jean-Charles},
   author={Salvy, Bruno},
   title={On the complexity of the $F_5$ Gr\"{o}bner basis algorithm},
   journal={J. Symbolic Comput.},
   volume={70},
   date={2015},
   pages={49--70},
   issn={0747-7171},
   review={\MR{3320794}},
}

\bib{BF}{article}{
   author={Bayer-Fluckiger, Eva},
   title={Principe de Hasse faible pour les syst\`emes de formes quadratiques},
   journal={J. Reine Angew. Math.},
   volume={378},
   date={1987},
   pages={53--59},
   review={\MR{895284}},
}
\bib{BFM}{article}{
   author={Bayer-Fluckiger, Eva},
   author={First, Uriya A.},
   author={Moldovan, Daniel A.},
   title={Hermitian categories, extension of scalars and systems of
   sesquilinear forms},
   journal={Pacific J. Math.},
   volume={270},
   date={2014},
   number={1},
   pages={1--26},
   issn={0030-8730},
   review={\MR{3245846}},
}

\bib{BhargavaII}{article}{
   author={Bhargava, Manjul},
   title={Higher composition laws. II. On cubic analogues of Gauss
   composition},
   journal={Ann. of Math. (2)},
   volume={159},
   date={2004},
   number={2},
   pages={865--886},
   issn={0003-486X},
   review={\MR{2081442}},
}
		
\bib{BhargavaI}{article}{
   author={Bhargava, Manjul},
   title={Higher composition laws. I. A new view on Gauss composition, and
   quadratic generalizations},
   journal={Ann. of Math. (2)},
   volume={159},
   date={2004},
   number={1},
   pages={217--250},
   issn={0003-486X},
   review={\MR{2051392}},
}

\bib{Belitskii}{article}{
   author={Belitskii, G.},
   title={Normal forms in matrix spaces},
   journal={Integral Equations Operator Theory},
   volume={38},
   date={2000},
   number={3},
   pages={251--283},
   issn={0378-620X},
   review={\MR{1797705}},
}

\bib{magma}{article}{
 author = {Bosma, W.},
 author = {Cannon, J.},
 author = {Playoust, C.},
 title = {The Magma algebra system. I. The user language},
 journal = {J. Symbolic Comput.},
 volume = {24},
 number = {3-4},
 year = {1997},
 pages = {235--265},
 review = {\MR{1484478}}
}

\bib{BMW:exactseq}{article}{
   author={Brooksbank, Peter A.},
   author={Maglione, Joshua},
   author={Wilson, James B.},
   title={Exact sequences of inner automorphisms of tensors},
   journal={J. Algebra},
   volume={545},
   date={2020},
   pages={43--63},
   issn={0021-8693},
   review={\MR{4044688}},
}

\bib{BMW:genus2}{article}{
   author={Brooksbank, Peter A.},
   author={Maglione, Joshua},
   author={Wilson, James B.},
   title={A fast isomorphism test for groups whose Lie algebra has genus 2},
   journal={J. Algebra},
   volume={473},
   date={2017},
   pages={545--590},
   issn={0021-8693},
   review={\MR{3591162}},
}	

\bib{BMW:Der-Densor}{article}{
   author={Brooksbank, Peter A.},
   author={Maglione, Joshua},
   author={Wilson, James B.},
   title={Tensor isomorphism by derivations and densors},
   year={2020},
   note={\texttt{\href{https://arxiv.org/abs/2005.04046}{arXiv:2005.04046}}},
}

\bib{BW:isom}{article}{
   author={Brooksbank, Peter A.},
   author={Wilson, James B.},
   title={Computing isometry groups of Hermitian maps},
   journal={Trans. Amer. Math. Soc.},
   volume={364},
   date={2012},
   number={4},
   pages={1975--1996},
   issn={0002-9947},
   review={\MR{2869196}},
}

\bib{BW:autotopism}{article}{
   author={Brooksbank, Peter A.},
   author={Wilson, James B.},
   title={Groups acting on tensor products},
   journal={J. Pure Appl. Algebra},
   volume={218},
   date={2014},
   number={3},
   pages={405--416},
   review={\MR{3124207}},
}

\bib{BW:sloped}{article}{
   author={Brooksbank, Peter A.},
   author={Wilson, James B.},
   title={Intersecting two classical groups},
   journal={J. Algebra},
   volume={353},
   date={2012},
   pages={286--297},
   issn={0021-8693},
   review={\MR{2872448}},
}

\bib{BOW:graded}{article}{
	author={Brooksbank, Peter A.},
	author={O'Brien, E. A.},
	author={Wilson, James B.},
	title={Testing isomorphism of graded algebras},
	journal={Trans. Amer. Math. Soc.},
	volume={372},
	date={2019},
	number={11},
	pages={8067--8090},
	issn={0002-9947},
	review={\MR{4029690}},
}

\bib{Clark}{article}{
   author={Clark, Pete L.},
   title={Covering numbers in linear algebra},
   journal={Amer. Math. Monthly},
   volume={119},
   date={2012},
   number={1},
   pages={65--67},
   issn={0002-9890},
   review={\MR{2877668}},
}	

\bib{CLO}{book}{
   author={Cox, David A.},
   author={Little, John},
   author={O'Shea, Donal},
   title={Ideals, varieties, and algorithms},
   series={Undergraduate Texts in Mathematics},
   edition={4},
   publisher={Springer, Cham},
   date={2015},
   pages={xvi+646},
   review={\MR{3330490}},
}

\bib{Dirac}{book}{
   author={Dirac, P. A. M.},
   title={The Principles of Quantum Mechanics},
   edition={3},
   publisher={Oxford, at the Clarendon Press},
   date={1947},
   pages={xii+311},
   review={\MR{0023198}},
}

\bib{Drozd}{article}{
   author={Drozd, Ju. A.},
   title={Tame and wild matrix problems},
   conference={
      title={Representation theory, II},
      address={Proc. Second Internat. Conf., Carleton Univ., Ottawa, Ont.},
      date={1979},
   },
   book={
      series={Lecture Notes in Math.},
      volume={832},
      publisher={Springer, Berlin-New York},
   },
   date={1980},
   pages={242--258},
   review={\MR{607157}},
}

\bib{ES}{article}{
   author={Eisenbud, David},
   author={Sturmfels, Bernd},
   title={Binomial ideals},
   journal={Duke Math. J.},
   volume={84},
   date={1996},
   number={1},
   pages={1--45},
   issn={0012-7094},
   review={\MR{1394747}},
}

\bib{First:general}{article}{
   author={First, Uriya A.},
   title={General bilinear forms},
   journal={Israel J. Math.},
   volume={205},
   date={2015},
   number={1},
   pages={145--183},
   issn={0021-2172},
   review={\MR{3314586}},
}

\bib{Fulton}{book}{
   author={Fulton, William},
   author={Harris, Joe},
   title={Representation theory},
   series={Graduate Texts in Mathematics},
   volume={129},
   note={A first course;
   Readings in Mathematics},
   publisher={Springer-Verlag, New York},
   date={1991},
   pages={xvi+551},
   review={\MR{1153249}},
}

\bib{polytope}{article}{
    author = {Griebl, Martin},
    author = {Lengauer, Christian},
    author = {Wetzel, Sabine},
    title = {Code Generation in the Polytope Model},
    booktitle = {In IEEE PACT},
    year = {1998},
    pages = {106--111},
    publisher = {IEEE Computer Society Press},
}

\bib{Glasby-Palfy-Schneider}{article}{
   author={Glasby, S. P.},
   author={P\'alfy, P. P.},
   author={Schneider, Csaba},
   title={$p$-groups with a unique proper non-trivial characteristic
   subgroup},
   journal={J. Algebra},
   volume={348},
   date={2011},
   pages={85--109},
   review={\MR{2852233}},
}

\bib{GG}{article}{
   author={Goldstein, Daniel},
   author={Guralnick, Robert M.},
   title={Alternating forms and self-adjoint operators},
   journal={J. Algebra},
   volume={308},
   date={2007},
   number={1},
   pages={330--349},
   issn={0021-8693},
   review={\MR{2290925}},
}

\bib{Grochow-Qiao}{article}{
   title={Isomorphism problems for tensors, groups, and cubic forms: completeness and reductions},
   author={Grochow, Joshua A.},
   author={Qiao, Youming},
   note={\texttt{\href{https://arxiv.org/abs/1907.00309}{arXiv:1907.00309}}},
}

\bib{Haastad}{article}{
   author={H\aa stad, Johan},
   title={Tensor rank is NP-complete},
   journal={J. Algorithms},
   volume={11},
   date={1990},
   number={4},
   pages={644--654},
   issn={0196-6774},
   review={\MR{1079455}},
}

\bib{Hall:triality}{article}{
   author={Hall, J. I.},
   title={Moufang loops and groups with triality are essentially the same
   thing},
   journal={Mem. Amer. Math. Soc.},
   volume={260},
   date={2019},
   number={1252},
   pages={xiv + 186},
   issn={0065-9266},
   isbn={978-1-4704-3622-3},
   isbn={978-1-4704-5321-3},
   review={\MR{3986936}},
}

\bib{Hartshorne}{book}{
   author={Hartshorne, Robin},
   title={Algebraic geometry},
   note={Graduate Texts in Mathematics, No. 52},
   publisher={Springer-Verlag, New York-Heidelberg},
   date={1977},
   pages={xvi+496},
   isbn={0-387-90244-9},
   review={\MR{0463157}},
}

\bib{Lim:NP}{article}{
   author={Hillar, Christopher J.},
   author={Lim, Lek-Heng},
   title={Most tensor problems are NP-hard},
   journal={J. ACM},
   volume={60},
   date={2013},
   number={6},
   pages={Art. 45, 39},
   issn={0004-5411},
   review={\MR{3144915}},
}

\bib{Holt-Rees}{article}{
   author={Holt, Derek F.},
   author={Rees, Sarah},
   title={Testing modules for irreducibility},
   journal={J. Austral. Math. Soc. Ser. A},
   volume={57},
   date={1994},
   number={1},
   pages={1--16},
   issn={0263-6115},
   review={\MR{1279282}},
}

\bib{quantum}{article}{
   title = {Quantum entanglement},
   author = {Horodecki, Ryszard},
   author = {Horodecki, Pawe\l},
   author = {Horodecki, Micha\l},
   author = {Horodecki, Karol},
   journal = {Rev. Mod. Phys.},
   volume = {81},
   number = {2},
   pages = {865--942},
   year = {2009},
}

\bib{Qi:det}{article}{
   author={Hu, Shenglong},
   author={Huang, Zheng-Hai},
   author={Ling, Chen},
   author={Qi, Liqun},
   title={On determinants and eigenvalue theory of tensors},
   journal={J. Symbolic Comput.},
   volume={50},
   date={2013},
   pages={508--531},
   issn={0747-7171},
   review={\MR{2996894}},
}

\bib{Ivanyos-Lux}{article}{
   author={Ivanyos, G\'{a}bor},
   author={Lux, Klaus},
   title={Treating the exceptional cases of the MeatAxe},
   journal={Experiment. Math.},
   volume={9},
   date={2000},
   number={3},
   pages={373--381},
   issn={1058-6458},
   review={\MR{1795309}},
}

\bib{Ivanyos-Qiao}{article}{
   author={Ivanyos, G\'{a}bor},
   author={Qiao, Youming},
   title={Algorithms based on $*$-algebras, and their applications to
   isomorphism of polynomials with one secret, group isomorphism, and
   polynomial identity testing},
   journal={SIAM J. Comput.},
   volume={48},
   date={2019},
   number={3},
   pages={926--963},
   issn={0097-5397},
   review={\MR{3945816}},
}

\bib{KN:classical-crystals}{article}{
   author={Kashiwara, Masaki},
   author={Nakashima, Toshiki},
   title={Crystal graphs for representations of the $q$-analogue of
   classical Lie algebras},
   journal={J. Algebra},
   volume={165},
   date={1994},
   number={2},
   pages={295--345},
   issn={0021-8693},
   review={\MR{1273277}},
}

\bib{engineer}{article}{
   author={Kolda, Tamara G.},
   author={Bader, Brett W.},
   title={Tensor decompositions and applications},
   year={2009},
   journal={SIAM Review},
   volume={51},
   number={3},
   pages={455-500},
}

\bib{LamFC}{book}{
   author={Lam, T. Y.},
   title={A first course in noncommutative rings},
   series={Graduate Texts in Mathematics},
   volume={131},
   edition={2},
   publisher={Springer-Verlag, New York},
   date={2001},
   pages={xx+385},
   isbn={0-387-95183-0},
   review={\MR{1838439}},
}

\bib{Landsberg}{book}{
   author={Landsberg, J. M.},
   title={Tensors: geometry and applications},
   series={Graduate Studies in Mathematics},
   volume={128},
   publisher={American Mathematical Society, Providence, RI},
   date={2012},
   pages={xx+439},
   isbn={978-0-8218-6907-9},
   review={\MR{2865915}},
}

\bib{LL:gen-der}{article}{
   author={Leger, George F.},
   author={Luks, Eugene M.},
   title={Generalized derivations of Lie algebras},
   journal={J. Algebra},
   volume={228},
   date={2000},
   number={1},
   pages={165--203},
   review={\MR{1760961 (2001e:17029)}},
}

\bib{LW:iso}{article}{
   author={Lewis, Mark L.},
   author={Wilson, James B.},
   title={Isomorphism in expanding families of indistinguishable groups},
   journal={Groups Complex. Cryptol.},
   volume={4},
   date={2012},
   number={1},
   pages={73--110},
   issn={1867-1144},
   review={\MR{2921156}},
}

\bib{Li-Qiao}{article}{
   author={Li, Yinan},
   author={Qiao, Youming},
   title={Linear algebraic analogues of the graph isomorphism problem and
   the Erd\H os-R\'enyi model (extended abstract)},
   conference={
      title={58th Annual IEEE Symposium on Foundations of Computer
      Science---FOCS 2017},
   },
   book={
      publisher={IEEE Computer Soc., Los Alamitos, CA},
   },
   date={2017},
   pages={463--474},
   review={\MR{3734252}},
}

\bib{Lim:Spectral}{article}{
   author={Lim, Lek-Heng},
   author={Ng, Michael K.},
   author={Qi, Liqun},
   title={The spectral theory of tensors and its applications [Editorial]},
   journal={Numer. Linear Algebra Appl.},
   volume={20},
   date={2013},
   number={6},
   pages={889--890},
   issn={1070-5325},
   review={\MR{3141882}},
}

\bib{SLOC4}{article}{
  title = {Polynomial invariants of four qubits},
  author = {Luque, Jean-Gabriel},
  author={Thibon, Jean-Yves},
  journal = {Phys. Rev. A},
  volume = {67},
  pages = {042303},
  year = {2003},
  publisher = {American Physical Society},
}

\bib{Maglione:efficient}{article}{
   author={Maglione, Joshua},
   title={Efficient characteristic refinements for finite groups},
   journal={J. Symbolic Comput.},
   volume={80},
   date={2017},
   number={part 2},
   part={part 2},
   pages={511--520},
   review={\MR{3574524}},
}

\bib{Maglione:filter}{article}{
   author={Maglione, Joshua},
   title={Longer nilpotent series for classical unipotent subgroups},
   journal={J. Group Theory},
   volume={18},
   date={2015},
   number={4},
   pages={569--585},
   review={\MR{3365818}},
}

\bib{TensorSpace}{misc}{
   author={Maglione, Joshua},
   author={Wilson, James B.},
   title={TensorSpace},
   publisher = {GitHub},
   year={2019},
   edition={version 2.2},
   note={Contributions from Peter A. Brooksbank, (\url{https://github.com/thetensor-space/TensorSpace})},
}

\bib{Malcev}{book}{
   author={Mal\cprime cev, A. I.},
   title={Foundations of linear algebra},
   series={Translated from the Russian by Thomas Craig Brown; edited by J.
   B. Roberts},
   publisher={W. H. Freeman \& Co., San Francisco, Calif.-London},
   date={1963},
   pages={xi+304},
   review={\MR{0166200}},
}

\bib{MFMc}{article}{
   title={Standard errors for regression on relational data with exchangeable errors},
   author={Marrs, Frank W.},
   author={Fosdick, Bailey K.},
   author={McCormick, Tyler H.},
   note={\texttt{\href{https://arxiv.org/abs/1701.05530}{arXiv:1701.05530}}},
   year={2018},
   status={preprint},
}

\bib{SLOCC:Classification}{article}{
   title = {Classification of multipartite entangled states by multidimensional determinants},
   author = {Miyake, Akimasa},
   journal = {Phys. Rev. A},
   volume = {67},
   number = {1},
   pages = {012108},
   year = {2003},
   publisher = {American Physical Society},
}

\bib{Myasnikov}{article}{
   author={Myasnikov, A. G.},
   title={Definable invariants of bilinear mappings},
   language={Russian},
   journal={Sibirsk. Mat. Zh.},
   volume={31},
   date={1990},
   number={1},
   pages={104--115, 220},
   translation={
      journal={Siberian Math. J.},
      volume={31},
      date={1990},
      number={1},
      pages={89--99},
      issn={0037-4466},
   },
   review={\MR{1046815 (91i:03074)}},
}

\bib{Nakashima:gen-LR}{article}{
   author={Nakashima, Toshiki},
   title={Crystal base and a generalization of the Littlewood--Richardson
   rule for the classical Lie algebras},
   journal={Comm. Math. Phys.},
   volume={154},
   date={1993},
   number={2},
   pages={215--243},
   issn={0010-3616},
   review={\MR{1224078}},
}

\bib{Raz}{article}{
   author={Raz, Ran},
   title={Tensor-rank and lower bounds for arithmetic formulas},
   conference={
      title={STOC'10---Proceedings of the 2010 ACM International Symposium
      on Theory of Computing},
   },
   book={
      publisher={ACM, New York},
   },
   date={2010},
   pages={659--666},
   review={\MR{2743315}},
}

\bib{Rossmann}{article}{
   author={Rossmann, Tobias},
   title={The average size of the kernel of a matrix and orbits of linear groups},
   journal={Proc. Lond. Math. Soc.},
   volume={117},
   year={2018}, 
   number={3}, 
   pages={574–616},
}

\bib{Rossmann2}{article}{
   author={Rossmann, Tobias},
   title={The average size of the kernel of a matrix and orbits of linear groups, II: duality},
   journal={J. Pure Appl. Algebra},
   volume={224},
   year={2020},
   number={4},
}

\bib{Schafer:nonass}{book}{
   author={Schafer, Richard D.},
   title={An introduction to nonassociative algebras},
   series={Pure and Applied Mathematics, Vol. 22},
   publisher={Academic Press, New York-London},
   date={1966},
   pages={x+166},
   review={\MR{0210757 (35 \#1643)}},
}

\bib{Sergeichuk}{article}{
   author={Sergeichuk, Vladimir V.},
   title={Canonical matrices for linear matrix problems},
   journal={Linear Algebra Appl.},
   volume={317},
   date={2000},
   number={1-3},
   pages={53--102},
   issn={0024-3795},
   review={\MR{1782204}},
}

\bib{Sturmfels}{article}{
   author={Sturmfels, Bernd},
   title={Tensors and their eigenvectors},
   journal={Notices Amer. Math. Soc.},
   volume={63},
   date={2016},
   number={6},
   pages={604--606},
   issn={0002-9920},
   review={\MR{3495220}},
}

\bib{Tyburski}{article}{
   author={Tyburski, Brady A.},
   title={Asymptotic enumeration of matrix groups},
   year = {2018},
   journal={2000-2019 - Colorado State Univ. Theses and Dissertations},
   note={(\url{https://hdl.handle.net/10217/191288})},
}

\bib{Waterhouse}{book}{
   author={Waterhouse, William C.},
   title={Introduction to affine group schemes},
   series={Graduate Texts in Mathematics},
   volume={66},
   publisher={Springer-Verlag, New York-Berlin},
   date={1979},
   pages={xi+164},
   isbn={0-387-90421-2},
   review={\MR{547117}},
}

\bib{Whitney}{article}{
   author={Whitney, Hassler},
   title={Tensor products of Abelian groups},
   journal={Duke Math. J.},
   volume={4},
   date={1938},
   number={3},
   pages={495--528},
   issn={0012-7094},
   review={\MR{1546071}},
}

\bib{Wilson:central}{article}{
   author={Wilson, James B.},
   title={Decomposing $p$-groups via Jordan algebras},
   journal={J. Algebra},
   volume={322},
   date={2009},
   number={8},
   pages={2642--2679},
   issn={0021-8693},
   review={\MR{2559855}},
}
	
\bib{Wilson:division}{article}{
   author={Wilson, James B.},
   title={Division, adjoints, and dualities of bilinear maps},
   journal={Comm. Algebra},
   volume={41},
   date={2013},
   number={11},
   pages={3989--4008},
   review={\MR{3169502}},
}

\bib{Wilson:Remak-I}{article}{
   author={Wilson, James B.},
   title={Existence, algorithms, and asymptotics of direct product
   decompositions, I},
   journal={Groups Complex. Cryptol.},
   volume={4},
   date={2012},
   number={1},
   pages={33--72},
   issn={1867-1144},
   review={\MR{2921155}},
}
	
\bib{Wilson:Skolem-Noether}{article}{
   author={Wilson, James B.},
   title={On automorphisms of groups, rings, and algebras},
   journal={Comm. Algebra},
   volume={45},
   date={2017},
   number={4},
   pages={1452--1478},
   issn={0092-7872},
   review={\MR{3576669}},
}

\bib{Wilson:profiles}{article}{
	author={Wilson, James B.},
   title={The threshold for subgroup profiles to agree is logarithmic}, 
   journal={Theory of Computing},
   year={2019},
   volume={15},
   pages={1--25},
}

\end{biblist}
\end{bibdiv}

\end{document}